\documentclass[11pt, oneside]{amsart}

\usepackage{tabularx, hyperref,ifthen}
\usepackage{amssymb} \usepackage{amsfonts} \usepackage{amsmath}
\usepackage{amsthm} \usepackage{epsfig, subfig}
\usepackage{ amscd, amsxtra, latexsym,pb-diagram,pb-xy, mathtools}
\usepackage[margin=1.2in]{geometry}
\usepackage{epsfig, graphicx, psfrag}
\usepackage{graphicx}
\usepackage[all]{xy}
\usepackage{caption}
\usepackage{enumerate}
\usepackage[dvipsnames]{xcolor}
\usepackage{epigraph}
\usepackage{overpic}

\addtolength{\captionmargin}{1cm}

\newtheorem{lemma}{Lemma}[section]
\newtheorem{thm}[lemma]{Theorem}
\newtheorem{prop}[lemma]{Proposition}
\newtheorem{cor}[lemma]{Corollary}

\newtheorem*{questintro}{Question}
\newtheorem{claim}[lemma]{Claim}
\newtheorem{theoremalph}{Theorem}
\newtheorem{coralph}[theoremalph]{Corollary}

\theoremstyle{definition}
\newtheorem{defn}[lemma]{Definition}

\newtheorem{rem}[lemma]{Remark}
\newtheorem{conv}[lemma]{Convention}
\theoremstyle{definition}
\newtheorem{notation}[lemma]{Notation}

\definecolor{darkgreen}{cmyk}{1,0,1,.2}

\newcommand{\showcomments}{yes}

\newsavebox{\commentbox}

\newcommand{\Z} {\ensuremath {\mathbb{Z}}}

\newcommand{\calG} {\ensuremath {\mathcal{G}}}

\newcommand{\calB} {\ensuremath {\mathcal{B}}}
\newcommand{\calH} {\ensuremath {\mathcal{H}}}
\newcommand{\calX} {\ensuremath {\mathcal{X}}}

\newcommand{\stabilizer}{\mathrm{Stab}}

\newcommand{\nest}{\sqsubseteq}
\usepackage{mathabx}

\newcommand{\orth}{\bot}

\newcommand{\cayley}{\mathrm{Cay}}

\newcommand{\link}{\mathrm{Lk}}
\newcommand{\Star}{\mathrm{St}}
\newcommand{\sat}{\mathrm{Sat}}
\newcommand{\naturals}{\mathbb N}
\newcommand{\dist}{\mathrm{d}}
\newcommand{\diam}{\mathrm{diam}}

\newcommand{\cuco}[1]{{\mathcal #1}}

\newcommand{\tsh}[1]{\left\{\kern-.7ex\left\{#1\right\}\kern-.7ex\right\}}

\newcommand{\integers}{\mathbb Z}
\newcommand{\reals}{\mathbb R}

\makeatletter
\@tfor\next:=abcdefghijklmnopqrstuvwxyzABCDEFGHIJKLMNOPQRSTUVWXYZ\do{%
	\def\command@factory#1{%
		\expandafter\def\csname cal#1\endcsname{\mathcal{#1}}
		\expandafter\def\csname frak#1\endcsname{\mathfrak{#1}}
		\expandafter\def\csname scr#1\endcsname{\mathscr{#1}}
		\expandafter\def\csname bb#1\endcsname{\mathbb{#1}}
		\expandafter\def\csname rm#1\endcsname{\mathrm{#1}}
		\expandafter\def\csname bf#1\endcsname{\mathbf{#1}}
	}
	\expandafter\command@factory\next
}
\makeatother

\begin{document}
	
	\title[Extra-large type Artin groups are hierarchically hyperbolic]{Extra-large type Artin groups are hierarchically hyperbolic}
	\author[M Hagen]{Mark Hagen}
	\address{School of Mathematics, University of Bristol, Bristol, UK}
	\email{markfhagen@posteo.net}
	\author[A Martin]{Alexandre Martin}
	\address{Department of Mathematics, Heriot-Watt University, Edinburgh, UK}
	\email{alexandre.martin@hw.ac.uk}
	\author[A Sisto]{Alessandro Sisto}
	\address{Department of Mathematics, Heriot-Watt University, Edinburgh, UK}
	\email{a.sisto@hw.ac.uk}
	\maketitle

	\begin{abstract}
	We show that   Artin groups of extra-large type, and more generally Artin groups of large and hyperbolic type, are hierarchically hyperbolic. This implies in particular that these groups have finite 
asymptotic dimension and uniform exponential growth.
		
	We prove these results by using a combinatorial approach to hierarchical hyperbolicity, via the action of these groups on a new complex that 
is quasi-isometric both to the coned-off Deligne complex introduced by Martin--Przytycki and to a generalisation due to Morris-Wright of the 
graph of irreducible parabolic subgroups of finite type introduced by Cumplido--Gebhardt--Gonz\'alez-Meneses--Wiest.
	\end{abstract}

	\tableofcontents
	
\section*{Introduction}

\subsection*{Hyperbolic features of Artin groups} The geometry of Artin groups has seen an explosion of results in the last decade. While Artin groups remain in general much less understood than their Coxeter relatives, a driving theme behind current research has been to show that these groups are as well-behaved as Coxeter groups, and this has indeed been verified for several classes of Artin groups. On the geometric side,  
a  popular theme has been to understand the ``hyperbolic features'' of groups, and Artin groups are conjectured to display such hyperbolic features. This rather vague notion comes in many flavours, a 
first one being the notion of \textit{acylindrically hyperbolic} group. Loosely speaking, such groups can be described as having ``hyperbolic directions'' (see \cite{Osin} for the precise definition 
and its many consequences). It is conjectured that for an irreducible Artin group $A_\Gamma$,  its central quotient $A_\Gamma/Z(A_\Gamma)$  is acylindrically hyperbolic. 

This question has been answered positively for most standard classes of Artin groups, such as right-angled Artin groups \cite{CS}, Artin groups of finite type \cite{CW:acyl}, Artin groups of Euclidean type \cite{Calvez}, Artin groups whose underlying presentation graph is not a join \cite{CharneyMW}, and two-dimensional Artin groups \cite{Vaskou}. 

While acylindrical hyperbolicity guarantees the existence of some hyperbolic directions, it does not provide much control on the overall geometry of the group. For instance, the free product $A*B$, where $A, B$ are the worst infinite groups you can think of, is acylindrically hyperbolic.

A notion of non-positive curvature that provides a much stronger control over the coarse geometry of the group is the notion of \textit{hierarchically hyperbolic group} (or HHG). This notion was
introduced by Behrstock--Hagen--Sisto \cite{HHS_I,HHS_II} and inspired by work of Masur-Minsky \cite{MMI,MMII} as a framework that unifies and generalises the geometry of mapping class groups of 
hyperbolic surfaces and that of (all known) cocompactly cubulated groups~\cite{HagenSusse}. The idea of hierarchical hyperbolicity is to describe the coarse geometry of a group/space 
using a ``coordinate system'' where the coordinates take values in various hyperbolic spaces.

Since this notion gives much better control than acylindrical hyperbolicity, it implies many results expected of non-positively curved groups that are not true for more general notions of 
non-positive curvature; for example, it gives quadratic isoperimetric inequality~\cite{Bowditch, HHS_II}, solubility of the word and conjugacy problem~\cite{HHS_II,HHP}, the Tits 
alternative~\cite{HHS_IV,HHS:corr}, finite asymptotic dimension~\cite{HHS_III}, semi-hyperbolicity~\cite{HHP, DMS}, etc. In particular, since both braid groups and right-angled Artin groups 
belong to this family \cite{HHS_I,HHS_II}, the following is a natural question that was for instance raised by Calvez--Wiest \cite{CW}: 

\begin{questintro}
	Which Artin groups are hierarchically hyperbolic?
\end{questintro}

Building new examples of hierarchically hyperbolic groups is nontrivial; there are combination theorems~\cite{HHS_II,BerlaiRobbio}, and theorems about persistence of hierarchical hyperbolicity under 
various quotients~\cite{HHS_III,CHHS}, but the examples coming from ``nature'' are dominated by compact special groups, mapping class groups, and fundamental groups of certain $3$--manifolds.  The 
preceding question has only been previously answered positively for right-angled Artin groups and braid groups. 

In this article, we add a large class of Artin groups to the ``naturally occurring'' hierarchically hyperbolic groups:

\begin{theoremalph}\label{thmintro:main}
	Artin groups of large and hyperbolic type are hierarchically hyperbolic. 
\end{theoremalph}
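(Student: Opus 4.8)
The plan is to deduce Theorem~\ref{thmintro:main} from the combinatorial criterion for hierarchical hyperbolicity of Behrstock--Hagen--Sisto. Concretely, it suffices to produce a flag simplicial complex $\calX$ carrying a simplicial action of $A_\Gamma$ with finitely many orbits of simplices, together with an $\calX$-graph $\calW$ (a graph with $\calW^{(0)}=\calX^{(0)}$ and $\calX^{(1)}\subseteq\calW$) on which $A_\Gamma$ acts metrically properly and cocompactly, in such a way that: (i) $\calX$ is finite-dimensional; (ii) for every simplex $\sigma$ of $\calX$, including $\sigma=\emptyset$, the $\calW$-augmented link $\calC(\sigma)$ is $\delta$-hyperbolic with $\delta$ independent of $\sigma$; and (iii) the remaining combinatorial axioms hold (fullness of links, the compatibility of $\calC(\sigma)$ with $\calC(\tau)$ when $\sigma$ is a face of $\tau$, and the finite-complexity/non-degeneracy conditions). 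Granting this, the combinatorial HHS theorem promotes the pair $(\calX,\calW)$ to a hierarchically hyperbolic group structure on $A_\Gamma$, whose index set is the set of $A_\Gamma$-orbits of simplices of $\calX$ and whose associated hyperbolic spaces are the augmented links $\calC(\sigma)$.

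First I would build $\calX$ out of the poset of standard parabolic subgroups of spherical (finite) type: since $A_\Gamma$ has large type, these are the trivial subgroup, the cyclic subgroups $\langle s\rangle$, and the dihedral-type subgroups $A_{st}=\langle s,t\rangle$ with $m_{st}<\infty$, so the relevant parabolics have rank at most $2$ and $\calX$ is finite-dimensional, with vertices the cosets of the rank-$1$ and rank-$2$ spherical parabolics and simplices recording containment up to the relevant commutation relations. The graph $\calW$ is obtained from $\calX^{(0)}$ by adjoining enough $A_\Gamma$-equivariant edges — a blow-up of a Cayley-type graph — to make the $A_\Gamma$-action on $\calW$ proper and cocompact. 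The crux, to be carried out in the body of the paper, is that the top augmented link $\calC(\emptyset)$ is $A_\Gamma$-equivariantly quasi-isometric both to the coned-off Deligne complex of Martin--Przytycki and to Morris-Wright's graph of irreducible parabolic subgroups of finite type; under the large-and-hyperbolic-type hypothesis these complexes are hyperbolic, whence $\calC(\emptyset)$ is hyperbolic.

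Next I would check hyperbolicity of the lower augmented links. A vertex of $\calX$ of type $gA_{st}$ has augmented link quasi-isometric to a space on which $A_{st}$ acts through $A_{st}/Z(A_{st})$, a virtually free group, hence a hyperbolic (indeed tree-like) space; a vertex of type $g\langle s\rangle$ has augmented link assembled from the edges of $\Gamma$ incident to $s$, suitably coned off, hence again hyperbolic or bounded; and links of edges of $\calX$ are at most $0$-dimensional. So all $\calC(\sigma)$ with $\sigma\neq\emptyset$ are uniformly hyperbolic. The remaining axioms of the combinatorial criterion — fullness of links as subcomplexes, the behaviour of $\calC(\sigma)$ under passage to faces, and the finite-complexity/non-degeneracy conditions — are then verified directly from the definitions of $\calX$ and $\calW$; these are the delicate but essentially bookkeeping parts.

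The principal obstacle is the choice of $(\calX,\calW)$ itself: requiring simultaneously that $A_\Gamma$ act properly cocompactly on $\calW$, that every augmented link (above all $\calC(\emptyset)$) be uniformly hyperbolic, and that the combinatorial axioms hold, puts these conditions in genuine tension, and pinning down a complex that also matches both the coned-off Deligne complex and Morris-Wright's graph up to equivariant quasi-isometry is the real content. Accordingly, I expect the bulk of the work to lie in (a) proving hyperbolicity of this new complex, equivalently of the coned-off Deligne complex, under the large-and-hyperbolic-type hypothesis — this is exactly where ``hyperbolic type'', which excludes Euclidean configurations such as $\widetilde A_2$, is used — and (b) establishing the two equivariant quasi-isometries together with the fullness-of-links condition, after which the combinatorial HHS machinery applies formally.
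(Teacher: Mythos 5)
Your blueprint is right in spirit (apply the combinatorial HHS criterion, arrange for the top augmented link to match the coned-off Deligne complex, check lower links), but the central construction is missing and the proof would fail as written. A first slip: an $\calX$-graph $\calW$ has vertex set the set of \emph{maximal simplices} of $\calX$, not $\calX^{(0)}$ (Definition~\ref{def:aug_X}); this matters because the coarse point in $A_\Gamma$ attached to a maximal simplex is what makes the model work. More seriously, your proposed $\calX$ — cosets of rank-$\le 2$ spherical parabolics with containment simplices — is essentially the modified Deligne complex, and this \emph{cannot} serve as the underlying complex. The paper's starting point is the commutation graph $Y$ of the family $\calH$ of cyclic subgroups $\langle a\rangle$ and $\langle z_{ab}\rangle$; but edges of $Y$ (maximal simplices) have $\Z^2$ stabilisers (Lemma~\ref{lem:intersection_normalisers}), so \emph{any} graph on maximal simplices of $Y$ has $\Z^2$ vertex stabilisers and the $A_\Gamma$-action cannot be proper, no matter which edges one adds. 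The same obstruction applies to the Deligne-type complex you propose. This is precisely why the paper \emph{blows up} each vertex $gN(H)$ of $Y$ into the cone on a quasiline $\Lambda_{gN(H)}$ (Definition~\ref{defn:blow_up_commute}), after which maximal simplices of the new complex $X$ have trivial stabiliser. Adding edges at the $\calW$ level (``a blow-up of a Cayley-type graph'') is not a substitute: properness fails at the level of $\calX$ itself, before $\calW$ is even defined.

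You also underestimate what is genuinely difficult. Producing the blow-up data is not bookkeeping: one needs, for each $N(H)\cong\langle z\rangle\times F$, a quasiline on which $H$ acts with unbounded orbits but on which every incident direction (coming from a $Y$-adjacent coset) has \emph{bounded} image — condition~(C) of Definition~\ref{defn:blow_up_data}. This is achieved by constructing suitable homogeneous quasimorphisms on $N(H)$ and converting them into quasiline actions via \cite{ABO} (Lemmas~\ref{lem:quasimorphisms_1}--\ref{lem:quasimorphisms_2}, Lemma~\ref{lem:blow_up_data_existence}); nothing in your outline supplies this. Conversely, you place the heaviest burden on proving hyperbolicity of the coned-off Deligne complex, which is already a theorem of Martin--Przytycki \cite{MP} and is used as an input. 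The real work in the paper is (a) showing that the coarse map $W\to A_\Gamma$ built from the blow-up data is a quasi-isometry (Proposition~\ref{prop:W_qi_to_A_Gamma}) and (b) proving the quasi-isometric embedding of each augmented link $\calC(\Delta)$ into $Y_\Delta$ (Proposition~\ref{prop:QI}), which requires the CAT$(-1)$ geometry of $\widehat D$, convexity of standard trees, coarse Lipschitz retractions, and the dihedral estimate of Lemma~\ref{lem:Vaskou_QI_embedded_link}. Finally, links of edges in the paper's $X$ are not $0$-dimensional: by Lemma~\ref{lem:join decomposition} the link of an edge in a fibre is $p^{-1}(\link_Y(v))$, a graph, and these are exactly the augmented links whose (quasi-tree) geometry carries the hierarchical structure.
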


Note that Artin groups of large and hyperbolic type contain in particular all Artin groups of extra-large type (see Section~\ref{subsec:artin_background} for precise definitions of these 
classes). We refer to Theorem~\ref{thm:artin_HHG} for a description of the HH structure.

The following results are new for Artin groups of large and hyperbolic type, and follow from the cited results about hierarchically hyperbolic groups:

\begin{coralph}\label{corintro}
Let $A_\Gamma$ be an Artin group of large and hyperbolic type, with $\Gamma$ connected and not a single vertex. Then:
\begin{enumerate}
 \item \label{item:asdim} $A_\Gamma$ has finite asymptotic dimension~\cite[Thm. A]{HHS_III},
 \item \label{item:unif_exp} $A_\Gamma$ has uniform exponential growth~\cite[Cor. 1.3]{ANS},
 \item \label{item:stable} A finitely generated subgroup of $A_\Gamma$ is stable if and only if the orbit maps to the coned-off Deligne complex are quasi-isometric embeddings.~\cite[Thm. B]{ABD}
\end{enumerate}
\end{coralph}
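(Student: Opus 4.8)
The plan is to derive all three items directly from Theorem~\ref{thmintro:main} (equivalently Theorem~\ref{thm:artin_HHG}), which equips $A_\Gamma$ with an explicit hierarchically hyperbolic group structure, by feeding it into the three cited general theorems about HHGs. There is no new group-theoretic content beyond checking that the hypotheses of those external results are satisfied in our setting.

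Item~\eqref{item:asdim} is immediate, since \cite[Thm.~A]{HHS_III} asserts that \emph{every} HHG has finite asymptotic dimension, with no extra hypotheses. For item~\eqref{item:unif_exp}, \cite[Cor.~1.3]{ANS} gives uniform exponential growth for HHGs subject to a mild non-elementarity assumption. The only point to verify is therefore that, under the standing hypotheses on $\Gamma$ (connected, not a single vertex, large and hyperbolic type), $A_\Gamma$ meets this assumption; this is standard, as such an $A_\Gamma$ is acylindrically hyperbolic (for instance by \cite{Vaskou}, or via its action on the coned-off Deligne complex, which is part of the HH structure), hence not virtually cyclic.

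The substantive item is \eqref{item:stable}, where the work is to identify the hyperbolic space in the statement with the one appearing in \cite[Thm.~B]{ABD}. That theorem characterizes the stable subgroups of an HHG $G$ as precisely the finitely generated subgroups whose orbit map to the hyperbolic space $\mathcal{C}S$ of the $\nest$-maximal domain $S$ is a quasi-isometric embedding, for an HH structure arranged so that $G \curvearrowright \mathcal{C}S$ is the largest acylindrical action of $G$. The step to carry out is thus: in the structure of Theorem~\ref{thm:artin_HHG}, the hyperbolic space of the $\nest$-maximal domain is by construction $A_\Gamma$-equivariantly quasi-isometric to the new complex, hence $A_\Gamma$-equivariantly quasi-isometric to the coned-off Deligne complex; so it remains to see that this action plays the role of the largest acylindrical action demanded by \cite{ABD}. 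Granting this, \cite[Thm.~B]{ABD} gives exactly the stated equivalence, since replacing $\mathcal{C}S$ by an equivariantly quasi-isometric space does not affect the quasi-isometric-embedding condition.

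The one genuine obstacle is precisely this last point in \eqref{item:stable}: one needs the normalization of the HH structure required by \cite{ABD} not to disturb the top hyperbolic space up to equivariant quasi-isometry --- equivalently, that the $A_\Gamma$-action on the coned-off Deligne complex is (quasi-isometric to) the largest acylindrical action. Since that action is already known to be acylindrical (Martin--Przytycki) and the HH structure is built with the coned-off Deligne complex as the $\nest$-maximal coordinate space, this should be routine, but it is the only place where the detailed shape of the HH structure, rather than its mere existence, intervenes.
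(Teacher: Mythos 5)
Your item~\eqref{item:asdim} and the overall strategy match the paper, and for item~\eqref{item:stable} you correctly isolate the key obstacle: one must ensure that the normalisation of the HH structure carried out in~\cite{ABD} does not change the equivariant quasi-isometry type of the $\nest$-maximal hyperbolic space. You judge this ``routine''; the paper in fact devotes a dedicated general lemma (Lemma~\ref{lem:ABD}) to it, verified via the distance formula and the orthogonality property recorded in Theorem~\ref{thm:artin_HHG}\,(i.e.\ every non-maximal domain with unbounded coordinate space admits an orthogonal domain with unbounded coordinate space). So you have the right idea here, but be aware that there is genuine content to check.

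There is, however, a real gap in your treatment of item~\eqref{item:unif_exp}. You assert that every $A_\Gamma$ under the standing hypotheses is acylindrically hyperbolic, citing \cite{Vaskou} or the acylindrical action on the coned-off Deligne complex. This is false when $\Gamma$ is a single edge: a dihedral Artin group $A_{ab}$ with $m_{ab}\geq 3$ has infinite cyclic centre $\langle z_{ab}\rangle$ (Lemma~\ref{lem:centre dihedral}), so $A_{ab}$ is \emph{not} acylindrically hyperbolic (acylindrically hyperbolic groups have finite centre). Vaskou's theorem concerns the central quotient $A_\Gamma/Z(A_\Gamma)$, which does not help here, and the acylindricity/universality of the action on $\widehat D_\Gamma$ is established in~\cite{MP} only when $\Gamma$ is connected and \emph{not} a single edge (see the remark before Corollary~\ref{cor:Y_orbits} territory, and the hypothesis in Proposition~\ref{prop:irred_parabolic}). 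Consequently \cite[Cor.~1.3]{ANS}, which requires torsion-freeness and acylindrical hyperbolicity, does not apply to the dihedral case, and you need a separate argument there. The paper handles it via Lemma~\ref{lem:dihedral_split}: $A_{ab}$ fits in an exact sequence $1\to\integers\to A_{ab}\to F\to 1$ with $F$ virtually a non-abelian free group, which gives uniform exponential growth by an elementary result such as~\cite[Prop.~2.3]{dlH}. Adding this case analysis repairs the gap.
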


Items \ref{item:unif_exp} and \ref{item:stable} require a short argument to link the cited results to Theorem~\ref{thmintro:main}; this appears at the end of Section~\ref{subsec:assembling}. The coned-off Deligne complex - which is one of our most important tools - was defined and extensively studied in \cite{MP}. As a consequence of our construction, this complex is (equivariantly quasi-isometric to) the maximal hyperbolic space in our HHG structure, see Theorem~\ref{thm:artin_HHG}. 

Regarding asymptotic dimension, it follows from Theorem \ref{thm:artin_HHG} and \cite[Theorem 5.2, Corollary 3.3]{HHS_III} that the asymptotic dimension of the coned-off Deligne complex $\hat D_\Gamma$ is finite (with no explicit bound) and that the asymptotic dimension of $A_\Gamma$ is at most $asdim(\hat D_\Gamma)+3$. In view of this, it is natural to ask:

\begin{questintro}
 Given $\Gamma$, what is (a good bound on) the asymptotic dimension of the coned-off Deligne complex?
\end{questintro}

Note that our main Theorem  also allows us to recover from a unified perspective several other known results for these groups, such as the above-mentioned Tits alternative (see~\cite[Corollary 
B]{MP}; this was proved in the extra-large case in~\cite{OP}), the solubility of the conjugacy problem (also proved in \cite[Corollary 1.3]{OsajdaHuang} for any two-dimensional Artin group), and 
control over their quasi-flats (see \cite[Theorem 1.1]{OsajdaHuang2} for the statement about two-dimensional Artin groups, and the closely-related~\cite[Theorem A]{HHS_V} about HHGs). Moreover, we recover the fact that these groups are semi-hyperbolic, as they are coarsely Helly by~\cite[Thm. A, Cor. F]{HHP}, see also \cite{DMS}. (The semi-hyperbolicity also follows from the fact that these groups are  systolic \cite{OsajdaHuang3}, hence biautomatic \cite{januszkiewicz2006simplicial, swikatkowski2006regular}, hence semi-hyperbolic \cite{alonso1995semihyperbolic}.)

\subsection*{A curve complex for Artin groups} There are earlier results in the direction of hierarchical hyperbolicity for Artin groups, see e.g.~\cite{CW} for a detailed discussion.  The first step in such a result is to introduce an analogue 
of the curve graph, to play the role of the ``maximal'' hyperbolic space in the HHS structure, i.e. a space quasi-isometric to the space obtained from the group by coning off the standard product 
regions.  There are various significant results about actions of Artin groups on hyperbolic complexes, notably those of Calvez-Wiest for spherical type Artin groups~\cite{CW:acyl}, Calvez in the 
Euclidean-type case~\cite{Calvez}, and Martin--Przytycki in the two-dimensional case \cite{MP}.  

A candidate for such a curve complex for Artin groups of finite type has been proposed recently by Cumplido-Gebhardt--Gonzales-Meneses--Wiest~\cite{CumplidoEtAl}, and generalised to all Artin groups by 
Morris-Wright~\cite{MorrisWright}  who studied this 
complex extensively for Artin groups of type FC. This is the notion of \textit{graph of irreducible proper parabolic subgroups of finite type} of an Artin group. 

 It is conjectured that this complex is an infinite-diameter hyperbolic space (except in degenerate cases) for  Artin 
groups of finite type~\cite[Conjecture 2.4]{CumplidoEtAl} and type FC~\cite[Conjecture 5.6]{MorrisWright}.  We prove the analogous result for Artin groups of large and hyperbolic type as a corollary of Proposition~\ref{prop:irred_parabolic} and Theorem~\ref{thmintro:main}:

\begin{coralph}
	Let $A_\Gamma$ an Artin group of large and hyperbolic type on at least three generators. Then $A_\Gamma$ admits an HHG structure whose maximal hyperbolic space is the graph of irreducible proper parabolic 
	subgroups of finite type. 
\end{coralph}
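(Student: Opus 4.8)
The plan is to obtain this as a formal consequence of the HHG structure produced in the proof of Theorem~\ref{thmintro:main}, together with a comparison of the relevant complexes. Recall from Theorem~\ref{thm:artin_HHG} that $A_\Gamma$ carries a hierarchically hyperbolic structure $(A_\Gamma,\mathfrak{S})$ whose $\nest$-maximal domain $S$ has associated hyperbolic space $\mathcal{C}S$ that is $A_\Gamma$-equivariantly quasi-isometric to the coned-off Deligne complex $\hat D_\Gamma$ (equivalently, to the new complex $\calX$ introduced in the paper). Separately, Proposition~\ref{prop:irred_parabolic} identifies $\hat D_\Gamma$ (equivalently $\calX$), up to $A_\Gamma$-equivariant quasi-isometry, with the graph $\calP$ of irreducible proper parabolic subgroups of finite type, \emph{provided $\Gamma$ has at least three vertices}. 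Composing these equivariant quasi-isometries yields an $A_\Gamma$-equivariant quasi-isometry $f\colon\mathcal{C}S\to\calP$. It then remains only to observe that the $\nest$-maximal hyperbolic space of an HHG structure may be replaced by any equivariantly quasi-isometric hyperbolic space.

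This last observation is a soft, general fact. Given an HHG $(G,\mathfrak{S})$ with $\nest$-maximal domain $S$, projection $\pi_S\colon G\to 2^{\mathcal{C}S}$, and a $G$-equivariant quasi-isometry $f\colon\mathcal{C}S\to Y$, define a new structure by keeping every domain, every other hyperbolic space, the relations $\nest$, $\transverse$, $\orth$, and every projection $\rho$ unchanged, but replacing $\mathcal{C}S$ by $Y$ and $\pi_S$ by $f\circ\pi_S$. Each HHS axiom mentioning $S$ -- the projection axiom, the nesting/orthogonality/transversality compatibilities, the bounded geodesic image and large link axioms, consistency, partial realisation, uniqueness, and the distance formula -- is phrased in terms of coarse distances and diameters in $\mathcal{C}S$, which $f$ distorts only by its quasi-isometry constants; $G$-equivariance of $f$ keeps the new structure $G$-equivariant; and the action on the index set, in particular cofiniteness of the $G$-orbits of domains, is untouched. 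Hence $(G,\mathfrak{S})$ with top-level space $Y$ is again an HHG. (The three-generator hypothesis is genuinely needed in Proposition~\ref{prop:irred_parabolic}: for dihedral Artin groups the only irreducible proper parabolics of finite type are the infinite cyclic subgroups generated by conjugates of single standard generators, so $\calP$ is an infinite discrete set and cannot be quasi-isometric to the connected, unbounded complex $\hat D_\Gamma$.)

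Assembling, I would apply the previous paragraph with $(G,\mathfrak{S})$ the structure of Theorem~\ref{thm:artin_HHG} and $f$ the composite equivariant quasi-isometry $\mathcal{C}S\to\hat D_\Gamma\to\calP$ just described; this produces an HHG structure on $A_\Gamma$ whose maximal hyperbolic space is $\calP$, as required. All the substance lies in Proposition~\ref{prop:irred_parabolic}, and that is where I expect the real work: one must construct an explicit $A_\Gamma$-equivariant quasi-isometry between $\calP$ and $\hat D_\Gamma$ (or $\calX$), matching the vertices of $\calP$ with the appropriate simplices or cone points of the Deligne complex and controlling distances through the combinatorics of intersection and adjacency of finite-type parabolic subgroups. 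By contrast, the HHG-swapping step and the final assembly are purely formal.
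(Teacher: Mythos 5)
Your proposal is correct and follows essentially the same route the paper takes. The paper deduces the corollary from Proposition~\ref{prop:irred_parabolic} (which gives an $A_\Gamma$-equivariant isomorphism $\calP\cong Y$, hence, composed with Lemma~\ref{lem:iota QI}, an equivariant quasi-isometry $\calP\to\widehat D_\Gamma$) together with Theorem~\ref{thm:artin_HHG}, where item~(1) already records that $\mathcal C(S)$ can be taken to be either the coned-off Deligne complex or $\calP$; you have in effect unwound what makes that item true, namely the soft fact that the top-level hyperbolic space in an HHG structure may be replaced by any $G$-equivariantly quasi-isometric space by post-composing $\pi_S$ with the quasi-isometry, at the cost of enlarging the structural constants. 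Your explanation of why the hypothesis of at least three generators is needed matches the hypothesis ``$\Gamma$ not a single edge'' in Proposition~\ref{prop:irred_parabolic}, and your observation that $\calP$ degenerates to a discrete set in the dihedral case (no strict inclusions among conjugate infinite cyclic parabolics, and no commuting pairs by Lemma~\ref{lem:dihedral Z2}) is a correct justification.
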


Note that the action of $A_\Gamma$ on this 
graph is acylindrical. This can be seen by combining the above corollary with \cite[Theorem~K]{HHS_I}, or by combining Proposition~\ref{prop:irred_parabolic} with \cite[Theorem A]{MP}. This corollary gives hope that this complex not only is hyperbolic for larger classes of Artin groups but also is the maximal hyperbolic space in an HHG structure.

\subsection*{Combinatorial approach to HHS, and strategy of proof}
We will not strictly need the full definition of hierarchical hyperbolicity but, roughly, a hierarchically hyperbolic space (HHS) is a space $\cuco X$ that comes with a family of hyperbolic spaces and 
projections from $\cuco X$ to the various hyperbolic spaces satisfying various conditions \cite[Definition 1.1]{HHS_II}. Additionally, a group is hierarchically hyperbolic if it acts geometrically on 
an HHS preserving the HHS structure, in a way expressed most simply in \cite[p.4]{PetytSpriano}.

We will not need the full definition because we will verify a more combinatorial criterion from \cite{CHHS} for a group $G$ to be hierarchically hyperbolic. We state this as Theorem \ref{thm:main 
criterion}, but for this discussion it suffices to know that criterion involves a hyperbolic simplicial complex $X$ on which $G$ acts, and maximal simplices of $X$ are in bijection with the vertices 
of a graph quasi-isometric to $G$. The links of simplices in $X$ give the hyperbolic spaces in the HHS structure, so the fine geometry of $X$ is relevant for this purpose.

We now explain how to come up with a candidate $X$ for a large-type  Artin group $A_\Gamma$ of hyperbolic type. The strategy makes sense more generally and potentially could be applied to other 
classes of groups as well.
First, we need a little more discussion on hierarchical hyperbolicity. An HHS contains a family of so-called \emph{standard product regions}, which are HHS themselves and (coarsely) split as a 
product of HHS. Moreover, the standard product regions are ``arranged hyperbolically'' meaning that coning them off yields a hyperbolic space. (This is the ``hierarchical structure'' that gives the 
name.) In the context of the combinatorial criterion, said coned-off space is quasi-isometric to the hyperbolic simplicial complex $X$.

In view of this, a natural candidate for an action of a given group on a simplicial complex to which Theorem 
\ref{thm:main criterion} can be applied is constructed as follows. 

\medskip

\noindent \textit{Step 1:} Consider the family of subgroups that are maximal virtual 
products (these should give the standard product regions if the group is hierarchically hyperbolic). For our Artin groups, the tools to identify these subgroups come from the acylindricity of the action 
of $A_\Gamma$ on its coned-off Deligne complex and  CAT($-1$) geometry \cite{MP}, see Section \ref{subsec:std_tree}.

\par\medskip

\noindent \textit{Step 2:} Isolate the subgroups arising as minimal infinite intersections between such virtual product subgroups. Coarse intersections of standard product regions in an HHS are again coarse products of 
HHS, which are ``smaller'' than the original product subgroups. This should identify the simplest sub-HHS, if the group is to be hierarchically hyperbolic. For our Artin groups, here we have the 
cyclic subgroups conjugate to either the subgroup generated by a standard generators or centres of \emph{dihedral parabolic} subgroups. See Section~\ref{subsec:commutation_graph} for more discussion.

\par\medskip

\noindent \textit{Step 3:}  To construct a hyperbolic space that the group acts on, it is natural to consider the graph encoding the intersections of candidates for the standard product regions. This graph is quasi-isometric to the \emph{commutation graph} of the minimal subgroups above, see Section \ref{subsec:commutation_graph}. This graph has a vertex for each such minimal subgroup and we put 
an edge when two such subgroups commute. In the case of right-angled Artin groups, we recover the extension graph, which is indeed hyperbolic \cite{KimKoberda}.  In our case, there is a natural map between the commutation graph and the coned-off Deligne complex. This map is a quasi-isometry with very nice local properties, and this is crucial for our arguments in Section~\ref{sec:augment}.

\par\medskip

\noindent \textit{Step 4:}  The commutation graph as above is expected to have the right coarse geometry, but not the right local geometry, because  stabilisers of maximal simplices are 
usually infinite. We remedy this by a 
\emph{blow-up} construction, where we replace each vertex of the commutation graph by a quasi-isometric copy of the corresponding subgroup, preserving the $A_\Gamma$--action. 
In our situation, the vertices of the commutation graph need to be blown up to quasi-lines.  This creates a somewhat delicate situation where, for a certain cyclic subgroup $H$, we need to construct 
an action of $N(H)$ on a quasiline with certain properties.  
Remarkably, this difficulty is circumvented using quasimorphisms (which are related to actions on quasilines by~\cite{ABO}), in a similar way as in \cite{GraphManifolds}.

\medskip

Once we have our hyperbolic complex $X$, we can build our quasi-isometry model $W$ of $A_\Gamma$.  As mentioned above, the vertex set of $W$ is the set of maximal simplices of $X$.  The adjacency relation is defined in 
such a way as to guarantee that orbit maps $A_\Gamma\to W$ are quasi-isometries (Section~\ref{subsec:augmented}).  The remaining work is to verify the technical conditions of Theorem~\ref{thm:main 
criterion}, and here we once again rely on the CAT($-1$) geometry of the coned-off Deligne complex.

In the end, we get an HHS structure on $A_\Gamma$ where the maximal hyperbolic space --- the HHS analogue of the curve graph in the mapping class group setting --- is quasi-isometric to $X$ and hence 
to the commutation graph.

As a final remark, our results do not cover the more general case of 2-dimensional Artin groups of hyperbolic type (that is, the case where edges with label 2 are allowed). In this case, the combinatorics of the commutation graph and associated spaces are more complex, with several statements in Section \ref{sec:commute} and beyond needing more nuance since more cases have to be considered. However, we still believe that with more sophisticated arguments one can deal with the more general case using the same combinatorial HHS approach.

\subsection*{Outline of the paper}  Section~\ref{sec:CHHS} contains the definitions and results from~\cite{CHHS} that we will need. Section~\ref{sec:artin_background} contains background on Artin 
groups, and in particular the subclass of Artin groups considered in this paper, along with the coned-off Deligne complex.  Section~\ref{sec:commute} is about the commutation graph, and also contains 
the discussion relating the commutation graph and the graph of irreducible parabolics.  Section~\ref{sec:blowup} contains the construction of the simplicial complex $X$ --- the blow-up of the 
commutation graph --- along with some purely combinatorial facts about $X$, and the relationship between maximal simplices in $X$ and coarse points in $A_\Gamma$.  In Section~\ref{sec:augment}, we 
define the graph of maximal simplices of $X$, prove that it is quasi-isometric to $A_\Gamma$, and study its combinatorial structure. Finally, in Section~\ref{sec:augment_geom}, we verify the remaining hypotheses of Theorem~\ref{thm:main criterion}.  The final subsection of 
Section~\ref{sec:augment_geom} assembles the pieces into a proof of Theorem~\ref{thmintro:main} and Corollary~\ref{corintro}. 

\subsection*{Acknowledgments}
We are grateful for the support of the International Centre for Mathematical Sciences, Edinburgh, which hosted the authors for a week in February 2020 during a Research in Groups where much of the work on this project 
was done.  Hagen was partially supported by EPSRC New Investigator Award EP/R042187/1. Martin was partially supported by the EPSRC New Investigator Award EP/S010963/1.

\section{A combinatorial criterion for hierarchical hyperbolicity} \label{sec:CHHS}

In this section, we recall the main combinatorial criterion introduced in \cite{CHHS} to show that a group is hierarchically 
hyperbolic by means of a map to a hyperbolic simplicial complex.  We will not require these notions until Section~\ref{sec:blowup}, but we introduce them now to motivate the constructions in earlier 
sections.  We refer the reader to~\cite[Section 1.2,1.5]{CHHS} for a more informal discussion of the various hypotheses in the definition of a combinatorial HHS.

\begin{defn}[$X$--graph, augmented complex]\label{def:aug_X}
	Let $X$ be a flag simplicial complex. An \textbf{$X$-graph} is a graph $W$ whose vertex set is the set 
	of maximal simplices of $X$. We say that two maximal simplices $\Delta, \Delta'$ of $X$ are $W$\textbf{-adjacent} if the 
	corresponding vertices of $W$ are adjacent in $W$.
	
	We denote by $X^{+W}$ the complex  obtained from the $1$-skeleton of $X$ by adding edges between vertices that belong 
	to $W$-adjacent maximal simplices. For a subcomplex $X_0$ of $X$, we denote by $X_0^{+W} $  the full subcomplex of $X^{+W}$ 
	induced by $X_0$. We refer to these various objects $X^{+W}, X_0^{+W}$, etc. as being \textbf{augmented complexes}.
\end{defn}

\begin{defn}[Link, star, saturation of a simplex]
	Let $X$ be a flag simplicial complex, and let $\Delta$ be a simplex of $X$. 
	
	The \textbf{star} of $\Delta$, denoted $\Star_X(\Delta)$, is the union of all the simplices of $X$  containing 
	$\Delta$. 
	
	The \textbf{link} of $\Delta$, denoted $\link_X(\Delta)$, is the full subcomplex of $\Star_X(\Delta)$  induced by 
	$\Star_X(\Delta) - \Delta$.
	
	The \textbf{saturation} of $\Delta$ is $$\sat_X(\Delta)=\bigcup_{\{\Sigma:\link_X(\Delta)=\link_X(\Sigma)\}}\Sigma^{(0)}.$$
\end{defn}

The following is Definition~1.8 in~\cite{CHHS}:

\begin{defn}[Combinatorial HHS]\label{defn:combinatorial_HHS}
	A \emph{combinatorial HHS} is a pair $(X,W)$, where $X$ is a simplicial complex and $W$ is an $X$--graph, such that all of 
	the following hold for some $\delta<\infty,n\in\naturals$:
	\begin{enumerate}[(I)]
		\item If $\Delta_0,\ldots,\Delta_m$ are simplices of $X$ and 
		$\link_X(\Delta_i)\subsetneq\link_X(\Delta_{i+1})$ for $0\leq i\leq m-1$, then $m\leq n$.  This condition will be called 
		\emph{finite complexity}.\label{item:finite_comp}
		\item Let $\Delta$ be a non-maximal simplex of $X$.  Let 
		$\mathcal C(\Delta)=\link_X(\Delta)^{+W}$ and let $Y_\Delta=(X^{(0)}-\sat(\Delta))^{+W}$.  Then $\mathcal C(\Delta)$ is 
		$\delta$--hyperbolic and the inclusion $\mathcal C(\Delta)\hookrightarrow Y_\Delta$ is a $(\delta,\delta)$--quasi-isometric 
		embedding.  We will call this condition \emph{hyperbolic links}.\label{item:hyperbolic_links}
		
		\item Let $\Delta$ be a non-maximal simplex of $X$ and let $v,w\in\link_X(\Delta)$ be distinct non-adjacent vertices.  
		Suppose that $v,w$ are contained in $W$--adjacent maximal simplices of $X$.  Then there exist maximal simplices 
		$\Sigma_v,\Sigma_w$ of $\link_X(\Delta)$, respectively containing $v,w$, such that $\Sigma_v\star\Delta$ and 
		$\Sigma_w\star\Delta$ are $W$--adjacent.  We will call this condition \emph{fullness of links}.\label{item:fullness}
		
		\item Let $\Sigma,\Delta$ be non-maximal simplices of $X$ such that there exists a non-maximal simplex $\Gamma$ with 
		$\link_X(\Gamma)\subseteq \link_X(\Sigma)\cap\link_X(\Delta)$ and $\diam(\mathcal C(\Gamma))>\delta$.  Then there exists a 
		non-maximal simplex $\Pi\subset\link_X(\Sigma)$ such that $\link_X(\Sigma\star\Pi)\subseteq\link_X(\Delta)$ and all $\Gamma$ 
		as above satisfy $\link_X(\Gamma)\subset\link_X(\Sigma\star\Pi)$.  We call this condition the \emph{intersection condition}.\label{item:intersection_condition}

	\end{enumerate}
	
\end{defn}

The following criterion is immediate from \cite[Theorem 1.18, Remark 1.19]{CHHS} and the fact that hierarchical 
hyperbolicity is a 
quasi-isometry invariant property~\cite[Proposition 1.10]{HHS_II}:

\begin{thm}\label{thm:main criterion}
	Let $(X,W)$ be a combinatorial HHS.  Then any quasigeodesic space quasi-isometric to $W$ is a hierarchically 
	hyperbolic space.  Moreover, suppose that the group $G$ acts by simplicial automorphisms on $X$, and that the resulting 
	$G$--action on the set of maximal simplices of $X$ extends to a 
	proper cobounded action of $G$ on $W$.  Suppose moreover that $X$ contains finitely many $G$--orbits of subcomplexes of the 
	form $\link_X(\Delta)$, for $\Delta$ a simplex.  Then $G$ is a hierarchically hyperbolic group.
\end{thm}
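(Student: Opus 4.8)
The plan is to obtain the statement as a direct consequence of the structural theorem of \cite{CHHS}, since being a combinatorial HHS is exactly its hypothesis. First I would invoke \cite[Theorem~1.18]{CHHS}: for a combinatorial HHS $(X,W)$, the graph $W$ equipped with its path metric is a hierarchically hyperbolic space, whose index set is governed by the combinatorics of links of simplices of $X$, whose associated hyperbolic spaces are the complexes $\mathcal C(\Delta)=\link_X(\Delta)^{+W}$, and whose $\nest$, $\orth$ and $\transverse$ relations are read off from inclusions of links and from saturations. The four conditions of Definition~\ref{defn:combinatorial_HHS} are exactly what is needed to verify the HHS axioms: finite complexity bounds the complexity of the structure, hyperbolic links supplies the hyperbolic coordinate spaces together with their bounded-geodesic-image behaviour, and fullness of links and the intersection condition provide the consistency and orthogonality data.

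For the first assertion, once $W$ is known to be an HHS, the fact that any quasigeodesic space quasi-isometric to $W$ is again an HHS is \cite[Proposition~1.10]{HHS_II}: one transports the coordinate projections and the associated structure across a quasi-isometry.

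For the second assertion I would appeal to the equivariant refinement recorded in \cite[Remark~1.19]{CHHS}. A simplicial action of $G$ on $X$ permutes the simplices of $X$, hence permutes their links, and therefore induces an action of $G$ on the index set of the HHS structure on $W$ together with a compatible action on the family $\{\mathcal C(\Delta)\}$; in particular it preserves $\nest$, $\orth$ and $\transverse$ and intertwines the projections. The hypothesis that $X$ has only finitely many $G$--orbits of subcomplexes of the form $\link_X(\Delta)$ says precisely that the $G$--action on the index set has finitely many orbits, one of the defining requirements for a hierarchically hyperbolic group. The hypothesis that the induced action on the maximal simplices of $X$ --- that is, on the vertex set of $W$ --- extends to a proper cobounded action of $G$ on $W$ then gives, via the Milnor--\v{S}varc lemma, a $G$--equivariant quasi-isometry $G\to W$; so $G$ acts geometrically on the HHS $W$ while preserving its HHS structure, and hence $G$ is a hierarchically hyperbolic group.

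The substance of the argument is therefore entirely contained in \cite[Theorem~1.18, Remark~1.19]{CHHS}. The only delicate point --- bookkeeping rather than a genuine obstacle --- is to check that the $G$--action genuinely respects every piece of HHS data produced by \cite[Theorem~1.18]{CHHS} (the relations, the projection maps, and the gate maps) and that ``finitely many $G$--orbits of links'' matches up correctly with ``finitely many $G$--orbits in the index set''; both of these are handled in \cite[Remark~1.19]{CHHS}, so no new argument is needed.
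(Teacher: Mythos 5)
Your proposal is correct and matches the paper's own (very short) argument: the paper likewise derives the statement directly from \cite[Theorem 1.18, Remark 1.19]{CHHS} together with quasi-isometry invariance of hierarchical hyperbolicity \cite[Proposition 1.10]{HHS_II}. The only small point worth flagging is that the ``properness'' in the hypothesis is metric properness (as the paper notes parenthetically), which is exactly what is needed for the quasigeodesic Milnor--\v{S}varc step you invoke.
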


(In the statement, the notion of properness used is sometimes called metric properness, and what we mean is that given a ball in $X$ there are only finitely many elements of $G$ that do not map the ball to a disjoint ball.)

Note that in addition to the properties of combinatorial HHS, Theorem~\ref{thm:main criterion} requires another condition, namely that there are finitely many orbits of links of simplices. For readers familiar with HHS terminology, we mention that this is so that the action of $A_\Gamma$ on the index set of the eventual HHG structure 
is cofinite, as the elements of the index set correspond to the links of the non-maximal simplices.

We will apply the above theorem to an Artin group of large and hyperbolic type $A_\Gamma$ by explicitly constructing $(X,W)$ with $W$ 
quasi-isometric to $A_\Gamma$, and then verifying each of the properties from Definition~\ref{defn:combinatorial_HHS}.

\section{Background on Artin groups and Deligne complexes}\label{sec:artin_background}

\subsection{Artin groups}\label{subsec:artin_background}

A \textbf{presentation graph} is a finite simplicial graph $\Gamma$ such that every edge between vertices $a, b \in V(\Gamma)$ is 
labelled by an integer $m_{ab} \geq 2$. The \textbf{Artin group} associated to $\Gamma$ is the group $A_{\Gamma}$ given by 
the following presentation:
$$A_\Gamma \coloneqq \langle a \in V(\Gamma ) ~|~ \underbrace{aba\cdots}_{m_{ab}} = \underbrace{bab\cdots}_{m_{ab}}  ~~ 
\mbox{ whenever } a,b \mbox{ are connected by an edge of }\Gamma \rangle.$$

An Artin group is of \textbf{large type} if all coefficients $m_{ab}$ are at least $3$, and of \textbf{extra-large type} if they all are at least $4$. An Artin group on two generators $a,b$ with $m_{ab}<\infty$ is a \textbf{dihedral} Artin group.

Given an Artin group $A_\Gamma$, the \textbf{associated Coxeter group}  $W_\Gamma$ is obtained by further requiring that each generator $a \in V(\Gamma)$ 
satisfies the relation $a^2 =1$. An Artin group is said to be of \textbf{hyperbolic type} if the associated Coxeter group is hyperbolic, and of \textbf{finite type} if the associated 
Coxeter group is finite.

For a (possibly empty) full subgraph $\Gamma'\subset \Gamma$, the subgroup of $A_{\Gamma}$ generated by the vertices of $\Gamma'$ 
is called a \textbf{standard parabolic subgroup}. Such a subgroup is isomorphic to the Artin group $A_{\Gamma'}$ by a result 
of Van der Lek \cite{vdL}, and moreover we have $A_{\Gamma_1} \cap A_{\Gamma_2} = A_{\Gamma_1\cap\Gamma_2}$ for full subgraphs $\Gamma_1, \Gamma_2$ of $\Gamma$. Conjugates of standard parabolic subgroups are called \textbf{parabolic subgroups}.

\subsection{Structure of dihedral Artin groups}\label{subsec: toolbox}

Since dihedral Artin groups appear as stabilisers of vertices of dihedral type in the modified Deligne complex and its cone-off, we 
mention some structural results that will be needed in this article.

A dihedral Artin group on two standard generators $a, b$ will be denoted $A_{ab}$ for simplicity, even though the group depends on the coefficient $m_{ab}$. Dihedral Artin groups come into two types: If $m_{ab}=2$, the group is a copy of $\Z^2$. The rest of this subsection focuses on the structure of dihedral Artin groups with $m_{ab}\geq 3$. We start by recalling the following definition:

\begin{defn} For a dihedral Artin group with $m_{ab}\geq 3$,  the \textbf{Garside element} $\Delta_{ab}\in A_{ab}$ is defined as follows:
$$\Delta_{ab} \coloneqq \underbrace{aba\cdots}_{m_{ab}}= \underbrace{bab\cdots}_{m_{ab}}.$$
\end{defn}

\begin{lemma}[{\cite{BS}}]\label{lem:centre dihedral}
	 The centre of a dihedral Artin group $A_{ab}$ with $m_{ab}\geq 3$ is infinite cyclic and generated by the element $$z_{ab} 
	\coloneqq\Delta_{ab}~\mbox{ if } ~ m_{ab} \mbox{ is even, and } ~ z_{ab} \coloneqq \Delta_{ab}^2~~\mbox{ if } ~ m_{ab} \mbox{ 
		is odd}.$$
\end{lemma}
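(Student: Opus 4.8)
The plan is to prove the two inclusions separately: that $z_{ab}$ is central and of infinite order, and that $Z(A_{ab})$ is no larger. This is the classical computation of Brieskorn--Saito \cite{BS}; here is how I would organise it. Write $m=m_{ab}$. For the first inclusion, if $m=2k$ is even the defining relation reads $(ab)^k=(ba)^k$, and since $(ba)^k=a^{-1}(ab)^ka$ this is equivalent to $[a,(ab)^k]=1$, i.e.\ $\Delta_{ab}=(ab)^k$ commutes with $a$; the transposition $a\leftrightarrow b$ sends the defining relator to its inverse, hence extends to an automorphism of $A_{ab}$ which fixes $\Delta_{ab}$, so $\Delta_{ab}$ also commutes with $b$ and $z_{ab}=\Delta_{ab}$ is central. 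If $m$ is odd, a direct manipulation of the defining relation shows that conjugation by $\Delta_{ab}$ exchanges $a$ and $b$ (for instance $\Delta_{ab}\,a\,\Delta_{ab}^{-1}=b$), so this conjugation is an involution and $z_{ab}=\Delta_{ab}^2$ is central. In either case the homomorphism $A_{ab}\to\Z$ sending each standard generator to $1$ sends $z_{ab}$ to $m$ (resp.\ $2m$), so $z_{ab}$ has infinite order.

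For the reverse inclusion $Z(A_{ab})\subseteq\langle z_{ab}\rangle$ I would exhibit $A_{ab}$ as a nondegenerate amalgamated free product with edge group $\langle z_{ab}\rangle$, via elementary Tietze transformations. When $m$ is odd, putting $v=ab$ and $u=\Delta_{ab}$ (so $a=v^{-(m-1)/2}u$, $b=u^{-1}v^{(m+1)/2}$) turns the defining presentation into $\langle u,v\mid u^2=v^m\rangle\cong\langle u\rangle *_{\langle u^2\rangle=\langle v^m\rangle}\langle v\rangle$, whose edge group is $\langle u^2\rangle=\langle\Delta_{ab}^2\rangle=\langle z_{ab}\rangle$. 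When $m=2k$ is even, putting $x=ab$ (so $b=a^{-1}x$) turns it into $\langle a,x\mid[a,x^k]=1\rangle\cong\langle x\rangle *_{\langle x^k\rangle}(\langle x^k\rangle\times\langle a\rangle)$, whose edge group is $\langle x^k\rangle=\langle(ab)^k\rangle=\langle\Delta_{ab}\rangle=\langle z_{ab}\rangle$. In both cases each vertex group properly contains the edge group, and one of the two edge-group indices is $\geq 3$ (it is $m$, resp.\ infinite), so the Bass--Serre tree $T$ is neither a point nor a line, and $A_{ab}$ acts on $T$ minimally and with no global fixed point.

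Now let $Z=Z(A_{ab})$, which is abelian and normal in $A_{ab}$. If $Z$ contained a hyperbolic isometry $g$ of $T$, then every element of $Z$ (being elliptic, or hyperbolic and commuting with $g$) would preserve the axis $L$ of $g$, and for each $h\in A_{ab}$ the element $hgh^{-1}\in Z$ would be hyperbolic, commute with $g$, hence share its axis, giving $hL=L$; thus $L$ would be $A_{ab}$-invariant, contradicting minimality since $T$ is not a line. Hence $Z$ acts elliptically on $T$; its fixed-point set is a nonempty subtree and is $A_{ab}$-invariant by normality of $Z$, so by minimality $Z$ fixes $T$ pointwise. Therefore $Z$ is contained in every edge stabiliser, in particular in $\langle z_{ab}\rangle$. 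Combined with the first paragraph this gives $Z(A_{ab})=\langle z_{ab}\rangle\cong\Z$.

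The genuine content is this reverse inclusion. The main obstacle is really twofold: first, carrying out the Tietze transformations so that the relator simplifies to \emph{exactly} $u^2=v^m$ (resp.\ $[a,x^k]=1$), with edge group precisely $\langle z_{ab}\rangle$ and no extra relations (this is a routine but slightly tedious bookkeeping with the change of generators $a\mapsto v^{-k}u$ in the odd case); and second, the standard but delicate tree-theoretic step that a normal abelian subgroup of a group acting minimally on a tree that is not a line must fix the tree pointwise --- alternatively one may simply quote that the centre of a nondegenerate amalgamated free product with non-lineal Bass--Serre tree lies in the edge group.
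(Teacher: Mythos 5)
The paper does not prove this lemma; it is quoted directly from Brieskorn--Saito~\cite{BS}, so your proposal is a self-contained substitute rather than a rederivation of an argument in the text. Your forward inclusion and the two Tietze presentations are correct: for $m$ odd, $u=\Delta_{ab}$, $v=ab$ gives $\langle u,v\mid u^2=v^m\rangle$, with $a=v^{-(m-1)/2}u$ and $b=u^{-1}v^{(m+1)/2}$ (equivalently $b=uv^{-(m-1)/2}$, and equating these reproduces $u^2=v^m$); for $m=2k$ even, $x=ab$ gives $\langle a,x\mid[a,x^k]\rangle\cong\langle x\rangle *_{\langle x^k\rangle}\langle x^k,a\rangle$. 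The edge groups are indeed $\langle\Delta_{ab}^2\rangle$ and $\langle\Delta_{ab}\rangle=\langle(ab)^k\rangle$, and in each case one of the indices ($m$, resp.\ infinite) exceeds $2$, so the Bass--Serre tree is not a line.

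There is, however, a genuine (if small) gap in the sentence \emph{``Hence $Z$ acts elliptically on $T$; its fixed-point set is a nonempty subtree.''} A group all of whose elements act elliptically on a tree need not have a global fixed point: it can be \emph{parabolic}, fixing a unique end but no vertex (e.g.\ an ascending union of vertex stabilisers). You must rule this out. Two quick ways: (i) any two commuting elliptic isometries of a tree with disjoint fixed-point sets have hyperbolic product, so the abelian group $Z$ consisting entirely of elliptics has the pairwise-intersection property for its fixed subtrees, and for a normal such subgroup in a minimal non-lineal action the putative fixed end would be $A_{ab}$-invariant, contradicting that the Bass--Serre tree of a proper amalgam fixes no end; or (ii) simply invoke, as you suggest at the end, the standard fact that the centre of a nondegenerate amalgam with non-lineal Bass--Serre tree is contained in the edge group. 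Either patch closes the argument.

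For what it is worth, once you have the presentations there is a still shorter finish that avoids Bass--Serre theory entirely: you have already shown $\langle z_{ab}\rangle\le Z(A_{ab})$, and killing the central element $z_{ab}$ in either presentation yields $\Z/2 * \Z/m$ (odd case) or $\Z * \Z/k$ (even case), both nontrivial free products and hence centerless. Since $Z(A_{ab})/\langle z_{ab}\rangle$ embeds in the center of the quotient, it is trivial, giving $Z(A_{ab})=\langle z_{ab}\rangle$. This buys the same conclusion from the same Tietze transformations with no recourse to trees.
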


\begin{lemma}\label{lem:virtually splits}
	Let $A_{ab}$ be a dihedral Artin group with $m_{ab}\geq 3$. The central quotient $A_{ab}/\langle z_{ab}\rangle $ is virtually a finitely generated non-trivial free group. In particular,   $A_{ab}$ contains a finite-index subgroup that splits as a direct product of the form $\langle 
	z_{ab} \rangle \times K$, where $K$ is a finitely generated free subgroup of $A_{ab}$. 
	\label{lem:dihedral_split}
\end{lemma}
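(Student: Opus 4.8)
The plan is to reduce the lemma to an explicit presentation of $A_{ab}$ in which $z_{ab}$ and the central quotient become transparent, to deduce virtual freeness of that quotient from the Kurosh subgroup theorem, and finally to upgrade this to the asserted direct-product decomposition by a standard cohomological splitting.

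First I would treat the two parities of $m := m_{ab}$ separately. If $m = 2k$ is even, the Tietze substitution $x = ab$, $y = a$ turns the defining relation $\underbrace{aba\cdots}_{m}=\underbrace{bab\cdots}_{m}$ into $(ab)^k=(ba)^k$, i.e.\ $x^k=y^{-1}x^ky$, so $A_{ab}\cong\langle x,y\mid [x^k,y]=1\rangle$, with $z_{ab}=\Delta_{ab}=(ab)^k=x^k$. If $m = 2k+1$ is odd, set $c = ab$ and $\Delta := \Delta_{ab} = (ab)^k a$: eliminating $b = a^{-1}c$ rewrites the defining relation as $ac^ka = c^{k+1}$, and then eliminating $a = c^{-k}\Delta$ turns it into $\Delta^2 = c^{m}$, so $A_{ab}\cong\langle c,\Delta\mid \Delta^2=c^{m}\rangle$ (the torus knot group of type $(2,m)$), with $z_{ab}=\Delta_{ab}^2=\Delta^2=c^{m}$. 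In both cases $\langle z_{ab}\rangle$ is the central $\Z$ of Lemma~\ref{lem:centre dihedral}, and killing it kills the single relator, giving
\[ A_{ab}/\langle z_{ab}\rangle \;\cong\; \begin{cases} \Z/k \ast \Z & \text{if } m = 2k,\\ \Z/2 \ast \Z/m & \text{if } m = 2k+1.\end{cases} \]
Since the hypothesis $m\geq 3$ forces $k\geq 2$ in the even case, in both cases this is an \emph{infinite} free product of (finite or infinite) cyclic groups.

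Second, I would check that such a free product $Q := A_{ab}/\langle z_{ab}\rangle$ is virtually a finitely generated non-trivial free group. Map $Q$ onto the finite group $\Z/k$ by killing the free factor when $m=2k$, resp.\ onto its abelianisation $\Z/2\oplus\Z/m$ when $m = 2k+1$; in either case the kernel $F$ meets every conjugate of each finite cyclic free factor trivially, hence is free by the Kurosh subgroup theorem. It has finite index in $Q$ (namely $k$, resp.\ $2m$), so it is finitely generated, and it is non-trivial because $Q$ is infinite. (Alternatively one may simply invoke that a free product of finitely many virtually free groups is virtually free.) This proves the first assertion.

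Finally, for the ``in particular'' part, let $\widetilde F\leq A_{ab}$ be the preimage of $F$ under $A_{ab}\to Q$; it has finite index in $A_{ab}$ and sits in a central extension
\[ 1 \longrightarrow \langle z_{ab}\rangle \longrightarrow \widetilde F \longrightarrow F \longrightarrow 1. \]
Such an extension is classified by a class in $H^2(F;\Z)$, which vanishes because the free group $F$ has cohomological dimension $\leq 1$; hence the extension splits and $\widetilde F \cong \langle z_{ab}\rangle \times K$ with $K\cong F$ a finitely generated non-trivial free subgroup of $A_{ab}$, as required. The only step requiring genuine computation is the presentation in the odd case --- in particular verifying that the two Tietze moves really yield the relation $\Delta^2 = c^{m}$; everything afterwards is routine combinatorial group theory.
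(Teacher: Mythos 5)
Your proof is correct, but it takes a genuinely different route from the paper. The paper gives a \emph{geometric} argument: it observes that the quasi-tree $\calT_{ab}$ built from left-weighted Garside normal forms (Definition~\ref{def:quasi_tree_simplices}) carries an $A_{ab}$-action by left multiplication on which $\langle z_{ab}\rangle$ acts trivially, so the central quotient acts properly and cocompactly on the quasi-tree and is therefore virtually free; the splitting $\langle z_{ab}\rangle\times K$ then follows. The paper explicitly flags that the result is well known and that it prefers this geometric proof because $\calT_{ab}$ is needed later (in Section~\ref{sec:augment}). Your approach is purely combinatorial and self-contained: Tietze transformations to the two normal forms $\langle x,y\mid [x^k,y]\rangle$ (even $m=2k$) and the $(2,m)$ torus-knot presentation $\langle c,\Delta\mid \Delta^2=c^m\rangle$ (odd $m$), identification of $z_{ab}$ with $x^k$ resp.\ $c^m$, Kurosh's theorem to get a torsion-free finite-index free subgroup of the free-product quotient, and vanishing of $H^2(F;\integers)$ for $F$ free to split the resulting central extension. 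Both proofs are valid; yours is closer to the classical reference the authors cite (Crisp), it has the virtue of producing the quotient groups $\integers/k\ast\integers$ and $\integers/2\ast\integers/m$ explicitly, while the paper's buys integration with machinery used elsewhere in the argument. Two very small remarks: when you invoke Kurosh you should also note that the subgroups of the \emph{infinite} cyclic factor's conjugates occurring in the decomposition are automatically free, so the whole kernel is free (this is implicit but worth saying); and the paper additionally records that $a$ and $b$ have infinite order in $A_{ab}/\langle z_{ab}\rangle$, which is not part of this lemma's statement but is used later, so if you were substituting your proof you would want to add that observation (it follows at once from your presentations).
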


This virtual splitting is well-known to experts, see for instance \cite{Crisp}. We give here a 
geometric proof of this result that uses objects that will be needed in Section~\ref{sec:augment}.

\begin{defn}[Atoms, left-weighted form]
	An \textbf{atom} of $A_{ab}$ is a strict subword of $\underbrace{aba\cdots}_{m_{ab}}$ or 
	$\underbrace{bab\cdots}_{m_{ab}}$, that is, an alternating product of $a$ and $b$ with strictly fewer than $m_{ab}$ letters. 
	We denote by $M$ the set of all atoms of $A_{ab}$. A product of the form $m_1\cdots m_k$, with each $m_i \in M$, is said to 
	be \textbf{left-weighted} if for each $i$ the last letter of $m_i$ coincides with the first letter of $m_{i+1}$.  
\end{defn}

It follows from the existence and uniqueness of Garside normal forms (see for instance\cite{MairesseMatheus}) that elements of  the quotient 
$A_{ab}/\langle \Delta_{ab}\rangle $ are in bijection with left-weighted elements of the free monoid $M^\bullet$ on $M$, 
where  $\langle \Delta_{ab}\rangle $ acts on $A_{ab}$ by right multiplication. 

\begin{defn}\label{def:quasi_tree_simplices}
	We denote by $\calT_{ab}$ the full subgraph of the Cayley graph $ \mathrm{Cayley}(M^\bullet, M)$ spanned by 
	left-weighted elements. 
	The action of $A_{ab}$ on $A_{ab}/\langle \Delta_{ab}\rangle  $ by left multiplication induces an action of $A_{ab}$  
	on $\calT_{ab}$ (seen as an unlabelled graph).
\end{defn}

Since $M^\bullet$ is a free monoid, the graph $\calT_{ab}$ is a quasi-tree, as already explained in \cite{BestvinaArtin}. More precisely, the flag completion of $\calT_{ab}$ has a structure of tree of simplices of dimension $m_{ab}-1$ 
glued along vertices (see Figure \ref{quasitree}), where the simplices are either in the $A_{ab}$-orbit of the simplex 
spanned by $$e, a, ab, \ldots, \underbrace{aba\cdots}_{m_{ab}-1},$$ or in the $A_{ab}$-orbit of the simplex spanned by $$e, 
b, ba, \ldots, \underbrace{bab\cdots}_{m_{ab}-1}.$$

\begin{figure}[h]
	\begin{center}
		\scalebox{0.75}{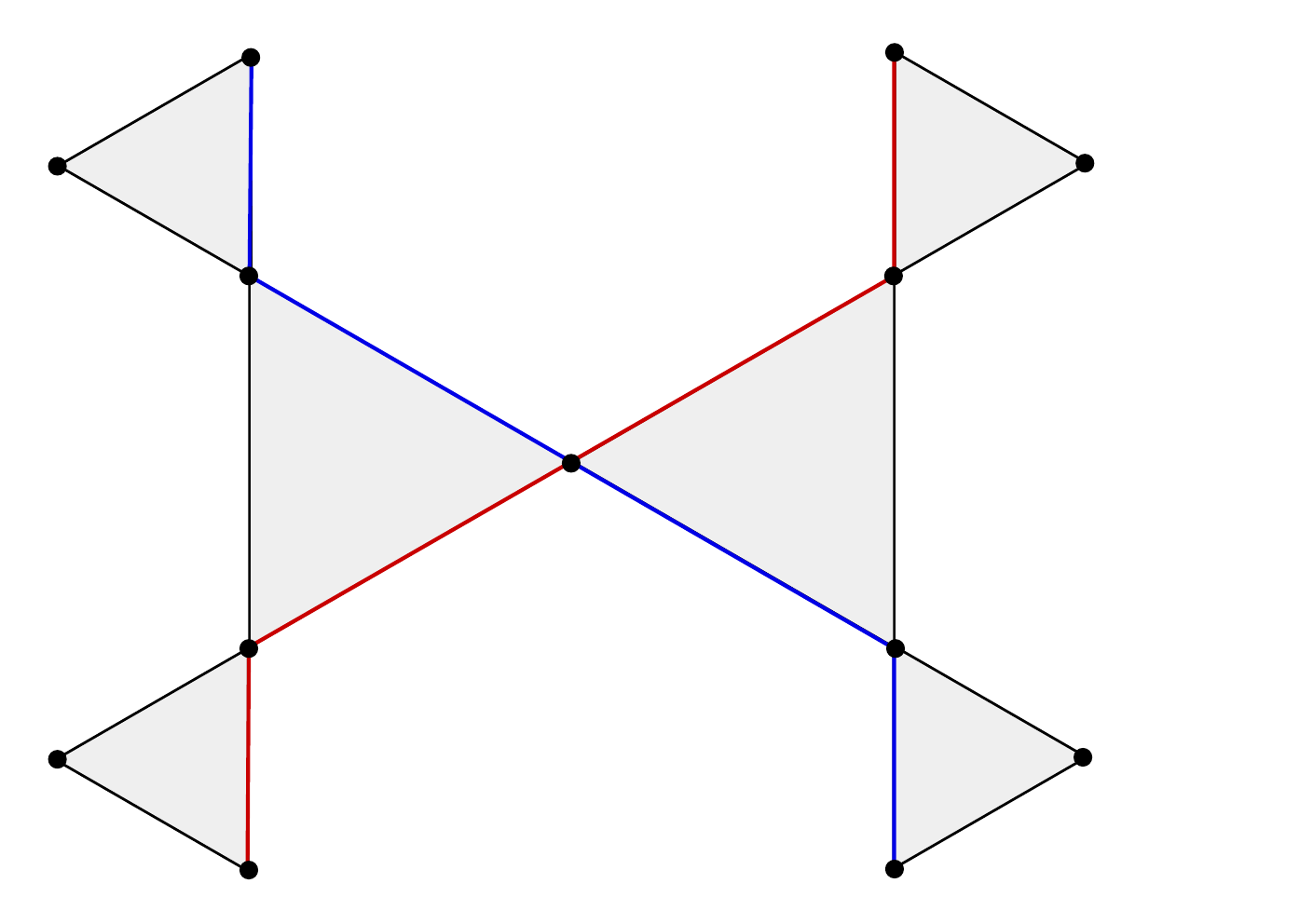}
		\caption{A portion of the quasi-tree $\calT_{ab}$ for $m_{ab}=3$. In red, the full subgraph spanned by the 
			$\langle a \rangle$-orbit of $e$. In blue, the full subgraph spanned by the $\langle b \rangle$-orbit of $e$.}
		\label{quasitree}
	\end{center}
\end{figure}

\begin{proof}[Proof of Lemma \ref{lem:virtually splits}] The group $A_{ab}$ acts by left multiplication on $\calT_{ab}$. 
	Since the element $z_{ab}$ is central and is a power of $\Delta_{ab}$, it follows that $\langle z_{ab}\rangle$ acts trivially 
	on that graph, hence the quotient $A_{ab}/\langle z_{ab} \rangle$ acts by left multiplication on $\calT_{ab}$. The action is 
	cocompact and proper, thus
	$A_{ab}/\langle z_{ab} \rangle$ is virtually free, and hence $A_{ab}$ is virtually a direct product of the form $\langle z_{ab} 
	\rangle \times K$, where $K$ is a finitely-generated free group. Note that $a$ and $b$ define elements of infinite order in 
	$A_{ab}/\langle z_{ab}\rangle$ since their orbits in $\calT_{ab}$ span embedded lines (see Figure \ref{quasitree}).
\end{proof}

 We recall that the \textbf{syllabic length} of an element $g \in A_{ab}$ is the smallest non-negative integer $n$ such that $g$ can be written as a product of the form $g=x_1^{k_1}\cdots x_n^{k_n}$ with $k_i \in \Z$ and $x_i \in \{a,b\}$ for all $1\leq i \leq n$.

\begin{lemma}[{\cite[Proposition 4.6]{Vaskou}}]  \label{lem:Vaskou_syll_length}
	Let $g$ be an element  of $A_{ab}$ that can be written only with positive letters and that has syllabic length 
	greater than $1$. If $m_{ab}\geq 3$, then the syllabic length of $g^n$ goes to infinity as $n$ goes to infinity. 
\end{lemma}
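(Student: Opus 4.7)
The plan is to exploit the central extension
$1 \to \langle z_{ab}\rangle \to A_{ab} \to A_{ab}/\langle z_{ab}\rangle \to 1$, where, by Lemma~\ref{lem:virtually splits}, the quotient is virtually free and hence hyperbolic. Mineyev's theorem asserts that the comparison map $H^2_b(A_{ab}/\langle z_{ab}\rangle;\mathbb{R}) \to H^2(A_{ab}/\langle z_{ab}\rangle;\mathbb{R})$ is surjective for hyperbolic groups, so the class of this central extension admits a bounded cocycle representative. Choosing a set-theoretic section of the quotient map that lifts $\bar a, \bar b$ to $a, b$ and applying the standard correspondence between bounded 2-cocycles and real quasi-morphisms produces a homogeneous quasi-morphism $\psi\colon A_{ab}\to\mathbb{R}$ satisfying $\psi(a)=\psi(b)=0$ and $\psi(z_{ab})\neq 0$.

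Given such $\psi$ with defect $D$, for any syllabic decomposition $g^n = x_1^{k_1}\cdots x_s^{k_s}$ with $x_i\in\{a,b\}$, the defect bound together with homogeneity gives
\[
|\psi(g^n)| = \Bigl|\psi(g^n) - \textstyle\sum_{i=1}^{s} \psi(x_i^{k_i})\Bigr| \leq (s-1)D,
\]
since $\psi(x_i^{k_i}) = k_i\psi(x_i) = 0$. Thus the conclusion $s\to\infty$ will follow once I establish $|\psi(g^n)| \to \infty$, which by homogeneity amounts to $\psi(g)\neq 0$.

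The core step is therefore to prove that $\psi(g)\neq 0$ whenever $g$ is positive of syllabic length $\geq 2$ and $m_{ab}\geq 3$. I would argue via the Garside normal form: every positive element of $A_{ab}$ decomposes uniquely as $g = \Delta_{ab}^{k_0} m_1 \cdots m_{r_0}$ with atoms $m_i\in M$ in left-weighted form, and the quasi-morphism $\psi$, being built from the central cocycle, is controlled by the central exponent $k_0(g^n)$ of the iterated normal form, in the sense that $\psi(g^n) = k_0(g^n)\psi(\Delta_{ab}) + O(1)$. A combinatorial analysis of the positive monoid, using the braid relation $\underbrace{aba\cdots}_{m_{ab}}=\underbrace{bab\cdots}_{m_{ab}}$, shows that iterating a positive word of syllabic length $\geq 2$ forces $\Delta_{ab}$-factors to be extracted at a positive rate: when $g = a^{p}b^{q}\cdots$ alternates between $a$'s and $b$'s, each junction in the periodic product $g^{n+1} = g^n\cdot g$ contributes braid-relation rearrangements that absorb $m_{ab}$-letter blocks into factors of $\Delta_{ab}$, giving $k_0(g^n) \sim cn$ for some $c>0$. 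Hence $\psi(g^n)\to\infty$, and the bound above forces $s\to\infty$.

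The main obstacle is the combinatorial verification that $k_0(g^n)$ grows linearly for every positive $g$ of syllabic length $\geq 2$; this requires careful case analysis depending on the parities of the exponents in a syllabic expression of $g$ and on the parity of $m_{ab}$, using cancellativity of the positive monoid as in the proof of Lemma~\ref{lem:virtually splits}. The hypothesis $m_{ab}\geq 3$ is essential: for $m_{ab}=2$ the group $A_{ab}\cong\mathbb{Z}^2$ is abelian and no braid cancellations arise, while for $m_{ab}\geq 3$ the braid relation is precisely what generates $\Delta_{ab}$-factors under iteration.
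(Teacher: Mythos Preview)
The paper does not give a proof of this lemma; it is quoted from \cite{Vaskou}, where the argument proceeds via a direct analysis of Garside normal forms in the dihedral Artin monoid. Your proposal takes a genuinely different route through bounded cohomology and quasimorphisms, and the reduction you describe is attractive: once one has a homogeneous quasimorphism $\psi$ with $\psi(a)=\psi(b)=0$ and defect $D$, the inequality $|\psi(g^n)|\le (s-1)D$ for any syllabic expression of length $s$ is correct, so unbounded growth of $|\psi(g^n)|=n|\psi(g)|$ would force $s\to\infty$.

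However, the argument has a genuine gap precisely at the step you yourself flag as ``the main obstacle'': establishing $\psi(g)\neq 0$ for every positive $g$ of syllabic length at least $2$. Your justification reduces this to the claim that the $\Delta_{ab}$--exponent $k_0(g^n)$ in the Garside normal form of $g^n$ grows linearly in $n$, but this claim is not proved, and it is at least as hard as the original statement (indeed it is a quantitative strengthening). The intermediate assertion $\psi(g^n)=k_0(g^n)\psi(\Delta_{ab})+O(1)$ is also unjustified: writing $g^n=\Delta_{ab}^{k_0}m_1\cdots m_r$ only yields $\psi(g^n)=k_0\psi(\Delta_{ab})+\psi(m_1\cdots m_r)+O(1)$, and the term $\psi(m_1\cdots m_r)$ is not bounded when $r$ grows. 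Separately, the construction of $\psi$ with $\psi(a)=\psi(b)=0$ and $\psi(z_{ab})\neq 0$ deserves more care: when $m_{ab}$ is odd it follows because $a$ and $b$ are conjugate and homogeneous quasimorphisms are class functions, but when $m_{ab}$ is even you must subtract two independent homomorphisms to kill both $\psi(a)$ and $\psi(b)$, and it is not automatic that $\psi(z_{ab})$ remains nonzero afterwards. In short, the quasimorphism machinery repackages the problem rather than solving it; the combinatorial work in the positive monoid that Vaskou carries out directly cannot be bypassed here.
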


\begin{cor}\label{cor:power_centre}
	In a dihedral Artin group $A_{ab}$ with $m_{ab}\ge 3$, no non-trivial power of $z_{ab}$ is equal to a power of a standard generator.  
\end{cor}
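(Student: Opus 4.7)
The plan is to pass to the central quotient $A_{ab}/\langle z_{ab}\rangle$ and exploit a fact recorded in the proof of Lemma~\ref{lem:virtually splits}: the images of the standard generators $a$ and $b$ have infinite order in $A_{ab}/\langle z_{ab}\rangle$, because their orbits in the quasi-tree $\calT_{ab}$ span embedded lines.

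Suppose for contradiction that $z_{ab}^n = x^k$ for some $x\in\{a,b\}$, some $n \ne 0$, and some $k \in \Z$. If $k=0$, then $z_{ab}^n = 1$, contradicting Lemma~\ref{lem:centre dihedral} which guarantees that $z_{ab}$ has infinite order; so we may assume $k \ne 0$. Projecting the equality to $A_{ab}/\langle z_{ab}\rangle$, the left-hand side becomes trivial, so the image $\bar x$ satisfies $\bar x^{k}=1$. Since $\bar x$ has infinite order by the observation above, this forces $k=0$, a contradiction.

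An alternative route avoiding the quotient entirely is to invoke Lemma~\ref{lem:Vaskou_syll_length}: the element $z_{ab}$ is a positive word (being $\Delta_{ab}$ or $\Delta_{ab}^{2}$) of syllabic length at least $m_{ab}\geq 3$, so raising the putative identity $z_{ab}^n=x^k$ to the $N$-th power yields $z_{ab}^{nN}=x^{kN}$; the left-hand side has syllabic length tending to infinity as $N\to\infty$, while the right-hand side has syllabic length at most one, a contradiction for large $N$.

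There is essentially no obstacle beyond locating the right input: the only subtle point is the infinite-order claim in the central quotient, and this is stated explicitly in the last sentence of the proof of Lemma~\ref{lem:virtually splits}. Once that is cited, the corollary is a one-line diagram chase, which is why it is natural to place it immediately after the dihedral structural lemmas.
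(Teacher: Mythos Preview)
Your alternative route via syllabic length is precisely the paper's argument: the paper observes that by Lemma~\ref{lem:Vaskou_syll_length} the syllabic length of $z_{ab}^n$ explodes as $n$ grows, while powers of a standard generator have syllabic length one.

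Your primary argument, passing to $A_{ab}/\langle z_{ab}\rangle$ and invoking the infinite-order observation recorded at the end of the proof of Lemma~\ref{lem:virtually splits}, is a genuinely different and equally valid route. It has the minor advantage of sidestepping the hypothesis check for Lemma~\ref{lem:Vaskou_syll_length} (namely that $z_{ab}$ itself has syllabic length greater than one --- you assert this is at least $m_{ab}$, but that claim about the \emph{minimal} syllabic expression is not entirely free, whereas the quotient argument needs nothing of the sort). The paper places the corollary right after Lemma~\ref{lem:Vaskou_syll_length} because that is the intended application; your quotient argument could just as well have been placed immediately after Lemma~\ref{lem:virtually splits}.
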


\begin{proof}
	By Lemma~\ref{lem:Vaskou_syll_length}, the syllabic length of $z_{ab}^n$ explodes as $n$ grows, while the syllabic length of $a^n$ and $b^n$ is always equal to one.
\end{proof}

\begin{lemma}
	In a dihedral Artin group $A_{ab}$ with $m_{ab}\ge 3$, two distinct conjugates of standard generators never generate a 
	subgroup 
	isomorphic to $\mathbb{Z}^2$.  
	\label{lem:dihedral Z2}
\end{lemma}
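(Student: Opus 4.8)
The plan is to reduce to the case that one of the two conjugates is a standard generator, to pin the other one down using the action of \(A_{ab}/\langle z_{ab}\rangle\) on the quasi-tree \(\calT_{ab}\), and then to reach a contradiction by abelianizing. Suppose, towards a contradiction, that \(x,y\) are distinct conjugates of standard generators with \(\langle x,y\rangle\cong\Z^2\). Writing \(x=g_1c_1g_1^{-1}\) and conjugating the pair \(\{x,y\}\) by \(g_1^{-1}\) (which affects neither the hypothesis nor the conclusion), I may assume \(x=a\) is a standard generator and \(y=gcg^{-1}\) for some \(g\in A_{ab}\) and \(c\in\{a,b\}\), with \(y\neq a\). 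Put \(z:=z_{ab}\) and \(\bar A:=A_{ab}/\langle z\rangle\). By Lemma~\ref{lem:centre dihedral}, \(z\in\langle\Delta_{ab}\rangle\), so \(\langle z\rangle\) acts trivially on \(\calT_{ab}\) and hence \(\bar A\) acts on \(\calT_{ab}\); by Lemma~\ref{lem:virtually splits} and its proof this action is proper and cocompact, \(\bar A\) is virtually free (hence hyperbolic), \(\bar a\) has infinite order, and the \(\langle a\rangle\)-orbit \(L_a\) of the base vertex is an embedded line on which \(\bar a\) acts by translation, so \(\bar a\) is loxodromic with axis \(L_a\). Since \(y\) commutes with \(a\), the images \(\bar a,\bar y\) commute in \(\bar A\).

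The crucial step — and the one I expect to require the most care — is to show that \(C_{\bar A}(\bar a)\) is infinite cyclic. The base vertex of \(\calT_{ab}\) is the coset \(\langle\Delta_{ab}\rangle\), whose \(A_{ab}\)-stabilizer (for the left-multiplication action on \(A_{ab}/\langle\Delta_{ab}\rangle\)) is \(\langle\Delta_{ab}\rangle\); hence its \(\bar A\)-stabilizer is \(\langle\Delta_{ab}\rangle/\langle z\rangle\), which is trivial when \(m_{ab}\) is even and is the order-two group generated by \(\bar\Delta_{ab}\) when \(m_{ab}\) is odd (Lemma~\ref{lem:centre dihedral}). Using that \(\calT_{ab}\) is a quasi-tree whose flag completion is a tree of simplices (so that \(L_a\) is the unique \(\bar a\)-minimal line), \(C_{\bar A}(\bar a)\) preserves \(L_a\), giving a homomorphism \(C_{\bar A}(\bar a)\to\mathrm{Isom}(L_a)\cong D_\infty\). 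Its kernel lies in the stabilizer of the base vertex, and for \(m_{ab}\) odd the element \(\bar\Delta_{ab}\) conjugates \(\bar a\) to \(\bar b\) (a standard property of the Garside element for odd labels), with \(\bar b\neq\bar a\) since \(a^{-1}b\notin\langle z\rangle\), as one sees by abelianizing; so \(\bar\Delta_{ab}\notin C_{\bar A}(\bar a)\), and the kernel is trivial. Moreover the image contains no reflection: such an element would reverse \(L_a\) while commuting with \(\bar a\), forcing (by injectivity) \(\bar a=\bar a^{-1}\), contrary to \(\bar a\) having infinite order. Hence \(C_{\bar A}(\bar a)\) embeds in the infinite cyclic translation subgroup of \(\mathrm{Isom}(L_a)\) and contains \(\bar a\), so it is infinite cyclic.

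Next I would use the stable translation length \(\tau\) on \(\calT_{ab}\). The automorphism of \(A_{ab}\) interchanging \(a\) and \(b\) fixes \(\Delta_{ab}\), hence \(z\), descends to \(\bar A\), and is induced by a simplicial automorphism of \(\calT_{ab}\); therefore \(\tau(\bar a)=\tau(\bar b)\), and this is positive since \(\bar A\) is hyperbolic and \(\bar a\) has infinite order. As \(\bar y\) is conjugate to \(\bar c\in\{\bar a,\bar b\}\), we get \(\tau(\bar y)=\tau(\bar a)\). Writing \(C_{\bar A}(\bar a)=\langle\bar t\rangle\) with \(\bar a=\bar t^{k_0}\) and \(\bar y=\bar t^{k}\), homogeneity of \(\tau\) together with \(\tau(\bar t)>0\) gives \(|k|=|k_0|\), so \(\bar y=\bar a^{\pm1}\); equivalently \(y=a^{\pm1}z^{j}\) for some \(j\in\Z\). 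If \(j=0\) then \(\langle a,y\rangle=\langle a\rangle\cong\Z\), contradicting \(\langle x,y\rangle\cong\Z^2\); so \(j\neq0\).

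Finally I would abelianize. Let \(\pi\colon A_{ab}\to A_{ab}^{\mathrm{ab}}\); note \(\pi(y)=\pi(gcg^{-1})=\pi(c)\). If \(m_{ab}\) is even, the defining relation abelianizes trivially, so \(A_{ab}^{\mathrm{ab}}\cong\Z^2\) is free on \(\pi(a),\pi(b)\) and \(\pi(z)=\pi(\Delta_{ab})=\tfrac{m_{ab}}{2}(\pi(a)+\pi(b))\); then the \(\pi(b)\)-coefficient of \(\pi(c)=\pm\pi(a)+j\pi(z)\) is \(jm_{ab}/2\in\{0,1\}\), forcing \(jm_{ab}=2\), impossible for \(j\neq0\) and \(m_{ab}\ge3\). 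If \(m_{ab}\) is odd, the relation abelianizes to \(\pi(a)=\pi(b)\), so \(A_{ab}^{\mathrm{ab}}\cong\Z\) and \(\pi(z)=\pi(\Delta_{ab}^{2})=2m_{ab}\pi(a)\); then \(\pi(c)=\pm\pi(a)+j\pi(z)\) reads \(1=\pm1+2m_{ab}j\) in \(\Z\), again impossible for \(j\neq0\) and \(m_{ab}\ge3\). This contradiction proves the lemma. The delicate part, as indicated, is the second paragraph: identifying the vertex stabilizers of the action of \(\bar A\) on \(\calT_{ab}\) — where I would lean on the explicit description of \(\calT_{ab}\) in Definition~\ref{def:quasi_tree_simplices} and the discussion of the Garside element — and deducing that \(C_{\bar A}(\bar a)\) is infinite cyclic; the remaining steps are routine bookkeeping with translation lengths and abelianizations.
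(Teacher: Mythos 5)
Your proof is correct, but it takes a genuinely different route from the paper's. The paper's argument is very short: it quotes Crisp's computation $C_{A_{ab}}(a)=\langle a,z_{ab}\rangle$ to conclude directly that the second conjugate equals $a^kz_{ab}^\ell$, rules out $\ell\neq 0$ via the syllabic-length blow-up of Lemma~\ref{lem:Vaskou_syll_length}, and then abelianises to get $k=1$. You instead bypass Crisp's centralizer lemma and rederive the needed structural fact from the action on $\calT_{ab}$: you establish that $C_{\bar A}(\bar a)$ is infinite cyclic (via the tree-of-simplices structure, the vertex-stabilizer computation, and the fact that $\bar\Delta_{ab}$ conjugates $\bar a$ to $\bar b\neq\bar a$), then use translation length together with $\tau(\bar a)=\tau(\bar b)$ (justified by the $a\leftrightarrow b$ automorphism commuting with everything in sight) to pin down $\bar y=\bar a^{\pm1}$, and finally dispatch $y=a^{\pm1}z_{ab}^j$, $j\neq0$, by a case analysis of the abelianisation of $A_{ab}$. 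Your approach is more self-contained --- it needs neither Crisp's lemma nor the syllabic-length input --- at the cost of being considerably longer. The one step left informal is the assertion that $C_{\bar A}(\bar a)$ preserves $L_a$; the slogan ``tree of simplices'' is right, and the cleanest way to make it precise is to pass to the dual tree (one vertex per maximal simplex of the flag completion of $\calT_{ab}$, one edge per shared vertex), on which $\bar a$ acts loxodromically with a unique axis whose edges correspond exactly to the $\langle\bar a\rangle$-orbit of the basepoint, i.e.\ to $L_a$; since $C_{\bar A}(\bar a)$ preserves that axis, it preserves $L_a$. With that filled in, the rest of your argument (injectivity into $\mathrm{Isom}(L_a)$, elimination of reflections by commutativity, translation-length bookkeeping, and the two abelianisation cases) is sound.
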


\begin{proof} 	Let us consider a dihedral Artin group $A_{ab}$   with $m_{ab}\ge 3$. Up to conjugation, we can assume that the element $a$ and a 
	conjugate $x\coloneqq gcg^{-1}$, with $c \in \{a,b\}$, commute. Since the centraliser of $a$ in $A_{ab}$ is $\langle a, z_{ab}\rangle$ by \cite[Lemma 7]{Crisp}, it follows that there exist integers 
	$\ell, k$ such that $x = a^kz_{ab}^\ell$. We claim that necessarily $\ell = 0$. Indeed, if that were not the case, then the syllabic length of the powers of $z_{ab}^\ell$ would go to infinity by 
	Lemma \ref{lem:Vaskou_syll_length}, and since $a^k$ and $z_{ab}^\ell$ commute, so would the syllabic length of the powers of $x$ (since the powers of $a$ all have syllabic length $1$). But since $x$ is 
	conjugate to a power of a generator, its powers have a uniformly bounded syllabic length, a contradiction. We thus have $x = a^k$. By using the homomorphism $A_{ab}\rightarrow \mathbb{Z}$ sending 
	both generators to $1$, we get that $k=1$, hence $x = a$.
	
	By taking the contrapositive,  distinct conjugates of standard generators of $A_{ab}$ do not commute.
\end{proof}

\subsection{The modified Deligne complex}

Parabolic subgroups of finite type of an Artin group are used to define a simplicial complex as follows: 

\begin{defn}[Modified Deligne complex \cite{CharneyDavis}]
	The cosets $gA_{\Gamma'}$ of  standard parabolic subgroups of finite type of $A_{\Gamma}$ form a partially 
ordered set, for the partial order given by 
	$$gA_{\Gamma'} < gA_{\Gamma'' }~~~~~~~~ \mathrm{ if } ~g \in A_{\Gamma}~ \mathrm{ and } ~\Gamma' \subsetneq 
\Gamma'' \mbox{ are full subgraphs of } \Gamma.$$
	The \textbf{modified Deligne complex} (or Charney--Davis complex) $D_\Gamma$ of an Artin group $A_\Gamma$ is the geometric realisation of this 
poset. That is, vertices of $D_\Gamma$ correspond to cosets $gA_{\Gamma'}$ of  standard parabolic subgroups of finite 
type, and for every chain of the form 
	$$gA_{\Gamma_0} < gA_{\Gamma_1}  < \cdots < gA_{\Gamma_n},$$
 we add an $n$-simplex spanned by the vertices $gA_{\Gamma_0}, gA_{\Gamma_1}, \ldots, 
gA_{\Gamma_n}$. 
 The group $A_\Gamma$ acts on its modified Deligne 
complex by left multiplication on left cosets.
	\end{defn}

\begin{conv}
	From now on, we fix a large-type  Artin group $A_\Gamma$ of hyperbolic type. 
\end{conv}

Since $A_\Gamma$ is assumed to be of large-type and of hyperbolic type here, its only parabolic subgroups of finite type are its parabolic subgroups on at most two generators. In particular, the Deligne complex of $A_\Gamma$ is a $2$-dimensional simplicial complex.

	It was shown in \cite{CharneyDavis} that  for an Artin group of large and hyperbolic type, there 
exists an $A_\Gamma$--invariant piecewise hyperbolic metric that turns $D_\Gamma$ into a CAT($-1$) space. 
From now on, we will assume that 
$D_\Gamma$ is endowed with such a metric.

	\begin{notation} 
		For simplicity, we will often omit the `modified' from the name and call $D_\Gamma$ the Deligne complex. 

The vertex of $D_\Gamma$ corresponding to the standard dihedral parabolic 
subgroup $A_{ab}$ will be denoted $v_{ab}$.  Vertices of $D_\Gamma$ corresponding to cosets of  dihedral parabolic subgroups are  said to be of
\textbf{dihedral type}.
	\end{notation}

\begin{rem}
	We describe the stabilisers of vertices of the Deligne complex. Since a vertex  of $D_\Gamma$ is a left coset of the 
form $gA_{\Gamma'}$ (with $\Gamma' \subset \Gamma$), its stabiliser is the conjugate $gA_{\Gamma'}g^{-1}$. In particular, we 
get the following description, for each type of vertices of $D_\Gamma$:
	\begin{itemize}
		\item A vertex that corresponds to a left coset of the trivial subgroup has  trivial stabiliser.
		\item A vertex  that corresponds to a left coset of the form $g\langle a \rangle$, with $a\in V(\Gamma)$, 
has a stabiliser that is infinite cyclic.
		\item A vertex of dihedral type has a stabiliser that is isomorphic to a dihedral Artin group.
	\end{itemize}
\end{rem}

\subsection{Standard trees and the coned-off Deligne complex}\label{subsec:std_tree}

The structure of fixed-point sets of parabolic subgroups of $A_\Gamma$ play a crucial role. We start by a useful result: 

\begin{lemma}\label{lem:self-normalising}
	The fixed-point set in $D_\Gamma$ of a parabolic subgroup on two generators of $A_\Gamma$ is a single vertex. In particular, such parabolic subgroups are self-normalising.
\end{lemma}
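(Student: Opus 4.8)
The plan is to show that a parabolic subgroup $P$ on two generators fixes a unique vertex of $D_\Gamma$, from which the self-normalising statement follows formally: if $g$ normalises $P$, then $g$ permutes $\mathrm{Fix}(P)$, so $g$ fixes its unique point, which is the vertex $v$ with $\mathrm{Stab}(v)=P$ (here we use that the stabiliser of a dihedral-type vertex equals the corresponding parabolic, as recorded in the Remark on vertex stabilisers), hence $g\in P$. So the whole content is the first sentence.

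First I would reduce to the standard case: after conjugating we may assume $P=A_{ab}$ is a standard dihedral parabolic, whose associated vertex $v_{ab}$ it certainly fixes. Now I must rule out any other fixed vertex. Since $D_\Gamma$ is CAT($-1$) (in particular uniquely geodesic and with convex metric), the fixed-point set $\mathrm{Fix}(A_{ab})$ is convex; if it contained a vertex other than $v_{ab}$ it would contain the geodesic segment between them, hence infinitely many points, and in particular — since the fixed set of a group acting simplicially is a subcomplex after passing to the barycentric subdivision, or simply by looking at the link — it would contain a vertex $u$ adjacent to $v_{ab}$, i.e. $u$ is either a coset $g\langle c\rangle$ with $g\langle c\rangle < A_{ab}$, or a coset $gA_{\Gamma''}>A_{ab}$ with $A_{ab}$ of finite type sitting inside $A_{\Gamma''}$. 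The latter is impossible because $A_\Gamma$ is of large and hyperbolic type, so $A_{ab}$ is a maximal parabolic of finite type (there are no rank-$3$ parabolics of finite type); this is exactly the observation made after the Convention fixing $A_\Gamma$. So $u=g\langle c\rangle$ with $c\in\{a,b\}$ and $g\langle c\rangle<A_{ab}$, i.e. $g\in A_{ab}$ and the vertex is $g\langle c\rangle$ for some $g\in A_{ab}$, $c\in\{a,b\}$.

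The remaining point, which I expect to be the crux, is to show that $A_{ab}$ fixes no vertex of the form $g\langle c\rangle$ with $c\in\{a,b\}$: equivalently, that the infinite cyclic subgroup $g\langle c\rangle g^{-1}$ is not normalised by all of $A_{ab}$. Suppose it were; then $A_{ab}$ normalises, hence (since $\langle gcg^{-1}\rangle$ is infinite cyclic and normal, so $A_{ab}$ acts on it by $\pm 1$, and $A_{ab}$ has abelianisation $\Z$, hence no index-$2$ subgroup detecting a sign change that doesn't factor through $\Z$—so actually) centralises $gcg^{-1}$. Then $A_{ab}$ is contained in the centraliser of the infinite-order element $gcg^{-1}$. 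But the centraliser in $A_{ab}$ of a conjugate of a standard generator is, by \cite[Lemma 7]{Crisp}, of the form $\langle a\text{-conjugate}, z_{ab}\rangle$, which is isomorphic to $\Z^2$ (or visibly not all of $A_{ab}$, since $A_{ab}$ with $m_{ab}\ge 3$ is not virtually abelian — it surjects a free group by Lemma~\ref{lem:virtually splits}). Either way this contradicts $A_{ab}$ being the whole centraliser. Hence no such fixed vertex exists, $\mathrm{Fix}(A_{ab})=\{v_{ab}\}$, and the lemma follows. The one case needing care is $m_{ab}=2$, where $A_{ab}\cong\Z^2$ is itself abelian; but $A_\Gamma$ is of large type, so $m_{ab}\ge 3$ always, and this case does not arise.
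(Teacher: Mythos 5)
Your proof reaches the right conclusion and shares the paper's key geometric input (convexity of the fixed-point set in the CAT($-1$) complex $D_\Gamma$, so two fixed vertices would force a fixed geodesic), but it then takes a noticeably longer and somewhat confused route than the paper does. The paper simply observes that a fixed geodesic must contain a non-vertex point, which lies in the interior of an edge or triangle of $D_\Gamma$; since stabilisers of edges and triangles are trivial or infinite cyclic, $A_{ab}$ would embed in $\Z$, contradicting the fact that $A_{ab}$ contains $\Z^2$ (Lemma~\ref{lem:virtually splits}). That is the whole argument. In particular, there is no need to produce a fixed \emph{vertex} adjacent to $v_{ab}$ and then classify it.

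Your extra detour also contains two genuine slips. First, ``$A_{ab}$ fixes the vertex $g\langle c\rangle$'' is not \emph{equivalent} to ``$A_{ab}$ normalises $g\langle c\rangle g^{-1}$''; it says the much stronger thing that $A_{ab}\subset\operatorname{Stab}(g\langle c\rangle)=g\langle c\rangle g^{-1}$, which (since $g\in A_{ab}$) is a rank-one subgroup of $A_{ab}$, so the contradiction is immediate and Crisp's lemma is not needed. The implication you actually use (fixing $\Rightarrow$ normalising) is fine, but calling it an equivalence is misleading. Second, the parenthetical justification that the normaliser of $\langle gcg^{-1}\rangle$ in $A_{ab}$ equals its centraliser is garbled as written (the fact that $A_{ab}^{\mathrm{ab}}\cong\Z$ does \emph{not} preclude index-two subgroups); the clean argument is that if $h(gcg^{-1})h^{-1}=(gcg^{-1})^{-1}$ then applying the abelianisation homomorphism $A_\Gamma\to\Z$ sending each generator to $1$ gives $1=-1$, a contradiction. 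Finally, the step ``the fixed set contains a vertex $u$ adjacent to $v_{ab}$'' requires first noting that the geodesic cannot enter the interior of a triangle (trivial stabiliser), i.e.\ it stays in the $1$-skeleton; you gesture at this but do not spell it out, whereas the paper's phrasing sidesteps the need for this reduction entirely.
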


\begin{proof}
	If a parabolic subgroup $gA_{ab}g^{-1}$ were to fix two distinct points of $D_\Gamma$, then it would fix the unique CAT(0) geodesic of $D_\Gamma$ between them.  Since stabilisers of edges and triangles 
of $D$ are either trivial or $\Z$, $gA_{ab}g^{-1}$ would embed in an infinite cyclic group, which is impossible as $A_{ab}$ is either $\Z^2$ or contains a copy of $\Z^2$ by Lemma~\ref{lem:virtually splits}.
	
	Since an element of the normaliser of $gA_{ab}g^{-1}$ stabilises $\mbox{Fix}(gA_{ab}g^{-1})$, hence fixes the vertex $gv_{ab}$, it follows that $gA_{ab}g^{-1}$ is self-normalising.
\end{proof}

The fixed-point sets of infinite cyclic parabolic subgroup are much more interesting:
	
	\begin{defn}[Standard trees {\cite[Definition 4.1]{MP}}]\label{defn:standard_tree}
		For an element $g \in A_\Gamma$ that is a conjugate of a standard generator, the fixed-point set 
$\mathrm{Fix}(g)$ is a convex subtree of $D_\Gamma$ that is contained in the $1$-skeleton of $D_\Gamma$. 

		Such subtrees are called \textbf{standard trees} of $D_\Gamma$. 
		
		Note in particular that all edges of a standard tree have the same infinite cyclic stabiliser. 
	\end{defn}

We list here a few immediate results:

\begin{lemma}\label{lem:edge_stab_tree}
	Two edges of $D_\Gamma$ have stabilisers that either intersect trivially or are equal. Moreover, if two edges of $D_\Gamma$ have the same non-trivial stabiliser, then they belong to the same standard tree.
\end{lemma}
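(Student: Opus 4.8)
The plan is to first determine exactly which subgroups of $A_\Gamma$ occur as non‑trivial stabilisers of edges of $D_\Gamma$, and then read off both assertions. An edge of $D_\Gamma$ comes from a chain $gA_{\Gamma_0}<gA_{\Gamma_1}$ with $\Gamma_0\subsetneq\Gamma_1$ full subgraphs on which $A_\Gamma$ is of finite type; since $A_\Gamma$ is of large and hyperbolic type, $|\Gamma_1|\leq 2$, so either $\Gamma_0=\emptyset$ (trivial stabiliser) or $\Gamma_0=\{a\}$ and $\Gamma_1=\{a,b\}$, in which case the stabiliser is $g\langle a\rangle g^{-1}=\langle gag^{-1}\rangle$, the infinite cyclic group generated by a conjugate of a standard generator. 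Such an edge joins a vertex of type $\langle a\rangle$, whose stabiliser is $\cong\Z$, to a vertex of dihedral type, whose stabiliser is a dihedral Artin group and hence, by Lemma~\ref{lem:virtually splits}, never $\cong\Z$; therefore no element of $A_\Gamma$ can interchange the two endpoints, so every element of the stabiliser of such an edge fixes the edge pointwise, and in particular the edge lies in $\mathrm{Fix}(gag^{-1})$, that is, in a standard tree. Conversely, if $e$ is any edge of a standard tree $\mathrm{Fix}(g)$, with $g$ a conjugate of a standard generator, then $g\in\mathrm{Stab}(e)=\langle g'\rangle$ for some conjugate $g'$ of a standard generator, so $g=(g')^{m}$; applying the total‑exponent homomorphism $A_\Gamma\to\Z$ sending every standard generator to $1$ (which sends any conjugate of a standard generator to $1$) forces $m=1$, so $\mathrm{Stab}(e)=\langle g\rangle$. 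Thus every edge of the standard tree $\mathrm{Fix}(g)$ has stabiliser exactly $\langle g\rangle$; this is the quantitative form of the last sentence of Definition~\ref{defn:standard_tree}.

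The ``moreover'' clause is now immediate: if $\mathrm{Stab}(e_1)=\mathrm{Stab}(e_2)=\langle g\rangle$ then $g$ fixes both $e_1$ and $e_2$ pointwise, so $e_1,e_2\subseteq\mathrm{Fix}(g)$, a single standard tree. For the first assertion, suppose $\mathrm{Stab}(e_1)\cap\mathrm{Stab}(e_2)$ contains some $h\neq 1$. Then both stabilisers are non‑trivial, say $\mathrm{Stab}(e_i)=\langle g_i\rangle$ with $g_i$ a conjugate of a standard generator and $h=g_i^{k_i}$, $k_i\neq 0$. Writing $T_i=\mathrm{Fix}(g_i)$, a standard tree containing $e_i$ by the previous paragraph, the element $h\in\langle g_i\rangle$ fixes all of $T_i$, so $T_1\cup T_2\subseteq\mathrm{Fix}(h)$. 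It suffices to show $\mathrm{Fix}(h)=T_1$: then $e_2\subseteq T_2\subseteq T_1$ is an edge of the standard tree $T_1$, whence $\mathrm{Stab}(e_2)=\langle g_1\rangle=\mathrm{Stab}(e_1)$. One inclusion is clear; for the other, note that $\mathrm{Fix}(h)$ is a non‑empty closed convex subset of the CAT($-1$) complex $D_\Gamma$ on which $g_1$ acts (it commutes with $h=g_1^{k_1}$), with $g_1^{k_1}=h$ acting as the identity, so $g_1$ restricts to a finite‑order isometry of the complete CAT($-1$) space $\mathrm{Fix}(h)$.

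The heart of the matter — and the step I expect to be the main obstacle — is to upgrade this to: $g_1$ acts \emph{trivially} on $\mathrm{Fix}(h)$, equivalently $\mathrm{Fix}(g_1^{k_1})=\mathrm{Fix}(g_1)$, equivalently still, two conjugates of standard generators with a common non‑trivial power generate the same cyclic subgroup. I would attack this directly via $\mathrm{Fix}(h)$: as in the proof of Lemma~\ref{lem:self-normalising}, $h\neq 1$ fixes no vertex of trivial type and no cell with trivial stabiliser, and no edge of $D_\Gamma$ has its endpoints interchanged by a group element, so $\mathrm{Fix}(h)$ is a subtree $S$ of $D_\Gamma^{(1)}$ — connected and contractible because convex in a CAT($-1$) space — each of whose edges has infinite cyclic stabiliser; it then suffices to check that two adjacent edges of $S$ have the same stabiliser. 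At a shared vertex of type $\langle c\rangle$ this is immediate, since its $\Z$‑stabiliser contains the stabilisers of both incident edges, which therefore equal it by the exponent argument above. At a shared dihedral‑type vertex $gA_{cd}$ it reduces to the statement that two distinct $A_{cd}$‑conjugates of $\langle c\rangle$ or $\langle d\rangle$ intersect trivially; this follows from $C_{A_{cd}}(c)=\langle c,z_{cd}\rangle$ (\cite[Lemma~7]{Crisp}) together with the syllabic‑length estimate of Lemma~\ref{lem:Vaskou_syll_length}, reasoning exactly as in the proof of Lemma~\ref{lem:dihedral Z2} — the one delicate point being to verify that the centraliser in $A_{cd}$ of a non‑trivial power of $c$ coincides with that of $c$. (Alternatively, the first assertion drops out at once from the fact that an intersection of parabolic subgroups of $A_\Gamma$ is again a parabolic subgroup: such an intersection lies inside the infinite cyclic parabolic $\langle g_1\rangle$ and, being non‑trivial, must equal it — but I would prefer the self‑contained route above.)
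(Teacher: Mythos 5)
The paper's own proof is a one-liner: a non-trivial common stabiliser of two edges fixes the CAT(0) geodesic between them, and the conclusion then follows from the local analysis in \cite[Lemma~4.3]{MP}, which describes exactly which edges around a dihedral vertex lie on a common standard tree. You instead try to give a self-contained argument, which is a genuinely different (and far longer) route: you classify edge stabilisers, observe via the total-exponent homomorphism that the stabiliser of an edge in $\mathrm{Fix}(g)$ is exactly $\langle g\rangle$, reduce to showing $\mathrm{Fix}(g_1^{k_1})=\mathrm{Fix}(g_1)$, and attack that by showing any two adjacent edges of the tree $\mathrm{Fix}(h)$ have the same stabiliser. The classification of edge stabilisers, the total-exponent arguments, and the analysis at a vertex of cyclic type are all sound.

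There is, however, a genuine gap at the dihedral-type vertex, and you flag it yourself without filling it: the ``delicate point'' that $C_{A_{cd}}(c^m)=C_{A_{cd}}(c)$ for $m\neq 0$ (equivalently, that $x^m=c^m$ for a conjugate $x$ of a generator forces $x=c$). The tools you invoke do not readily give this. Crisp's Lemma~7 gives the centraliser of $c$ itself, not of $c^m$, and the argument of Lemma~\ref{lem:dihedral Z2} together with Lemma~\ref{lem:Vaskou_syll_length} deals with a different configuration --- a conjugate of a generator \emph{commuting} with $c$ --- rather than with an element commuting with a \emph{power} of $c$; the syllabic-length growth does not obviously separate $C(c^m)$ from $C(c)$. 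Worse, the in-paper result that would give your delicate point, Lemma~\ref{lem:centraliser_power}, is proved in the paper \emph{using} Lemma~\ref{lem:edge_stab_tree}, so appealing to it here would be circular. Similarly, Corollary~\ref{cor:tree_power} (which is essentially your reduction target $\mathrm{Fix}(g_1^{k_1})=\mathrm{Fix}(g_1)$) is deduced from this very lemma. Your parenthetical alternative via ``intersections of parabolics are parabolic'' relies on an external result that the paper does not invoke and which is itself non-trivial for this class of Artin groups, so it is not an available shortcut in this context. In short: your reduction correctly isolates the crux, but the crux is not handled, and the only in-paper routes to it come after the lemma in the logical order; the paper sidesteps all of this by citing \cite[Lemma~4.3]{MP}, which is precisely the local statement your dihedral-vertex step needs.
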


\begin{proof}
	Since a non-trivial element of $A_\Gamma$ stabilising two points fixes pointwise the unique CAT(0) geodesic between them, this result is a direct consequence of \cite[Lemma~4.3]{MP}.
\end{proof}

\begin{cor}\label{cor:tree_power}
	For every standard generator $a$ and non-zero integer $k\in \Z-\{0\}$, the trees $\mbox{Fix}(a)$ and $\mbox{Fix}(a^k)$ coincide.
\end{cor}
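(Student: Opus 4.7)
The inclusion $\mathrm{Fix}(a) \subseteq \mathrm{Fix}(a^k)$ is automatic, so the work is to show the reverse inclusion. My plan is to use Lemma~\ref{lem:edge_stab_tree} to match the edge stabilizers of $\mathrm{Fix}(a^k)$ with those of $\mathrm{Fix}(a)$, once I know $\mathrm{Fix}(a^k)$ is structured like a subcomplex of the $1$-skeleton.

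First, I would verify that $\mathrm{Fix}(a^k)$ is a subcomplex of $D_\Gamma^{(1)}$. A $2$-simplex of $D_\Gamma$ has vertices of three distinct types (trivial, generator, dihedral), each of which must be individually fixed by any element setwise stabilizing the simplex; this forces the stabilizer of a $2$-simplex to lie inside the trivial-type vertex stabilizer, hence to be trivial, so $a^k$ cannot fix an interior point of a $2$-simplex. The same type-separation argument shows that $a^k$ cannot swap the endpoints of an edge it setwise preserves, so whenever $a^k$ fixes an interior point of an edge $e$ it must fix $e$ pointwise. Since $\mathrm{Fix}(a^k)$ is convex in the CAT($-1$) space $D_\Gamma$, it is connected; combined with the previous observations, $\mathrm{Fix}(a^k)$ is a connected subcomplex of $D_\Gamma^{(1)}$.

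Next, I would show that every edge of $\mathrm{Fix}(a^k)$ actually lies in $\mathrm{Fix}(a)$. By Definition~\ref{defn:standard_tree} the standard tree $\mathrm{Fix}(a)$ has all its edges sharing a common infinite cyclic stabilizer $H$ containing $a$; the inclusion $\mathrm{Fix}(a)\subseteq \mathrm{Fix}(a^k)$ then provides at least one edge $e_0$ of $\mathrm{Fix}(a^k)$ with $\mathrm{Stab}(e_0)=H$. For any other edge $e$ of $\mathrm{Fix}(a^k)$, both $\mathrm{Stab}(e)$ and $H$ contain the non-trivial element $a^k$, so Lemma~\ref{lem:edge_stab_tree} forces $\mathrm{Stab}(e)=H$, and in particular $a\in\mathrm{Stab}(e)$. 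Hence every edge of $\mathrm{Fix}(a^k)$ is fixed pointwise by $a$. Combined with the connectedness of $\mathrm{Fix}(a^k)$ as a subcomplex (so that any vertex of $\mathrm{Fix}(a^k)$ is an endpoint of such an edge, modulo the degenerate case where $\mathrm{Fix}(a^k)$ is a single vertex already lying in $\mathrm{Fix}(a)$), this yields $\mathrm{Fix}(a^k)\subseteq \mathrm{Fix}(a)$.

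The crucial ingredient, and the only genuine use of Deligne complex geometry, is reducing to edges and applying Lemma~\ref{lem:edge_stab_tree}: once the subcomplex structure of $\mathrm{Fix}(a^k)$ is secured, matching edge stabilizers does the real work. The most delicate step is ruling out fixed interior points of $2$-simplices, which is where the hardest case analysis would sit, but this is entirely elementary given the rigid type structure of $D_\Gamma$.
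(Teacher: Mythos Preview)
Your proof is correct and follows essentially the same approach as the paper: both arguments use triviality of triangle stabilisers to force $\mathrm{Fix}(a^k)$ into the $1$--skeleton, and then invoke Lemma~\ref{lem:edge_stab_tree} on an edge containing a given point of $\mathrm{Fix}(a^k)$ together with an edge of $\mathrm{Fix}(a)$. The only organisational difference is that you first establish $\mathrm{Fix}(a^k)$ is globally a connected subcomplex of the $1$--skeleton, whereas the paper works point-by-point, taking the CAT($-1$) geodesic from an arbitrary $x\in\mathrm{Fix}(a^k)$ to an edge of $\mathrm{Fix}(a)$ and extracting a single $a^k$--invariant edge through $x$.
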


\begin{proof}
	The inclusion $\mbox{Fix}(a)\subseteq\mbox{Fix}(a^k)$ is clear, so let us show the other inclusion. Consider any point $x$ in $\mbox{Fix}(a^k)$, and any edge $e$ of the tree $\mbox{Fix}(a)$. If $x$ lies in $e$, we are done, otherwise we can consider a minimal length geodesic from $x$ to $e$, which is $a^k$-invariant. Since triangles of $D_\Gamma$ have trivial stabilisers, this geodesic is contained in the $1$-skeleton and it intersects an edge $e'$ in a non-trivial subpath containing $x$ (if $x$ is not a vertex we cannot say that the geodesic contains the edge). We have that $e'$ is also $a^k$-invariant, and since $k\neq 0$ we have that the stabilisers of $e$ and $e'$ intersect non-trivially, and therefore by Lemma~\ref{lem:edge_stab_tree} $e'$, whence $x$, belongs to $\mbox{Fix}(a)$.
\end{proof}

\begin{cor}\label{cor:intersect_stab_nontrivial}
	Two points of $D_\Gamma$ have stabilisers that intersect non-trivially if and only if they are contained in a common standard tree. 
\end{cor}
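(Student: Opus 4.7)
The plan is to prove both directions directly. The easy direction is that if $x,y$ both lie in a common standard tree $T=\mathrm{Fix}(h)$ with $h$ a conjugate of a standard generator, then $h$ is itself a non-trivial element of $\mathrm{Stab}(x)\cap\mathrm{Stab}(y)$.

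For the converse, suppose $g\in \mathrm{Stab}(x)\cap\mathrm{Stab}(y)$ is non-trivial, and first assume $x\neq y$. Since $D_\Gamma$ is CAT($-1$), the unique geodesic $\gamma$ from $x$ to $y$ is pointwise fixed by $g$. Because the vertices of every simplex of $D_\Gamma$ are totally ordered by containment of cosets, setwise and pointwise stabilisers of simplices agree; thus the fact that triangles of $D_\Gamma$ have trivial stabilisers (as used in the proof of Corollary~\ref{cor:tree_power}) forces $\gamma$ to avoid the interior of every $2$-simplex, so $\gamma$ lies in the $1$-skeleton. Since $\gamma$ has positive length, it contains a point interior to some edge $e$, giving $g\in\mathrm{Stab}(e)$. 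From the Deligne complex construction together with Definition~\ref{defn:standard_tree} and Lemma~\ref{lem:edge_stab_tree}, any edge with non-trivial stabiliser lies in a standard tree, and its stabiliser is cyclic, generated by a conjugate $h$ of a standard generator. Writing $g=h^k$ with $k\neq 0$, and using $A_\Gamma$-equivariance of fix sets together with Corollary~\ref{cor:tree_power}, we obtain $\mathrm{Fix}(g)=\mathrm{Fix}(h^k)=\mathrm{Fix}(h)$, which is a standard tree containing both $x$ and $y$.

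For the degenerate case $x=y$, it suffices to show that any point with non-trivial stabiliser lies in some standard tree, and here we do a short case analysis on where $x$ sits. If $x$ is interior to a $2$-simplex, its stabiliser is trivial and there is nothing to check. If $x$ is interior to an edge with non-trivial stabiliser, that edge lies in a standard tree by the previous paragraph. If $x$ is a vertex corresponding to a coset $g\langle a\rangle$, then $x\in\mathrm{Fix}(gag^{-1})$; and if $x$ is of dihedral type $gA_{ab}$, then $gag^{-1}\in gA_{ab}g^{-1}=\mathrm{Stab}(x)$ gives $x\in\mathrm{Fix}(gag^{-1})$, a standard tree.

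The only mildly delicate point is justifying that non-trivial edge stabilisers are exactly cyclic subgroups generated by conjugates of standard generators, so that Corollary~\ref{cor:tree_power} applies; this is essentially forced by the description of the Deligne complex (edges join cosets $gA_{\Gamma_0}<gA_{\Gamma_1}$ with $\Gamma_0\subsetneq\Gamma_1$ of rank at most one, so the edge stabiliser is a conjugate of $\langle a\rangle$ for some $a\in V(\Gamma)$) combined with Lemma~\ref{lem:edge_stab_tree}, so no substantively new argument is needed beyond what is already recorded in the excerpt.
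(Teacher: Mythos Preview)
Your proof is correct and follows essentially the same approach as the paper: pass to the CAT($-1$) geodesic, observe it lies in the $1$--skeleton because triangle stabilisers are trivial, and then exploit the structure of edge stabilisers. The only minor difference is in the final step: the paper takes the full chain of edges $e_1,\dots,e_n$ along the geodesic and applies Lemma~\ref{lem:edge_stab_tree} directly to conclude they all lie in a common standard tree, whereas you pick a single edge $e$, identify $g=h^k$ for the generator $h$ of $\mathrm{Stab}(e)$, and invoke Corollary~\ref{cor:tree_power} to get $\mathrm{Fix}(g)=\mathrm{Fix}(h)$. Both routes are equally short; you also spell out the easy direction and the degenerate case $x=y$, which the paper leaves implicit.
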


\begin{proof}
 If two points  $x, y$ of $D_\Gamma$ 
have a nontrivial common stabiliser $H$, then $H$ fixes pointwise the unique CAT(0) geodesic $\gamma$  between them.  The geodesic $\gamma$ cannot pass through the interior of a triangle because $H$ is non-trivial, so $\gamma$ is contained in the $1$-skeleton and is contained in a minimal path of edges of the form $e_1, \ldots, e_n$.  So $H$ fixes edges $e_1,\ldots, e_n$, and by Lemma~\ref{lem:edge_stab_tree}, it follows that $e_1,\ldots, e_n$, hence $x$ and $y$, are contained in the same standard tree. (If $n=1$, note that an edge with non-trivial stabiliser belongs to a standard tree.)
\end{proof}

\begin{cor}\label{cor:intersect_trees}
	Two distinct standard trees intersect in at most one vertex.
\end{cor}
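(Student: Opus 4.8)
The plan is to argue by contradiction: suppose two distinct standard trees $T=\mathrm{Fix}(g)$ and $T'=\mathrm{Fix}(g')$, where $g,g'$ are conjugates of standard generators, share at least two vertices. First I would observe that, since $T$ and $T'$ are convex subtrees of $D_\Gamma$ contained in its $1$-skeleton (Definition~\ref{defn:standard_tree}), the intersection $T\cap T'$ is again a convex subtree contained in the $1$-skeleton; having at least two vertices, it is connected and nontrivial, hence it contains an edge $e$. As $e\subseteq\mathrm{Fix}(g)$ and $e\subseteq\mathrm{Fix}(g')$, both $g$ and $g'$ lie in $\mathrm{Stab}(e)$.

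The crux is then to identify $\mathrm{Stab}(e)$. Since $e$ is an edge of the standard tree $T$, its stabiliser is infinite cyclic (Definition~\ref{defn:standard_tree}), say $\mathrm{Stab}(e)=\langle h\rangle$, so $g=h^k$ for some integer $k$. But a conjugate of a standard generator is not a proper power in $A_\Gamma$: applying the homomorphism $A_\Gamma\to\Z$ that sends every generator to $1$ to the equation $g=h^k$ yields $1=k\cdot\bar h$ in $\Z$, forcing $k=\pm1$. Hence $\mathrm{Stab}(e)=\langle g\rangle$. In particular $g'\in\langle g\rangle$, so $g'=g^m$ for some $m$, and $m\neq 0$ because $g'$ has infinite order.

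Finally, using $A_\Gamma$--equivariance of the fixed-point sets together with Corollary~\ref{cor:tree_power} (applied to the standard generator of which $g$ is a conjugate), one gets $\mathrm{Fix}(g^m)=\mathrm{Fix}(g)$, whence $T'=\mathrm{Fix}(g')=\mathrm{Fix}(g^m)=\mathrm{Fix}(g)=T$, contradicting $T\neq T'$. The only step that is not purely formal is the computation of $\mathrm{Stab}(e)$ — equivalently, the fact that a conjugate of a standard generator generates the full stabiliser of any edge of $D_\Gamma$ that it fixes; everything else follows from elementary tree geometry and the results already established about standard trees.
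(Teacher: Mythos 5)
Your argument is correct, but it follows a different route than the paper. The paper's proof is a one-liner: after using convexity of standard trees to produce a common edge $e$, it invokes Lemma~\ref{lem:edge_stab_tree} (two edges of $D_\Gamma$ with the same nontrivial stabiliser lie in the same standard tree) to conclude immediately. You instead pin down the stabiliser of $e$ explicitly: knowing $\mathrm{Stab}(e)$ is infinite cyclic, you use the exponent-sum homomorphism $A_\Gamma\to\Z$ to show that the conjugate $g$ of a standard generator cannot be a proper power of a generator of $\mathrm{Stab}(e)$, so in fact $\mathrm{Stab}(e)=\langle g\rangle$; then $g'\in\langle g\rangle$ and Corollary~\ref{cor:tree_power} (applied equivariantly) forces $\mathrm{Fix}(g')=\mathrm{Fix}(g)$. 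Both arguments are valid and of comparable length once the paper's citation to Lemma~\ref{lem:edge_stab_tree} is unfolded; your version has the advantage of being self-contained modulo Corollary~\ref{cor:tree_power} and making explicit the pleasant fact that a conjugate of a standard generator generates the full stabiliser of any edge it fixes, whereas the paper's version leans directly on the stabiliser-coincidence dichotomy from Lemma~\ref{lem:edge_stab_tree}.
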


\begin{proof}
	This follows directly from Lemma \ref{lem:edge_stab_tree} and the convexity of standard trees.
\end{proof}

\begin{defn}[Coned-off Deligne complex {\cite[Definition 4.8]{MP}}]
	The \textbf{coned-off Deligne complex}, denoted $\widehat{D}_\Gamma$, is obtained from $D_\Gamma$ by coning-off each 
standard tree of $D_\Gamma$. That is, for every standard tree $T$ of $D_\Gamma$ we add a new vertex $v_T$, which we connect 
by an edge to every vertex of $T$. The complex $\widehat{D}_\Gamma$ is then the flag completion of the resulting complex. The action of $A_\Gamma$ on $D_\Gamma$ extends to an action on $\widehat{D}_\Gamma$.
\end{defn}

\begin{notation}
	  For a standard generator $a \in \Gamma$, the standard tree $\mbox{Fix}(a)$ (i.e. the standard tree containing the vertex $\langle a \rangle$ of $D_\Gamma$) will be denoted $T_a$, and the apex of the cone 
$\widehat{T}_a$ over that tree will be denoted $v_a$, see Figure~\ref{coneoff}. With this notation, the vertex $gv_a$ corresponds to the apex of the cone over the standard tree $\mbox{Fix}(gag^{-1})$ containing $g\langle a \rangle$. The apex of a cone over a standard tree will be  called a vertex 
\textbf{of tree type}.
\end{notation}

\begin{figure}[h]
	\begin{center}
		\scalebox{1}{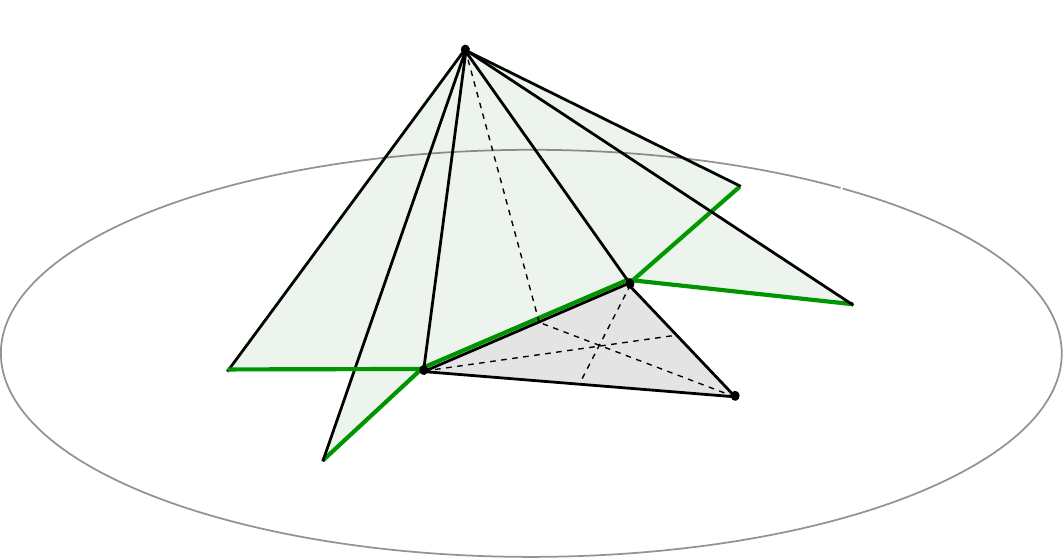}
		\caption{A portion of the coned-off Deligne complex $\widehat{D}_\Gamma$ for an Artin group on three 
			generators $a, b, c$. A fundamental domain for the action of $A_\Gamma$ on $D_\Gamma$ is represented in  grey, and is a 
			subdivided triangle with vertices $v_{ab}, v_{bc}, v_{ac}$. A portion of the standard tree $T_a$ is represented in green. In 
			the coned-off Deligne complex $\widehat{D}_\Gamma$, this tree is the basis of a cone with apex the vertex $v_a$.}
		\label{coneoff}
	\end{center}
\end{figure}

\begin{rem}
	By work of Paris  \cite[Corollary 4.2]{Paris}, two standard generators are conjugated if and only if there is a path in the presentation graph $\Gamma$ consisting of edges with odd labels connecting the corresponding vertices.   Since distinct generators may be conjugated, it may happen that we have an equality of the form $gv_a=hv_b$ for distinct standard generators $a, b$. The following result shows that this is the  only case where such an equality happens.
\end{rem}

\begin{lemma}\label{lem:tree_label_conjugate}
	Let $T$ be a standard tree of $D_\Gamma$, let $a, b$ be two standard generators, and let $g, h \in A_\Gamma$. If the two vertices $g\langle a \rangle$ and $h\langle b \rangle$ of $D_\Gamma$ are  contained in $T$, then $a$ and $b$ are connected in $\Gamma$ by a path with odd labels. (In particular, $a$ and $b$ are conjugated.) 
\end{lemma}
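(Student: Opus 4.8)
The plan is to exploit the rigidity of edge stabilisers along a standard tree, combined with the homomorphism $\epsilon\colon A_\Gamma\to\Z$ sending every standard generator to $1$ (well-defined because each defining relator has the same length on both sides, so it is sent to $m_{ab}$ on each side).

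If $a=b$ the empty path works, so assume $a\neq b$; then the vertices $g\langle a\rangle$ and $h\langle b\rangle$ of $D_\Gamma$ are automatically distinct, since they lie in the disjoint coset-spaces $A_\Gamma/\langle a\rangle$ and $A_\Gamma/\langle b\rangle$. Thus $T$ is a subtree of $D_\Gamma$ with at least two vertices; being connected, it has no isolated vertices, so each of $g\langle a\rangle$ and $h\langle b\rangle$ lies on some edge of $T$.

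Next I would compute the stabilisers of those edges. A standard tree is the fixed-point set of a nontrivial element, hence contains no rank-$0$ vertex of $D_\Gamma$ (such vertices have trivial stabiliser); therefore any edge of $T$ incident to the rank-$1$ vertex $g\langle a\rangle$ joins it to a rank-$2$ vertex, which a quick inspection of the poset shows must be of the form $gA_{ad}$ for some vertex $d$ of $\Gamma$ adjacent to $a$. Using Van der Lek's theorem (which gives $\langle a\rangle\cap A_{ad}=\langle a\rangle$), the stabiliser of the edge $\{g\langle a\rangle,\,gA_{ad}\}$ is $g\langle a\rangle g^{-1}=\langle gag^{-1}\rangle$, and symmetrically every edge of $T$ through $h\langle b\rangle$ has stabiliser $\langle hbh^{-1}\rangle$. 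Since by Lemma~\ref{lem:edge_stab_tree} all edges of the standard tree $T$ share a single stabiliser, we get $\langle gag^{-1}\rangle=\langle hbh^{-1}\rangle$.

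From this equality of infinite cyclic groups, $gag^{-1}$ and $hbh^{-1}$ generate the same subgroup, so $gag^{-1}=(hbh^{-1})^{\pm1}$ as an equation in $A_\Gamma$. Applying $\epsilon$ gives $1=\pm 1$, which forces the sign $+$, so $gag^{-1}=hbh^{-1}$ and $a$ is conjugate to $b$. Then \cite[Corollary~4.2]{Paris} (as recalled just above the statement) yields the required path in $\Gamma$ with odd labels. The one delicate point is the bookkeeping in the previous paragraph: one must check carefully that every edge of $T$ at a rank-$1$ vertex really does recover the cyclic group generated by the corresponding conjugate of a standard generator (in particular that no rank-$0$ vertex intervenes); once that is pinned down, the rest of the argument is purely formal.
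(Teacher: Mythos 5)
Your proof is correct, and it takes a genuinely different route from the paper's. The paper walks along a geodesic $e_1,\dots,e_k$ in $T$ and, for each consecutive pair $e_i,e_{i+1}$, uses Martin--Przytycki's local description of standard trees at dihedral vertices (\cite[Lemma~4.3]{MP}) to show that the standard generators labelling the rank-$1$ endpoints of $e_i$ and $e_{i+1}$ are either equal or joined by an odd-labelled edge of $\Gamma$; this \emph{constructs} the odd path directly from the geodesic. You instead use a purely algebraic argument: every edge of $T$ incident to a rank-$1$ vertex has stabiliser exactly the conjugate cyclic group generated at that vertex (your poset computation of the edge stabiliser is correct), all edges of the standard tree share one infinite cyclic stabiliser (this is recorded in Definition~\ref{defn:standard_tree}; it follows from Lemma~\ref{lem:edge_stab_tree} since all such stabilisers contain the conjugate generator defining $T$), the degree homomorphism $\epsilon$ eliminates the inverse ambiguity to give $gag^{-1}=hbh^{-1}$, and then Paris's theorem (\cite[Corollary~4.2]{Paris}, recalled just before the statement) converts conjugacy into an odd path in $\Gamma$. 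What the paper's argument buys is self-containedness — it needs only \cite[Lemma~4.3]{MP}, which has already been used to set up the whole standard-tree machinery — and it is somewhat more informative, since it exhibits the odd path as a shadow of the geodesic in $T$. What your argument buys is brevity, by outsourcing the real content to Paris's nontrivial theorem, which the paper only quotes as motivation in the preceding remark rather than relying on it in this proof. Both are valid.
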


\begin{proof}
	Since $T$ is a connected tree, we consider a geodesic path $e_1, \ldots, e_k$ of $T$ from  $g\langle a \rangle$ to $h\langle b \rangle$. Each edge $e_i$ joins a coset of a cyclic standard parabolic subgroup and a coset of parabolic subgroup of dihedral type. For each $i$, let $a_i$ be the unique standard generator such that $e_i$ contains a vertex that is a coset of $\langle a_i\rangle$. It is enough to show that for every $1 \leq i <k$, $a_i$ and $a_{i+1}$ are either equal or connected by an edge of $\Gamma$ with odd label. Consider the  vertex $v$ of $D_\Gamma$ where $e_i$ and $e_{i+1}$ meet. If $v$ corresponds to a coset of cyclic standard parabolic subgroup, then $a_i = a_{i+1}$. If $v$ is a vertex of dihedral type, then it follows from \cite[Lemma~4.3]{MP} that either $a_i=a_{i+1}$, or $a_i$ and $a_{i+1}$ are adjacent in $\Gamma$ (since $e_i$ and $e_{i+1}$ meet along a vertex that is a coset of $A_{a_i, a_{i+1}}$, which must be an Artin group of finite type by construction of $D_\Gamma$) and the label of that edge is odd. This concludes the proof.
\end{proof}

\begin{rem}\label{conv:metric_Dhat}	It was shown in \cite[Proposition 4.8]{MP} that there exists an $A_\Gamma$--invariant piecewise hyperbolic metric that turns 
$\widehat{D}_\Gamma$ into a CAT($-1$) space. From now on, we will assume that $\widehat{D}_\Gamma$ is endowed such a metric from~\cite{MP}. 

It should be noted that this metric depends on a constant $\varepsilon>0$ that can be chosen arbitrary small, see \cite[Definition~4.7]{MP}. This constant is such that for an edge $e$ of $D_\Gamma$ contained in a standard tree, the triangle of $D_\Gamma$ over $e$ has angles at least $\pi/2-\varepsilon$ at the two vertices of $e$. In this article, the choice of $\varepsilon$ will be mostly irrelevant. We will only need to consider this constant in Lemma~\ref{lem:tree_convex_coneoff} below, where $\varepsilon$ needs to be smaller than a certain constant depending only on the group $A_\Gamma$. 
\end{rem}
 
 We mention here a slight generalisation of the CAT($-1$)-ness of $\widehat{D}_\Gamma$, which will be used in Section~\ref{sec:augment}.
 
\begin{lemma}\label{rem:subspace_coneoff}
	 Let  $Z$ be the full subcomplex of  $\widehat{D}_\Gamma$ whose vertex set is obtained from $\widehat{D}_\Gamma$ by removing some vertices of tree type. Then $Z$ is also CAT($-1$) for the induced metric.
\end{lemma}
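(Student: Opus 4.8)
The plan is to endow $Z$ with the structure of an $M_{-1}$--polyhedral complex whose cells are exactly those cells of $\widehat{D}_\Gamma$ that it contains, retaining their piecewise hyperbolic shapes; ``the induced metric'' of the statement is then the associated length metric (it is not the subspace metric, since $Z$ is not convex in $\widehat{D}_\Gamma$). Since $A_\Gamma$ acts cocompactly on $\widehat{D}_\Gamma$, that complex has finitely many isometry types of cells, and hence so does $Z$; in particular $Z$ is a complete geodesic space. Moreover $\widehat{D}_\Gamma$, hence $Z$, is two--dimensional: it is built from the two--dimensional complex $D_\Gamma$ by coning the (one--dimensional) standard trees, and the convexity of standard trees, together with the fact that their vertices are not of trivial type, rules out any tetrahedron through a cone vertex. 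In particular every vertex link in $\widehat{D}_\Gamma$, and in $Z$, is a metric graph. Given all this, by the link condition and the Cartan--Hadamard theorem for $M_{-1}$--complexes with finitely many shapes (see \cite{BH}), it suffices to prove that (i) $Z$ is simply connected and (ii) $\link_Z(v)$ is CAT($1$) for every vertex $v$ of $Z$.

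For (ii) the key point --- and the real content of the lemma --- is that deleting a vertex together with its incident edges from a CAT($1$) metric graph, and passing to the induced length metric, yields again a CAT($1$) metric graph: a metric graph is CAT($1$) exactly when it contains no locally geodesic loop of length $<2\pi$, and deleting vertices cannot create one, since a locally geodesic loop in the smaller graph avoids the deleted vertices and remains locally geodesic, of the same length, in the larger graph (any extra edge at a surviving vertex is incident to a deleted vertex, so offers no shortcut). Now if $v$ is a vertex of $Z$, then because $Z$ is a full subcomplex of $\widehat{D}_\Gamma$ the graph $\link_Z(v)$ is the full subgraph of $\link_{\widehat{D}_\Gamma}(v)$ spanned by the surviving vertices; concretely it is obtained from $\link_{\widehat{D}_\Gamma}(v)$ by deleting the vertices corresponding to edges $\{v,v_T\}$ with $v_T$ a removed tree--type vertex (such an edge occurs precisely when $v$ lies on the standard tree $T$) together with their incident edges. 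Since $\widehat{D}_\Gamma$ is CAT($-1$), hence locally CAT($-1$), each $\link_{\widehat{D}_\Gamma}(v)$ is CAT($1$), and by the observation above so is $\link_Z(v)$.

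For (i) I would show that the inclusion $Z\hookrightarrow\widehat{D}_\Gamma$ is a homotopy equivalence, whence $Z$ is contractible and in particular simply connected. The complex $\widehat{D}_\Gamma$ is recovered from $Z$ by attaching, for each removed tree--type vertex $v_T$, the closed star $\Star_{\widehat{D}_\Gamma}(v_T)=v_T\star T$ along the standard tree $T$; here $\link_{\widehat{D}_\Gamma}(v_T)=T$ by convexity of $T$ (an edge of $D_\Gamma$ joining two vertices of $T$ lies in $T$), and the open stars of distinct tree--type vertices are pairwise disjoint, so the attachments are made along pairwise essentially disjoint subcomplexes. As $T$ is a tree and $v_T\star T$ is a cone, both are contractible, and the attaching maps $T\hookrightarrow v_T\star T$ are cofibrations of contractible complexes; hence each attachment is a homotopy equivalence ($Z\cup_T (v_T\star T)\simeq Z/T\simeq Z$), and so is the (possibly infinite) family of them. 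Therefore $Z\simeq\widehat{D}_\Gamma$, which is contractible since it is CAT($-1$); this gives (i), and together with (ii) it follows that $Z$ is CAT($-1$).

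I expect (i) to be the delicate step: $\widehat{D}_\Gamma$ is not locally finite (a vertex of dihedral type lies on infinitely many standard trees), so one cannot simply note that removing one cone vertex preserves the homotopy type and iterate finitely many times --- one needs the homotopy--pushout argument just sketched, or else a compactness reduction in which a given loop (or nullhomotopy disc) meets only finitely many removed cone vertices and is pushed off each of them inside its disjoint cone neighbourhood. Part (ii), by contrast, is essentially formal once the stability of CAT($1$) graphs under vertex deletion is isolated.
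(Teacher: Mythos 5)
Your proof is correct, and the link-condition step is essentially the same as the paper's: both observe that $\link_Z(x)$ is a (full) subgraph of $\link_{\widehat D_\Gamma}(x)$, and that passing to a subgraph of a CAT($1$) metric graph cannot shorten the girth. Where you diverge is the simple-connectedness step. The paper's argument goes ``upward'': it views $Z$ as obtained from the simply connected (indeed contractible, CAT($-1$)) complex $D_\Gamma$ by coning off a sub-collection of the standard trees; since each standard tree is contractible, each cone attachment preserves simple connectedness, and done. Your argument goes ``downward'': you view $Z$ as $\widehat D_\Gamma$ with some cone points and their open stars deleted, and show the inclusion $Z\hookrightarrow\widehat D_\Gamma$ is a homotopy equivalence via a homotopy pushout over the contractible attaching trees. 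Both are valid, and in fact face the same finiteness caveat you flag --- a vertex of dihedral type lies on infinitely many standard trees, so in either direction one is performing infinitely many cone attachments and must invoke the standard fact that a (pairwise essentially disjoint) family of cofibration attachments of contractible cones along contractible subcomplexes is a homotopy equivalence. The paper's upward formulation is shorter and arguably cleaner because it only needs simple connectedness of $Z$ (not contractibility or a homotopy equivalence to $\widehat D_\Gamma$), and it applies Seifert--van Kampen rather than a full pushout-of-cofibrations argument; your downward formulation gives the stronger conclusion $Z\simeq\widehat D_\Gamma$ as a byproduct. Either way the Cartan--Hadamard step then closes the argument.
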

	
	\begin{proof}
	 The complex $Z$ can be thought of as being obtained from the simply connected complex $D_\Gamma$ by coning-off only certain (contractible) standard trees of $D_\Gamma$, so $Z$ is also simply connected. Moreover, for a point $x\in Z$, the link $\link_Z(x)$ is a subgraph of $\link_{\widehat{D}_\Gamma}(x)$, since $\widehat{D}_\Gamma$ is a two-dimensional complex. Since $\widehat{D}_\Gamma$ is CAT($-1$), links of points are graphs with a systole of at least $2\pi$, so the same is true for the subgraph $\link_Z(x)$. Thus, $Z$ is locally CAT($-1$). Since $Z$ is simply-connected and locally CAT($-1$), it is CAT($-1$).
	\end{proof}

The standard trees of $D_\Gamma$ are convex in $D_\Gamma$ but become bounded in $\widehat{D}_\Gamma$. We mention the following intermediate result, which will be used in Section~\ref{subsec:augmented}:

\begin{lemma}\label{lem:tree_convex_coneoff}
	We can choose the constant $\varepsilon>0$ from Remark~\ref{conv:metric_Dhat} small enough so that the following holds: Let $T$ be a standard tree of $D_\Gamma$, and let $Z$ be the full subcomplex of $\widehat{D}_\Gamma$ obtained by removing the vertex of tree type associated to $T$. Then $T$ is convex in $Z$ for the induced metric.
	\end{lemma}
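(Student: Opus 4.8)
The plan is to show that $T$ is convex in $Z$ by a local-to-global argument: since $Z$ is CAT($-1$) by Lemma~\ref{rem:subspace_coneoff}, it suffices to show that $T$ is locally convex, i.e.\ that at every point $x\in T$ the set of directions into $T$ is $\pi$-convex in $\link_Z(x)$, meaning any two such directions are joined by a geodesic of length $<\pi$ in the link that stays in the directions into $T$. Equivalently, we must rule out the existence of a geodesic in $Z$ with endpoints on $T$ that leaves $T$; by CAT($-1$)-ness such a geodesic is unique, and a standard argument reduces a global failure of convexity to a failure at a single vertex of $T$. So the real content is a computation in vertex links.

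First I would recall the structure of links in $D_\Gamma$ and $\widehat D_\Gamma$. The vertices of $T$ are of two kinds: cosets $g\langle a\rangle$ of cyclic standard parabolics (type-$\langle a\rangle$) and cosets of dihedral parabolics (dihedral type). In $D_\Gamma$, $T$ is already convex (Definition~\ref{defn:standard_tree}), so the only new geodesics that could leave $T$ are those that pass through one of the new cone-apices of $Z$ (apices of cones over standard trees other than $T$, since the apex over $T$ itself has been removed). Thus, fix a vertex $x$ of $T$ and analyze $\link_Z(x)$. The directions into $T$ at $x$ correspond to the edges of $T$ at $x$, which lie in the $1$-skeleton of $D_\Gamma$. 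A geodesic leaving $T$ at $x$ would enter $\link_Z(x)$ along a direction pointing into a triangle of $D_\Gamma$ or along an edge $[x,v_{T'}]$ to a cone-apex $v_{T'}$ of another standard tree $T'$ with $x\in T'$; by Corollary~\ref{cor:intersect_trees}, $T\cap T'$ is a single vertex, so $x$ is the unique vertex of $T'$ near here and the edge $[x,v_{T'}]$ immediately leaves $T$.

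The key estimate, and the reason $\varepsilon$ must be small, is the angle in $\link_Z(x)$ between two consecutive edges of $T$ at $x$. At a dihedral-type vertex $x=gv_{bc}$, two edges of $T$ at $x$ go to vertices $gh\langle b\rangle$ and $gh'\langle b\rangle$ (same generator $b$, by Lemma~\ref{lem:tree_label_conjugate}), and by the metric from \cite[Definition 4.7]{MP} the triangle of $D_\Gamma$ over each edge of $T$ has angle at least $\pi/2-\varepsilon$ at $x$ on each side; combining, the combinatorial distance in the link forces the angle between two such consecutive edges of $T$ to be at least $\pi - 2\varepsilon$, and more generally the path in $\link_Z(x)$ running along directions into $T$ has length $\geq \pi$ for $\varepsilon$ small. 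Meanwhile, any alternative path in $\link_Z(x)$ between these two directions that uses a direction into a triangle of $D_\Gamma$ not belonging to $T$, or an edge to a foreign cone-apex, has length at least $\pi$ as well (this is essentially the systole $\geq 2\pi$ condition for $\link_{\widehat D_\Gamma}(x)$ from the CAT($-1$) metric, restricted to $Z$): going "around" past a foreign cone-apex $v_{T'}$ costs at least the two link-edges at $v_{T'}$, each of length $\pi/2$ in the \cite{MP} metric, plus positive contributions. I would make this precise by noting that in $\link_Z(x)$ the directions into $T$ already span a path of length $\geq \pi - O(\varepsilon)$, the whole link has systole $\geq 2\pi$, hence the complementary arc has length $\geq \pi + O(\varepsilon) > \pi - O(\varepsilon)$ once $\varepsilon$ is small — so no geodesic of $Z$ through $x$ leaves $T$. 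Choosing $\varepsilon$ below the resulting threshold (which depends only on the finitely many isomorphism types of vertex links in $D_\Gamma$, hence only on $A_\Gamma$) gives local convexity of $T$ in $Z$, and then CAT($-1$)-ness of $Z$ upgrades this to global convexity.

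The main obstacle I anticipate is the careful bookkeeping of vertex links in $\widehat D_\Gamma$ and the precise angle computations with the \cite{MP} metric: one must check the claim not just at generic vertices of $T$ but at the special configurations (dihedral-type vertices where several standard trees meet, and cyclic-type vertices $g\langle a\rangle$ whose link contains the apices $gv_a$ of cones over $T_a$-type trees and many dihedral-type vertices $gv_{a\ast}$), and verify that the "outside" arc in the link is always $\geq \pi$ with room to spare. The cone-apex edges have length exactly $\pi/2$ at the base vertices in the \cite{MP} metric by construction, which is what makes the systole bound work, but one should double-check the interaction with the $\pi/2-\varepsilon$ triangle angles to confirm that the threshold on $\varepsilon$ is strictly positive and uniform. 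I do not expect any genuinely new idea beyond this link analysis.
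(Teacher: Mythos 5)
Your overall framework is the same as the paper's: use CAT($-1$)-ness of $Z$ (Lemma~\ref{rem:subspace_coneoff}) to reduce convexity to a local statement at vertices, i.e.\ to showing that in $\link_Z(v)$ the distance between two directions $w, w'$ into $T$ is at least $\pi$, and then rule out shortcuts through foreign cone apices. But there are concrete errors in the quantitative part, which is where the actual content of the lemma lies (since it is, after all, a statement about how to choose $\varepsilon$).

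First, you assert that the cone-apex link edges have length exactly $\pi/2$. This is not what the \cite{MP} metric guarantees: by Remark~\ref{conv:metric_Dhat}, the angles of the cone triangles at vertices of $T$ are only at least $\pi/2 - \varepsilon$, so the corresponding link edges have length $\geq \pi/2 - \varepsilon$, possibly strictly less than $\pi/2$. If they were exactly $\pi/2$, the lemma would hold for all $\varepsilon$ and there would be nothing to choose. With the correct bound, a path through one foreign apex uses two such edges, totalling only $\geq \pi - 2\varepsilon$, which falls short of $\pi$. Your phrase ``plus positive contributions'' is precisely the gap that must be filled; the paper does so via Corollary~\ref{cor:intersect_trees}: since $w$ and $w'$ correspond to two distinct edges of $T$ at $v$, they cannot both lie in the same other standard tree $T'$, so a path through the apex of $T'$ must contain at least one additional edge. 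Choosing $\varepsilon$ smaller than half the (positive, by cocompactness) minimum edge length in links of $D_\Gamma$ then forces that extra edge to have length $\geq 2\varepsilon$, closing the gap: $2(\pi/2-\varepsilon) + 2\varepsilon = \pi$.

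Second, your bound on the ``inside'' arc is both weaker and derived from the wrong source. You claim the directions into $T$ span a path of length $\geq \pi - O(\varepsilon)$ from the angle estimates. The paper instead uses the cleaner fact (already built into Definition~\ref{defn:standard_tree}) that $T$ is convex in $D_\Gamma$, which directly gives distance $\geq\pi$ in $\link_{D_\Gamma}(v)$, with no $\varepsilon$-loss. Finally, the ``systole $\geq 2\pi$'' argument you invoke to bound the outside arc does not work as stated: the systole condition gives $d_{\mathrm{in}} + d_{\mathrm{out}} \geq 2\pi$, but since we only know $d_{\mathrm{in}} \geq \pi$ (and it may well exceed $\pi$), this yields no lower bound on $d_{\mathrm{out}}$. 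The bound has to be a direct edge count, as in the paper.
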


\begin{proof}
	It follows from Lemma~\ref{lem:tree_convex_coneoff} that $Z$ is CAT($-1$) for the induced metric. Since $T$ is a subtree of the CAT($-1$)  complex $Z$, it is enough to show that two edges of $T$ that share a vertex $v$ make an angle of at least $\pi$ at $v$. (This follows from the local characterisation of geodesics in a CAT(0) space) This amounts to showing that for distinct vertices $w, w'$ of $T\cap \link_{Z}(v)$, their distance in $\link_{Z}(v)$ is at least $\pi$. By construction, the simplicial graph $\link_{Z}(v)$ is obtained from $\link_{D_\Gamma}(v)$ by coning-off  the sets $T'\cap \link_{D_\Gamma}(v)$ for each standard tree $T'$ other than $T$, i.e. by constructing a simplicial cone over every such sets $T'\cap \link_{D_\Gamma}(v)$. Moreover, each of these new edges has length at least $\pi/2-\varepsilon$ by construction, see Remark~\ref{conv:metric_Dhat}.  Since the action of $A_\Gamma$ on $D_\Gamma$ is cocompact, there is a uniform lower bound on the length of edges in the link of an arbitrary vertex of $D_\Gamma$, and we can choose the constant $\varepsilon<\pi/10$ smaller than half this uniform lower bound, which we now assume. We now claim  that the distance  in $\link_{Z}(v)$ between $w$ and $w'$ is at least $\pi$. Indeed, assume that this were not the case. Since $T$ is a convex subtree of $D_\Gamma$ by construction, two distinct points $w, w'$ of $\link_{D_\Gamma}(v)$ that belong to $T$ are at distance at least $\pi$ in $\link_{D_\Gamma}(v)$. 
	
	Therefore, if there was a geodesic of length less than $\pi$ between $w$ and $w'$ in $\link_{Z}(v)$, it would have to go through at least two of the additional edges of length $\geq \pi/2-\varepsilon$ coming from the cone-off procedure. Moreover, since $w$ and $w'$ are not in the same standard tree (Corollary \ref{cor:intersect_trees}), said geodesic should also contain another edge, which has length at least $2\varepsilon$. Therefore, the geodesic would have length at least 
	$$2\varepsilon + 2(\pi/2 - \varepsilon) \geq\pi,$$
	a contradiction. Thus, $T$ is convex in $Z$.
\end{proof}

\begin{conv}\label{conv:metric_coneoff}
	From now on, we will assume that $\widehat{D}_\Gamma$ is endowed with a CAT($-1$) metric from~\cite{MP} such that Lemma~\ref{lem:tree_convex_coneoff} holds.
	
	Moreover, since we are considering a fixed Artin group $A_\Gamma$, we will from now on simply denote by $D$ and $\widehat{D}$ the complexes 
	$D_{\Gamma}$ and $ \widehat{D}_\Gamma$ . 
\end{conv}

We now describe the stabiliser of the vertices of $\widehat{D}$ of tree type:

\begin{lemma}\label{lem:stab_tree}
	The stabiliser of the vertex of tree type $v_a$ is exactly the centraliser (and normaliser) of the cyclic subgroup $\langle a 
\rangle$. Moreover, this centraliser splits as a direct product of the form 
	$$ \langle a \rangle \times K,$$
	where $K$ is a finitely-generated free group. 
	More precisely, the subgroup $\langle a \rangle$ acts trivially on 
$T_a$, while $K$ acts cocompactly on it with trivial edge stabilisers.
	\end{lemma}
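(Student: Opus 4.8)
The plan is to identify $\stabilizer(v_a)$ first with the setwise stabiliser of the standard tree $T_a$ and then with $N(\langle a\rangle)=Z_{A_\Gamma}(\langle a\rangle)$, and afterwards to read off the splitting from a Bass--Serre analysis of the action of this subgroup on $T_a$. We may assume that $T_a$ has an edge, the case where $T_a$ is a single vertex (i.e.\ $a$ is isolated in $\Gamma$, so $Z_{A_\Gamma}(a)=\stabilizer(v_a)=\langle a\rangle$) being immediate with $K=1$.

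\emph{Step 1: the stabiliser.} Since the $A_\Gamma$--action on $\widehat D$ sends the cone apex over a standard tree $T$ to the cone apex over $gT$, and $gT_a=g\,\mathrm{Fix}(a)=\mathrm{Fix}(gag^{-1})$ is again a standard tree (with distinct standard trees having distinct apices), we have $gv_a=v_a$ if and only if $gT_a=T_a$. Every edge of $T_a$ is fixed pointwise by $a$, and an edge of $D$ with non-trivial stabiliser has infinite cyclic stabiliser generated by a conjugate of a standard generator; applying the homomorphism $\phi\colon A_\Gamma\to\Z$ sending every standard generator to $1$ (well defined because each Artin relation equates two words of the same length) shows that this stabiliser is exactly $\langle a\rangle$, so $\langle a\rangle$ is the pointwise stabiliser of $T_a$. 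Hence $gT_a=T_a$ forces $gag^{-1}$ to fix $T_a$ pointwise, i.e.\ $gag^{-1}=a^k$, and $\phi$ gives $k=1$; combined with the evident inclusions $Z_{A_\Gamma}(a)\subseteq N(\langle a\rangle)\subseteq\stabilizer(v_a)$ (using $\mathrm{Fix}(a^{\pm1})=\mathrm{Fix}(a)$, cf.\ Corollary~\ref{cor:tree_power}), this yields $\stabilizer(v_a)=N(\langle a\rangle)=Z_{A_\Gamma}(a)$.

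\emph{Step 2: the action on $T_a$.} The group $Z_{A_\Gamma}(a)=\stabilizer(v_a)$ preserves $T_a$ with $\langle a\rangle$ acting trivially (it is the pointwise stabiliser), so $K:=Z_{A_\Gamma}(a)/\langle a\rangle$ acts on the tree $T_a$, and without inversions since the $A_\Gamma$--action preserves vertex types in $D$. I would then compute the quotient graph of groups. Each edge of $T_a$, and each cyclic-type vertex of $T_a$ (which lies on an edge of $T_a$ and has the same stabiliser as that edge), has $A_\Gamma$--stabiliser exactly $\langle a\rangle$, hence contributes the trivial group. For a dihedral-type vertex $gA_{cd}$ of $T_a$, every incident edge of $T_a$ runs to a cyclic-type vertex $g'\langle c'\rangle$ with $g'\in gA_{cd}$ and $c'\in\{c,d\}$; from $\langle a\rangle=g'\langle c'\rangle {g'}^{-1}$ and $\phi$ we get $a=g'c'{g'}^{-1}$, so
\[
Z_{A_\Gamma}(a)\cap gA_{cd}g^{-1}=Z_{gA_{cd}g^{-1}}(a)=g'\langle c',z_{cd}\rangle{g'}^{-1}=\langle a\rangle\times\langle g'z_{cd}{g'}^{-1}\rangle
\]
by \cite[Lemma 7]{Crisp} (applicable since $m_{cd}\geq3$) together with Corollary~\ref{cor:power_centre}; such a vertex thus contributes a copy of $\Z$.

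\emph{Step 3: cocompactness and conclusion.} The key remaining point is cocompactness of the $Z_{A_\Gamma}(a)$--action on $T_a$: $A_\Gamma$ has finitely many edge-orbits in $D$, and if $he=e'$ for edges $e,e'$ of $T_a$ then $h$ conjugates $\stabilizer(e)=\langle a\rangle$ to $\stabilizer(e')=\langle a\rangle$, so $h\in N(\langle a\rangle)=Z_{A_\Gamma}(a)$; hence $Z_{A_\Gamma}(a)$, and so $K$, has finitely many edge-orbits on $T_a$, and (as $T_a$ is connected) acts cocompactly. Therefore $K$ is the fundamental group of a finite graph of groups with trivial edge groups and vertex groups that are trivial or infinite cyclic, so $K$ is a finitely generated free group. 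Finally $1\to\langle a\rangle\to Z_{A_\Gamma}(a)\to K\to1$ is a central extension with $K$ free, hence $H^2(K;\langle a\rangle)=0$, the extension splits, and being central it splits as a direct product $\langle a\rangle\times K$; the section realises $K$ as a subgroup of $\stabilizer(v_a)$ acting on $T_a$ exactly as the quotient does, i.e.\ cocompactly with trivial edge stabilisers, while $\langle a\rangle$ acts trivially. I expect the main obstacle to be Step 2 together with the orbit-matching in Step 3 --- pinning down the dihedral vertex groups precisely as $\langle a\rangle\times\Z$, and checking that $A_\Gamma$--orbits of edges of $T_a$ are single $Z_{A_\Gamma}(a)$--orbits; once the quotient graph of groups is in hand, the conclusion is routine Bass--Serre theory plus the vanishing of $H^2$ of a free group.
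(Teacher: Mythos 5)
Your proof is correct, and it is more self-contained than the paper's. The paper establishes the identification $\stabilizer(v_a)=N(\langle a\rangle)=C(a)$ by a direct argument (essentially the same as your Step 1, phrased via cosets instead of via pointwise stabilisers), but for the splitting $C(a)\cong\langle a\rangle\times K$ it simply cites \cite[Lemma 4.5]{MP}, and it obtains finite generation of $K$ from the cocompactness of the action together with the graph-of-groups description given in \cite[Remark 4.6]{MP}. You instead redo that analysis: you compute the edge and vertex stabilisers for $C(a)\curvearrowright T_a$ directly (trivial edge/cyclic-vertex stabilisers once you quotient by $\langle a\rangle$; dihedral vertex groups $\langle a\rangle\times\Z$ via Crisp's lemma and Corollary~\ref{cor:power_centre}), conclude that $K=C(a)/\langle a\rangle$ is the fundamental group of a finite graph of groups with trivial edge groups and trivial-or-infinite-cyclic vertex groups hence a finitely generated free group, and then deduce the direct-product splitting from $H^2(K;\Z)=0$ for $K$ free plus centrality of $\langle a\rangle$. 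Your cocompactness argument (using that a group element carrying an edge of $T_a$ to another must conjugate $\langle a\rangle$ to itself) is equivalent to, but slightly more explicit than, the paper's appeal to Corollary~\ref{cor:intersect_trees}. The gain of your route is that the reader does not need to unwind \cite{MP}; the cost is a somewhat longer argument, though all the ingredients (Crisp's centraliser lemma, the vanishing of $H^2$ of a free group) are standard. One small point worth flagging: the clean deduction $a=g'c'g'^{-1}$ (rather than $a=g'c'^{-1}g'^{-1}$) from $\langle a\rangle = g'\langle c'\rangle g'^{-1}$ does indeed require the length homomorphism $\phi$, as you say, and this is also how the paper normalises signs throughout; good that you made it explicit.
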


\begin{proof}
	Let us first note that the normaliser and centraliser of $\langle a \rangle$ coincide. Indeed, if $g \in A_\Gamma$ is such that $g^{-1}ag \in \langle a \rangle$, we apply the homomorphism $A_\Gamma \rightarrow \Z$ sending every generator to $1$ and deduce that $g^{-1}ag=a$, hence $g$ centralises $\langle a \rangle$. 
	
	The stabiliser of $v_a$ coincides with the global stabiliser of the standard tree $T_a$. Let us show that an element 
$g \in A_\Gamma$ stabilises $T_a$ if and only if it  normalises $\langle a \rangle$. 
	
	If $g $ stabilises $T_a$, then it sends the vertex $\langle a \rangle \in T_a$ to the vertex $g\langle a \rangle\in 
T_a$, so in particular we have 
	$$a\cdot g\langle a \rangle = g \langle a \rangle,$$
	or in other words $g^{-1}ag \in \langle a \rangle$, hence $g$ normalises $\langle a \rangle$. Conversely, 
let us assume that $g$ normalises $\langle a \rangle$, and let $x$ be a point of $T_a$. Let us show that $gx\in T_a$. We 
have 
	$$a\cdot gx = g(g^{-1}ag)x = gx,$$
	the last equality following from the fact that $x$ is fixed by $\langle a\rangle$ by definition of $T_a$. Thus, $gx$ is fixed by 
$a$, and it follows that $g$ stabilises $T_a$.

The decomposition of the stabiliser of $T_a$ as a direct product as in the statement is a consequence of \cite[Lemma 
4.5]{MP}. Observe that $\mathrm{Stab}(v_a)$ acts cocompactly on $T_a$.  Indeed, $D$ contains finitely many $A_\Gamma$ orbits of edges, and no edge is contained in distinct translates of a 
standard tree by Corollary~\ref{cor:intersect_trees}.

The quotient $K$ was defined in the proof of \cite[Lemma 4.5]{MP} as  the fundamental group of a graph of groups over the graph $T_a/\mathrm{Stab}(v_a)$, with trivial edge stabilisers and vertex stabilisers that are trivial or  
infinite cyclic. Since $T_a/\mathrm{Stab}(v_a)$ is a finite graph by the above, it follows that $K$ is finitely generated.
\end{proof}

Regarding vertices of dihedral type, we have the following similar result:

\begin{lemma}\label{lem:stab_dihedral}
	The stabiliser $A_{ab}$ of the vertex of dihedral type $v_{ab}$ is exactly the centraliser of the cyclic subgroup $\langle z_{ab}^p 
	\rangle$ for all $p\neq 0$, which further coincides with the normaliser.
\end{lemma}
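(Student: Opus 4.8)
The plan is to prove, for every nonzero integer $p$, the cycle of inclusions
\[
A_{ab}\ \subseteq\ C_{A_\Gamma}(z_{ab}^p)\ \subseteq\ N_{A_\Gamma}(\langle z_{ab}^p\rangle)\ \subseteq\ A_{ab},
\]
which forces all three groups to coincide; since the stabiliser of the vertex $v_{ab}$ is exactly $A_{ab}$ by the general description of vertex stabilisers of $D$, this yields the statement. The first inclusion is immediate from Lemma~\ref{lem:centre dihedral}: $z_{ab}$, hence $z_{ab}^p$, lies in the centre of $A_{ab}$, so $A_{ab}$ centralises $\langle z_{ab}^p\rangle$. The middle inclusion is formal (the centraliser of a generating element sits inside the normaliser of the subgroup it generates).

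The substance is the last inclusion $N_{A_\Gamma}(\langle z_{ab}^p\rangle)\subseteq A_{ab}$. I would first reduce the normaliser to the centraliser of the element $z_{ab}^p$ using the homomorphism $\phi\colon A_\Gamma\to\Z$ sending each standard generator to $1$: since $z_{ab}=\Delta_{ab}$ or $\Delta_{ab}^2$ and $\Delta_{ab}$ is a positive word of length $m_{ab}\geq 3$, we have $\phi(z_{ab}^p)=p\,\phi(z_{ab})\neq 0$, so any $g$ with $g z_{ab}^p g^{-1}=z_{ab}^{pm}$ must have $m=1$; thus every element of $N_{A_\Gamma}(\langle z_{ab}^p\rangle)$ centralises $z_{ab}^p$. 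It remains to show $C_{A_\Gamma}(z_{ab}^p)\subseteq A_{ab}$, and for this I would exploit the CAT($-1$) geometry of $D$ to prove the key claim $\mathrm{Fix}_D(z_{ab}^p)=\{v_{ab}\}$. Indeed $z_{ab}^p$ fixes $v_{ab}$; if it fixed some other point, it would fix pointwise the unique geodesic from $v_{ab}$ to that point, hence an interior point of some edge or triangle of $D$ incident to $v_{ab}$, and therefore lie in the (pointwise = setwise, by order-preservation on the poset) stabiliser of that cell. Triangles of $D$ have trivial stabiliser, contradicting $z_{ab}^p\neq 1$. An edge of $D$ incident to $v_{ab}$ with nontrivial stabiliser is of the form $[g\langle c\rangle, v_{ab}]$ with $c\in\{a,b\}$ and $g\in A_{ab}$, and its stabiliser is $\langle gcg^{-1}\rangle$; so $z_{ab}^p=gc^kg^{-1}$ for some $k\neq 0$, and conjugating by $g^{-1}$ and using that $z_{ab}^p$ is central in $A_{ab}$ gives $z_{ab}^p=c^k$, contradicting Corollary~\ref{cor:power_centre}. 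This establishes the claim.

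To finish, for $g\in C_{A_\Gamma}(z_{ab}^p)$ and $x\in\mathrm{Fix}_D(z_{ab}^p)$ one computes $z_{ab}^p(gx)=g(g^{-1}z_{ab}^pg)x=g z_{ab}^p x=gx$, so $g$ preserves $\mathrm{Fix}_D(z_{ab}^p)=\{v_{ab}\}$; hence $g v_{ab}=v_{ab}$, i.e.\ $g\in A_{ab}$. The main obstacle is the identification $\mathrm{Fix}_D(z_{ab}^p)=\{v_{ab}\}$; the rest is bookkeeping. The two arithmetic inputs that drive the argument are that $z_{ab}$ has nonzero image in the abelianisation (for the normaliser-to-centraliser reduction) and that no nonzero power of $z_{ab}$ is a power of a standard generator (Corollary~\ref{cor:power_centre}), which is precisely what prevents $z_{ab}^p$ from fixing an edge through $v_{ab}$.
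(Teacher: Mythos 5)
Your proof is correct and is essentially the same argument as the paper's: both hinge on the CAT($0$) geodesic argument in $D$ combined with Corollary~\ref{cor:power_centre} to rule out $z_{ab}^p$ (or a conjugate power) fixing an edge at $v_{ab}$. The only organizational difference is that you first reduce the normaliser to the centraliser via the abelianisation map and then explicitly identify $\mathrm{Fix}_D(z_{ab}^p)=\{v_{ab}\}$, whereas the paper skips that reduction and works directly with $hz_{ab}^ph^{-1}=z_{ab}^q$ and the geodesic from $v_{ab}$ to $hv_{ab}$.
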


\begin{proof}
	Since the stabiliser of $v_{ab}$ is equal to $A_{ab}$ by construction, and $z_{ab}^p$ is central in $A_{ab}$, it is enough to show that an element in the normaliser of $z_{ab}^p$ fixes $v_{ab}$. Suppose that we have 
an element $h$ such that $h\in N(z_{ab}^p)$.   Then $z_{ab}^q=hz_{ab}^ph^{-1}$ (for some $q\neq 0$) also fixes the vertex $hv_{ab}$, so $z_{ab}^q$ fixes pointwise the unique CAT(0) geodesic of $D$ joining $v_{ab}$ to $hv_{ab}$.  If this 
geodesic is nontrivial, then $z_{ab}^q$ fixes an edge of $D$ containing $v_{ab}$, since triangles in $D$ have trivial stabilisers. This is impossible, as otherwise $z_{ab}^q$ would be contained in an 
edge stabiliser, hence would be conjugate in $A_{ab}$ to a power of $a$ or $b$ (since the edge in question contains $v_{ab}$). Hence $z^q_{ab}$ would be equal to a power of $a$ or $b$ since $z_{ab}$ is 
central in $A_{ab}$, contradicting Corollary~\ref{cor:power_centre}.   So, $hv_{ab}=v_{ab}$, as required.
\end{proof}

We will also mention the following lemma about centralisers, which will be used later in this article (see Lemma~\ref{lem:blow_up_data_existence}):

\begin{lemma}\label{lem:centraliser_power}
	Let $c\in A_\Gamma$ and suppose that either $c$ is a standard generator, or $c=z_{ab}$ for some standard generators $a$ and $b$ generating a dihedral Artin group.  Then the centraliser $C(c)$ 
	satisfies $C(c)=C(c^p)$ for all $p\in\integers-\{0\}$.
\end{lemma}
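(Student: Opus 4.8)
The plan is to reduce both cases to the geometric identifications of these centralisers as stabilisers of vertices of $\widehat D$, which are already available from Lemmas~\ref{lem:stab_tree} and~\ref{lem:stab_dihedral}. The inclusion $C(c)\subseteq C(c^p)$ holds trivially for any element $c$ and any $p\in\Z-\{0\}$, so all of the content is in the reverse inclusion, and I would split according to the two allowed forms of $c$.

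The case $c=z_{ab}$ needs essentially no argument: Lemma~\ref{lem:stab_dihedral} asserts that $\mathrm{Stab}(v_{ab})$ coincides with the centraliser of $\langle z_{ab}^p\rangle$ --- equivalently with $C(z_{ab}^p)$ --- for \emph{every} $p\neq 0$. Comparing the statement for $p=1$ with the statement for a general $p$ then gives $C(z_{ab})=\mathrm{Stab}(v_{ab})=C(z_{ab}^p)$, as desired.

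For $c=a$ a standard generator, I would argue geometrically. Fix $g\in C(a^p)$. Since $g$ commutes with $a^p$ we have $g\cdot\mathrm{Fix}(a^p)=\mathrm{Fix}(ga^pg^{-1})=\mathrm{Fix}(a^p)$, so $g$ preserves the fixed-point set of $a^p$ in $D$. By Corollary~\ref{cor:tree_power} this fixed-point set is exactly the standard tree $T_a=\mathrm{Fix}(a)$, so $g$ stabilises $T_a$ and hence fixes the associated vertex of tree type $v_a$. By Lemma~\ref{lem:stab_tree}, $\mathrm{Stab}(v_a)$ is precisely the centraliser of $\langle a\rangle$, i.e.\ $C(a)$; therefore $g\in C(a)$, which gives the required inclusion $C(a^p)\subseteq C(a)$.

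There is no serious obstacle here. The only step that is not purely formal is, in the standard-generator case, invoking Corollary~\ref{cor:tree_power} to pass from ``$g$ preserves $\mathrm{Fix}(a^p)$'' to ``$g$ stabilises $T_a$''; once $g$ is seen to fix $v_a$, the identification of $\mathrm{Stab}(v_a)$ with $C(a)$ from Lemma~\ref{lem:stab_tree} finishes the argument.
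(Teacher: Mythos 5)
Your proof is correct and follows essentially the same approach as the paper's: identify both centralisers with stabilisers of vertices of $\widehat D$ (Lemmas~\ref{lem:stab_tree} and~\ref{lem:stab_dihedral}), and in the standard-generator case show that an element of $C(a^p)$ must preserve the standard tree $T_a$. The only cosmetic difference is that for $c=a$ you invoke Corollary~\ref{cor:tree_power} directly (which already packages $\mathrm{Fix}(a)=\mathrm{Fix}(a^p)$), whereas the paper re-derives the same conclusion from a single edge $e$ of $T_a$ via Lemma~\ref{lem:edge_stab_tree} and Corollary~\ref{cor:intersect_trees}; this is a mild streamlining, not a different route.
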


\begin{proof}
	There are two cases, according to whether $c$ is a standard generator or $c=z_{ab}$ for standard generators $a,b$.
	
	First consider the case where $c=z_{ab}$, where $a,b$ are standard generators generating a dihedral type Artin subgroup $A_{ab}$.  Recall that $\langle z_{ab}\rangle$ fixes a point in $\widehat 
D$, namely the vertex $v_{ab}$.  Suppose that, for some $p\in\integers-\{0\}$, we have an element $h$ such that $h\in C(z_{ab}^p)$.   By Lemma 
\ref{lem:stab_dihedral}, we have that $hv_{ab}=v_{ab}$, whence $h\in A_{ab}$, and  this subgroup is equal to $C(z_{ab})$ by Lemma \ref{lem:stab_dihedral}.
	
	Next consider the case where $c$ is a standard generator $c=a$, and let $e$ be an edge of the corresponding standard tree $T_a$, which has $\langle a \rangle$ as stabiliser. Suppose that, for some 
$p\in\integers-\{0\}$, we have an element $h$ such that $h\in C(a^p)$. Then the stabiliser of the edge $he$ is $\mathrm{Stab}(he)= h\langle a\rangle h^{-1}\supset \langle a^p \rangle$. In particular, 
the stabilisers of $e$ and $he$ intersect non-trivially, and it follows from Lemma~\ref{lem:edge_stab_tree} that they are in the same standard tree $T_a$.  From Corollary~\ref{cor:intersect_trees}, 
we get that $hT_a=T_a$, so $h\in C(a)$.
\end{proof}

\subsection{Links of vertices}

The local structure of Deligne complexes will play an important role in this article, so we now describe it in further detail.

\begin{lemma}\label{lem:link_tree_cocompact}
	Let $v_a$ be a vertex of tree type. The link $\mathrm{Lk}_{\widehat{D}}(v_a)$ is the standard tree $T_a$, and the action of $\mathrm{Stab}(v_a)$ on it is cocompact.
\end{lemma}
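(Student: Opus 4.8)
The claim has two parts: first, that $\link_{\widehat D}(v_a)$ is (isomorphic to) the standard tree $T_a$, and second, that $\stabilizer(v_a)$ acts cocompactly on it. The plan is to read both statements directly off the construction of $\widehat D$ together with the results already established for $D$. For the first part, recall that $\widehat D$ is obtained from $D$ by adding, for each standard tree $T$, a cone vertex $v_T$ joined by an edge to every vertex of $T$, and then taking the flag completion. Since $v_a$ is by definition the apex of the cone over $T_a$, the vertices adjacent to $v_a$ in $\widehat D$ are exactly the vertices of $T_a$. Two such vertices $w,w'$ span an edge in $\link_{\widehat D}(v_a)$ precisely when $\{v_a,w,w'\}$ spans a $2$-simplex of $\widehat D$; since no new $2$-simplex involving $v_a$ other than the cone triangles over edges of $T_a$ is added (the flag completion only fills triangles all of whose edges are already present, and the only edges at $v_a$ go to $T_a$), this happens exactly when $w,w'$ are joined by an edge of $T_a$. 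Here I would also invoke Corollary~\ref{cor:intersect_trees}: distinct standard trees meet in at most a vertex, so no vertex of $D$ lies in two cones in a way that would create spurious adjacencies, and in particular $v_a$ is not adjacent to any other cone apex. Hence the full subcomplex on $\Star_{\widehat D}(v_a)-v_a$ is precisely $T_a$ with its simplicial structure, i.e. $\link_{\widehat D}(v_a)\cong T_a$.

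For the cocompactness statement, the argument is essentially contained in the proof of Lemma~\ref{lem:stab_tree}, which I would cite: $\stabilizer(v_a)$ equals the global stabiliser of the standard tree $T_a$, and it acts cocompactly on $T_a$ because $D$ contains only finitely many $A_\Gamma$-orbits of edges and, by Corollary~\ref{cor:intersect_trees}, no edge of $D$ lies in two distinct translates of a standard tree — so the $\stabilizer(v_a)$-orbits of edges of $T_a$ are in bijection with a subset of the $A_\Gamma$-orbits of edges of $D$, hence finite in number. Since $T_a$ is a locally finite tree (a subcomplex of the locally finite complex $D$) and has finitely many orbits of edges under $\stabilizer(v_a)$, the action is cocompact. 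Transporting this through the isomorphism $\link_{\widehat D}(v_a)\cong T_a$ gives the second assertion.

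I do not expect a genuine obstacle here: both halves are formal consequences of the definition of $\widehat D$ and of facts (Corollary~\ref{cor:intersect_trees}, Lemma~\ref{lem:stab_tree}) already in hand. The only point requiring a little care is making sure the flag completion in the definition of $\widehat D$ does not introduce extra simplices at $v_a$ beyond the cone triangles — that is, checking that the only edges incident to $v_a$ are the cone edges to $T_a$, so that the link is exactly $T_a$ and not some larger complex containing it; this is immediate from the construction but worth stating explicitly. A secondary subtlety is the identification ``$\link_{\widehat D}(v_a)$ is $T_a$'' as opposed to merely ``$T_a$ sits inside it'': this relies on the two-dimensionality of $\widehat D$ (noted in Convention after Lemma~\ref{lem:tree_convex_coneoff} and used already in Lemma~\ref{rem:subspace_coneoff}), which guarantees the link is a graph and forces the above combinatorial description to be complete.
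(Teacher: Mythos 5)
Your proof is correct and follows the same route as the paper's, whose proof is a one-liner citing the construction of the cone-off for the link description and Lemma~\ref{lem:stab_tree} for cocompactness; your added detail about the flag completion, the convexity of $T_a$, and the fact that the only edges at $v_a$ go to $T_a$ is the right expansion of that. One small inaccuracy to flag: $D$ is not locally finite, and neither is $T_a$ --- for instance the dihedral vertex $v_{ab}$ of $T_a$ has a $\langle z_{ab}\rangle$-orbit's worth of incident $T_a$-edges --- but this parenthetical is not load-bearing, since for a graph ``finitely many $\stabilizer(v_a)$-orbits of edges, with every vertex on an edge'' already forces the quotient to be a finite graph, hence compact; that is all the argument via Corollary~\ref{cor:intersect_trees} and the finiteness of $A_\Gamma$-orbits of edges of $D$ needs.
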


\begin{proof}
	The description of the link follows from the construction of the cone-off, and the cocompactness follows from Lemma~\ref{lem:stab_tree}.
\end{proof}

Throughout the rest of this subsection, we fix a vertex of $D$ of dihedral type of the form $v_{ab}$. Before describing the link in the cone-off $\widehat{D}$, we start by describing the link in the original Deligne complex $D$.
The link  $\link_D(v_{ab})$ of that vertex has a simple 
description, which is a direct consequence of the construction of $D$: 

\begin{lemma}\label{lem:link_dihedral_cocompact}
	The link  $\link_D(v_{ab})$ is $A_{ab}$-equivariantly 
isomorphic to  the  the geometric realisation of the poset of cosets of strict standard parabolic subgroups of $A_{ab}$. That 
is, vertices of $\link_D(v_{ab})$ correspond to cosets of the form $g\langle a\rangle, g \langle b \rangle, $ or $g\{1\}$, 
and for every $g \in A_{ab}$, we add an edge between $g\{1\}$ and $g\langle a\rangle$, as well as an edge between  $g\{1\}$ 
and $g \langle b \rangle$. 

In particular, the action of   $\mathrm{Stab}(v_{ab})$ on $\mathrm{Lk}_{D}(v_{ab})$ is cocompact. \qed
\end{lemma}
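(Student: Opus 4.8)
The plan is to unwind the construction of $D$ as the geometric realisation (order complex) of the poset $\mathcal P$ of cosets of finite-type standard parabolic subgroups of $A_\Gamma$. Under the standing assumption that $A_\Gamma$ is of large and hyperbolic type, the finite-type standard parabolics are exactly those on at most two generators, so $A_{ab}$ is \emph{maximal} among them; hence $v_{ab}$, viewed as the element $1\cdot A_{ab}\in\mathcal P$, is a maximal element of $\mathcal P$. Consequently every simplex of $D$ containing $v_{ab}$ is a chain of $\mathcal P$ with top element $v_{ab}$, and therefore $\link_D(v_{ab})$ is precisely the order complex of the strict down-set $\mathcal P_{<v_{ab}}=\{w\in\mathcal P:\ w<v_{ab}\}$. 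This reduces the statement to a purely order-theoretic identification of $\mathcal P_{<v_{ab}}$.

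Next I would identify $\mathcal P_{<v_{ab}}$ explicitly. Unwinding the definition of the partial order, $hA_{\Gamma'}<v_{ab}$ means there is a representative $g$ with $gA_{ab}=A_{ab}$, i.e.\ $g\in A_{ab}$, and $\Gamma'\subsetneq\{a,b\}$, i.e.\ $\Gamma'\in\{\varnothing,\{a\},\{b\}\}$; since $A_{\Gamma'}\subseteq A_{ab}$ this also forces $h\in A_{ab}$, and conversely every coset $g\{1\}$, $g\langle a\rangle$, $g\langle b\rangle$ with $g\in A_{ab}$ lies below $v_{ab}$. As $A_{\Gamma'}$ is literally the same subgroup whether regarded inside $A_{ab}$ or inside $A_\Gamma$, coset equality among these cosets is unchanged whether computed in $A_{ab}$ or in $A_\Gamma$, so $\mathcal P_{<v_{ab}}$ is, as a poset and $A_{ab}$-equivariantly, the poset of cosets in $A_{ab}$ of its strict standard parabolic subgroups $\{1\}$, $\langle a\rangle$, $\langle b\rangle$.

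Then I would read off the order complex of this poset. Within $\mathcal P_{<v_{ab}}$ one has $g\{1\}<g\langle a\rangle$ and $g\{1\}<g\langle b\rangle$ for each $g$, while $g\langle a\rangle$ and $g\langle b\rangle$ are maximal in $\mathcal P_{<v_{ab}}$: a relation $g\langle a\rangle<hA_{\Gamma''}<v_{ab}$ would require $\{a\}\subsetneq\Gamma''\subsetneq\{a,b\}$, which is impossible, and likewise for $g\langle b\rangle$; also distinct cosets of the same standard parabolic are incomparable, and $\langle a\rangle,\langle b\rangle$ are incomparable. Hence the order complex is exactly the graph with vertices $g\{1\},g\langle a\rangle,g\langle b\rangle$ ($g\in A_{ab}$) and an edge joining $g\{1\}$ to each of $g\langle a\rangle$ and $g\langle b\rangle$, which is the asserted description of $\link_D(v_{ab})$. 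Equivariance follows because the stabiliser of $v_{ab}$ for the left-multiplication action on cosets is $A_{ab}$, acting on $\link_D(v_{ab})$ exactly as on cosets of its strict standard parabolics; and cocompactness of the $\mathrm{Stab}(v_{ab})$-action follows since it has three vertex orbits (represented by $\{1\}$, $\langle a\rangle$, $\langle b\rangle$) and two edge orbits, so the quotient graph is finite.

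I do not expect any real obstacle: the whole argument is a routine unwinding of the order-complex definition of the modified Deligne complex, and the only place where a hypothesis is genuinely used is the $2$-dimensionality of $D$ — equivalently, the maximality of $A_{ab}$ among finite-type standard parabolics, which makes both $v_{ab}$ a maximal element of $\mathcal P$ and $g\langle a\rangle,g\langle b\rangle$ maximal in the down-set. If a slicker formulation were desired, one could instead note that $\link_D(v_{ab})$ is canonically the link of the top vertex in the (full) Deligne complex of the finite-type Artin group $A_{ab}$, but the direct coset description above is already exactly what the statement requires.
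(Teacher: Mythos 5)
Your proof is correct and is exactly the intended argument: the paper treats this lemma as immediate from the construction of $D$ as the order complex of the poset of cosets of finite-type standard parabolics (the statement carries a bare \qed with no written proof), and your careful unwinding — that $v_{ab}$ is a maximal element of the poset, so its link is the order complex of the strict down-set, which is then identified coset-by-coset — is precisely what the authors leave to the reader.
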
 

This link is a particular case of a general construction that we will  use again in Section~\ref{sec:augment}: 

\begin{defn}[Graph of orbits]\label{def:graph of orbits}
	Let $\calG$ be a graph with an $A_{ab}$-action, such that the subgroups $\langle a \rangle$ and $\langle b \rangle$ 
act freely on it. We define a new graph encoding the pattern of intersections of orbits of $\langle a \rangle$ and $\langle b 
\rangle$ as follows: We put a vertex for every $\langle a \rangle$-orbit and one vertex for every $\langle b \rangle$-orbit. 
If two such orbits have a non-empty intersection, we put an edge between them. The \textbf{graph of orbits} 
$\mathrm{Orbit}_{a, b}\big(\calG\big)$ is defined as the first barycentric subdivision of the graph obtained in this way.
\end{defn}	

\begin{rem}\label{rem:graph_of_orbits}
With that terminology, the link $\link_D(v_{ab})$ is $A_{ab}$-equivariantly isomorphic to the graph of orbits 
$\mathrm{Orbit}_{a, b}\big(\mathrm{Cayley}_{a,b}(A_{ab})\big)$, where $\mathrm{Cayley}_{a,b}(A_{ab}) $ denotes the Cayley 
graph of $A_{ab}$ for the standard generators $a, b$.
\end{rem}

The action of $A_{ab}$ on $\mathrm{Lk}_D(v_{ab})$ has been studied by Vaskou in \cite{Vaskou}. In particular, the following result, which is a geometric counterpart of Lemma~\ref{lem:Vaskou_syll_length} will be useful in Section~\ref{sec:augment}: 

\begin{lemma}[{\cite[Proposition 4.7]{Vaskou}}]  \label{lem:Vaskou_QI_embedded_link}
	Let $g$ be an element  of $A_{ab}$ that can be written only with positive letters and that has syllabic length
greater than $1$. If $m_{ab}\geq 3$, then the $\langle g \rangle$-orbit maps to $\mathrm{Lk}_D(v_{ab})$ are quasi-isometric embeddings.\qed
\end{lemma}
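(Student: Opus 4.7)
The plan is to reduce the statement to a combinatorial fact about syllabic length, using the description of $\mathrm{Lk}_D(v_{ab})$ as a graph of orbits. First, by Remark \ref{rem:graph_of_orbits}, we identify $\mathrm{Lk}_D(v_{ab})$ with $\mathrm{Orbit}_{a,b}(\mathrm{Cayley}_{a,b}(A_{ab}))$ equivariantly. In this graph, the two types of unsubdivided vertices are cosets of the form $h\langle a\rangle$ and $h\langle b\rangle$, and two such vertices are joined by an edge-path of length $2$ (through the barycentric vertex indexing their intersection) precisely when $h^{-1}h'\in\langle a\rangle\cdot\langle b\rangle\cup\langle b\rangle\cdot\langle a\rangle$. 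Consequently, distance in $\mathrm{Lk}_D(v_{ab})$ from the basepoint $\langle a\rangle$ to $h\langle a\rangle$ corresponds, up to an additive and multiplicative constant, to the length of the shortest alternating decomposition of $h$ as a product of powers of $a$ and $b$ -- that is, to the syllabic length $\mathrm{syl}(h)$.

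Once this combinatorial identification is in place, proving the orbit map $\langle g\rangle\to\mathrm{Lk}_D(v_{ab})$ is a quasi-isometric embedding reduces to establishing that $\mathrm{syl}(g^n)$ grows linearly in $n$. The upper bound $\mathrm{syl}(g^n)\le n\cdot\mathrm{syl}(g)$ is immediate by concatenating $n$ copies of an alternating expression for $g$. For the lower bound, the positivity hypothesis is crucial: writing $g=c_1^{k_1}\cdots c_s^{k_s}$ with $s=\mathrm{syl}(g)\ge 2$, $k_i\ge 1$, and $c_i\in\{a,b\}$ alternating, the positive word obtained from $n$ copies of this expression has positive syllabic length at least $n(s-1)+1$, since at each concatenation junction at most two adjacent syllables can merge. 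One then argues, using that the dihedral Artin monoid embeds in $A_{ab}$ and that positive elements admit Garside-type normal forms (see e.g.\ \cite{MairesseMatheus} and the description of $\calT_{ab}$ in Definition~\ref{def:quasi_tree_simplices}), that this positive syllabic length agrees with $\mathrm{syl}(g^n)$ up to an additive constant; concretely, any mixed expression for $g^n$ can be rewritten as a positive one without decreasing syllabic length by more than a bounded amount.

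The main obstacle is precisely this last step -- promoting Lemma~\ref{lem:Vaskou_syll_length}, which only asserts $\mathrm{syl}(g^n)\to\infty$, to genuine linear growth. Since the distance function on the link is subadditive along $\langle g\rangle$, Fekete's lemma yields the existence of a stable translation length $L=\lim d(1,g^n)/n$, and the argument sketched above shows $L\ge(s-1)/C>0$ for an absolute constant $C$ controlling the comparison between graph distance and syllabic length. Positivity of $L$, combined with the trivial upper bound $d(1,g^n)\le n\cdot d(1,g)$, is exactly the statement that the orbit map $n\mapsto g^n\cdot\langle a\rangle$ is a quasi-isometric embedding, completing the proof.
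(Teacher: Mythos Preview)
The paper itself gives no proof of this lemma: it is stated with a \qed and attributed to Vaskou. So there is nothing in the paper to compare your argument against directly; the question is whether your argument stands on its own.

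Your reduction of link distance to syllabic length via the graph-of-orbits description is correct up to constants. The difficulty, as you yourself identify, is the linear lower bound on $\mathrm{syl}(g^n)$, and this is where your argument has a genuine gap. You claim that ``any mixed expression for $g^n$ can be rewritten as a positive one without decreasing syllabic length by more than a bounded amount,'' but you do not justify this, and it is essentially equivalent to what you are trying to prove. Invoking Garside normal forms or the monoid embedding does not by itself give this: the braid relation $aba\cdots=bab\cdots$ can change syllabic length when applied in context (for instance $a(aba)a=a^2ba^2$ has syllabic length $3$ while $a(bab)a=ababa$ has syllabic length $5$), so even for positive words the minimal syllabic length is not obviously the syllabic length of the word you wrote down. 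Your bound $n(s-1)+1$ is only for one particular positive word representing $g^n$, not for the element.

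The Fekete argument is fine for establishing that the stable translation length $L$ exists, but it does nothing toward showing $L>0$; that inequality is exactly the content of the lemma, and your proposed route to it is circular. If you want a self-contained proof rather than the citation, you will need to engage with the actual combinatorics of normal forms in $A_{ab}$ --- this is what Vaskou does.
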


 Since the Garside element $\Delta_{ab}$ either conjugates the generators $a, b$ (when $m_{ab}$ is odd) or centralises each of them (when 
$m_{ab}$ is even), the action of $\langle \Delta_{ab} \rangle$ by \textit{right} multiplication induces an action on the 
\textit{left cosets}. Indeed, we have 
$$g\langle a \rangle \Delta_{ab} = g \Delta_{ab} \langle b \rangle ~~ \mbox{ and }  ~~ g\langle b \rangle \Delta_{ab} = 
g \Delta_{ab} \langle a \rangle  ~~\mbox{ if $m_{ab}$ is odd,}$$
$$g\langle a \rangle \Delta_{ab} = g \Delta_{ab} \langle a \rangle ~~ \mbox{ and }  ~~ g\langle b \rangle \Delta_{ab} = 
g \Delta_{ab} \langle b \rangle  ~~\mbox{ if $m_{ab}$ is even.}$$

We now describe the link of $v_{ab}$ in the coned-off Deligne complex. There is a simple characterisation of the trace of a standard tree on the link of a vertex of dihedral type. The following is a reformulation of \cite[Lemma 4.3]{MP}:

\begin{lemma}\label{lem:Garside orbit}
	Two vertices of the link $\link_D(v_{ab})$ correspond to edges in the same standard tree of $D$ if and only if the 
corresponding cosets are in the same $\langle \Delta_{ab} \rangle$-orbit (for the multiplication on the right).
	
	Moreover, two vertices of the link $\link_D(v_{ab})$ that are \textit{in the same $A_{ab}$-orbit} correspond to edges 
in the same standard tree of $D$ if and only if the corresponding cosets are in the same $\langle z_{ab} \rangle$-orbit (for 
the multiplication on the right).\qed
\end{lemma}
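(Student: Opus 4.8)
The plan is to convert the statement about standard trees into a statement about edge stabilisers (via Lemma~\ref{lem:edge_stab_tree}), and then to match that against the $\langle\Delta_{ab}\rangle$-- and $\langle z_{ab}\rangle$--orbit conditions by an explicit, essentially routine, computation inside the dihedral Artin group $A_{ab}$.

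First I would set up the dictionary. By the construction of $D$ (cf.\ Lemma~\ref{lem:link_dihedral_cocompact}), the vertices of $\link_D(v_{ab})$ are the edges of $D$ incident to $v_{ab}$: a vertex of the form $g\langle a\rangle$ or $g\langle b\rangle$ with $g\in A_{ab}$ corresponds to an edge whose stabiliser is the infinite cyclic group $g\langle a\rangle g^{-1}$ or $g\langle b\rangle g^{-1}$ (here we use $g\in A_{ab}$, so that this conjugate lies inside $\stabilizer(v_{ab})=A_{ab}$), while a vertex $g\{1\}$ corresponds to an edge with trivial stabiliser. Since every edge of a standard tree has non-trivial stabiliser, only vertices of the first kind can correspond to edges lying in a standard tree, and by Lemma~\ref{lem:edge_stab_tree} two such edges lie in a common standard tree if and only if they have equal (non-trivial) stabilisers. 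So the first assertion reduces to: for $c,d\in\{a,b\}$ and $g,h\in A_{ab}$ one has $g\langle c\rangle g^{-1}=h\langle d\rangle h^{-1}$ if and only if $g\langle c\rangle=h\langle d\rangle\Delta_{ab}^{k}$ for some $k\in\integers$.

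For the ``if'' direction I would compute directly using the identities displayed just before the lemma (i.e.\ that $\Delta_{ab}$ swaps $a$ and $b$ by conjugation when $m_{ab}$ is odd and is central when $m_{ab}$ is even): if $g\langle c\rangle=h\langle d\rangle\Delta_{ab}^{k}=h\Delta_{ab}^{k}\langle d'\rangle$, then $h\Delta_{ab}^{k}$ and $g$ differ by an element of $\langle d'\rangle$, and a one-line conjugation check gives $h\langle d\rangle h^{-1}=h\Delta_{ab}^{k}\langle d'\rangle\Delta_{ab}^{-k}h^{-1}=g\langle c\rangle g^{-1}$. For the ``only if'' direction, given $g\langle c\rangle g^{-1}=h\langle d\rangle h^{-1}$, I would first right-multiply $h\langle d\rangle$ by a suitable power of $\Delta_{ab}$ — which by the ``if'' direction does not change the associated conjugate, and which cannot occur at all when $m_{ab}$ is even and $c\ne d$ since then $a$ is not conjugate to $b^{\pm1}$ — to reduce to the case $c=d$, say $c=d=a$. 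Then $h^{-1}g$ normalises $\langle a\rangle$ inside $A_{ab}$; since the normaliser of $\langle a\rangle$ in $A_{ab}$ coincides with its centraliser $\langle a,z_{ab}\rangle$ (by the abelianisation argument used in Lemma~\ref{lem:stab_tree}, together with \cite[Lemma~7]{Crisp}), we get $g\langle a\rangle=hz_{ab}^{j}\langle a\rangle=h\langle a\rangle z_{ab}^{j}$ for some $j$, and as $z_{ab}$ is a power of $\Delta_{ab}$ this is the desired conclusion.

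Finally, the second assertion follows at once. Left multiplication by $A_{ab}$ never changes the ``type'' of a coset (the cosets of $\langle a\rangle$ and the cosets of $\langle b\rangle$ are two distinct $A_{ab}$--orbits of vertices of $\link_D(v_{ab})$), so two vertices in a common $A_{ab}$--orbit have the form $g\langle a\rangle,h\langle a\rangle$ or $g\langle b\rangle,h\langle b\rangle$. For such a pair the reduction of the previous paragraph is not needed, and the computation directly shows that they lie in a common standard tree if and only if $g$ and $h$ differ, on the right, by a power of $z_{ab}$ rather than merely of $\Delta_{ab}$; equivalently, restricted to same-type cosets, an odd power of $\Delta_{ab}$ changes the type, so the $\langle\Delta_{ab}\rangle$--orbit relation restricts to the $\langle z_{ab}\rangle$--orbit relation. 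The only step that is not pure bookkeeping is the identification of the normaliser of $\langle a\rangle$ in $A_{ab}$ in the ``only if'' direction; indeed, the whole statement is just a repackaging of \cite[Lemma~4.3]{MP}, and one could instead simply cite that lemma after recording the dictionary in the second paragraph.
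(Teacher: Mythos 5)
Your proof is correct, but note that the paper supplies no argument at all for this lemma: it is stated without proof and described as ``a reformulation of [MP, Lemma~4.3],'' i.e.\ an outsourced citation. You instead give a self-contained computation, and the dictionary you set up---convert ``same standard tree'' into ``equal non-trivial edge stabilisers'' via Lemma~\ref{lem:edge_stab_tree}, then check the resulting equality-of-conjugates statement against right $\langle\Delta_{ab}\rangle$-translation in $A_{ab}$---is exactly the right one and is what [MP, Lemma~4.3] encodes. The only substantive input in your verification is the identification $N_{A_{ab}}(\langle a\rangle)=C_{A_{ab}}(a)=\langle a,z_{ab}\rangle$, obtained from the abelianisation trick plus [Crisp, Lemma~7], a tool the paper already relies on elsewhere (Lemmas~\ref{lem:stab_tree} and \ref{lem:intersection_normalisers}). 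Your case analysis is careful: you discard the trivial-coset vertices, rule out the even-$m_{ab}$ cross-type case via the abelianisation, and deduce the second assertion from the first by noting that same-type cosets force an even power of $\Delta_{ab}$ in the odd case (for $m_{ab}$ even one should add explicitly that $\Delta_{ab}=z_{ab}$ and there is nothing to prove). The trade-off is the usual one: your version is longer but stands alone, the paper's is a one-line pointer, and you even flag the alternative of stopping after the dictionary and citing MP, which is in effect what the paper does.
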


This local characterization of standard trees allows us to simply describe the links of vertices in the coned-off space 
$\widehat{D}$:

\begin{cor}\label{cor: link coneoff}
	 The link $\link_{\widehat{D}}(v_{ab})$ is obtained from $\link_D(v_{ab})$  by coning-off every $\langle \Delta_{ab} 
\rangle$-orbit of vertices corresponding to cosets of the form $g \langle a \rangle$ or $g \langle b \rangle$.

In particular, the action of $\mathrm{Stab}(v_{ab})$ on $\link_{\widehat{D}}(v_{ab})$ is cocompact.
	 \end{cor}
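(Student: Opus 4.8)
The plan is to unwind the construction of $\widehat{D}$ and of links, and then read off the combinatorics from Lemma~\ref{lem:Garside orbit}. Recall that $\widehat{D}$ is the flag completion of the graph obtained from the $1$--skeleton of $D$ by adding, for each standard tree $T$, a vertex $v_T$ joined to every vertex of $T$; in particular this adds no edges among the vertices of $D$ and no edge between two distinct cone vertices. Since both $D$ (an order complex) and $\widehat{D}$ (a flag completion) are flag, links of vertices are full subcomplexes, so $\link_{\widehat{D}}(v_{ab})$ is the flag completion of the graph $\mathcal{G}$ that $\widehat{D}$ induces on the vertices adjacent to $v_{ab}$, and it suffices to describe $\mathcal{G}$. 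First I would list its vertices: these are the vertices of $\link_D(v_{ab})$, namely the cosets $g\langle a\rangle$, $g\langle b\rangle$, $g\{1\}$ with $g\in A_{ab}$ (Lemma~\ref{lem:link_dihedral_cocompact}), together with the cone vertices $v_T$ for the standard trees $T$ with $v_{ab}\in T$; its edges are (i) those of $\link_D(v_{ab})$, (ii) an edge from $v_T$ to $w\in\link_D(v_{ab})^{(0)}$ precisely when $w\in T$, and (iii) no edge between two distinct cone vertices. So the only non-formal point is to determine, for each standard tree $T$ through $v_{ab}$, the set $S_T:=T^{(0)}\cap\link_D(v_{ab})^{(0)}$, and which subsets arise.

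Then I would invoke Lemma~\ref{lem:Garside orbit}. A vertex $g\{1\}$ has trivial stabiliser, hence lies on no edge with non-trivial stabiliser and so in no standard tree, so such vertices acquire no new edges in $\mathcal{G}$. A vertex $g\langle a\rangle$ (similarly $g\langle b\rangle$) spans with $v_{ab}$ an edge $e$ of $D$ with non-trivial stabiliser $g\langle a\rangle g^{-1}$; by Lemma~\ref{lem:edge_stab_tree}, $e$ lies in a standard tree, which contains $v_{ab}$ since $gag^{-1}\in A_{ab}=\mathrm{Stab}(v_{ab})$. Conversely, a standard tree $T$ through $v_{ab}$ is not the single point $v_{ab}$: it is an $A_\Gamma$--translate of some $T_c=\mathrm{Fix}(c)$ with $c$ a standard generator that is non-isolated in $\Gamma$ (being conjugate into $A_{ab}$), and $T_c$ contains the edge $[\langle c\rangle,v_{cd}]$ for any neighbour $d$ of $c$; hence $T$ has an edge at $v_{ab}$, and its far endpoint is a generator-type vertex of $\link_D(v_{ab})$ in $T$. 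So $S_T$ is non-empty, and by Lemma~\ref{lem:Garside orbit} it is exactly one $\langle\Delta_{ab}\rangle$--orbit (for the right action) of generator-type vertices; conversely every such orbit equals $S_T$ for the standard tree determined by the edge at $v_{ab}$ through any of its members. Feeding this back into the description of $\mathcal{G}$ shows that $\link_{\widehat{D}}(v_{ab})$ is the flag completion of $\link_D(v_{ab})$ together with one new vertex joined to each $\langle\Delta_{ab}\rangle$--orbit of cosets $g\langle a\rangle$ or $g\langle b\rangle$, i.e.\ the result of coning off each such orbit, which is the claim.

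For the final assertion I would argue that $\mathrm{Stab}(v_{ab})=A_{ab}$ acts on the graph $\link_{\widehat{D}}(v_{ab})$ with finitely many orbits of cells. It acts cocompactly on $\link_D(v_{ab})$ by Lemma~\ref{lem:link_dihedral_cocompact}. Since left multiplication by $A_{ab}$ commutes with right multiplication by $\Delta_{ab}$, $A_{ab}$ permutes the $\langle\Delta_{ab}\rangle$--orbits of generator-type vertices, with at most two orbits (from the two $A_{ab}$--orbits of $\langle a\rangle$, $\langle b\rangle$), so it has finitely many orbits of cone vertices. Finally, each $\langle\Delta_{ab}\rangle$--orbit is a union of at most two $\langle z_{ab}\rangle$--orbits ($\langle z_{ab}\rangle$ has index $1$ or $2$ in $\langle\Delta_{ab}\rangle$ by Lemma~\ref{lem:centre dihedral}), and $z_{ab}$, being central in $A_{ab}$, fixes each cone vertex and acts on the relevant cosets on the left exactly as on the right; hence the $A_{ab}$--stabiliser of each cone vertex acts with finitely many orbits on its incident (cone) edges. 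Combining, $A_{ab}$ has finitely many orbits of vertices and of edges of $\link_{\widehat{D}}(v_{ab})$, i.e.\ acts cocompactly.

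The only point I expect to require care is the bijection in the second paragraph: one must check that both directions of Lemma~\ref{lem:Garside orbit} line up with ``same standard tree $\Leftrightarrow$ same $\langle\Delta_{ab}\rangle$--orbit'', and rule out a standard tree through $v_{ab}$ collapsing to the single vertex $v_{ab}$; the rest is bookkeeping with cosets and orbits.
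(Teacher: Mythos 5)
Your proof is correct and follows the same approach the paper takes, which is simply to read off the combinatorics from Lemma~\ref{lem:Garside orbit} and the cocompactness from Lemma~\ref{lem:link_dihedral_cocompact}; the paper's own proof is very terse, and you have supplied the bookkeeping (identifying the new cone vertices and their incident edges, ruling out vertices of the form $g\{1\}$, and bounding the orbit count) that the paper leaves implicit.
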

 
 \begin{proof}
 	The description of the link is a consequence of Lemma~\ref{lem:Garside orbit}. The cocompactness of the action of  $\mathrm{Stab}(v_{ab})$ on 
$\link_{D}(v_{ab})$ follows from Lemma~\ref{lem:link_dihedral_cocompact}. Moreover, since there are finitely many $\mathrm{Stab}(v_{ab})$-orbits of edges of $D$ containing $v_{ab}$, there are in particular finitely many orbits of standard trees containing $v_{ab}$, and hence finitely many orbits of apices of standard trees containing $v_{ab}$. The cocompactness of the action of $\mathrm{Stab}(v_{ab})$ on 
$\link_{\widehat{D}}(v_{ab})$ now follows.
 \end{proof}

\section{The commutation graph of an Artin group}\label{sec:commute}

\subsection{The commutation graph}\label{subsec:commutation_graph}

We now construct a complex to which Theorem \ref{thm:main criterion} will be applied, and which turns out to be 
quasi-isometric to the coned-off Deligne complex $\widehat{D}$. This complex is obtained via a general construction that 
encodes the commutation of certain chosen subgroups of a given group that we now describe.

\begin{defn}[Commutation graph]\label{def:commutation graph}
	Let $G$ be a group, and let $\mathcal{H}$ be a set of subgroups of $G$. We define a simplicial graph, called the 
	\textbf{commutation graph} of $\calH$ and denoted $Y_\calH$, as follows. The vertex set of $Y_\calH$ is
	$$\bigsqcup_{H\in\calH} G/N(H),$$
and we put an edge between $gN(H)$ and $hN(H')$ if $gHg^{-1}$ and $hH'h^{-1}$ commute, that is, every element of one subgroup commutes with every element of the other subgroup. Note that this is independent of the choice of coset representatives $g$ and $h$. The group $G$ acts on this graph by left multiplication.
\end{defn}

\begin{rem}
	Commutation graphs have already been considered in the work of Kim--Koberda on right-angled Artin groups under the 
	name of \textit{extension graphs}~\cite{KimKoberda}. Indeed, for a right-angled Artin group $G$ on generators $g_1, \ldots, g_n$, the extension 
	graph they study is exactly the commutation graph for the family $\calH = \big\{ \langle g_1 \rangle, \cdots, \langle g_n 
	\rangle\big\}.$
\end{rem}

We now discuss the family of subgroups of $A_\Gamma$ that yield the correct commutation graph for our purposes.

First, the motivation. The acylindricity of the action of $A_\Gamma$ 
on its coned-off Deligne complex and the CAT($0$) 
geometry of this complex can be shown to imply that the maximal subgroups of $A_\Gamma$ that virtually split as products 
are the dihedral parabolic subgroups (stabilisers of vertices of dihedral type), which are virtual products by 
Lemma \ref{lem:centre dihedral}, and the normalisers of standard generators, which are products by 
Lemma \ref{lem:stab_tree}. It is an exercise, left to the reader as it is not  needed in this 
article, to identify the minimal infinite subgroups obtained by taking arbitrary intersections of such maximal virtual 
products. They come in two families: 
\begin{itemize}
	\item The  cyclic subgroups generated by a conjugate of a standard generator: These subgroups are obtained by taking 
	the intersection of stabilisers of vertices of dihedral type contained in a common standard tree.
	\item  The centres of dihedral parabolic subgroups with $m_{ab}\geq 3$: These subgroups are obtained by taking the intersection of the 
	stabilisers of two  standard trees that share a vertex.
\end{itemize}

\begin{rem}
We know from the work of Paris \cite[Corollary 4.2]{Paris} that two standard generators are conjugate if and only if there is a path in the presentation graph $\Gamma$ consisting of edges with odd labels connecting the corresponding vertices. We can thus choose a set of representatives of conjugacy classes of elements of $V(\Gamma)$, which defines a subset $V_{\mathrm{odd}}(\Gamma)$ of $V(\Gamma)$.
\end{rem}

This motivates the following definition:

\begin{defn}\label{defn:fancy_H}
	 We define the following collection of subgroups of the Artin group  $A_\Gamma$: 
	$$\calH \coloneqq \big\{ \langle a \rangle ~|~ a \mbox{ a standard generator in } V_{\mathrm{odd}}(\Gamma) \big\} \cup \big\{ \langle 
	z_{ab} \rangle ~|~ a, b \mbox{ span an edge of } \Gamma  \mbox{ with } m_{ab}\geq 3 \big\}. $$ 
	We will simply denote by $Y=Y_\Gamma=Y_\calH$  the commutation graph of $\calH$. 
\end{defn}

\begin{lemma}\label{lem:no_conj_H}
	Distinct elements of $\calH$ are in different conjugacy classes. Moreover, if two conjugates of elements of $\calH$ intersect non-trivially, then they are equal.
\end{lemma}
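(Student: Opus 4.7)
The plan is to prove both claims simultaneously via a case analysis on the types (generator vs.\ centre) of the subgroups involved, using the $A_\Gamma$-action on the Deligne complex $D$ together with the homomorphism $\phi : A_\Gamma \to \Z$ sending every standard generator to $1$. The shapes of fixed-point sets in $D$ discriminate sharply between the two types: by Corollary~\ref{cor:tree_power} applied equivariantly, any non-trivial power of a conjugate of a standard generator fixes an unbounded standard tree, whereas, as I will show below, any non-trivial power of a conjugate of a centre $z_{ab}$ fixes exactly one vertex of dihedral type.

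The singleton fixed-point claim is the key technical input that I would establish first. If some non-trivial $z_{ab}^m$ fixed a point of $D$ other than $v_{ab}$, then CAT$(0)$ geodesic uniqueness together with the triviality of triangle stabilisers would force it to fix an edge $e$ incident to $v_{ab}$. The stabiliser of any such $e$ is generated by a conjugate $y = g_0 x g_0^{-1}$ of some $x \in \{a, b\}$ with $g_0 \in A_{ab}$; hence $z_{ab}^m = y^p$ for some $p \neq 0$. Since $z_{ab}^m$ is central in $A_{ab}$ (by Lemma~\ref{lem:centre dihedral} and Lemma~\ref{lem:stab_dihedral}), conjugating the equation by $g_0^{-1}$ yields $z_{ab}^m = x^p$, contradicting Corollary~\ref{cor:power_centre}.

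With this in place, the case analysis is essentially automatic. In the two-generator case ($H = \langle a\rangle$, $H' = \langle a'\rangle$ with $a, a' \in V_{\mathrm{odd}}(\Gamma)$), a common non-trivial element $w = (gag^{-1})^k = (ha'h^{-1})^m$ forces the fixed standard trees to coincide: $g T_a = h T_{a'}$; equating the uniform edge stabilisers of this tree gives $\langle gag^{-1}\rangle = \langle ha'h^{-1}\rangle$, and $\phi$ pins down the sign so that $gag^{-1} = ha'h^{-1}$, yielding $g H g^{-1} = h H' h^{-1}$. If in addition $a \neq a'$, this makes $a$ and $a'$ conjugate in $A_\Gamma$, contradicting the defining property of $V_{\mathrm{odd}}(\Gamma)$ and also ruling out conjugacy of the subgroups. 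The mixed case (generator vs.\ centre) is immediate from the shape mismatch of fixed-point sets, with non-conjugacy following because $|\phi(z_{bc})| \geq m_{bc} \geq 3$ while $\phi(a) = 1$. In the two-centre case, equality of singleton fixed-point sets yields $g v_{ab} = h v_{cd}$, i.e.\ $h^{-1}g \cdot A_{ab} = A_{cd}$ as cosets of $A_\Gamma$; this forces $A_{ab} = A_{cd}$, and Van der Lek gives $\{a,b\} = \{c,d\}$, so $H = H'$, while $h^{-1}g \in A_{ab}$ commutes with the central $z_{ab}$ to deliver equality of the two conjugates.

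The main obstacle I anticipate is the singleton fixed-point claim for $z_{ab}^m$, which is the linchpin for the mixed and two-centre cases. Once it is established, everything else amounts to clean bookkeeping using the fixed-point/edge-stabiliser dictionary developed in Section~\ref{subsec:std_tree} together with applications of $\phi$.
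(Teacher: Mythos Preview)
Your proposal is correct and takes a genuinely different route from the paper. The paper handles the three cases with rather disparate tools: for two generators it uses $\phi$ together with an external citation to Paris to pass from conjugacy of powers to conjugacy of the generators themselves; for the mixed case it distinguishes $\langle a\rangle$ from $\langle z_{a'b'}\rangle$ by comparing the isomorphism types of their centralisers ($\Z\times F_k$ versus a dihedral Artin group, separated via abelianisations); and for two centres it passes through conjugacy of centralisers to reach their fixed-point sets in $D$. Your approach instead computes $\mathrm{Fix}_D$ directly for powers of conjugates in each case and reads off the conclusion from the resulting dichotomy (a standard tree versus a single dihedral-type vertex). This is more uniform, avoids the Paris citation and the centraliser-isomorphism argument, and has the further advantage of transparently covering the $H=H'$ sub-case of the ``moreover'' clause, which the paper's reduction to distinct $x,y$ leaves implicit.

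One minor imprecision: the phrase ``unbounded standard tree'' overstates what you need. In the mixed case you only use that $gT_a$ is never a single dihedral-type vertex, and in the two-generator case you need to identify the pointwise stabiliser of $gT_a$ with $g\langle a\rangle g^{-1}$. Both hold regardless of whether $T_a$ contains an edge (the pointwise stabiliser of $T_a$ is $\langle a\rangle$ in all cases), so replacing ``equating the uniform edge stabilisers'' with ``equating the pointwise stabilisers of the tree'' makes the argument go through without any connectedness hypothesis on $\Gamma$. Your singleton fixed-point claim for $z_{ab}^m$ is essentially the content of the proof of Lemma~\ref{lem:stab_dihedral}, so you may cite that proof rather than reproduce it.
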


\begin{proof}
	We have to show that for two distinct elements $x, y $ of the form $z_{ab}$ or $a\in V_{\mathrm{odd}}(\Gamma)$, no non-trivial power of $x$ is conjugate to a non-trivial power of $y$. We treat several different cases. 
	
	Let $a$, $b$ be two standard generators of $V_{\mathrm{odd}}(\Gamma)$. If non-trivial powers $a^k$ and $b^\ell$ were conjugate, then using the homomorphism $A_\Gamma \rightarrow \Z$ sending every generator to $1$ we would obtain that $k=\ell$. By \cite[Corollary 5.3]{Paris} (which says that an element conjugates $a^k$ to $b^k$ if and only if it conjugates $a$ to $b$), we get that $a$ and $b$ are conjugate, which is excluded by  construction of $V_{\mathrm{odd}}(\Gamma)$. 
	
	The centraliser of a standard generator $a$ has the form $K\times \langle a\rangle$ for some free group $K$ by Lemma \ref{lem:stab_tree}, while the centraliser of an element of the form $z_{a'b'}$ is the dihedral Artin group $A_{a'b'}$ by Lemma \ref{lem:stab_dihedral}. These are not isomorphic for example because the abelianisations can be isomorphic only if $K$ is trivial (consider the cases of $m_{a'b'}$ odd or even), but $A_{a'b'}$ is not isomorphic to $\mathbb Z$ by Lemma \ref{lem:virtually splits}.

Finally, suppose that non-trivial powers of $z_{ab}$ and $z_{a'b'}$ are conjugate. Then their centralisers are also conjugate. By Lemma \ref{lem:stab_dihedral}, these centralisers are $A_{ab}$ and $A_{a'b'}$. In turn, by Lemma \ref{lem:self-normalising}, these have fixed-point sets respectively consisting of $v_{ab}$ and $v_{a'b'}$ only, so that these fixed point sets are not translates of each other. Therefore, the centralisers cannot be conjugate, and the powers of $z_{ab}$ and $z_{a'b'}$ cannot be conjugate.	
\end{proof}

\begin{notation}
	For a standard generator  $a \in \Gamma$, we denote by $u_a$ the vertex of $Y$ corresponding to $N(\langle a 
	\rangle)$. The $A_\Gamma$-translates of such vertices are said to be  \textbf{of tree type}.  Analogously, for standard 
	generators $a, b$ spanning an edge of $\Gamma$ with $m_{ab}\geq 3$, we denote by $u_{ab}$ the vertex of $Y$ corresponding to $N(\langle z_{ab} 
	\rangle)$. The $A_\Gamma$-translates of such vertices are said to be of \textbf{dihedral type}. 
\end{notation}

In the rest of this section, we construct an equivariant quasi-isometry between the commutation graph $Y$ and the 
coned-off Deligne complex $\widehat{D}$. We start by defining such a map at the level of vertices:

\begin{lemma}\label{lem:iota_vertex}
	We define a map $\iota: Y^{(0)} \rightarrow \widehat{D}^{(0)}$ as follows:
	 \begin{itemize}
	 	\item For a  vertex of $Y$ of dihedral type of the form $gu_{ab}$, we set  $\iota(gu_{ab}) \coloneqq 
gv_{ab}$.
	 	\item For a vertex of $Y$ of tree type of the form $gu_a$, we set $\iota(gu_{a}) \coloneqq gv_{a}$.
	 \end{itemize}
 Then $\iota$  is well-defined, is injective, and realises bijections between the vertices of $Y$ of dihedral type and the  vertices of $\widehat{D}$ of dihedral type, as well as between the vertices of $Y$ of tree type and the  vertices of $\widehat{D}$ of tree type.
\end{lemma}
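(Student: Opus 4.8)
The plan is to use that $\iota$ is $A_\Gamma$--equivariant by construction, so it suffices to analyse it one orbit at a time and then check that the images of distinct orbits do not overlap. Recall from Definitions~\ref{def:commutation graph} and~\ref{defn:fancy_H} that $Y^{(0)}=\bigsqcup_{H\in\calH}A_\Gamma/N(H)$ is a genuine disjoint union, so every vertex of $Y$ is of the form $gu_a$ for a unique $a\in V_{\mathrm{odd}}(\Gamma)$, or of the form $gu_{ab}$ for a unique edge $ab$ of $\Gamma$ (with $g$ determined only modulo the relevant normaliser). The key inputs are the stabiliser computations: $N(\langle a\rangle)=\mathrm{Stab}_{\widehat D}(v_a)$ by Lemma~\ref{lem:stab_tree}, and $N(\langle z_{ab}\rangle)=A_{ab}=\mathrm{Stab}_{\widehat D}(v_{ab})$ by Lemma~\ref{lem:stab_dihedral}. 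Thus on each piece $A_\Gamma/N(H)$ the map $\iota$ is precisely the orbit bijection $gN(H)\mapsto g\cdot v$ (with $v=v_a$ or $v=v_{ab}$), so it is well-defined and injective there.

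It remains to rule out collisions between distinct pieces. Tree-type vertices of $\widehat D$ are the apices added in the cone-off construction, hence are never vertices of $D$, whereas dihedral-type vertices are cosets $gA_{ab}$, i.e.\ vertices of $D$; so the tree-type and dihedral-type parts of the image of $\iota$ are automatically disjoint. If $gv_a=hv_b$ with $a,b\in V_{\mathrm{odd}}(\Gamma)$, then $gT_a=hT_b$ is a single standard tree containing both $g\langle a\rangle$ and $h\langle b\rangle$, so $a$ and $b$ are conjugate by Lemma~\ref{lem:tree_label_conjugate}, forcing $a=b$ by the choice of $V_{\mathrm{odd}}(\Gamma)$; hence this collision already happens inside one piece. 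If $gv_{ab}=hv_{a'b'}$, then comparing $\widehat D$--stabilisers gives $gA_{ab}g^{-1}=hA_{a'b'}h^{-1}$, and passing to centres (which are characteristic and equal $\langle z_{ab}\rangle$, $\langle z_{a'b'}\rangle$ by Lemma~\ref{lem:centre dihedral}) yields that $\langle z_{ab}\rangle$ and $\langle z_{a'b'}\rangle$ are conjugate; since both lie in $\calH$, Lemma~\ref{lem:no_conj_H} gives $\langle z_{ab}\rangle=\langle z_{a'b'}\rangle$, i.e.\ $ab=a'b'$. Combined with injectivity on each piece, this shows $\iota$ is injective.

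Finally I would pin down the image. A tree-type vertex of $\widehat D$ is the apex over a standard tree $\mathrm{Fix}(x)$ with $x=gcg^{-1}$ for some standard generator $c$; writing $c=hah^{-1}$ with $a\in V_{\mathrm{odd}}(\Gamma)$ (possible since $V_{\mathrm{odd}}(\Gamma)$ meets every conjugacy class of standard generators), we get $\mathrm{Fix}(x)=gh\,\mathrm{Fix}(a)=gh\,T_a$, so this apex is $gh\,v_a=\iota(gh\,u_a)$. Hence $\iota$ carries the tree-type vertices of $Y$ onto the tree-type vertices of $\widehat D$, and by the previous paragraph this is a bijection. For dihedral type, recall that $A_\Gamma$ is of large type, so every dihedral parabolic of finite type is a conjugate of some $A_{ab}$ with $ab$ an edge of $\Gamma$ (automatically with $m_{ab}\geq 3$); thus every dihedral-type vertex of $\widehat D$ is of the form $gv_{ab}=\iota(gu_{ab})$, and $\iota$ again restricts to a bijection onto the dihedral-type vertices of $\widehat D$. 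The one point requiring genuine care is the non-collision of distinct dihedral-type pieces: one must route through the stabiliser-and-centre computation in order to be in a position to invoke Lemma~\ref{lem:no_conj_H}; everything else is bookkeeping with the orbit--stabiliser correspondence.
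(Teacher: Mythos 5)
Your proof is correct and follows essentially the same outline as the paper's (orbit--stabiliser identification on each piece via Lemmas~\ref{lem:stab_tree} and~\ref{lem:stab_dihedral}, Lemma~\ref{lem:tree_label_conjugate} for tree-type collisions, and surjectivity by writing a conjugate of a standard generator through a representative in $V_{\mathrm{odd}}(\Gamma)$). The one point where you diverge is in ruling out collisions between distinct dihedral-type pieces: the paper simply observes that in both $Y$ and $\widehat D$ the dihedral-type vertices are the cosets $gA_{ab}$ (relying implicitly on the fact, via Van der Lek, that a coset of a standard dihedral parabolic determines the underlying edge of $\Gamma$), whereas you recover $\{a,b\}=\{a',b'\}$ by passing from the equality of vertex stabilisers to equality of their centres and then invoking Lemma~\ref{lem:no_conj_H}. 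Both arguments are valid; yours is a bit more explicit at that step, but it is not a genuinely different route.
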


\begin{proof}
We have to show the following properties:

\begin{itemize}
 \item if $gN(\langle z_{ab}\rangle)=hN(\langle z_{ab}\rangle)$, then $gv_{ab}=hv_{ab}$.
 \item if $gN(\langle a\rangle)=hN(\langle a\rangle)$, where $a\in V_{\mathrm{odd}}(\Gamma)$, then $gv_a=hv_a$.
\end{itemize}

If $gN(\langle z_{ab}\rangle)=hN(\langle z_{ab}\rangle)$, then $g^{-1}h \in N(\langle z_{ab}\rangle)=A_{ab}$, the latter equality following from Lemma~\ref{lem:stab_dihedral}, and we know from Lemma~\ref{lem:stab_dihedral} that $N(\langle z_{ab}\rangle)$ stabilises $v_{ab}$. It thus follows that $hv_{ab} = g(g^{-1}h)v_{ab} = gv_{ab}$, so $\iota$ is well defined on vertices of dihedral type. Moreover, it in fact realises a bijection on vertices of dihedral type as we just saw that in both $Y$ and $\widehat D$ these vertices are cosets of the $A_{ab}$. 

If $gN(\langle a\rangle)=hN(\langle a\rangle)$, then $g^{-1}h \in N(a)$, and we know from Lemma~\ref{lem:stab_tree} that $N(a)$ stabilises $v_{a}$. It thus follows that $hv_{a} = g(g^{-1}h)v_{a} = gv_{a}$, so $\iota$ is well defined on vertices of tree type. 

Let us show that $\iota$ is surjective on vertices of tree type. Let $gv_a$ be a vertex of $\widehat D$ of tree type. By construction, there exists a standard generator $b \in V_{\mathrm{odd}}(\Gamma)$ and an element $x\in A_\Gamma$ such that $xbx^{-1}=a$. It thus follows that $xv_b=v_a$, and in particular $gv_a = gxv_b = \iota(gxu_b)$.

Let us now show that $\iota$ is injective on vertices of tree type. Consider $a,b\in V_{\mathrm{odd}}(\Gamma)$ and $g,h\in A_{\Gamma}$ such that $gv_a=hv_b$. We want to show that $gu_{a}=hu_b$. By construction of vertices of tree type, this means that the standard tree of $D$ containing the vertex $g\langle a \rangle$ and the standard tree of $D$ containing the vertex $h\langle b \rangle$ coincide. 
In particular, the vertices $g\langle a \rangle$ and $h\langle b \rangle$ of $D$ are in the same standard tree, and it follows from Lemma~\ref{lem:tree_label_conjugate} that $a$ and $b$ are connected by a path of $\Gamma$ with odd labels. By definition of $V_{\mathrm{odd}}(\Gamma)$, this implies that $a=b$. We thus have $gv_a=hv_a$, and so $g^{-1}h\in \mbox{Fix}(v_a)=N(a)$, the latter equality following from Lemma~\ref{lem:stab_tree}. We thus have $gN(a)=hN(a)$, hence $gu_{a}=hu_a=hu_b$, which shows injectivity. 

Note that the injectivity of $\iota: Y^{(0)} \rightarrow \widehat{D}^{(0)}$ is now straightforward, since the sets of vertices of tree type and dihedral type of $\widehat{D}$ are disjoint.
\end{proof}

\begin{lemma}
The map $\iota$  is well-defined, is injective, and realises bijections between the vertices of $Y$ of dihedral type and the  vertices of $\widehat{D}$ of dihedral type, as well as between the vertices of $Y$ of tree type and the  vertices of $\widehat{D}$ of tree type.
	
Moreover, two vertices $v, v'$ of $Y$ are adjacent if and only if the following occurs: One of them (say $v$) is of dihedral type, the other (say $v'$) is of 
	tree type, and $\iota(v)$ is contained in the standard tree having $\iota(v')$ as apex. 
	
	In particular, $Y$ is a bipartite graph with respect to the type of vertices.
\label{lem:iota}
\end{lemma}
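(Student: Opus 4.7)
The plan is straightforward: the well-definedness, injectivity, and type-preserving bijection claims already appear verbatim in Lemma~\ref{lem:iota_vertex}, so I would simply cite that lemma for the first paragraph of the statement. The remainder of the proof concerns the adjacency characterisation, and the bipartiteness conclusion will be immediate from it. My approach is to reformulate commutation in $Y$ as a fixed-point condition in $\widehat D$, and then argue case by case on the types of $v, v'$.

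For the reformulation, write $H_v$ for the cyclic subgroup associated to $v$ in the obvious way ($\langle gag^{-1}\rangle$ if $v=gu_a$, and $\langle gz_{ab}g^{-1}\rangle$ if $v=gu_{ab}$). By Definition~\ref{def:commutation graph}, $v\sim v'$ iff the generators of $H_v$ and $H_{v'}$ centralise each other. Lemmas~\ref{lem:stab_tree} and~\ref{lem:stab_dihedral} identify the relevant centralisers with the $\widehat D$-stabilisers of the corresponding tree-type and dihedral-type vertices, so $v\sim v'$ is equivalent to: the generator of $H_v$ fixes $\iota(v')$ in $\widehat D$, and symmetrically. The tree--dihedral case is then a direct computation: if $v=gu_{ab}$ and $v'=hu_c$, adjacency means $hch^{-1}\in\mathrm{Stab}(gv_{ab})$, i.e.\ $gv_{ab}\in\mathrm{Fix}_D(hch^{-1})=hT_c$, which is exactly the containment condition in the statement.

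The dihedral--dihedral case will rest on the observation that $\mathrm{Fix}_D(gz_{ab}g^{-1})$ is the single vertex $\{gv_{ab}\}$: any additional fixed point would force $gz_{ab}g^{-1}$ to fix an edge of $D$ incident to $gv_{ab}$, hence to lie in the edge stabiliser $g\langle a\rangle g^{-1}$ or $g\langle b\rangle g^{-1}$, contradicting Corollary~\ref{cor:power_centre}. Therefore, if two dihedral-type vertices $v,v'$ are adjacent, the commutation forces $\iota(v)=\iota(v')$, hence $v=v'$ by the injectivity of $\iota$; since a simple graph has no self-loops, there are no such edges.

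The tree--tree case will be the main obstacle. Suppose $x=gag^{-1}$ and $y=hbh^{-1}$ commute with $gv_a\neq hv_b$; my goal is to derive a contradiction. By the reformulation, each of $x,y$ fixes both apices $gv_a$ and $hv_b$ in $\widehat D$, and by convexity of fixed-point sets in the CAT($-1$) space $\widehat D$ (Remark~\ref{conv:metric_Dhat}), both $x$ and $y$ fix pointwise the geodesic $\gamma$ from $gv_a$ to $hv_b$. Since the only neighbours of the apex $gv_a$ in $\widehat D$ are vertices of $gT_a$ (cone construction), $\gamma$ must traverse at least one $D$-vertex $p\in gT_a$, whose $A_\Gamma$-stabiliser contains both $x$ and $y$. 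If $p$ is of cyclic type then $\mathrm{Stab}(p)$ is infinite cyclic and the abelianisation $A_\Gamma\to\mathbb Z$ forces $x=y$. If $p$ is of dihedral type $kA_{cd}k^{-1}$, then the parabolic intersection property of van der Lek makes the cyclic parabolic closures of $x$ and $y$ standard parabolic subgroups of $kA_{cd}k^{-1}$, so $x$ and $y$ are commuting conjugates of standard generators inside the dihedral Artin subgroup $kA_{cd}k^{-1}$, and Lemma~\ref{lem:dihedral Z2} again forces $x=y$. Either way $x=y$ yields $gT_a=hT_b$ and hence $gv_a=hv_b$, contradicting our hypothesis. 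The bipartiteness assertion is then immediate from the adjacency characterisation just proved.
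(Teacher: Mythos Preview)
Your proof is essentially correct and takes a genuinely different, more elementary route than the paper's. The paper first observes that adjacent vertices give a $\mathbb{Z}^2$ subgroup (via Lemma~\ref{lem:no_conj_H}), invokes acylindricity of the $A_\Gamma$--action on the CAT(0) space $\widehat D$ to produce a common fixed point, and then runs a ping-pong argument on a standard tree (the unproved ``Claim'' in the paper) to rule out the dihedral--dihedral and part of the tree--dihedral configurations. Your argument bypasses both acylindricity and ping-pong: the reformulation of adjacency via $C(z_v)=\stabilizer(\iota(v))$ (Lemmas~\ref{lem:stab_tree},~\ref{lem:stab_dihedral}) is clean, the dihedral--dihedral case collapses immediately once you know $\mathrm{Fix}_D(gz_{ab}g^{-1})=\{gv_{ab}\}$, and the tree--tree case is handled by a direct convexity argument in $\widehat D$ to locate a common fixed vertex in $D$.

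Two small points need tightening. First, the CAT($-1$) geodesic $\gamma$ need not pass through a \emph{vertex} of $gT_a$: it may cross $gT_a$ at an interior point of an edge. This is harmless --- an interior edge point has cyclic stabiliser (namely $\langle x\rangle$), and your abelianisation argument still forces $x=y$ --- but you should say ``$\gamma$ must cross $gT_a$ at some point $p$'' and include the edge-interior subcase. Second, your appeal to ``the parabolic intersection property of van der Lek'' does not establish that $x,y$ are conjugates of standard generators \emph{inside} $kA_{cd}k^{-1}$; van der Lek's result concerns intersections of standard parabolics, not the ``parabolic of a parabolic'' statement you need. A simpler argument works: since $p\in gT_a$ and $gT_a$ is a tree, there is an edge of $gT_a$ incident to $p$ fixed by $x$; edges of $D$ at a dihedral vertex $p=kv_{cd}$ have stabiliser $kA_{cd}k^{-1}$--conjugate to $k\langle c\rangle k^{-1}$ or $k\langle d\rangle k^{-1}$, so $x$ is such a conjugate (using the abelianisation to pin down the exponent). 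The same applies to $y$ since $y$ fixes $p\in D$, whence $p\in hT_b$ as well. Then Lemma~\ref{lem:dihedral Z2} (whose proof actually shows distinct such conjugates do not commute) gives $x=y$ as you claim.
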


\begin{proof}
The first statement is exactly Lemma~\ref{lem:iota_vertex}. Let us now characterise the edges of $Y$.

	Consider two vertices $v=gN(H), v'=hN(H')$ of $Y$ that are connected by an edge. The subgroups $gHg^{-1}, hH'h^{-1}$ are infinite cyclic, and we denote by  $z_v, z_{v'}$  the associated generators. The elements $z_v, z_{v'}$ commute by definition of $Y$, and they
generate a $\mathbb{Z}^2$ subgroup of $A_\Gamma$ by Lemma~\ref{lem:no_conj_H}. Since the action of $A_\Gamma$ on $\widehat{D}$ is acylindrical and $\widehat{D}$ is 
CAT(0), this subgroup must fix a point. In particular, the fixed-point sets in $\widehat{D}$ of $z_v$ and $z_{v'}$ have a 
non-trivial intersection. We will need the following standard result from group actions on trees, whose proof we omit,  to show that certain configurations are impossible: 

\medskip

\textbf{Claim:} Let $G$ be a group acting on a simplicial tree $T$ by isometries, let $w, w'$ be two distinct vertices of $T$, and let $g, g'$ be two elements of $G$ such that for all non-zero $k\in \Z$, we have $\mbox{Fix}_T(g^k) = \{w\}$ and $\mbox{Fix}_T((g')^k) = \{w'\}$. Then $g, g'$ generate a non-abelian free subgroup. 

\medskip

 We now consider several cases, depending on the type (dihedral or tree) of the corresponding vertices.

\medskip 
		
Case 1: Suppose by contradiction that $v$ and $v'$ are adjacent vertices of $Y$ of dihedral type. Recall that for an element of the form $z_{ab}$, we have $\mbox{Fix}_{D}(z_{ab}) = \{v_{ab}\}$ by Lemma~\ref{lem:stab_dihedral}. Since $\mbox{Fix}_{\widehat{D}}(z_{v}) \cap \mbox{Fix}_{\widehat{D}}(z_{v'})$ is non-empty, these fixed-point sets intersect along at least (at least) one vertex $w$, which must be of tree type since $\mbox{Fix}_{D}(z_{v}) \cap \mbox{Fix}_{D}(z_{v'})=\varnothing$. This vertex correspond to a standard tree $T$ stabilised by $\langle z_v, z_{v'}\rangle$. Moreover, $T$ contains both vertices $\iota(v)$ and $\iota(v')$: Indeed, $z_v$ stabilises the unique CAT(0) geodesic  between $\iota(v)$ and $T$, and since $z_v$ is not conjugate to a power of a standard generator by Lemma~\ref{lem:no_conj_H}, $z_v$ cannot stabilise an edge of $D$, and it follows that this geodesic is reduced to a point (and similarly for $z_{v'}$). Thus, $\langle z_v, z_{v'}\rangle$ acts on $T$, and since no non-trivial power of $z_v$ or $z_{v'}$ is conjugate to a standard generator by Lemma~\ref{lem:no_conj_H}, it follows that for all non-zero $k\in \Z$, we have $\mbox{Fix}_T(z_v^k) = \{\iota(v)\}$ and $\mbox{Fix}_T((z_{v'})^k) = \{\iota(v')\}$. It now follows from the Claim that $z_v$  and $z_{v'}$ generate a non-abelian free subgroup, a contradiction. 

\medskip 

	Case 2: Let us now assume that $v, v'$ are two adjacent vertices of $Y$ of tree type. By the above argument,  $z_v$ and $z_{v'}$ fix a vertex $w$ of $\widehat{D}$. 
	
	If $w$ is a vertex of $D$, then it belongs to    $\mbox{Fix}_{D}(z_{v}) \cap \mbox{Fix}_{D}(z_{v'})  = T_{\iota(v)}\cap T_{\iota(v')}$. Since distinct standard trees of $D$ meet in at most one vertex by Corollary~\ref{cor:intersect_trees}, we can assume that $w$ is that common vertex. Up to conjugation, we can thus assume that $z_v$ and $z_{v'}$ are two distinct commuting conjugates of standard generators of a dihedral Artin group $A_{ab}$, which contradicts Lemma~\ref{lem:dihedral Z2}.

	\medskip

	Case 3: Finally, we  assume that $v$ is of dihedral type and $v'$ of tree type.  Since $\mbox{Fix}_{\widehat{D}}(z_{v}) \cap \mbox{Fix}_{\widehat{D}}(z_{v'})$ is non-empty,   let $w$ be a vertex of $\widehat{D}$ in that intersection. If $w$ belongs to $D$, then in particular $w$ belongs to $\mbox{Fix}_{D}(z_{v}) \cap \mbox{Fix}_{D}(z_{v'}) = \{\iota(v)\} \cap T_{\iota(v')}$, so $\iota(v)$ is contained in $\mbox{Fix}_{D}(z_{v'})= T_{\iota(v')}$, as required. 
	
	Let us show that this is the only possibility. By contradiction, if $\mbox{Fix}_{\widehat{D}}(z_{v}) \cap \mbox{Fix}_{\widehat{D}}(z_{v'})$ does not contain any vertex of $D$, then these fixed-point sets intersect along (at least) one vertex, which must be of tree type since $\mbox{Fix}_{D}(z_{v}) \cap \mbox{Fix}_{D}(z_{v'})=\varnothing$. This vertex corresponds to a standard tree $T$. Moreover, the same reasoning as in the previous cases shows that $T$ contains $\iota(v)$ and intersects $T_{\iota(v')}$. We thus have that $\langle z_v, z_{v'}\rangle$ induces an action on $T$, $z_v$ fixes the vertex $w\coloneqq {\iota(v)}$, $z_{v'}$ fixes the vertex $w\coloneqq T_{\iota(v')}\cap T$, $w, w'$ are distinct since $\mbox{Fix}_{D}(z_{v}) \cap \mbox{Fix}_{D}(z_{v'})$ is empty by assumption. Moreover, for every non-zero integer $k$, we have $\mbox{Fix}_{T}(z_v^k)=\{w\} $ since no non-trivial power of an element of the form $z_{ab}$ is conjugate to a standard generator by Lemma~\ref{lem:no_conj_H}, and for every non-zero integer $k$, we have that $\mbox{Fix}_{T}(z_{v'}^k)=\{w'\} $ by Lemma~\ref{lem:edge_stab_tree} since $T$ is a standard tree distinct from $T_{\iota(v')}$. It now follows from the Claim that $z_v$  and $z_{v'}$ generate a non-abelian free subgroup, a contradiction. 
\end{proof}

The previous lemma is useful in understand the structure of the graph $Y$. In particular, we have the following results:

\begin{lemma}
	The graph $Y$ does not contain any triangle or square.
	\label{lem:no square}
\end{lemma}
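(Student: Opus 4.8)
The plan is to deduce both statements from the combinatorial description of the edges of $Y$ given in Lemma~\ref{lem:iota}, together with the injectivity of $\iota$ (Lemma~\ref{lem:iota_vertex}) and the fact that distinct standard trees of $D$ meet in at most one vertex (Corollary~\ref{cor:intersect_trees}). For triangles, the key observation is that Lemma~\ref{lem:iota} asserts that $Y$ is bipartite with respect to the partition of its vertices into those of tree type and those of dihedral type. A triangle is an odd cycle, so it cannot occur in a bipartite graph, and this disposes of triangles with no further work.

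For squares, I would argue as follows. A $4$-cycle in the bipartite graph $Y$ must alternate between the two parts, so it consists of pairwise distinct vertices $v_1,v_2$ of dihedral type and $w_1,w_2$ of tree type, with edges $v_1w_1$, $w_1v_2$, $v_2w_2$, $w_2v_1$. By Lemma~\ref{lem:iota}, each such edge records that the image under $\iota$ of its dihedral-type endpoint lies in the standard tree whose apex is the image under $\iota$ of its tree-type endpoint. Applying this to the two edges incident to $w_1$ shows that $\iota(v_1)$ and $\iota(v_2)$ both lie in the standard tree $T_1$ with apex $\iota(w_1)$; applying it to the two edges incident to $w_2$ shows that both also lie in the standard tree $T_2$ with apex $\iota(w_2)$. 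Since $w_1\neq w_2$ and $\iota$ is injective, the apices $\iota(w_1)$ and $\iota(w_2)$ are distinct, so $T_1$ and $T_2$ are distinct standard trees; by Corollary~\ref{cor:intersect_trees} they share at most one vertex, forcing $\iota(v_1)=\iota(v_2)$, and hence $v_1=v_2$ by injectivity of $\iota$, contradicting the distinctness of the square's vertices.

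The argument is short once Lemma~\ref{lem:iota} is in hand; the only points that require a little care are verifying that the four vertices of a genuine square are pairwise distinct (so that in particular $w_1\neq w_2$, which is exactly what makes $T_1\neq T_2$) and that the passage between the commutation graph and the coned-off Deligne complex via $\iota$ is applied correctly to each of the four edges. I do not anticipate a serious obstacle here.
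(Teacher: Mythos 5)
Your proof is correct and follows essentially the same route as the paper: triangles are ruled out by bipartiteness, and a square would, via $\iota$ and the edge characterization of Lemma~\ref{lem:iota}, yield two distinct standard trees sharing two distinct vertices, contradicting Corollary~\ref{cor:intersect_trees}. You simply spell out in more detail the step the paper compresses into one sentence.
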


\begin{proof}  Since $Y$ is bipartite with respect to the type of vertices by Lemma~\ref{lem:iota}, it does not contain triangles.

	Let us now show by contradiction that $Y$ does not contain squares. Since $Y$  is bipartite with respect to the type of vertices by Lemma~\ref{lem:iota}, such a square would contain exactly two 
opposite vertices of dihedral type and two opposite vertices of tree type. 

 In $D_\Gamma$, this would correspond to two distinct standard trees intersecting in at least two different vertices, which contradicts Corollary~\ref{cor:intersect_trees}.
\end{proof}

\begin{lemma}\label{lem:Y_conn}
	If $\Gamma$ is connected, then the commutation graph $Y$ is connected.
\end{lemma}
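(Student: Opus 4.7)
The plan is to exhibit a connected component $C$ of $Y$ that meets every $A_\Gamma$-orbit of vertices, then promote $C$ to be $A_\Gamma$-invariant via a stabiliser argument. To set up notation, for each standard generator $a \in V(\Gamma)$, let $\tilde u_a$ denote the unique vertex of $Y$ of tree type whose image under $\iota$ is the apex of the standard tree $T_a$: if $a \in V_{\mathrm{odd}}(\Gamma)$ then $\tilde u_a = u_a$, while if $a = ga'g^{-1}$ with $a' \in V_{\mathrm{odd}}(\Gamma)$ (such $a'$ exists by Paris's result recalled before Lemma \ref{lem:tree_label_conjugate}), then $\tilde u_a = gu_{a'}$. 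In either case one has $\mathrm{Stab}(\tilde u_a) = N(\langle a \rangle) \ni a$.

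The key local observation is that for every edge $\{a,b\}$ of $\Gamma$ (which has $m_{ab}\geq 3$ by large type, so $u_{ab}$ exists), the vertex $u_{ab}$ is adjacent in $Y$ to both $\tilde u_a$ and $\tilde u_b$. Indeed, since $a \in A_{ab} = \mathrm{Stab}(v_{ab})$ we have $v_{ab} \in \mathrm{Fix}(a) = T_a$, so the criterion in Lemma \ref{lem:iota} gives adjacency of $u_{ab}$ and $\tilde u_a$ (the tree-type vertex whose associated standard tree has apex $\iota(\tilde u_a) = v_a$); the argument for $\tilde u_b$ is symmetric. Since $\Gamma$ is connected, iterating these length-two paths $\tilde u_a - u_{ab} - \tilde u_b$ along any path in $\Gamma$ places all vertices $\tilde u_a$ for $a \in V(\Gamma)$, together with all $u_{ab}$ for edges of $\Gamma$, in a common connected component $C$ of $Y$.

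To promote $C$ to be $A_\Gamma$-invariant, I would set $G = \{g \in A_\Gamma : gC = C\}$, which is a subgroup of $A_\Gamma$. Since $\tilde u_a \in C$ for every $a \in V(\Gamma)$, and stabilisers of vertices in $C$ preserve $C$, we get $N(\langle a \rangle) = \mathrm{Stab}(\tilde u_a) \subseteq G$; in particular $a \in G$. Hence $G$ contains all standard generators, forcing $G = A_\Gamma$. Now the $A_\Gamma$-orbits of vertices of $Y$ are parameterised by $\calH$, with orbit representatives $u_{a'}$ for each $a' \in V_{\mathrm{odd}}(\Gamma)$ and $u_{ab}$ for each edge of $\Gamma$; each such representative already lies in $C$, so $A_\Gamma$-invariance yields $C = Y$.

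I do not foresee any real obstacle here: Lemma \ref{lem:iota} reduces adjacency in $Y$ to the elementary fact that a standard generator $a$ fixes $v_{ab}$, and the rest is a standard orbit/stabiliser argument. The only nuance is that when $a \notin V_{\mathrm{odd}}(\Gamma)$, there is no vertex $u_a$ and one must work with its translate $\tilde u_a$; however, both Lemma \ref{lem:iota} and the stabiliser identification $\mathrm{Stab}(\tilde u_a) = N(\langle a \rangle)$ are $A_\Gamma$-equivariant, so this is purely bookkeeping.
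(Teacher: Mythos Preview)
Your proof is correct and follows essentially the same route as the paper's: both use the chain $\tilde u_a - u_{ab} - \tilde u_b$ along edges of $\Gamma$ to place the basic vertices in one component, and both use that $A_\Gamma$ is generated by the standard generators (each of which stabilises some $\tilde u_a$) to promote this to all of $Y$. The paper phrases the second step tersely as ``since $A_\Gamma$ is generated by its standard generators, it suffices to connect all $u_a$ to one another'' and then notes that dihedral-type vertices are adjacent to tree-type ones, whereas you package the same idea via the invariant-component subgroup $G=\{g:gC=C\}$; your version is also more careful than the paper about the $V_{\mathrm{odd}}$ bookkeeping, but these are cosmetic differences.
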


\begin{proof}
	Let us first show that any two vertices of $Y$ of tree type 
	are connected by a path of $Y$. Since $A_\Gamma$ is 
generated by its standard generators, it is enough to show that every pair of vertices of tree type of the form $u_{a}$ 
and $u_{b}$ are connected by a path of $Y$. Let $a$, $b$ be two vertices of $\Gamma$. Since $\Gamma$ is connected, let $c_1 =a, \ldots, c_n=b$ be a sequence of vertices of $\Gamma$ such that $c_i$ 
and $c_{i+1}$ are adjacent for every $i$. Then the sequence of vertices 
$$u_{c_1}, u_{c_1, c_2}, u_{c_2}, \ldots, u_{c_i}, u_{c_i, c_{i+1}}, u_{c_{i+1}}, \dots, u_{c_n}$$
defines a combinatorial path of $Y$ between $u_a$ and $u_b$.  
	
	Moreover, since every vertex of dihedral type of $Y$ is connected by an edge to some vertex of $Y$ of tree type, it 
follows that $Y$ is connected.
\end{proof}

\begin{lemma}\label{lem:cocompact_Y}
	The action of $A_\Gamma$ on $Y$ is cocompact.
\end{lemma}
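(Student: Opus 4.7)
The plan is to show both that the action has finitely many orbits of vertices and finitely many orbits of edges, using the identification of vertices (resp.\ edges) of $Y$ with structures in $\widehat{D}$ on which $A_\Gamma$ already acts cocompactly.

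First, I would handle the vertices. By Definition~\ref{defn:fancy_H}, the vertex set of $Y$ is the disjoint union
\[
\bigsqcup_{a\in V_{\mathrm{odd}}(\Gamma)} A_\Gamma/N(\langle a\rangle)\;\sqcup\;\bigsqcup_{\{a,b\}\in E(\Gamma),\,m_{ab}\geq 3} A_\Gamma/N(\langle z_{ab}\rangle),
\]
so that $A_\Gamma$ acts transitively on each factor. Since $\Gamma$ is finite, there are finitely many factors, hence finitely many $A_\Gamma$--orbits of vertices: one for each $u_a$ with $a\in V_{\mathrm{odd}}(\Gamma)$, and one for each $u_{ab}$ with $\{a,b\}$ an edge of $\Gamma$ carrying label $\geq 3$.

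Next I turn to edges. Since $Y$ is bipartite (Lemma~\ref{lem:iota}), every edge of $Y$ has exactly one dihedral-type endpoint. It therefore suffices, given finitely many orbits of dihedral-type vertices, to show that for each standard dihedral-type vertex $u_{ab}$ there are only finitely many $\mathrm{Stab}(u_{ab})$--orbits of edges of $Y$ incident to $u_{ab}$. By Lemma~\ref{lem:iota_vertex}, $\mathrm{Stab}(u_{ab})=N(\langle z_{ab}\rangle)=A_{ab}$ (using Lemma~\ref{lem:stab_dihedral}), and the edges of $Y$ at $u_{ab}$ are in $A_{ab}$--equivariant bijection, via $\iota$, with the tree-type vertices of $\widehat{D}$ adjacent to $v_{ab}$, i.e.\ with the tree-type vertices of $\link_{\widehat{D}}(v_{ab})$.

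The final ingredient is Corollary~\ref{cor: link coneoff}, which gives that the action of $\mathrm{Stab}(v_{ab})=A_{ab}$ on $\link_{\widehat{D}}(v_{ab})$ is cocompact; in particular there are finitely many $A_{ab}$--orbits of tree-type vertices in this link. Combining this with the finitely many orbits of dihedral-type vertices of $Y$, one concludes that $A_\Gamma$ acts on the edge set of $Y$ with finitely many orbits. Together with the vertex count, this yields cocompactness. I do not expect a real obstacle here: the argument is just a bookkeeping exercise once the identifications via $\iota$ and the cocompactness of the action on $\link_{\widehat{D}}(v_{ab})$ are invoked.
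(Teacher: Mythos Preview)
Your argument is correct. The paper's proof is a slight variant: rather than working locally at each dihedral-type vertex via the cocompactness of the action on $\link_{\widehat{D}}(v_{ab})$, it observes directly that the characterisation of edges in Lemma~\ref{lem:iota} gives an $A_\Gamma$--equivariant injection from the edge set of $Y$ into the edge set of $\widehat{D}$, and then appeals to the global cocompactness of the $A_\Gamma$--action on $\widehat{D}$. Your route unpacks essentially the same information vertex-by-vertex, using that link cocompactness (Corollary~\ref{cor: link coneoff}) is itself a consequence of the cocompactness of the action on $\widehat{D}$; the paper's version is a one-liner, while yours makes the bipartite structure and the role of stabilisers more explicit.
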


\begin{proof}
	There are finitely many orbits of vertices of $Y$ by construction, since the family $\calH$ is finite. It follows from Lemma~\ref{lem:iota} and the characterisation of the edges of $Y$ that the set of edges of $Y$ equivariantly embeds into the set of edges of $\widehat{D}$. Since the action of $A_\Gamma$ on $\widehat{D}$ is cocompact, it follows that there are only finitely many $A_\Gamma$-orbits of edges of $Y$, hence the result.  
\end{proof}

Lemma \ref{lem:iota} allows us to extend the map $\iota$ to a map from $Y$ to $\widehat{D}$:

\begin{defn}\label{def:iota}
	 We extend the map $\iota: Y^{(0)} \rightarrow \widehat{D}^{(0)}$  into a  simplicial  and $A_\Gamma$-equivariant map 
$\iota: Y \rightarrow \widehat{D}$ as follows: An edge between a vertex $u$ of dihedral type and a vertex $ u'$ of tree type is sent to the corresponding edge of 
	$\widehat{D}$ between $\iota(u)$ and $\iota(u')$. 
	\end{defn}

\begin{lemma}\label{lem: extending iota}
	Suppose that the graph $\Gamma$ is connected. Then the map $\iota: Y \rightarrow \widehat{D}$ is an $A_\Gamma$- equivariant quasi-isometry. 
More precisely, $\iota$ embeds $Y$ as a coarsely dense subgraph of $\widehat{D}$.
	\label{lem:iota QI}
\end{lemma}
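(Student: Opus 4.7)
The plan is to establish three properties: $\iota$ is $1$-Lipschitz, its image is coarsely dense in $\widehat D$, and the $Y$-distance between image vertices is linearly controlled by $\widehat D$-distance. The first is immediate: since $\iota$ is simplicial and injective on vertices, $d_{\widehat D}(\iota(v), \iota(v')) \le d_Y(v, v')$. For coarse density, observe that the vertices of $\widehat D$ outside $\iota(Y^{(0)})$ are exactly the cyclic cosets $g\langle a\rangle$ and the trivial cosets $g\{1\}$. A cyclic coset lies on a standard tree and is $\widehat D$-adjacent to that tree's apex, which is an image vertex; a trivial coset is adjacent in $D$ to some dihedral vertex $gv_{ab}$, using that $\Gamma$ is connected and not a single vertex. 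Hence $\iota(Y^{(0)})$ is $1$-dense in $\widehat D^{(0)}$.

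The core step is to establish a uniform constant $M = M(\Gamma)$ such that image vertices at $\widehat D$-distance at most $2$ are at $Y$-distance at most $M$. I would analyse the configurations by the type of an intermediate $\widehat D$-vertex $w$. If $w$ itself lies in $\iota(Y^{(0)})$, both edges of the $\widehat D$-path translate to $Y$-edges by Lemma \ref{lem:iota}, giving $d_Y \le 2$. If $w$ is a cyclic vertex, then Corollary \ref{cor:intersect_trees} forces both image endpoints to lie in the unique standard tree containing $w$; its apex is in $\iota(Y^{(0)})$ and $Y$-adjacent to both, again giving $d_Y \le 2$. The delicate subcase is when $w = g\{1\}$ is a trivial coset, so that both endpoints are dihedral $gv_{ab}, gv_{a'b'}$ sharing $g\{1\}$: if $\{a, b\}\cap\{a', b'\}\neq\emptyset$ a common generator yields a shared standard tree and $d_Y \le 2$, and otherwise I would concatenate the tree- and dihedral-type vertices $gu_{c_i}, gu_{c_i, c_{i+1}}$ along a combinatorial path $a = c_0, c_1, \ldots, c_n = a'$ in $\Gamma$ as in the proof of Lemma \ref{lem:Y_conn}, obtaining a $Y$-path of length at most $2\,\mathrm{diam}(\Gamma) + 2$.

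To upgrade to a linear bound on arbitrary $\widehat D$-distances, I would use coarse density to project every vertex along a $\widehat D$-geodesic between image vertices to a $\widehat D$-adjacent image vertex; consecutive projected vertices then lie at $\widehat D$-distance at most $3$. The case analysis above extends easily to distance $3$ (the only new situation being a consecutive non-image pair of geodesic vertices, which one handles by the same shared-tree or $\Gamma$-path argument), and iterating along the geodesic gives the linear bound; coarse surjectivity extends this to arbitrary endpoints at cost at most $1$ on each side. Together with the $1$-Lipschitz upper bound, this shows that $\iota$ is an $A_\Gamma$-equivariant quasi-isometric embedding with coarsely dense image. The principal technical difficulty is the disjoint-generator subcase above, which cannot be resolved by local structure in $\widehat D$ alone and depends essentially on the finiteness and connectedness of $\Gamma$.
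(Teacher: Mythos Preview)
Your argument is correct. The paper's proof takes a slightly different but closely related route: it defines a vertex-map $\overline{\iota}:\widehat{D}\to Y$ (sending each cyclic-coset vertex to the $\iota$-preimage of the apex of its standard tree, and each trivial-coset vertex to the $\iota$-preimage of an adjacent dihedral vertex) and then checks only that $\overline{\iota}\circ\iota$ is the identity on $Y^{(0)}$ and that $\iota\circ\overline{\iota}$ moves every vertex of $\widehat{D}$ by at most $1$. This amounts exactly to your coarse-density step; the paper then declares the proof complete, leaving the verification that $\overline{\iota}$ is coarsely Lipschitz (equivalently, your linear lower bound on $Y$-distances) implicit. Your case analysis on the type of an intermediate $\widehat{D}$-vertex---and in particular the trivial-coset subcase with disjoint generator sets, handled via a path in $\Gamma$ of length at most $2\,\mathrm{diam}(\Gamma)$---is precisely what is needed to certify this missing step. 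So the two arguments share the same skeleton, with yours supplying the one piece of genuine content that the paper's sketch omits.
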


\begin{proof}
	We construct a quasi-inverse $\overline{\iota}: \widehat{D} \rightarrow Y$ as follows. First notice that since 
$\widehat{D}$ has finitely many isometry types of simplices, it is enough to define $\overline{\iota}$ at the level of 
vertices, where the distance between vertices is defined as the length of a minimal path in the $1$-skeleton. 
	
	 By Lemma \ref{lem:iota}, for a vertex $v$ of $\widehat{D}$ that is either of dihedral or tree type, we define 
$\overline{\iota}(v)$ to be the unique vertex $u \in Y$ such that $\iota(u) =v$. Let us define $\overline{\iota}$ on the remaining vertices. A vertex $v$ of 
$\widehat{D}$ corresponding to the coset of a cyclic group generated by a standard generator belongs to a unique standard tree $T$, and we set $\overline{\iota}(v) = \overline{\iota}(v')$, where $v'$ is the vertex of $\widehat{D}$ that is the apex corresponding to $T$. 
For a vertex $v$ corresponding to a coset of the trivial subgroup, we pick a vertex $v'$ of dihedral type of $D$ adjacent to it, and we set $\overline{\iota}(v) = \overline{\iota}(v')$.
	  
	Lemma \ref{lem:iota} implies that $\overline{\iota} \circ \iota$ is the identity on the vertices of $Y$, and that $\iota \circ 
\overline{\iota}$ is the identity on the vertices of $\widehat{D}$ that are either of dihedral type or of tree type. 
For a vertex $v$ of $\widehat{D}$ corresponding either to the coset of a trivial, cyclic, or $\Z^2$-subgroup,  
the construction implies that $\iota \circ \overline{\iota}(v)$ and $v$ are at distance  $1$ in the $1$-skeleton of $D$,  which concludes the 
proof. 
\end{proof}

Note that if $\Gamma$ is disconnected and can be written as the disjoint union of two full subgraphs $\Gamma_1, \Gamma_2$, 
then $A_\Gamma$ splits as the free product $A_{\Gamma_1} * A_{\Gamma_2}$. In particular, since a free product of 
hierarchically hyperbolic groups is itself hierarchically hyperbolic (see \cite[Corollary 8.24]{HHS_II} or~\cite[Theorem 9.1]{HHS_II}), it is enough to consider the case of a connected graph 
$\Gamma$. This motivates the following convention:

\begin{conv}\label{conv:connected}
	In the rest of this article (except the proof of Theorem~\ref{thmintro:main} in Section~\ref{subsec:assembling}), we will assume that the underlying presentation graph $\Gamma$ is connected and not a single vertex.
\end{conv}

\subsection{The graph of proper irreducible parabolic subgroups of finite type}\label{subsec:irreducible_parabolic}

As an aside, we highlight the connection between the commutation graph $Y$ and the graph of proper irreducible parabolic 
subgroups of finite type introduced by Morris-Wright \cite{MorrisWright}, and generalizing a construction in the 
spherical type of Cumplido et al. \cite{CumplidoEtAl} . This graph was proposed as an analogue of the curve graph for all Artin groups. We show that this indeed the 
case for Artin groups of large and hyperbolic type. We emphasise that this subsection is not needed in the rest of the paper and can be omitted in a first reading.

\begin{defn}[{\cite{MorrisWright}}]
The \textbf{graph of irreducible proper parabolic subgroups of finite type} $P$ of  $A_\Gamma$ is  the simplicial  graph defined as 
follows. Vertices correspond to the proper parabolic subgroups of finite type that are \textit{irreducible} (that is, they do 
not decompose as a direct product of proper standard parabolic subgroups). Two vertices $H, H'$ are connected by an edge when either: 
\begin{itemize}
	\item there is a strict inclusion $H \subsetneq H'$, or
	\item we have $H\cap H' = \{1\}$ and $H, H'$ commute.
\end{itemize}
The Artin group $A_\Gamma$ acts on $P$ by conjugation.
\end{defn}

While this definition makes sense for all Artin groups, we recall that we are only dealing in this article (and in particular in this section) with the case of Artin groups $A_\Gamma$ that are large-type and of hyperbolic type.

\begin{prop}\label{prop:irred_parabolic}
	Assume that $\Gamma$ is a connected graph not reduced to a single edge. Then the graph of irreducible parabolic subgroups of finite type of $A_\Gamma$ is equivariantly isomorphic to the commutation graph.
\end{prop}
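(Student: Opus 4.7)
The plan is to exhibit an $A_\Gamma$--equivariant bijection $\Phi$ between the vertex sets of $Y$ and $P$, and then verify that it preserves adjacency in both directions. On vertices I would set $\Phi(gu_a)=g\langle a\rangle g^{-1}$ for $a\in V_{\mathrm{odd}}(\Gamma)$, and $\Phi(gu_{ab})=gA_{ab}g^{-1}$ for every edge $ab$ of $\Gamma$. Well-definedness and injectivity reduce to the fact that $N(\langle a\rangle)$ and $N(A_{ab})=A_{ab}$ (the latter via Lemma~\ref{lem:self-normalising}) are exactly the conjugation stabilisers of the corresponding subgroups; the cyclic and dihedral types cannot collide because $A_{ab}$ is non-abelian (it virtually splits as in Lemma~\ref{lem:virtually splits}) while $\langle a\rangle\cong\mathbb{Z}$. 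For surjectivity I would use that, in the large-type hyperbolic setting, the Deligne complex is $2$--dimensional, so the only irreducible proper parabolic subgroups of finite type of $A_\Gamma$ are conjugates of some $\langle a\rangle$ or of some $A_{ab}$; combined with Paris's result that every generator is conjugate to one in $V_{\mathrm{odd}}(\Gamma)$, this shows every vertex of $P$ lies in the image of $\Phi$.

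For edges, Lemma~\ref{lem:iota} tells us that each edge of $Y$ joins a tree-type vertex $hu_c$ to a dihedral-type vertex $gu_{ab}$ exactly when $gv_{ab}\in T_{hch^{-1}}$, equivalently when $hch^{-1}\in\mathrm{Stab}(gv_{ab})=gA_{ab}g^{-1}$; this translates into the strict inclusion $\Phi(hu_c)\subsetneq\Phi(gu_{ab})$, which is an edge of $P$. Conversely, I would split edges of $P$ by type. A strict-inclusion edge of $P$ is necessarily of the form cyclic~$\subsetneq$~dihedral (since distinct cyclic parabolics cannot contain each other, and distinct dihedral parabolics cannot either by the correspondence with vertices of $D$), and then the same calculation combined with Lemma~\ref{lem:iota} produces the corresponding edge of $Y$. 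A commutation edge with trivial intersection between two dihedral parabolics is impossible because each would normalise the other and so they would coincide by Lemma~\ref{lem:self-normalising}. Similarly, if $\langle c\rangle$ commuted with $gA_{ab}g^{-1}$, then $c\in N(gA_{ab}g^{-1})=gA_{ab}g^{-1}$, so their intersection would contain $\langle c\rangle$ and be non-trivial.

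The main obstacle is ruling out a commutation edge of $P$ between two distinct cyclic parabolics, i.e.\ showing that distinct conjugates $c_1,c_2$ of standard generators with $\langle c_1\rangle\neq\langle c_2\rangle$ cannot commute. I would argue inside the CAT($-1$) complex $D$: commutation implies that $c_2$ preserves $T_{c_1}=\mathrm{Fix}_D(c_1)$ setwise and, symmetrically, $c_1$ preserves $T_{c_2}$; by Corollary~\ref{cor:intersect_trees} the trees $T_{c_1}$ and $T_{c_2}$ meet in at most one vertex of $D$. If they share a vertex $p$, then $c_1,c_2\in\mathrm{Stab}(p)$: in the cyclic case, the homomorphism $A_\Gamma\to\mathbb{Z}$ sending each generator to $1$ forces $c_1=c_2$ as powers of the same conjugate generator, hence $\langle c_1\rangle=\langle c_2\rangle$; in the dihedral case, Lemma~\ref{lem:dihedral Z2} directly forces $c_1=c_2$. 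If instead the trees are disjoint, then the unique $D$--geodesic $\gamma$ from $T_{c_1}$ to $T_{c_2}$ is preserved by $c_1$, which also fixes its endpoint on $T_{c_1}$; since the isometry group of a geodesic segment that fixes an endpoint is trivial, $c_1$ fixes $\gamma$ pointwise, contradicting $\mathrm{Fix}_D(c_1)=T_{c_1}$ because the interior of $\gamma$ lies outside $T_{c_1}$. Equivariance of $\Phi$ is immediate, since $k\cdot\Phi(gu_H)=(kg)H(kg)^{-1}=\Phi(kgu_H)$ for all $k\in A_\Gamma$.
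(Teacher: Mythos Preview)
Your strategy matches the paper's: define the same map on vertices and check edges in both directions. The vertex-level bijection is the paper's Lemma~\ref{lem:irred_parabolic_vertex}, though your injectivity sketch skips a step---you only exclude collisions between the cyclic and dihedral types, not $g\langle a\rangle g^{-1}=h\langle b\rangle h^{-1}$ for distinct $a,b\in V_{\mathrm{odd}}(\Gamma)$ or $gA_{ab}g^{-1}=hA_{cd}h^{-1}$ for distinct edges; the paper fills these via Lemmas~\ref{lem:no_conj_H} and~\ref{lem:no_conj_norm_H}.

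The substantive divergence is your handling of a commutation $P$-edge between two cyclic parabolics, which you flag as ``the main obstacle'' and resolve with a direct CAT($0$) bridge argument in $D$. That argument is correct but unnecessary: by the very definition of the commutation graph (Definition~\ref{def:commutation graph}), vertices $gN(H)$ and $hN(H')$ of $Y$ are adjacent precisely when $gHg^{-1}$ and $hH'h^{-1}$ commute, so a commutation $P$-edge between cyclics is \emph{already} a $Y$-edge by definition---there is nothing to rule out. (That such edges in fact do not exist then follows from the isomorphism together with the bipartiteness of $Y$ in Lemma~\ref{lem:iota}, but this plays no role in the proof of the proposition.) The paper takes exactly this shortcut, and dispatches the dihedral cases of commutation edges with a one-line cohomological-dimension-$2$ argument rather than your (also valid) normaliser computations. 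In short: your plan works, but you are re-proving part of Lemma~\ref{lem:iota} where a glance at Definition~\ref{def:commutation graph} suffices.
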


Under such conditions on $\Gamma$, the action of $A_\Gamma$ on the coned-off Deligne complex is acylindrical and universal by \cite[Theorem A]{MP}. The following is thus a direct consequence of Lemma \ref{lem:iota QI}:

\begin{cor}
	Assume that $\Gamma$ is a connected graph not reduced to a single edge. Then the graph of irreducible proper parabolic subgroups of finite type of $A_\Gamma$ is hyperbolic of infinite diameter, and the action of  $A_\Gamma$ on it is 
	acylindrical and universal.\qed
\end{cor}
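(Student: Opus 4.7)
My plan is to define an explicit $A_\Gamma$-equivariant vertex map $Y \to P$ by the rule $gu_a \mapsto g\langle a\rangle g^{-1}$ and $gu_{ab}\mapsto gA_{ab}g^{-1}$, and then check that this is a bijection matching the edge relations. Under the hypotheses on $\Gamma$, large type and hyperbolic type force the only proper parabolic subgroups of finite type in $A_\Gamma$ to be trivial, cyclic, or dihedral of the form $gA_{ab}g^{-1}$ with $m_{ab}\ge 3$. The trivial subgroup is excluded from $P$ by the usual convention, the cyclic ones are automatically irreducible, and the dihedral ones are irreducible because, by Lemma~\ref{lem:virtually splits}, $A_{ab}$ with $m_{ab}\ge 3$ is not isomorphic to $\langle a\rangle\times \langle b\rangle$.

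To verify that this is a vertex bijection, well-definedness on cosets is immediate once one identifies the conjugation stabilisers: the stabiliser of $\langle a\rangle$ is $N(\langle a\rangle)$, and the stabiliser of $A_{ab}$ is $A_{ab}=N(\langle z_{ab}\rangle)$, where the latter equality combines Lemma~\ref{lem:self-normalising} with Lemma~\ref{lem:stab_dihedral}. Surjectivity onto cyclic parabolics follows from Paris's criterion on conjugacy of standard generators (they are conjugate iff joined by an odd-labelled path), so that $V_{\mathrm{odd}}(\Gamma)$ provides a complete set of conjugacy representatives. Surjectivity onto dihedral parabolics and injectivity between the two types both follow from Lemma~\ref{lem:no_conj_H}: for instance, $gA_{ab}g^{-1}=hA_{cd}h^{-1}$ yields $g\langle z_{ab}\rangle g^{-1}=h\langle z_{cd}\rangle h^{-1}$ upon taking centres, forcing $\{a,b\}=\{c,d\}$ and $g^{-1}h\in A_{ab}$. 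Equivariance is immediate from the formula $h\cdot gN(H)=hgN(H)\mapsto hgHg^{-1}h^{-1}=h\cdot(gHg^{-1})$.

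For the edge correspondence, I would first show that both graphs are bipartite with matching partitions. This is Lemma~\ref{lem:iota} for $Y$. For $P$, I would rule out edges within each side: between two cyclic parabolics, strict inclusion is ruled out by applying the abelianisation $A_\Gamma\to\Z$, while commutation reduces to Case~2 in the proof of Lemma~\ref{lem:iota} (two distinct conjugates of standard generators do not commute); between two dihedral parabolics, strict inclusion is ruled out by Lemma~\ref{lem:self-normalising} and a fixed-point argument in $D$ (if $A_{ab}\subset kA_{cd}k^{-1}$ then $A_{ab}$ fixes $kv_{cd}$, forcing $kv_{cd}=v_{ab}$ and hence equality of subgroups), while commutation would pass to the centres and is ruled out by Case~1 of the proof of Lemma~\ref{lem:iota}. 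For mixed edges, Lemma~\ref{lem:iota} translates an edge in $Y$ between $gu_a$ and $hu_{bc}$ into $gag^{-1}\in\mathrm{Stab}(hv_{bc})=hA_{bc}h^{-1}$, equivalently the strict inclusion $g\langle a\rangle g^{-1}\subsetneq hA_{bc}h^{-1}$, which is exactly the corresponding edge in $P$.

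The main obstacle is ruling out the last possibility for edges in $P$, namely a mixed pair of parabolics commuting elementwise with trivial intersection, i.e., $g\langle a\rangle g^{-1}$ centralising $hA_{bc}h^{-1}$ without being contained in it. The key computation is that the centraliser of $A_{bc}$ in $A_\Gamma$ equals its centre $\langle z_{bc}\rangle$: any element centralising $A_{bc}$ also centralises $z_{bc}$, hence lies in $C(z_{bc})=A_{bc}$ by Lemma~\ref{lem:stab_dihedral}, and must then lie in $Z(A_{bc})=\langle z_{bc}\rangle$ by Lemma~\ref{lem:centre dihedral}. Consequently $gag^{-1}$ would be a nontrivial power of $hz_{bc}h^{-1}$, making $g\langle a\rangle g^{-1}\cap h\langle z_{bc}\rangle h^{-1}$ nontrivial; the second part of Lemma~\ref{lem:no_conj_H} would then force these two conjugates of elements of $\calH$ to coincide, contradicting the fact that $\langle a\rangle$ and $\langle z_{bc}\rangle$ lie in distinct conjugacy classes in $\calH$.
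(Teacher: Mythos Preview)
Your argument establishes that the graph $P$ of irreducible proper parabolic subgroups of finite type is $A_\Gamma$-equivariantly isomorphic to the commutation graph $Y$; this is precisely the content of Proposition~\ref{prop:irred_parabolic}. But the corollary you are asked to prove asserts that $P$ is hyperbolic of infinite diameter and that the $A_\Gamma$-action on it is acylindrical and universal, and none of these conclusions appear anywhere in your write-up. The paper derives them by composing the isomorphism $P\cong Y$ with the equivariant quasi-isometry $\iota\colon Y\to\widehat D$ of Lemma~\ref{lem:iota QI} and then quoting \cite[Theorem~A]{MP}, which gives exactly these properties for the action on $\widehat D$. You should add a closing sentence invoking these two results; without it, your proposal proves the proposition but not the corollary.

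As for your proof of the isomorphism $Y\cong P$ itself, it is correct and follows the same overall scheme as the paper (Lemma~\ref{lem:irred_parabolic_vertex} for the vertex bijection, then the proof of Proposition~\ref{prop:irred_parabolic} for edges), though organised slightly differently. One genuine difference: to exclude a commuting-with-trivial-intersection edge between a cyclic and a dihedral parabolic, you compute directly that the centraliser of $A_{bc}$ in $A_\Gamma$ equals $\langle z_{bc}\rangle$ (via Lemmas~\ref{lem:stab_dihedral} and~\ref{lem:centre dihedral}) and reach a contradiction with Lemma~\ref{lem:no_conj_H}. The paper instead observes that $A_\Gamma$ has cohomological dimension~$2$, so a dihedral parabolic (which contains $\Z^2$) commuting with any further infinite cyclic subgroup with trivial intersection would produce a $\Z^3$. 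Both work; yours stays closer to the explicit structural lemmas already proved, while the paper's is a one-line appeal to a global invariant.
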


In order to prove Proposition~\ref{prop:irred_parabolic}, we will need the following result: 

\begin{lemma}\label{lem:no_conj_norm_H}
	Assume that $\Gamma$ is a connected graph not reduced to a single edge. Then distinct elements of $\calH$ have normalisers that are in different conjugacy classes.
\end{lemma}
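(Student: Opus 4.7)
The plan is to argue by contradiction. Suppose some $g\in A_\Gamma$ conjugates $N(H_1)$ to $N(H_2)$ for distinct $H_1,H_2\in\calH$. By Lemmas~\ref{lem:stab_tree} and~\ref{lem:stab_dihedral}, each $N(H_i)$ is the stabiliser in $\widehat{D}$ of a specific vertex $w_i$ --- the tree apex $v_a$ when $H_i=\langle a\rangle$, or the dihedral vertex $v_{ab}$ when $H_i=\langle z_{ab}\rangle$ --- so the hypothesis becomes $\mathrm{Stab}(gw_1)=\mathrm{Stab}(w_2)$. I would split the argument into three cases according to the types of $H_1$ and $H_2$.

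When both $H_i$ are central in dihedral parabolics, the conjugation sends $Z(A_{ab})=\langle z_{ab}\rangle$ to $Z(A_{cd})=\langle z_{cd}\rangle$, which is ruled out by Lemma~\ref{lem:no_conj_H}. For the mixed case, I distinguish the normalisers by the invariant ``fixes a vertex of $D$'': by Lemma~\ref{lem:self-normalising} the dihedral normaliser $A_{cd}$ fixes $v_{cd}\in D$, while a tree-type normaliser $\langle a\rangle\times K_a$ fixes no vertex of $D$. Indeed, since $\Gamma$ is connected and not a single vertex, choosing a neighbour $b$ of $a$ and noting that $z_{ab}\in C(a)$ lies outside $\langle a\rangle$ by Corollary~\ref{cor:power_centre}, the free factor $K_a$ is non-trivial, so it acts cocompactly and freely on the edges of the infinite tree $T_a$ (Lemma~\ref{lem:stab_tree}) with no global fixed vertex.

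The main obstacle is the case $H_1=\langle a\rangle$, $H_2=\langle b\rangle$ with distinct $a,b\in V_{\mathrm{odd}}(\Gamma)$, since the normalisers are abstractly similar. Here I would work geometrically on $\widehat{D}$. If $gv_a=v_b$ then $gT_a=T_b$, so $gag^{-1}$ lies in the common stabiliser $\langle b\rangle$ of edges of $T_b$ by Lemma~\ref{lem:edge_stab_tree}, and abelianising $A_\Gamma\to\mathbb{Z}$ (sending every standard generator to $1$) yields $gag^{-1}=b$, contradicting that $V_{\mathrm{odd}}(\Gamma)$ is a set of distinct conjugacy representatives. Otherwise $gv_a\neq v_b$, and since $b\in\mathrm{Stab}(gv_a)=N(\langle gag^{-1}\rangle)$, the element $b$ centralises $gag^{-1}$ and acts on the subtree $gT_a$. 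By Lemma~\ref{rem:subspace_coneoff}, the complement of $\{gv_a\}$ in $\widehat{D}$ is CAT($-1$), and $gT_a$ is convex there by Lemma~\ref{lem:tree_convex_coneoff}; since $b$ also fixes $v_b$ in this CAT($-1$) space, the distance function from $v_b$ restricted to a hypothetical translation axis of $b$ would be strictly convex and $b$-periodic, forcing that axis to be a point. So $b$ cannot translate on $gT_a$, and edge swaps are excluded because $b$ preserves vertex types in $D$, so $b$ must fix a vertex $v$ of $gT_a$.

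Such a $v$ is then fixed by both $b$ and $gag^{-1}$ in $D$. If $v$ is of cyclic type, its cyclic stabiliser together with the abelianisation map forces $b=gag^{-1}$. If $v$ is of dihedral type $hA_{cd}h^{-1}$, then $b$ and $gag^{-1}$ arise as the edge-stabiliser generators for the standard trees $T_b$ and $gT_a$ at $v$; conjugating by $h^{-1}$ places $h^{-1}bh$ and $h^{-1}gag^{-1}h$ in $A_{cd}$ as conjugates inside $A_{cd}$ of standard generators of $A_{cd}$, and since they commute Lemma~\ref{lem:dihedral Z2} again forces $b=gag^{-1}$. In every case $a$ is conjugate to $b$ in $A_\Gamma$, contradicting the definition of $V_{\mathrm{odd}}(\Gamma)$ and completing the proof. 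The delicate point I expect to be the CAT($-1$) argument ruling out translation on $gT_a$, which requires both the convexity provided by Lemma~\ref{lem:tree_convex_coneoff} and the correct identification of the fixed vertex $v$ with an element of a dihedral parabolic to which Lemma~\ref{lem:dihedral Z2} applies.
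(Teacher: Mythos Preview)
Your argument for the mixed case has a genuine gap. The invariant ``fixes a vertex of $D$'' does not separate the two types of normalisers in all cases. When $a$ is a leaf of $\Gamma$ whose unique incident edge has even label (and such $a$ do lie in $V_{\mathrm{odd}}(\Gamma)$, since no odd path leaves $a$), the standard tree $T_a$ is the cone over the $\langle z_{aa'}\rangle$--orbit of the edge from $\langle a\rangle$ to $A_{aa'}$; it is a star with centre the dihedral vertex $v_{aa'}$. Here $K_a$ acts freely on edges and cocompactly, yet it does fix the central vertex (the action simply cycles the leaves). So $N(a)\cong\mathbb Z^2$ fixes $v_{aa'}\in D$, and your proposed invariant fails. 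The inference ``free on edges, cocompact, non-trivial $\Rightarrow$ no global fixed vertex'' is simply false for trees with vertices of infinite valence. The fix is easy and is what the paper does: distinguish by isomorphism type, since $A_{cd}$ virtually contains a non-abelian free group (Lemma~\ref{lem:virtually splits}) and hence is never isomorphic to $\mathbb Z\times F_k$.

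Your Case~1 (both dihedral) is correct and slightly slicker than the paper's: you pass to centres and invoke Lemma~\ref{lem:no_conj_H} directly, whereas the paper compares fixed-point sets. Your Case~3 (both tree-type) is correct and takes a genuinely different route from the paper. The paper splits on the rank $k$ of $K_a$: for $k\geq 2$ it compares centres, while for $k\leq 1$ it invokes the explicit description of $K_a$ from \cite[Remark~4.6]{MP} to force both generators to be leaves with even labels, eventually contradicting that $\Gamma$ is not a single edge. Your geometric argument is more uniform: you use the convexity of $gT_a$ in the CAT($-1$) space $\widehat D\setminus\{gv_a\}$ (Lemmas~\ref{rem:subspace_coneoff} and~\ref{lem:tree_convex_coneoff}) together with the fixed point $v_b$ to rule out a translation axis for $b$ on $gT_a$, then identify the resulting common fixed vertex and apply Lemma~\ref{lem:dihedral Z2} at a dihedral vertex. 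This avoids the case split and the external reference, at the cost of a somewhat more delicate CAT($-1$) argument. Both approaches are valid; yours trades algebraic bookkeeping for geometry.
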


\begin{proof}
	The normaliser of a standard generator is of the form $\Z\times F_k$ for $F_k$ a finitely-generated free group by Lemma~\ref{lem:stab_tree}, while the normaliser of an element of the form $z_{ab}$ is equal to the dihedral Artin group $A_{ab}$ by Lemma~\ref{lem:stab_dihedral}.
	We show that these groups are non-isomorphic. This is clear if $k=1$ as a dihedral Artin group with $m_{ab}\geq 3$ is not isomorphic to $\Z^2$ since it virtually contains a non-abelian free group by Lemma~\ref{lem:virtually splits}. For $k\geq 2$, this follows for instance from the description of their abelianisations: The abelianisation of $\Z\times F_k$ is a free abelian group of rank $k+1\geq 3$, while the abelianisation of $A_{ab}$ is generated by two elements since $A_{ab}$ is. Thus, the normaliser of a standard generator and the normaliser of an element of the form $z_{ab}$ are not conjugated.
	 
	Let $z_{ab}$, $z_{a'b'}$ be two elements  associated to two distinct edges of $\Gamma$, and let $g \in A_\Gamma$. By Lemma~\ref{lem:stab_dihedral}, the normalisers of $z_{ab}$ and $gz_{a'b'}g^{-1}$ are the dihedral Artin groups $A_{ab}$ and $gA_{a'b'}g^{-1}$ and their fixed-point sets in $D$ are $v_{ab}$ and $gv_{a'b'}$ respectively. As these points are in different $A_\Gamma$-orbits, it follows that $N(z_{ab})$ and $N(z_{a'b'})$ are not conjugate.
	
	Finally, let $a, b$ two distinct standard generators in $V_{\mathrm{odd}}(\Gamma)$, and suppose by contradiction that $N(a)=gN(b)g^{-1}$ for some $g\in A_\Gamma$. By Lemma~\ref{lem:stab_tree}, the normalisers $a$ and $b$ have the form $\langle a \rangle \times F_k$ and $\langle b \rangle \times F_k$ respectively (note that the rank of the free group factors need to coincide since the subgroups are conjugated). If $k\geq 2$, then since $N(a)$ and $N(b)$ are conjugated, so are their centres $\langle a \rangle$ and $\langle b \rangle$. Using the homomorphism $A_\Gamma\rightarrow \Z$ sending each standard generator to $1$, it follows that $a$ and $b$ are conjugated. By construction of $V_{\mathrm{odd}}(\Gamma)$, it follows that $a=b$, a contradiction.
	
		Assume now that $k\leq 1$. We thus have two standard generators $a, b$ such that $N(a), N(b)$ are isomorphic to $\Z$ or $\Z^2$. We will use the following general claim:
		
		\medskip
		
		\textbf{Claim:}  Let $x$ be a standard generator. If the normaliser  $N(x)$ is isomorphic to $\mathbb Z$, then $x$ corresponds to an isolated vertex of $\Gamma$. If the normaliser $N(x)$ is isomorphic to $\mathbb Z^2$, then $x$ corresponds to a leaf of $\Gamma$.
		
		\medskip
		
		Let us prove the Claim. By Lemma~\ref{lem:stab_tree}, the normaliser $N(x)$ is of the form $\langle x \rangle \times F$ for some finitely-generated group $F$. An explicit basis $\calB$ of $F$ was given in \cite[Remark 4.6]{MP}. In particular, the following holds:
		\begin{itemize}
			\item[(i)] If $x$ is an isolated vertex of $\Gamma$, then $F$ is trivial.
			\item[(ii)] The basis $\calB$ contains the element $z_{xy}$ for every vertex $y$ of $\Gamma$ adjacent to $x$. 
			\item[(iii)] Suppose that $y$ is adjacent to $x$ and the edge of $\Gamma$ between them has an odd label. If $w$ is a vertex of $\Gamma$ adjacent to $y$ and distinct from $x$, then $\calB$ contains a suitable conjugate of $z_{yw}$. We note that such an element cannot be equal to an element described in $(ii)$ by Lemma~\ref{lem:no_conj_H}.
		\end{itemize} 
		
		If $N(x)$ is isomorphic to $\Z$, then $F$ is trivial, and by item $(ii)$ in the above description of a basis of $F$, $x$ must be an isolated vertex. If $N(x)$ is isomorphic to $\Z^2$, then $F$ is isomorphic to $\Z$, and by item $(ii)$ in the above can have at most one neighbour in $\Gamma$. It cannot be an isolated vertex for otherwise $F$ would be trivial by item $(i)$  above. Thus $x$ is a leaf of $\Gamma$. This proves the Claim.
		
		\medskip
		
		Since $N(a), N(b)$ are isomorphic to $\Z$ or $\Z^2$, it follows from the above Claim that $a$ and $b$ are leaves of $\Gamma$ (since $\Gamma$ is connected by assumption). Moreover, since $N(a), N(b) \cong \Z^2$ and $\Gamma$ is not reduced to a single edge, item $(iii)$ above implies that the corresponding edges of $\Gamma$ containing $a$ and $b$ have even labels. 

		 By Lemma~\ref{lem:stab_tree}, $N(a)=gN(b)g^{-1}$ stabilises the standard tree containing the vertex $\langle a \rangle$ and the standard tree containing $g \langle b \rangle$. Because $a, b$ are leaves of $\Gamma$ with corresponding edges having even labels, the corresponding standard trees $T_a$ and $gT_b$ have a very simple structure, which is a direct consequence of \cite[Lemma~4.3]{MP} (see also \cite[Example~4.7]{MP} for a concrete example): Let $a'$ be the unique neighbour of $a$ in $\Gamma$, and let $e$ be the edge of $D$ between the parabolic subgroups $\langle a \rangle$ and $A_{aa'}$. Then $T_a$ is the union of the $\langle z_{aa'}\rangle$-translates of $e$ (such a description was for instance given in \cite[Example 4.7]{MP}). In particular, $T_a$ (and similarly for $gT_b$) is isomorphic to a cone over a countably infinite discrete set.
		 
		 Let us show that the standard trees $T_a$ and $gT_b$ intersect. If these two convex trees were disjoint, then $N(a)=gN(b)g^{-1}$ would also stabilise the unique CAT($-1$) geodesic of $D$ between them, and so both normalisers would be contained in an edge-stabiliser or triangle-stabiliser. This is impossible as  such stabilisers do not contain $\Z^2$, as they are infinite cyclic or trivial respectively. 
		 
		 Thus these two standard trees meet.  The vertices in $T_a$ that are not the central vertex of dihedral type correspond to cosets of $\langle a \rangle$ (and we have an analogous result for $b$). Since $\langle a  \rangle \neq \langle b \rangle$ by assumption, the trees $T_a$ and $gT_b$ necessarily meet at the central vertex of dihedral type. Let $a'$ (respectively $b'$) be the unique neighbour of $a$ (respectively $b$) in $\Gamma$. The unique vertex of $T_a$ of dihedral type corresponds to a coset of $A_{aa'}$. Similarly, the unique vertex of $gT_b$ of dihedral type corresponds to a coset of $A_{bb'}$. As these vertices agree, it follows that $\{a, a'\}= \{b, b'\}$, and since $a \neq b$ this implies $a$ and $b$ are adjacent in $\Gamma$. Since $a$ and $b$ are leaves of $\Gamma$ by the above, it follows that $\Gamma$ is a single edge, contradicting our assumption on $\Gamma$. 
\end{proof}

\begin{lemma}\label{lem:irred_parabolic_vertex}
	Assume that $\Gamma$ is a connected graph not reduced to a single edge.  Consider the  map $\varphi: Y^{(0)} \rightarrow P^{(0)}$ defined as follows: 
	\begin{itemize}
		\item For a vertex $gA_{ab}$ of $Y$ of dihedral type, we set $\varphi(gA_{ab}) = gA_{ab}g^{-1}$.
		\item For a vertex $gN(a)$ of $Y$ of tree type, we set $\varphi(gN(a)) = g\langle a \rangle g^{-1}$.
	\end{itemize}
Then $\varphi$ is well-defined and realises an $A_\Gamma$-equivariant bijection between the vertex sets of $Y$ and $P$.
\end{lemma}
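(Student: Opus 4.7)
The plan is to verify that $\varphi$ is well-defined, injective, surjective, and equivariant. Well-definedness and equivariance are essentially formal from the definition of normalizer and the description of the vertex sets, so the substance of the proof will be in showing that $\varphi$ is a bijection.

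For well-definedness, I would use that $N(\langle z_{ab}\rangle) = A_{ab}$ (Lemma~\ref{lem:stab_dihedral}) and that $N(\langle a\rangle)$ is by definition the normalizer of $\langle a\rangle$. If $gN(H)=hN(H')$ for $H,H'\in\calH$, then $H=H'$ and $g^{-1}h\in N(H)$, so conjugation by $g$ and by $h$ produce the same subgroup of $A_\Gamma$. Equivariance amounts to the identity $(g'g)H(g'g)^{-1}=g'(gHg^{-1})(g')^{-1}$, which is immediate.

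For injectivity, I would consider the possible pairs of vertex types. Two dihedral-type vertices $gN(\langle z_{ab}\rangle)$ and $hN(\langle z_{a'b'}\rangle)$ mapping to the same subgroup would give conjugate subgroups $gA_{ab}g^{-1}=hA_{a'b'}h^{-1}$; since these are the normalizers of $\langle z_{ab}\rangle$ and $\langle z_{a'b'}\rangle$, Lemma~\ref{lem:no_conj_norm_H} (or directly Lemma~\ref{lem:no_conj_H}) forces the edges to coincide, and then $h^{-1}g\in N(A_{ab})=A_{ab}$ by self-normalization (Lemma~\ref{lem:self-normalising}), giving $gA_{ab}=hA_{ab}$. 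The same argument, using Lemma~\ref{lem:no_conj_H} and the definition of the normalizer, handles the tree-type versus tree-type case. Mixed cases are ruled out because a conjugate of $A_{ab}$ (which is non-cyclic by Lemma~\ref{lem:virtually splits}) cannot equal a conjugate of $\langle a\rangle$.

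For surjectivity, the key input is the earlier observation that, in our setting of large and hyperbolic type, the only parabolic subgroups of finite type are those on at most two generators, so every irreducible proper parabolic of finite type is either a conjugate of some $\langle a\rangle$ with $a\in V(\Gamma)$, or a conjugate of some $A_{ab}$ with $\{a,b\}$ spanning an edge of $\Gamma$ (the trivial subgroup not being counted, and dihedral parabolics with $m_{ab}\ge3$ being irreducible since their centre is infinite cyclic, making them non-splittable as a direct product). In the cyclic case, Paris's theorem~\cite[Corollary 4.2]{Paris} allows us to replace $a$ by a conjugate representative in $V_{\mathrm{odd}}(\Gamma)$, writing the parabolic as $g\langle a\rangle g^{-1}$ with $a\in V_{\mathrm{odd}}(\Gamma)$; this is then $\varphi(gN(\langle a\rangle))$. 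In the dihedral case, the subgroup is $gA_{ab}g^{-1}=\varphi(gA_{ab})$ directly. The main thing to watch out for is that the appeal to Paris's theorem is what lets us match the $V_{\mathrm{odd}}(\Gamma)$ indexing of $\calH$ exactly to the conjugacy classes of cyclic standard parabolics, so no cyclic parabolic is missed or hit twice — this is the only non-formal step, but it follows cleanly from the construction of $V_{\mathrm{odd}}(\Gamma)$.
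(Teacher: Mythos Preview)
Your proposal is correct and follows essentially the same approach as the paper: well-definedness via the normaliser description, surjectivity via Paris's result to reduce to $V_{\mathrm{odd}}(\Gamma)$, and injectivity by a case analysis using Lemma~\ref{lem:no_conj_H}, Lemma~\ref{lem:no_conj_norm_H}, Lemma~\ref{lem:self-normalising}, and Lemma~\ref{lem:virtually splits}. Your well-definedness argument is in fact slightly cleaner than the paper's, since you exploit that the vertex set is a \emph{disjoint} union over $\calH$ (so $gN(H)=hN(H')$ as vertices forces $H=H'$), whereas the paper additionally checks that distinct $a,b\in V_{\mathrm{odd}}(\Gamma)$ cannot have $N(a)=N(b)$---a true but, given the disjoint-union convention, unnecessary step here.
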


\begin{proof}
	The map $\varphi$ is well-defined on vertices of dihedral type, but we need to check that the definition makes sense for vertices of tree type. It is enough to show that if $a, b$ are standard generators in $V_{\mathrm{odd}}(\Gamma)$ with $N(a)=N(b)$, then $a = b$. This is a direct consequence of Lemma~\ref{lem:no_conj_norm_H}.
	
	Let us show that $\varphi$ is surjective. It is clear from the construction that every conjugate of the form $gA_{ab}g^{-1}=\varphi(gA_{ab})$ is in the image of $\varphi$. Let $a$ be a standard generator of $A_\Gamma$. By construction of $V_{\mathrm{odd}}(\Gamma)$, there exists a standard generator $b \in V_{\mathrm{odd}}(\Gamma)$ and an element $h \in A_\Gamma$ such that $a = hbh^{-1}$. Thus, any conjugate of the form $g\langle a \rangle g^{-1}$ can be written as $g\langle a \rangle g^{-1} = (gh)\langle b \rangle (gh)^{-1}=\varphi(ghN(b)).$ Thus, $\varphi$ is surjective.
	
	Let us show that $\varphi$ is injective. For a given standard generator $a \in V(\Gamma)$, the map $\varphi$ induces a bijection between the conjugates of $\langle a \rangle$ and the cosets of $N(a)$. Similarly, since dihedral parabolic subgroups of $A_\Gamma$  are self-normalizing by Lemma~\ref{lem:self-normalising}, we have that for a given edge $a, b$ of $\Gamma$ with $m_{ab}\geq 3$, the map $\varphi$ induces a bijection between the conjugates of $A_{ab} $ and the cosets of $A_{ab}$. To show that $\varphi$ is injective, it remains to show that for $H, H'$ different elements of $\calH$, we cannot have an equality of the form $\varphi(gN(H))=\varphi(g'N(H'))$ for $g, g' \in A_\Gamma$. Such an equality would amount to an equality  between  parabolic subgroups. Note that a  conjugate of the form $g\langle a \rangle g^{-1}$, which is isomorphic to $\Z$, cannot be isomorphic to a conjugate of a dihedral parabolic subgroup by Lemma~\ref{lem:virtually splits}. There are thus two cases to consider: 
	
	If $H, H'$ correspond to cyclic subgroups of the form $\langle a \rangle, \langle a'\rangle$ respectively, then the equality $\varphi(gN(H))=\varphi(g'N(H'))$ yields the equality $g\langle a \rangle g^{-1} = g' \langle a' \rangle (g')^{-1}$. Since $a\neq a'$, this equality is impossible by Lemma~\ref{lem:no_conj_H}. 
	
	If $H, H'$ correspond to cyclic subgroups of the form $\langle z_{ab} \rangle, \langle z_{a'b'}\rangle$ respectively, then the equality $\varphi(gN(H))=\varphi(g'N(H'))$ and the fact that $A_{ab}=N(z_{ab})$ by Lemma~\ref{lem:stab_dihedral} yield the equality $gN(z_{ab})g^{-1} = g'N(z_{a'b'})(g')^{-1}$. Since $H\neq H'$, this equality is impossible by Lemma~\ref{lem:no_conj_norm_H}.
\end{proof}

\begin{proof}[Proof of Proposition~\ref{prop:irred_parabolic}]
We know from Lemma~\ref{lem:irred_parabolic_vertex} that the map $\varphi$ realises an $A_\Gamma$-equivariant bijection between the vertex sets of $Y$ and $P$.  Let us show that this bijection extends to an equivariant isomorphism between $P$ and $Y$.

 First, it follows from Lemma~\ref{lem:iota} that two vertices that are adjacent in $Y$ are also adjacent in $P$, and we now show the converse. 

 Let $Q$ and $Q'$ be two proper irreducible parabolic subgroups of finite type, i.e. parabolic subgroups on one or two generators with $m_{ab}\geq 3$, and assume that $Q$ and $Q'$ are connected in $P$. There are two cases to consider, depending on the two types of edges of $P$: 
 
 Let us first assume that $Q, Q'$ intersect trivially and commute. Since $A_\Gamma$ is of cohomological dimension 2 by \cite{CharneyDavis} and dihedral parabolic subgroups contain a copy of $\Z^2$, this can only happen if both $Q$ and $Q'$ are infinite cyclic, in which case there is a $Y$-edge between them by construction of $Y$. 
 
 Suppose now that $Q \subsetneq Q'$. First notice that $Q$ and $Q'$ cannot be both dihedral parabolic subgroups by Lemma~\ref{lem:self-normalising}.  
 We can thus assume that $Q$ is infinite cyclic. Let $e$ be an edge in the standard tree $T$ associated to $Q$, and let $ \sigma'$ be a maximal simplex of $\mathrm{Fix}(Q')$ (i.e. $\sigma'$ is a vertex or an edge depending on whether $Q'$ is of dihedral type or infinite cyclic respectively). Since $Q$ fixes both $v$ and $\sigma'$ by assumption, Lemma~\ref{cor:intersect_stab_nontrivial} implies that $v$ and $\sigma'$ are contained in a common standard tree, namely the standard tree $T$. 
 
 Let us show by contradiction that $Q'$ cannot be infinite cyclic.  Let $T'$ be the standard tree corresponding to $Q'$. Then since $T$ and $T'$  both contain $e$, it follows from  Corollary~\ref{cor:intersect_trees} that $T=T'$. Since $Q, Q'$ is the pointwise stabiliser of $T, T'$ respectively by construction, we get $Q=Q'$, contradicting the assumption that $Q \subsetneq Q'$. 
 
 Thus, $Q'$ is a parabolic subgroup of dihedral type. We thus have that the vertex of $D$ corresponding to $Q'$ is contained in the standard tree associated to $Q$. By Lemma~\ref{lem:iota}, this implies that $Q$ and $Q'$ are connected by an edge of $Y$.  
\end{proof}

\begin{rem}
	We note that the alternative description of $P$ given by Proposition~\ref{prop:irred_parabolic} provides an alternative characterisation of the edges of $P$: Two proper irreducible parabolic subgroups of finite type are joined by an edge of $P$ if and only if their centres commute. This generalises results known for Artin groups of finite type \cite[Theorem 2.2]{CumplidoEtAl} and of FC type \cite[Lemma 4.2]{MorrisWright}.
\end{rem}

\subsection{Adjacency in the commutation graph and  intersection of neighbourhoods of cosets} 

In this section, we study the interactions between cosets associated to adjacent vertices of the commutation graph $Y$. Namely, we show that two such adjacent cosets have neighbourhoods that intersect along a quasi-flat (see Lemma~\ref{lem:intersection_of_cosets}). We start with the following result:

\begin{lemma}\label{lem:intersection_normalisers}
	Let $H_1,H_2\in\mathcal H$ and suppose that $gN(H_1),hN(H_2)$ are adjacent vertices of $Y$ for some $g,h\in A_\Gamma$. Then we have 
	$$ gN(H_1)g^{-1}\cap hN(H_2)h^{-1} = \langle gH_1g^{-1},hH_2h^{-1}\rangle .$$
	Moreover, $\langle gH_1g^{-1},hH_2h^{-1}\rangle$ is naturally isomorphic to $gH_1g^{-1}\times hH_2h^{-1}\cong \mathbb Z^2$.
\end{lemma}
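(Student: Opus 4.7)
The plan is to use the bipartite structure of $Y$ (Lemma~\ref{lem:iota}) to reduce to a single type of configuration, and then read off the intersection from the explicit normaliser descriptions of Lemmas~\ref{lem:stab_tree} and~\ref{lem:stab_dihedral}, combined with the centraliser formula in dihedral Artin groups already invoked in the proof of Lemma~\ref{lem:dihedral Z2}.

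First I would use Lemma~\ref{lem:iota} to reduce to the case $H_1=\langle c\rangle$ for a standard generator $c\in V_{\mathrm{odd}}(\Gamma)$ and $H_2=\langle z_{ab}\rangle$ for some edge $\{a,b\}$ of $\Gamma$ with $m_{ab}\geq 3$. Lemmas~\ref{lem:stab_tree} and~\ref{lem:stab_dihedral} then identify $N(H_1)=C(c)$ and $N(H_2)=A_{ab}$, so the intersection to compute is $C(gcg^{-1})\cap hA_{ab}h^{-1}$. The inclusion $\supseteq$ and the claim $\langle gH_1g^{-1},hH_2h^{-1}\rangle\cong\mathbb{Z}^2$ are essentially formal: adjacency in $Y$ means the two cyclic subgroups commute, so each lies in the normaliser of the other; moreover $gH_1g^{-1}$ and $hH_2h^{-1}$ are conjugates of distinct elements of $\mathcal{H}$, hence Lemma~\ref{lem:no_conj_H} forces $gH_1g^{-1}\cap hH_2h^{-1}=\{1\}$, and the generated subgroup is the internal direct product of two infinite cyclic subgroups.

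The substantive step is the reverse inclusion $\subseteq$. Lemma~\ref{lem:iota} translates the adjacency into the geometric fact $hv_{ab}\in T_{gcg^{-1}}$; in particular $gcg^{-1}\in hA_{ab}h^{-1}$. Since $\Gamma$ has more than one vertex, the tree $T_{gcg^{-1}}$ contains at least one edge, and by connectedness it must contain some edge $e$ incident to $hv_{ab}$. By the definition of a standard tree, $e$ has stabiliser $\langle gcg^{-1}\rangle$; and since $e$ lies in the star of $hv_{ab}$, its stabiliser also has the form $h\sigma\langle a\rangle\sigma^{-1}h^{-1}$ or $h\sigma\langle b\rangle\sigma^{-1}h^{-1}$ for some $\sigma\in A_{ab}$. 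Comparing, $gcg^{-1}=hyh^{-1}$ with $y\in A_{ab}$ an $A_{ab}$-conjugate of a power of $a$ or $b$. Crisp's formula $C_{A_{ab}}(a)=\langle a,z_{ab}\rangle$ (and the analogue for $b$), together with the centrality of $z_{ab}$ in $A_{ab}$, gives $C_{A_{ab}}(y)=\langle y,z_{ab}\rangle$; conjugating by $h$ yields $C_{hA_{ab}h^{-1}}(gcg^{-1})=\langle gcg^{-1},hz_{ab}h^{-1}\rangle=\langle gH_1g^{-1},hH_2h^{-1}\rangle$. Since every element of $C(gcg^{-1})\cap hA_{ab}h^{-1}$ lies in this centraliser, this gives $\subseteq$.

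The main obstacle I anticipate is the geometric bookkeeping in identifying $gcg^{-1}$, up to $h$-conjugation, with a conjugate in $A_{ab}$ of a standard generator of $A_{ab}$; once this identification is in place, Crisp's centraliser formula makes the remaining computation essentially automatic.
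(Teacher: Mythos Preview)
Your argument is correct and follows essentially the same route as the paper: both use Lemma~\ref{lem:iota} to place the dihedral-type vertex on the standard tree, identify $gH_1g^{-1}$ with an $A_{ab}$-conjugate of a standard generator via an edge of that tree incident to the dihedral vertex, and then invoke Crisp's centraliser formula $C_{A_{ab}}(a)=\langle a,z_{ab}\rangle$ together with Lemma~\ref{lem:no_conj_H}. The only cosmetic difference is that the paper first conjugates to make $h$ trivial, whereas you carry $h$ through; your justification for the existence of an edge of $T_{gcg^{-1}}$ at $hv_{ab}$ would be cleaner if you simply noted that the connected tree also contains the distinct vertex $g\langle c\rangle$.
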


\begin{proof}
	By Lemma~\ref{lem:iota}, we can assume that 
	$gN(H_1)$ is mapped under $\iota$ to the apex of a standard tree $T_1$  and $hN(H_2)$ is mapped under $\iota$ to a vertex of dihedral type contained in $T_1$. Notice that, up to a conjugation, it is enough to prove the desired equality when $h$ is trivial, so we will assume that this is the case in the rest of the proof. The subgroup $H_2\in \calH$ is of the form $\langle z_{ab} \rangle$  for some standard generators $a, b$, and so the vertex $v$ corresponds to the trivial coset $A_{ab}$ by construction of $\iota$. Since the standard tree $T_1$ contains the vertex $A_{ab}$, it also contains a vertex of the form $x\langle a \rangle$ or $x \langle b \rangle$ for some $x\in A_{ab}$. Without loss of generality, we can  assume that  $T_1$ contains a vertex of the form $x\langle a \rangle$ for some chosen $x\in A_{ab}$. 
	
	We have $hN(H_2)h^{-1} = N(z_{ab})= A_{ab}$ by Lemma~\ref{lem:stab_dihedral}. Moreover, since two edges of $T_1$ have the same stabiliser by construction, we have $gH_1g^{-1} = x\langle a \rangle x^{-1}$, hence $gN(H_1)g^{-1} = xN(a)x^{-1} = xC(a)x^{-1}$, the latter equality following from Lemma~\ref{lem:stab_tree}. Thus,  the intersection $gN(H_1)g^{-1}\cap hN(H_2)h^{-1}$ can be identified with the centraliser of the element $xax^{-1}$ in $A_{ab}$. By \cite[Lemma 7.$(ii)$]{Crisp}, this is equal to $\langle xax^{-1}, z_{ab} \rangle = \langle gH_1g^{-1}, hH_2h^{-1} \rangle$. 

For the ``moreover'' part, note that the intersection is naturally isomorphic to the product by definition of edges of $Y$ and Lemma~\ref{lem:no_conj_H}.
\end{proof}

\begin{defn}\label{def:coset_neighbourhood}
	 	Let $B(e,r)$ denote the closed ball of radius $r$ in the standard Cayley graph of $A_\Gamma$.
	 	
	 	We fix a constant $r\geq 0$  so that whenever $a,b$ are conjugate standard generators there exists $x\in B(e,r)$ so that $xax^{-1}=b$. The $r$-\textbf{neighbourhood} $N(H)^{+r}$ of $N(H)$ is defined as $$N(H)^{+r}\coloneqq \bigcup_{c\in B(e,r)}N(H)c.$$
\end{defn}

\begin{lemma}\label{lem:intersection_of_cosets}
	There exists a constant $\ell\geq0$ such that the following holds:
	
	Let $H_1,H_2\in\mathcal H$ and suppose that $gN(H_1),hN(H_2)$ are adjacent vertices of $Y$ for some $g,h\in A_\Gamma$.  

	Then $gN(H_1)^{+r}\cap h N(H_2)^{+r}$ is nonempty. Also, the subgroup $gN(H_1)g^{-1}\cap hN(H_2)h^{-1}$ (which is  isomorphic to  
	$\Z^2$ by Lemma~\ref{lem:intersection_normalisers}) acts coboundedly on the  intersection $gN(H_1)^{+r}\cap h N(H_2)^{+r}$, and the quotient of this action has diameter at most $\ell$.
\end{lemma}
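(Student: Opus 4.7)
The plan is a three-step reduction using cocompactness of the $A_\Gamma$-action on $Y$.

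\textbf{Reduction.} By Lemma \ref{lem:cocompact_Y}, $A_\Gamma$ acts cocompactly on $Y$, hence has only finitely many orbits of edges. Since non-emptiness of the intersection and the $L$-action on it are $A_\Gamma$-equivariant, it suffices to verify both conclusions on one representative per orbit and take $\ell$ to be the maximum of the resulting bounds.

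\textbf{Non-emptiness.} Fix a representative edge $(gN(H_1), hN(H_2))$ and translate so that $h = e$. By Lemma \ref{lem:iota}, the endpoints have opposite types, so we write $H_1 = \langle a\rangle$ with $a \in V_{\mathrm{odd}}(\Gamma)$ and $H_2 = \langle z_{bc}\rangle$. Following the proof of Lemma \ref{lem:intersection_normalisers}, adjacency forces $gag^{-1}$ to commute with $z_{bc}$, hence $gag^{-1} \in A_{bc}$, and the analysis there yields $gag^{-1} = xa'x^{-1}$ for some $x \in A_{bc}$ and $a' \in \{b, c\}$. Since $a$ and $a'$ are conjugate standard generators, the definition of $r$ supplies $y \in B(e, r)$ with $yay^{-1} = a'$, so $y^{-1}x^{-1}g \in C(a) = N(H_1)$. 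Setting $k \coloneqq xy$, one has $k \in gN(H_1) \subseteq gN(H_1)^{+r}$ and $k \in A_{bc}\cdot B(e, r) = N(H_2)^{+r}$, so the intersection is non-empty.

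\textbf{Finite quotient.} The subgroup $L$ preserves both $gN(H_1)^{+r}$ and $N(H_2)^{+r}$ under left multiplication, via the identity $\ell \cdot gN(H_1) c = g(g^{-1}\ell g)N(H_1)c = gN(H_1)c$ for $\ell \in L \subseteq gN(H_1)g^{-1}$ and the analogous identity for $N(H_2)$. The core computation is that if $k, k' \in I \coloneqq gN(H_1)^{+r}\cap N(H_2)^{+r}$ admit decompositions $k = gn_1 c_1 = n_2 c_2$ and $k' = gn_1' c_1 = n_2' c_2$ sharing the same $(c_1, c_2) \in B(e, r)^2$, then $k(k')^{-1} = gn_1(n_1')^{-1}g^{-1} = n_2(n_2')^{-1}$ lies in $gN(H_1)g^{-1}\cap N(H_2) = L$, whence $[k] = [k']$ in $L\backslash I$. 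Consequently $|L\backslash I|\leq |B(e, r)|^2$ is finite, so $L\backslash I$ has finite diameter in the quotient metric, and the uniform $\ell$ is obtained by taking the maximum over the finitely many orbit representatives. The main technical obstacle is assembling this finite-quotient argument: parameterizing $L$-orbits by the pairs $(c_1, c_2)$ is the key observation, while the non-emptiness step reduces to a direct application of the definition of $r$.
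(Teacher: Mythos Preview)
Your proof is correct, and the coboundedness argument is genuinely different from (and more elementary than) the paper's.

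For non-emptiness, both you and the paper do essentially the same thing: translate, identify $gag^{-1}$ as an $A_{bc}$-conjugate $xa'x^{-1}$ of a generator $a'\in\{b,c\}$, then use the defining property of $r$ to find a short conjugator $y$ and exhibit $k=xy$ in the intersection.

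For coboundedness, the paper takes a different route. After the same setup, it invokes a general result of Hruska on relatively hyperbolic groups (Proposition~9.4 of \cite{Hruska}) to show that $x_1 N(a_1)^{+r}\cap N(z_{b_1b_2})^{+r}$ lies in a uniform neighbourhood of the coset $\langle b_1,z_{b_1b_2}\rangle$, and then observes that the $\mathbb Z^2$ subgroup acts transitively on that coset. This pins down the intersection up to bounded Hausdorff distance as a specific quasi-flat, which is slightly more informative.

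Your argument instead observes directly that each set $gN(H_1)c_1\cap N(H_2)c_2$ is either empty or a single $L$-orbit, so $L\backslash I$ has cardinality at most $|B(e,r)|^2$; combined with the reduction to finitely many edge-orbits this yields the uniform diameter bound. This is self-contained and avoids the external reference entirely. The trade-off is that you do not identify the intersection as a neighbourhood of a concrete coset, but the lemma as stated does not require this, and nothing later in the paper seems to need it either. (Minor stylistic point: you use $\ell$ both for the constant in the statement and for a generic element of $L$; rename one of them.)
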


\begin{rem}
	We advise the reader to focus on the case of \textit{even} Artin groups (i.e. $m_{ab}$ is even for all standard generators) in a first reading. In that particular case, we have $V_\mathrm{odd}(\Gamma) = V(\Gamma)$ and we can take $r=\ell=0$ in the above statement. In particular,  we are simply looking at intersections of cosets.
	
	Let us explain briefly the need to consider neighbourhoods of cosets in the general case. If $x$ and $y$ are two standard generators connected by an edge of $\Gamma$ with an odd label, then we have $\Delta_{xy}x\Delta_{xy}^{-1}=y$, where $\Delta_{xy}$ is the Garside element of $A_{xy}$. In particular, we have $g\Delta_{xy}N(x)\Delta_{xy}^{-1} = gN(y)$, and so the cosets $g\Delta_{xy}N(x)$ and $gN(y)$ are at bounded Hausdorff distance of one another (they are in a sense ``parallel''). Let us assume that we are looking at vertices $gN(H)$ and $hN(H')$ where $H, H'$ correspond to standard generators $a, b$. As we saw in the proof of  Lemma~\ref{lem:intersection_normalisers}, the intersection $gN(H)g^{-1}\cap hN(H')h^{-1}$ stabilises a vertex of dihedral type. However, while $H, H'$ correspond to standard generators $a, b$ in $V_{\mathrm{odd}}(\Gamma)$, it may happen that this vertex of dihedral type correspond to two different standard generators $a', b'$ that are conjugated to $a, b$ respectively. Thus, while it may not be clear how to study the intersection $gN(a)\cap hN(b)$, it is much simpler to study the corresponding intersection of parallel copies corresponding to cosets of $N(a')$ and $N(b')$. This is why we do not simply consider the cosets $gN(a), hN(b)$ but consider suitable neighbourhoods that contain those parallel copies corresponding to cosets of $N(a')$ and $N(b')$. 
\end{rem}

\begin{proof}[Proof of Lemma~\ref{lem:intersection_of_cosets}]
	Let $H_1,H_2\in\mathcal H$ and let $gN(H_1),hN(H_2)$ be cosets corresponding to adjacent vertices of $Y$. 
  
	By Lemma \ref{lem:iota}, we can assume that $H_1$ correspond to a standard generator $a_1$, $H_2$ corresponds to an element of the form $z_{b_1 b_2}$, and the standard tree $T_1$ with apex $\iota(gN(H_1))$ contains the vertex of dihedral type $\iota(hN(H_2))$. This vertex corresponds to a coset of the form $hA_{b_1b_2}$.   Since the tree $T_1$ contains the vertex $hA_{b_1b_2}$, it also contains a vertex corresponding to a coset of the form $hx \langle b_1\rangle$ or $hx \langle b_2\rangle$  for some $x\in A_{b_1b_2}$, so without loss of generality we will assume that it contains a vertex corresponding to a coset of the form $hx \langle b_1\rangle$.  Since $N(H_2)=A_{b_1b_2}$ by Lemma~\ref{lem:stab_dihedral}, we have that $hxN(H_2)=hN(H_2)$, so without loss of generality we will assume, up to replacing the element $h$ by $hx$, that $T_1$ contains the vertex $h\langle  b_1\rangle$. Moreover, since $T_1$ contains vertices corresponding to cosets of $\langle a_1\rangle$ and $\langle b_1\rangle$, it follows from Lemma~\ref{lem:tree_label_conjugate} that $a_1$ and $b_1$ are conjugated: By construction of $r$, we choose an element $x_1 \in B(e, r)$ so that $x_1a_1x_1^{-1}=b_1$.  
	
	By construction of $x_1$, we have $hx_1v_{a_1}=hv_{b_1}$. Thus, the apex of the standard tree $T_1$ is the coset $gN(a_1) = hx_1N(a_1)$, so we can assume without loss of generality that $g = hx_1$. We thus have $gN(H_1)=hx_1N(a_1)$ and  $hN(H_2)=hA_{b_1b_2}$. Since $x_1 \in B(e,r)$, the  intersection $gN(H_1)^{+r}\cap hN(H_2)^{+r} = hx_1N(a_1)^{+r}\cap hN(z_{b_1b_2})^{+r}$ contains the intersection $$hx_1N(a_1)x_1^{-1}\cap hN(z_{b_1b_2}) = hN(b_1)\cap hN(z_{b_1b_2}) = h\langle b_1, z_{b_1b_2} \rangle,$$
	the latter equality following from Lemma~\ref{lem:intersection_normalisers}.

	Since $B(e,r)$ contains only finitely many elements of $A_\Gamma$ and $\calH$ is finite, it follows from \cite[Proposition~9.4]{Hruska} that there exists a constant $\ell>0$ (that does not depend on the choice of standard generators $a_1, b_1, b_2$ and element $x_1$) such that 
	$$x_1N(a_1)^{+r}\cap N(z_{b_1b_2})^{+r} \subset \big(N(x_1a_1x^{-1})\cap N(z_{b_1b_2})\big)^{+\ell} = \big(N(b_1)\cap N(z_{b_1b_2})\big)^{+\ell} = \langle b_1, z_{b_1b_2} \rangle^{+\ell},$$
	and in particular 
	$$gN(H_1)^{+r}\cap hN(H_2)^{+r} =hx_1N(a_1)^{+r}\cap hN(z_{b_1b_2})^{+r} \subset h\langle b_1, z_{b_1b_2} \rangle^{+\ell}.$$
	Thus, the intersection $gN(H_1)^{+r}\cap hN(H_2)^{+r} $ is at Hausdorff distance at most $\ell$ from $h\langle b_1, z_{b_1b_2} \rangle$. 
	
	The conjugate $h\langle b_1, z_{b_1b_2} \rangle h^{-1}$ is equal to $  \langle ga_1g^{-1}, hz_{b_1b_2}h^{-1} \rangle = gN(H_1)g^{-1}\cap hN(H_2)h^{-1}$ by Lemma~\ref{lem:intersection_normalisers}. Since $k\langle b_1, z_{b_1b_2} \rangle k^{-1}$ acts coboundedly on $k\langle b_1, z_{b_1b_2} \rangle$ and $gN(H_1)^{+r}\cap hN(H_2)^{+r} $ is at Hausdorff distance  at most $\ell$ from $k\langle b_1, z_{b_1b_2} \rangle$, it follows that $gN(H_1)g^{-1}\cap hN(H_2)h^{-1}$ acts coboundedly on $gN(H_1)^{+r}\cap hN(H_2)^{+r} $ and the quotient space has diameter at most $\ell$.
\end{proof}

\begin{defn}\label{def:incident_direction}
	Let $gN(H)$ be a vertex of the commutation graph $Y$.  For each $Y$--adjacent coset $hN(H')$, the $hH'h^{-1}$-orbit of a point of $gN(H)^{+r}\cap hN(H')^{+r}$ is called an \textbf{incident direction} at $gN(H)$.
	
	 Fixing $gN(H)$ and letting $hN(H')$ vary over the vertices of  $Y$ that are adjacent to $gN(H)$, we 
	denote by $\mathcal I(gN(H))$ the set of all 
	such incident directions.  
\end{defn}

We advise the reader to look at Figure~\ref{fig:incident_cosets} for an illustration of this notion.  For later use, we observe:

\begin{lemma}\label{lem:I_finite}
	For each vertex $gN(H)$ of the commutation graph $Y$, there are finitely many $gN(H)g^{-1}$--orbits of  incident directions at $gN(H)$.
\end{lemma}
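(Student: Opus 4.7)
The plan is to reduce the statement to two finiteness facts: finitely many stabilizer-orbits of edges at $gN(H)$, and, for each such edge, finitely many $gHg^{-1}$-orbits of incident directions produced by that single edge.

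First I would use the cocompactness of the action of $A_\Gamma$ on $Y$ (Lemma~\ref{lem:cocompact_Y}) together with the fact that the $A_\Gamma$-stabilizer of the vertex $gN(H)$ is exactly $gN(H)g^{-1}$ (since $A_\Gamma$ acts on $Y$ by left multiplication on cosets). A standard orbit--stabilizer argument then shows that $gN(H)g^{-1}$ has only finitely many orbits on the set of vertices of $Y$ adjacent to $gN(H)$. Thus it suffices to fix one $Y$-adjacent coset $hN(H')$ from each such orbit and prove that the set of incident directions arising from this single edge yields only finitely many $gN(H)g^{-1}$-orbits.

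Now fix such an adjacent coset $hN(H')$. By definition an incident direction coming from this edge is an $hH'h^{-1}$-orbit of a point of the intersection $S := gN(H)^{+r}\cap hN(H')^{+r}$. By Lemma~\ref{lem:intersection_of_cosets}, the subgroup $K := gN(H)g^{-1}\cap hN(H')h^{-1}$ acts coboundedly on $S$, with quotient of diameter at most $\ell$. By Lemma~\ref{lem:intersection_normalisers}, $K$ is naturally isomorphic to $gHg^{-1}\times hH'h^{-1}\cong\Z^2$. Quotienting $S$ by the action of $hH'h^{-1}$ therefore gives a set $S/hH'h^{-1}$ (the set of incident directions produced by this edge) on which the remaining factor $gHg^{-1}$ acts, and the induced action is still cobounded in an obvious metric sense, hence has only finitely many orbits since $gHg^{-1}\cong\Z$. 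In particular the set of incident directions from this single edge has finitely many $gHg^{-1}$-orbits, and \emph{a fortiori} finitely many $gN(H)g^{-1}$-orbits.

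Combining the two steps, $\mathcal I(gN(H))$ decomposes as a finite union (one piece per $gN(H)g^{-1}$-orbit of adjacent vertices) of sets, each of which is a finite union of $gN(H)g^{-1}$-orbits. The main routine point, which is the only place where any care is needed, is translating ``cobounded action of $\Z^2$ on $S$ with quotient diameter at most $\ell$'' into ``finitely many $gHg^{-1}$-orbits on $S/hH'h^{-1}$''; this follows immediately because $S$ is a subset of the Cayley graph of $A_\Gamma$ with balls of bounded radius containing only finitely many elements.
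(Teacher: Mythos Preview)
Your proof is correct and follows essentially the same two-step strategy as the paper. The only differences are cosmetic: for the first step the paper invokes Lemma~\ref{lem:link_tree_cocompact} and Corollary~\ref{cor: link coneoff} (cocompactness of the stabiliser action on links in $\widehat D$, transported via $\iota$) rather than Lemma~\ref{lem:cocompact_Y} plus orbit--stabiliser as you do; and for the second step the paper argues slightly more directly that $K=gN(H)g^{-1}\cap hN(H')h^{-1}\subset gN(H)g^{-1}$ already acts coboundedly, hence cofinitely by local finiteness, on $S$, without first quotienting by $hH'h^{-1}$.
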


\begin{proof}
	By Lemma~\ref{lem:link_tree_cocompact} and Corollary~\ref{cor: link coneoff}, there are finitely many $gN(H)g^{-1}$-orbits of vertices of $Y$ adjacent to the vertex $gN(H)$. Thus, it is enough to show that for a fixed vertex $hN(H')$ adjacent to $gN(H)$, there are only finitely many $gN(H)g^{-1}$-orbits of incident directions corresponding to $hH'h^{-1}$-orbits. This is indeed the case, since $gN(H)g^{-1}$ contains $gN(H)g^{-1}\cap hN(H')h^{-1}$, which acts coboundedly, hence cofinitely (since the Cayley graph of $A_\Gamma$ is locally finite) on the  intersection $gN(H)^{+r}\cap hN(H')^{+r}$ by Lemma~\ref{lem:intersection_of_cosets}. 
\end{proof}

For later purposes we also note the following consequence:

\begin{cor}\label{cor:Y_orbits}
 There are finitely many orbits of edges in $Y$.
\end{cor}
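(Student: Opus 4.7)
The plan is to combine two finiteness facts: finitely many $A_\Gamma$-orbits of vertices of $Y$, and, for each vertex, finitely many stabiliser-orbits of adjacent vertices. Together these immediately bound the number of $A_\Gamma$-orbits of edges of $Y$.

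First, I would recall that the vertex set of $Y$ is $\bigsqcup_{H\in\calH}A_\Gamma/N(H)$ where $\calH$ is \emph{finite} by Definition~\ref{defn:fancy_H}. Hence there are finitely many $A_\Gamma$-orbits of vertices of $Y$; equivalently, this is the cocompactness of the vertex action already contained in Lemma~\ref{lem:cocompact_Y}. Fix a finite set of $A_\Gamma$-orbit representatives $\{gN(H)\}$.

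Next, for each such representative $gN(H)$, I would observe that the stabiliser of this vertex is $gN(H)g^{-1}$, and that the $A_\Gamma$-orbits of edges incident to $gN(H)$ are in bijection with the $gN(H)g^{-1}$-orbits of $Y$-adjacent vertices to $gN(H)$. But this latter set is finite: this is exactly the intermediate statement extracted in the proof of Lemma~\ref{lem:I_finite}, which appeals to Lemma~\ref{lem:link_tree_cocompact} (when $H$ is generated by a standard generator, so that $\iota(gN(H))$ is a tree-type apex whose link in $\widehat D$ is a standard tree on which the stabiliser acts cocompactly) and to Corollary~\ref{cor: link coneoff} (when $H=\langle z_{ab}\rangle$, so that $\iota(gN(H))$ is a dihedral-type vertex with cocompact stabiliser action on its link in $\widehat D$). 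Via the equivariant map $\iota\colon Y\to\widehat D$ from Definition~\ref{def:iota}, which sends the edges incident to $gN(H)$ injectively into the edges of $\widehat D$ incident to $\iota(gN(H))$, the cocompactness of the stabiliser action on $\link_{\widehat D}(\iota(gN(H)))$ transfers to finiteness of $gN(H)g^{-1}$-orbits of $Y$-neighbours of $gN(H)$.

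Finally, summing over the finitely many $A_\Gamma$-orbit representatives of vertices, the total number of $A_\Gamma$-orbits of edges of $Y$ is bounded by a finite sum of finite quantities, and hence is finite. No step here is a genuine obstacle; the corollary is essentially a bookkeeping consequence of Lemma~\ref{lem:cocompact_Y} together with the link cocompactness recorded in Lemma~\ref{lem:link_tree_cocompact} and Corollary~\ref{cor: link coneoff}. Indeed, one could alternatively invoke the fact, already noted inside the proof of Lemma~\ref{lem:cocompact_Y}, that $\iota$ equivariantly embeds the edge set of $Y$ into that of $\widehat D$ and that $A_\Gamma$ acts cocompactly on $\widehat D$.
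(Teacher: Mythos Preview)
Your proposal is correct and matches the paper's intended approach: the corollary is stated without proof as a consequence of Lemma~\ref{lem:I_finite}, and the implicit argument is precisely the one you spell out (finitely many orbits of vertices since $\calH$ is finite, combined with the fact---established in the first sentence of the proof of Lemma~\ref{lem:I_finite} via Lemma~\ref{lem:link_tree_cocompact} and Corollary~\ref{cor: link coneoff}---that each vertex has finitely many stabiliser-orbits of $Y$-neighbours). Your alternative observation is also spot on: the statement is already contained verbatim in the proof of Lemma~\ref{lem:cocompact_Y}, so the corollary is really just a cross-reference recorded closer to where it is used.
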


\section{The blown-up commutation graph}\label{sec:blowup}

We now introduce a variant of the commutation graph of a group. This is this complex $X$ to which we apply Theorem 
\ref{thm:main 
criterion} for Artin groups of large and hyperbolic type. 

As per Convention \ref{conv:connected}, we are considering an Artin group of large and hyperbolic type $A_\Gamma$ with $\Gamma$ connected, and let $\mathcal H$ be as in Definition~\ref{defn:fancy_H}. 
 For each $H\in\mathcal H$, let $N(H)$ denote 
its normaliser.  Recall that $\bigsqcup_{H\in\mathcal H}A_\Gamma/N(H)$ is the vertex set of the commutation graph $Y$. Our goal is to produce $X$ by modifying  $Y$.  Roughly speaking, each vertex of $Y$ will 
be ``blown up'' to a cone on a certain discrete space quasi-isometric to $\mathbb Z$, and each edge of $Y$ will be blown up 
to the simplicial join of the blow-ups of its vertices.  Figure~\ref{fig:blowup} and the discussion following 
Definition \ref{def:proj_blowup} may help the reader visualise the construction.

In what follows, we equip $A_\Gamma$ with the word metric $\dist_{A_\Gamma}$ coming from the standard generating set given 
by the vertices of $\Gamma$.  When we refer to distances in subspaces of $A_\Gamma$ (e.g. cosets of subgroups), we are using 
the subspace metric inherited from $(A_\Gamma,\dist_{A_\Gamma})$.

\subsection{Blow-up data}\label{subsec:blow_up_data}
The action of $A_\Gamma$ on the commutation graph $Y$ hides too much information to use it as the underlying simplicial complex in a combinatorial HHS structure.  For example, 
subgroups of the form $N(H)$, $H\in\mathcal H$, fix vertices in the commutation graph, so by Lemma~\ref{lem:intersection_normalisers}, edges have $\integers^2$ 
stabilisers.  By Lemma~\ref{lem:no square}, edges are maximal simplices.  Thus any $Y$--graph (see Definition \ref{def:aug_X}) has $\integers^2$ vertex 
stabilisers, and so it is not a quasi-isometry model for $A_\Gamma$.  We remedy this by replacing the commutation graph by a complex $X$ in which the vertices of the 
commutation graph have been ``blown up''. 

We now describe the geometric data necessary to perform such a blow-up construction and 
show that it exists in the case of Artin groups of large and hyperbolic type. 

\begin{defn}[Blow-up data]\label{defn:blow_up_data}
Let $B_0$, $L_0\in\naturals$ be fixed constants, and let $r$ be as in Definition \ref{def:coset_neighbourhood}.  Then \emph{blow-up data} for $A_\Gamma$ and $Y$ consists of the following:
\begin{itemize}
 \item \textbf{(Quasilines associated to cosets.)}  For each coset $gN(H),H\in\mathcal H,g\in A_\Gamma$, let $\Lambda_{gN(H)}$ be a discrete metric space that is $(B_0,B_0)$--quasi-isometric to 
$\mathbb Z$.  Moreover, $gN(H)g^{-1}$ acts by isometries on $\Lambda_{gN(H)}$ with at most $B_0$ orbits of points, fixing the Gromov 
boundary of $\Lambda_{gN(H)}$ pointwise.  
 
 \item \textbf{(Projections to quasilines.)}  For each coset $gN(H),H\in\mathcal H,g\in A_\Gamma$, let
 $$\phi_{gN(H)}:gN(H)^{+r}\to\Lambda_{gN(H)}$$
 be a $gN(H)g^{-1}$--equivariant, $(L_0,L_0)$--coarsely 
Lipschitz map. 
\end{itemize}

Blow-up data must satisfy the following additional conditions:

\begin{enumerate}[(A)]
 \item \textbf{(Equivariance.)}  For each $g\in A_\Gamma$ and each coset $hN(H)$, there is an isometry $g:\Lambda_{hN(H)}\to\Lambda_{ghN(H)}$ such that $\phi_{ghN(H)}(gx)=g\cdot\phi_{hN(H)}(x)$ for 
all $x\in\Lambda_{hN(H)}$.\label{item:phi_equivariance}

\item \textbf{(Unbounded orbits.)}  For each coset $gN(H)$, the action of $gHg^{-1}$ on $\Lambda_{gN(H)}$ has unbounded 
orbits.\label{item:unbounded_orbits}

\item \textbf{(Bounded incident directions.)}  For each incident direction $hH'h^{-1}\cdot x\in\mathcal I(gN(H))$, we have $\diam(\phi_{gN(H)}(hH'h^{-1}\cdot x))\leq 
B_0$.\label{item:bounded_cosets} (Incident directions are defined in Definition \ref{def:incident_direction}.)
\end{enumerate}
\end{defn}

Some intuition for the definition of blow-up data is provided by Figure~\ref{fig:incident_cosets}.

\begin{figure}[h]

	\begin{overpic}[width=0.85\textwidth]{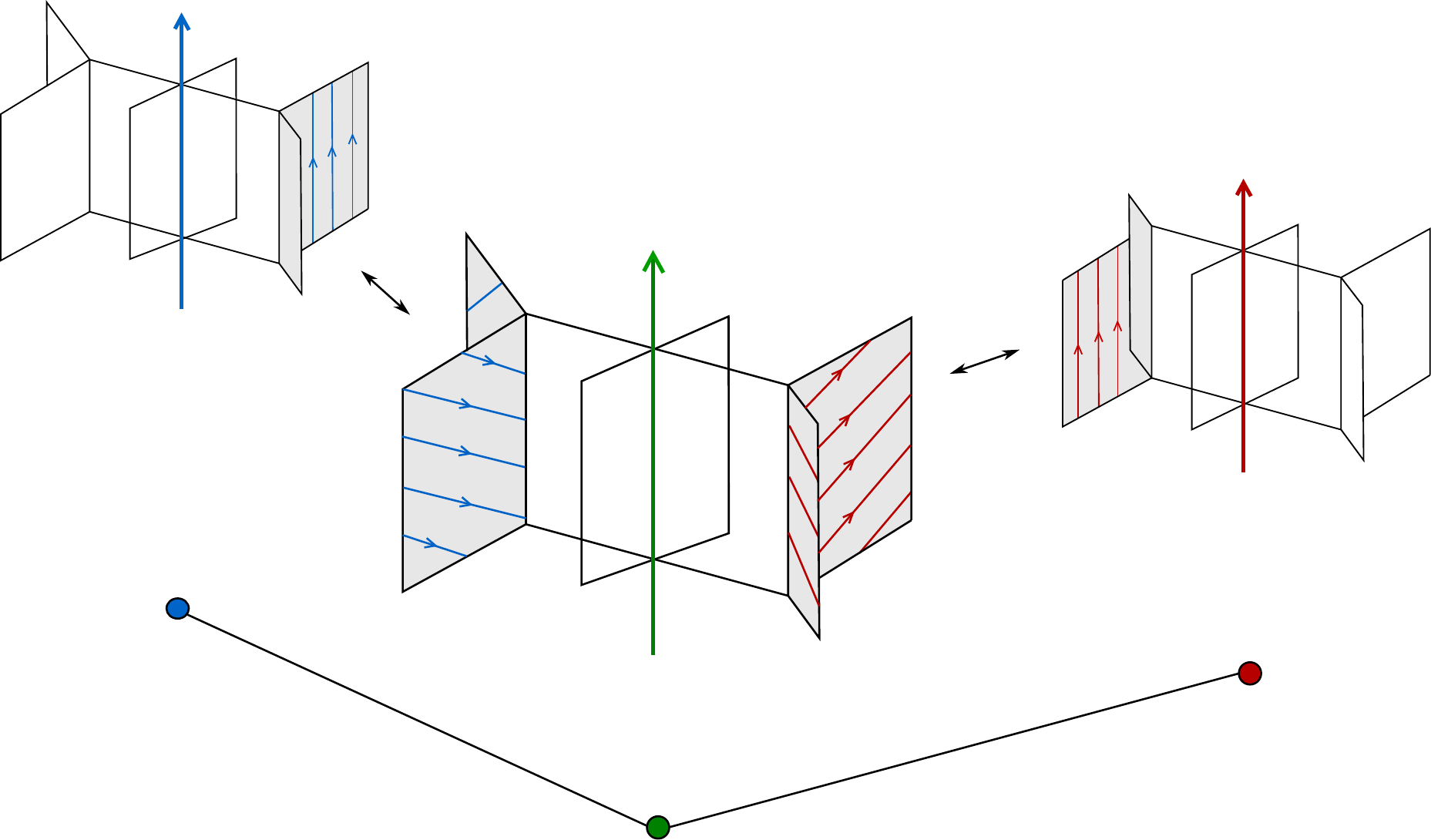}
		\put(14,58){$H'$}
		\put(47,42){$H$}
		\put(88,47){$H''$}
		\put(06,12){$N(H')$}
		\put(43,04){$N(H)$}
		\put(85,07){$N(H'')$}
		\put(20,05){$\langle H, H' \rangle$}
		\put(66,03){$\langle H, H'' \rangle$}
	\end{overpic}
	
\caption{A simplified picture illustrating two adjacent edges of $Y$, with above each vertex a depiction of the corresponding cosets $N(H), N(H'), N(H'')$. Each such coset is quasi-isometric to a product of a tree with a line, and the directions $H, H', H''$ are represented vertically in each coset. Two cosets $N(H)$ and $N(H')$ that are adjacent in $Y$ have uniform neighbourhoods that meet along a subset with a cobounded action of the $\integers^2$-subgroup $H\times H'$ (shaded regions in the picture).  The $H'$- and $H''$-orbits in the corresponding shaded regions have a
certain ``slope''  when seen inside $N(H)$.  The action of $N(H)$ on the  quasiline $\Lambda_{N(H)}$ from the blow-up data is chosen so that $H$ acts with unbounded orbits on it, 
but $H'$ and $H''$ act with bounded orbits on $\Lambda_{N(H)}$. }
\label{fig:incident_cosets}
\end{figure}

The reader may want to read the statement of Lemma~\ref{lem:bounded_sets} already, in order to understand the importance of the previous conditions, especially condition (C). 
While the equivariance condition (A) will be necessary for our constructions, it is slightly redundant in the previous definition, as we can start from a ``non-equivariant'' blow-up data and extend it equivariantly:

\begin{lemma}\label{lem:extending_equivariantly}
	Suppose that for every $H \in \mathcal H$, we have:
	
	\begin{itemize}
		\item  a discrete metric space $\Lambda_{N(H)}$ that is $(B_0,B_0)$--quasi-isometric to $\Z$, with an action of $N(H)$ on it by isometries with at most $B_0$ orbits of points and which fixes the Gromov 
	boundary pointwise.  
	\item a $N(H)$--equivariant, $(L_0,L_0)$--coarsely 
	lipschitz map $$\phi_{N(H)}:N(H)^{+r}\to\Lambda_{N(H)}$$ 
\end{itemize}

that satisfy the following additional conditions:

\begin{enumerate}
\item[(B')] For each $H \in \mathcal H$, the action of $H$ on $\Lambda_{N(H)}$ has unbounded 
orbits.
\item[(C')] For each coset $kH'k^{-1}\cdot x\in\mathcal I(N(H))$, we have $\diam(\phi_{N(H)}(kH'k^{-1}\cdot x))\leq 
B_0$.
\end{enumerate}
Then it is possible to extend equivariantly this data into a blow-up data for $Y$. 
\end{lemma}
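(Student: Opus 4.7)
The plan is to use an associated bundle construction to equivariantly extend the given data. Concretely, for each $H \in \mathcal{H}$, I will define the family $\{\Lambda_{gN(H)}\}_{g\in A_\Gamma}$ together with the projections and the $A_\Gamma$--action in one go, using the data $(\Lambda_{N(H)}, \phi_{N(H)})$ as the ``fiber over the identity coset''.

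More precisely, for each $H \in \mathcal{H}$, form the quotient space
\[
E_H \;=\; (A_\Gamma \times \Lambda_{N(H)}) / \sim, \qquad (g, x) \sim (gh, h^{-1}\cdot x) \text{ for all } h \in N(H),
\]
equipped with the natural projection $E_H \to A_\Gamma/N(H)$. The $A_\Gamma$--action on $E_H$ is $k\cdot [g, x] = [kg, x]$. For each coset $gN(H)$, define $\Lambda_{gN(H)}$ to be the fiber over $gN(H)$, metrized by the bijection $\Lambda_{N(H)} \to \Lambda_{gN(H)}$, $x \mapsto [g, x]$ (which is independent of the representative $g$ up to the $N(H)$--isometries on $\Lambda_{N(H)}$, so the metric is well defined). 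Similarly, for $y \in gN(H)^{+r}$, write $y = g\tilde{y}$ for the unique $\tilde{y} \in N(H)^{+r}$ (using that left multiplication by $g$ is an isometry of $A_\Gamma$, and that $N(H)^{+r}$ is invariant under $N(H)$ on the right), and set
\[
\phi_{gN(H)}(y) \;=\; [g, \phi_{N(H)}(\tilde{y})].
\]
Again this does not depend on the choice of representative $g$, since if $g' = gh$ with $h \in N(H)$ then $\tilde{y}' = h^{-1}\tilde{y}$ and $[g', \phi_{N(H)}(h^{-1}\tilde{y})] = [g', h^{-1}\cdot \phi_{N(H)}(\tilde{y})] = [g, \phi_{N(H)}(\tilde{y})]$ by $N(H)$--equivariance of $\phi_{N(H)}$.

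Equivariance (condition (A)) is then essentially built in: for $k \in A_\Gamma$ and $y \in gN(H)^{+r}$, the element $ky \in kgN(H)^{+r}$ has $(kg)^{-1}(ky) = \tilde{y}$, so $\phi_{kgN(H)}(ky) = [kg, \phi_{N(H)}(\tilde{y})] = k\cdot\phi_{gN(H)}(y)$. The induced isometries $k \colon \Lambda_{gN(H)} \to \Lambda_{kgN(H)}$ are those given by the $A_\Gamma$--action on $E_H$. The action of $gN(H)g^{-1}$ on $\Lambda_{gN(H)}$ corresponds, under the bijection $x \mapsto [g, x]$, to the original $N(H)$--action on $\Lambda_{N(H)}$, so it is by isometries with at most $B_0$ orbits and fixes the Gromov boundary pointwise; the quasi-isometry constants to $\mathbb{Z}$ and the coarse Lipschitz constants of $\phi_{gN(H)}$ are identical to those of the originals.

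The only thing left is to check the conditions (B) and (C), which by the above identification reduce exactly to (B$'$) and (C$'$) after conjugating by $g$. For (B), the action of $gHg^{-1}$ on $\Lambda_{gN(H)}$ corresponds to the action of $H$ on $\Lambda_{N(H)}$, which is unbounded by (B$'$). For (C), given an incident direction $hH'h^{-1}\cdot x \in \mathcal{I}(gN(H))$, write $x = g\tilde{x}$ so that $\tilde{x} \in N(H)^{+r} \cap (g^{-1}h)N(H')^{+r}$, and observe that adjacency in $Y$ is $A_\Gamma$--invariant, so $N(H)$ and $(g^{-1}h)N(H')$ are adjacent in $Y$. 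Then $(g^{-1}h)H'(g^{-1}h)^{-1}\cdot \tilde{x}$ is an incident direction at $N(H)$, whose $\phi_{N(H)}$--image has diameter at most $B_0$ by (C$'$); under the identification $\Lambda_{N(H)} \cong \Lambda_{gN(H)}$, this is precisely the diameter of $\phi_{gN(H)}(hH'h^{-1}\cdot x)$. I do not anticipate a serious obstacle here: the main conceptual point is recognizing that ``equivariant extension'' is simply the bundle construction, after which all verifications are bookkeeping that conjugation by the coset representative preserves the relevant structure.
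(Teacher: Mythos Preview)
Your proof is correct and is essentially the same construction as the paper's, just packaged differently: the paper fixes a coset representative for each $gN(H)$, defines $\Lambda_{gN(H)}$ as a copy of $\Lambda_{N(H)}$ with action and projection transported by that representative, and then must explicitly verify that the equivariance condition (A) holds despite the choices made; you instead use the associated bundle $E_H=(A_\Gamma\times\Lambda_{N(H)})/\!\sim$, which is a coordinate-free version of the same idea where the well-definedness checks absorb what the paper does in its equivariance verification. Both approaches reduce (B) and (C) to (B$'$) and (C$'$) by the same conjugation argument.
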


\begin{proof}
	Fix a favourite representative $g$ of the coset $gN(H)$.  Let $\Lambda_{gN(H)}$ be a copy of $\Lambda_{N(H)}$. We define $\phi_{gN(H)}(gx)=\phi(x)$ for $x\in N(H)^{+r}$, and the action of $gN(H)g^{-1}$ on $\Lambda_{gN(H)}$ is defined as $gng^{-1}\cdot \lambda=n\lambda$ for $n\in N(H)$ and $\lambda\in\Lambda_{gN(H)}$. (Note that the coset representative $g$ is fixed throughout.)
	
	We have that $\Lambda_{gN(H)}$ is 
	$(B_0,B_0)$--quasi-isometric to $\integers$, $gN(H)g^{-1}$ acts by isometries, it has at most $B_0$ orbits, and the orbits of $gHg^{-1}$ are unbounded. The fact that $\phi_{gN(H)}$ is coarsely Lipschitz is also straightforward.
	
	Let us check equivariance (which is not immediate as we chose coset representatives to define the $\phi$ maps). Let $g\in A_\Gamma$, and consider a coset $hN(H)$. We can assume that $h$ is the favourite representative of its coset, and let $ghn$ be the favourite representative of $ghN(H)$, where $n\in N(H)$. The isometry $g:\Lambda_{hN(H)}\to\Lambda_{ghN(H)}$ is defined as $g(x)=n^{-1} x$ (so that we are using the action of $N(H)$ on $\Lambda_{N(H)}$). For $x\in \Lambda_{hN(H)}$ we have
	
	\begin{eqnarray*}
	\phi_{ghN(H)}(gx) &= & \phi_{ghN(H)}(ghn\ n^{-1}h^{-1}x)\\
	 &=&\phi_{N(H)}(n^{-1}h^{-1}x)\\
	&=& n^{-1} \phi_{N(H)}(h^{-1}x) \\
	&=& n^{-1} \phi_{hN(H)}(h h^{-1}x) \\
	&=& n^{-1} \phi_{hN(H)}(x),\\
	\end{eqnarray*}
	as required.
	
	To show \ref{item:bounded_cosets}, we note that by equivariance, for $x\in gN(H)^{+r}$ and $hN(H')$ adjacent to $gN(H)$ in $Y$, we have $\phi_{gN(H)}(hH'h^{-1}\cdot x)=g\phi_{N(H)}( g^{-1} (h H'h^{-1}\cdot x))$. 
	
	Note that $g^{-1} (h H'h^{-1}\cdot x)=g^{-1} h H'h^{-1}g\cdot (g^{-1}x)\in N(H)^{+r}$ and $g^{-1} h H'h^{-1}g$ commutes with $H$, so that $N(H)$ is adjacent to $g^{-1}hN(H')$. In particular $g^{-1} h H'h^{-1}\cdot x$ is in $\mathcal I(N(H))$, so that $\phi_{N(H)}( g^{-1} (h H'h^{-1}\cdot x))$ has diameter at most $B_0$. Since $g$ is an isometry, the same holds for $\phi_{gN(H)}(hH'h^{-1}\cdot x)$, as required.
\end{proof}

\paragraph{\textbf{Existence of blow-up data.}} We now verify that blow-up data exists for $A_\Gamma$ and $Y$. The rest of this subsection is independent of the rest of the article, as  the arguments in 
the next sections rely on the existence of blow-up data, but 
not on the specific choice of blow-up data.  The reader may thus want to skip the rest of this subsection in a first reading.

As we will see, the key idea is to transform quasimorphisms into actions on 
quasilines. We use a lemma from \cite{ABO} to do so (the idea behind it can be traced further back, see e.g. \cite[Proposition 4.4]{Manning-quasitrees}).  We need the following two general lemmas, which also underpin the strategy followed in the forthcoming paper~\cite{GraphManifolds}.

\begin{lemma}[Initial quasimorphism]\label{lem:quasimorphisms_1}
 Let $1\to \integers\stackrel{\iota}{\longrightarrow}G\stackrel{\pi}{\longrightarrow}F\to 1$ be a central extension corresponding to a bounded class in $H^2(F,\integers)$.  Then there exists a 
quasimorphism $\phi:G\to\integers$ which is unbounded on $\iota(\integers)$.
\end{lemma}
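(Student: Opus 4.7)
The plan is to exploit the fact that a bounded class in $H^2(F,\integers)$ is represented by a bounded $2$--cocycle, and then define the quasimorphism as the ``$\integers$--coordinate'' associated to a set-theoretic section of $\pi$.

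First I would choose a set-theoretic section $s:F\to G$ of $\pi$ with $s(1_F)=1_G$. This identifies $G$ with $\integers\times F$ as a set via $(n,f)\mapsto \iota(n)\cdot s(f)$, and the group operation becomes
\[
(n,f)\cdot(n',f')=\bigl(n+n'+c(f,f'),\,ff'\bigr),
\]
where $c:F\times F\to\integers$ is the $2$--cocycle determined by $s$. By hypothesis, the class $[c]\in H^2(F,\integers)$ is bounded, so after modifying $s$ by a suitable map $F\to\integers$ (which does not affect the identification of $\iota(\integers)$ with the $\integers$--factor up to a uniformly bounded error), we can assume that $c$ is bounded, i.e.\ $\|c\|_\infty=:C<\infty$.

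Next I would define $\phi:G\to\integers$ by $\phi(n,f)=n$. A direct computation gives
\[
\phi\bigl((n,f)(n',f')\bigr)-\phi(n,f)-\phi(n',f')=c(f,f'),
\]
so $|\phi(gg')-\phi(g)-\phi(g')|\leq C$ for all $g,g'\in G$, and $\phi$ is a quasimorphism. Moreover, $\phi(\iota(n))=n$ by construction, so $\phi$ is unbounded on $\iota(\integers)$.

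The only subtle point, which I would treat carefully but not laboriously, is the passage from a bounded cohomology class to a bounded cocycle at the level of the chosen section: one picks any representative $c_0$, writes $c_0=c+\delta b$ with $c$ bounded, and redefines the section by $s'(f)=\iota(b(f))\cdot s(f)$, which replaces the cocycle $c_0$ by $c$. This is a standard manipulation and I expect no real obstacle; the whole argument is essentially a translation between cohomological boundedness and the defect of the coordinate map.
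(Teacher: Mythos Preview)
Your proposal is correct and follows essentially the same approach as the paper: choose a section with bounded cocycle, identify $G$ set-theoretically with $\integers\times F$, and define $\phi$ as the $\integers$--coordinate. If anything, your treatment of the passage from a bounded \emph{class} to a bounded \emph{cocycle} (by modifying the section via a coboundary) is more explicit than the paper's, which simply asserts the existence of a section whose associated cocycle takes finitely many values.
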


\begin{proof}
Since the central extension corresponds to a bounded class, there is a set-theoretic section $s:F\to G$ of $\pi$ such that the element $s(f_1)s(f_2)s(f_1f_2)^{-1}$ takes finitely many values as 
$f_1,f_2$ vary in $F$.  Since $s$ is a section of $\pi$, we have $s(f_1)s(f_2)s(f_1f_2)^{-1}\in\mathrm{ker}(\pi)=\iota(\integers)$ for all $f_1,f_2\in F$, so for each $f_1,f_2\in F$ we can choose an 
integer $c(f_1,f_2)$ such that $\iota(c(f_1,f_2))=s(f_1)s(f_2)s(f_1f_2)^{-1}$.  Note that $|c(f_1,f_2)|$ is bounded independently of $f_1,f_2$, because $s(f_1)s(f_2)s(f_1f_2)^{-1}$ takes only 
finitely many possible values.

We now define $\phi$.  Let $x\in G$.  Then there exist unique $f_x\in F,t_x\in\integers$ such that $x=s(f_x)\iota(t_x)$.  We set $\phi(x)=t_x$.  We first verify that $\phi$ is a quasimorphism.  
Indeed, let $x,y\in G$.  Then, since $\iota(\integers)$ is central, we have $$xy=s(f_x)\iota(t_x)s(f_y)\iota(t_y)=s(f_x)s(f_y)\iota(t_x+t_y)=s(f_xt_y)\iota(c(f_x,f_y)+t_x+t_y).$$  So 
$$\phi(xy)=t_x+t_y+c(f_x,f_y)=\phi(x)+\phi(y)+c(f_x,f_y),$$ whence $\phi$ is a quasimorphism because of the uniform bound on $|c(f_x,f_y)|$.  

Finally, observe that for all $t\in\integers$, we have $\phi(\iota(t))=t$, proving the last part of the lemma.
\end{proof}

\begin{lemma}[Action on a quasiline]\label{lem:quasimorphisms_2}
Let $1\to \integers\stackrel{\iota}{\longrightarrow}G\stackrel{\pi}{\longrightarrow}F\to 1$ be a central extension with $F$ a hyperbolic group.  Let $\{C_\alpha\}_{\alpha\in I}$ be a finite 
collection of infinite cyclic subgroups of $G$ such that $\{\pi(C_\alpha)\}_{\alpha\in I}$ is a malnormal collection of infinite cyclic subgroups of $F$.  Then $G$ has an (infinite) generating set 
$\mathcal S$ such that:
\begin{itemize}
 \item $\cayley(G,\mathcal S)$ is quasi-isometric to $\integers$;
 \item $\iota(\integers)$ is unbounded in $\cayley(G,\mathcal S)$;
 \item there exists $D\geq 0$ so that for each $x,g\in G$ and each $\alpha$ the diameter of $gC_\alpha g^{-1}x$ is bounded by $D$.
\end{itemize}
\end{lemma}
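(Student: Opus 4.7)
My overall strategy is to construct a homogeneous quasimorphism $\phi:G\to\reals$ that is unbounded on $\iota(\integers)$ and bounded on every $G$--conjugate of every $C_\alpha$, and then invoke the construction from \cite{ABO} that converts such a quasimorphism into a generating set $\mathcal S$ whose Cayley graph is quasi-isometric to $\integers$ via $\phi$, with bounded orbit diameters on subsets where $\phi$ is bounded.

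To build $\phi$, I would first apply Lemma~\ref{lem:quasimorphisms_1} to produce a quasimorphism $\phi_0:G\to\integers$ that is unbounded on $\iota(\integers)$, and then homogenise to a homogeneous quasimorphism $\phi_1:G\to\reals$. Since the restriction of $\phi_0$ to the central subgroup $\iota(\integers)$ is already (by the construction of Lemma~\ref{lem:quasimorphisms_1}) the identity homomorphism $\iota(t)\mapsto t$, homogenisation does not change it there, so $\phi_1$ remains unbounded on $\iota(\integers)$. Writing $C_\alpha=\langle c_\alpha\rangle$ and $r_\alpha=\phi_1(c_\alpha)\in\reals$, I next need to cancel these ``slopes''. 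Here I would use the Bestvina--Fujiwara (or Bestvina--Bromberg--Fujiwara) construction in the hyperbolic group $F$: the elements $\pi(c_\alpha)$ are loxodromic and malnormality of the family $\{\pi(C_\alpha)\}$ ensures the projection axioms hold for the axes of the $\pi(c_\alpha)$. This yields, for each $\alpha$, a homogeneous quasimorphism $\bar\psi_\alpha:F\to\reals$ with $\bar\psi_\alpha(\pi(c_\alpha))=r_\alpha$, bounded on every conjugate of $\pi(C_\beta)$ for $\beta\neq\alpha$, and in particular with $\bar\psi_\alpha(\pi(c_\beta))=0$ for $\beta\neq\alpha$. Pulling back via $\pi$ gives a homogeneous quasimorphism $\psi_\alpha=\bar\psi_\alpha\circ\pi:G\to\reals$ that vanishes on $\iota(\integers)$.

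Setting $\phi:=\phi_1-\sum_{\alpha\in I}\psi_\alpha$ (a finite sum) produces a homogeneous quasimorphism with $\phi|_{\iota(\integers)}=\phi_1|_{\iota(\integers)}$ still unbounded, and with $\phi(c_\alpha)=0$ for every $\alpha$. Since homogeneous quasimorphisms are constant on conjugacy classes, $\phi$ vanishes on every power of every conjugate of $c_\alpha$, so $\phi$ is uniformly bounded on each conjugate $gC_\alpha g^{-1}$ (with bound controlled by the defect of $\phi$). I would then apply the construction from \cite{ABO} that assigns to $\phi$ a generating set $\mathcal S\subseteq G$ such that $d_{\mathcal S}(e,g)$ is comparable to $|\phi(g)|$ up to affine error; the three bullet points then follow directly. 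The Cayley graph is quasi-isometric to $\integers$ via $\phi$; $\iota(\integers)$ is unbounded because $\phi$ is unbounded on it; and for each $x,g\in G$ and $\alpha$, left-invariance gives $\diam_{\mathcal S}(gC_\alpha g^{-1}x)=\diam_{\mathcal S}(x^{-1}gC_\alpha g^{-1}x)$, which is uniformly bounded because $x^{-1}gC_\alpha g^{-1}x$ is another conjugate of $C_\alpha$ and $\phi$ is uniformly bounded on all such conjugates.

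The main obstacle will be the second step, i.e.\ executing the Bestvina--Fujiwara construction in the setting of a malnormal collection of cyclic subgroups of a hyperbolic group and tuning the resulting homogeneous quasimorphisms to take prescribed values $r_\alpha$ on $\pi(c_\alpha)$ while vanishing on the other $\pi(c_\beta)$. The malnormality hypothesis is precisely what is needed so that the projection axioms of Bestvina--Bromberg--Fujiwara hold for the collection of axes, ensuring the existence of ``independent'' quasimorphisms with the prescribed boundary behaviour; everything else in the argument is essentially bookkeeping built on top of this input.
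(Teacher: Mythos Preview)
Your approach is essentially identical to the paper's: both homogenise the quasimorphism from Lemma~\ref{lem:quasimorphisms_1}, correct its values on the $C_\alpha$ using homogeneous quasimorphisms pulled back from $F$ (built via Epstein--Fujiwara / Hull--Osin + DGO in the paper, Bestvina--Fujiwara in your sketch), and then invoke \cite[Lemma~4.15]{ABO}. The only point you glossed over is that Lemma~\ref{lem:quasimorphisms_1} requires the extension class to be bounded; the paper notes this holds because $F$ is hyperbolic, citing Mineyev's theorem that $H^2_b(F;\integers)\to H^2(F;\integers)$ is surjective for hyperbolic $F$.
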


\begin{proof}
 By Lemma~4.15 in~\cite{ABO}, it suffices to produce a quasimorphism $\psi:G\to\reals$ such that $\psi$ is unbounded on $\iota(\integers)$, and each $\psi(gC_\alpha g^{-1}x)$  is contained in an interval of uniformly bounded size.

For each $\alpha\in I$, let $a_\alpha$ generate the infinite cyclic group $\pi(C_\alpha)$.  So, $\{\langle a_\alpha\rangle\}_{\alpha\in I}$ is a malnormal collection in $F$, and each $a_\alpha$ has 
infinite order.

We now construct a collection of quasimorphisms; the quasimorphism $\psi$ will be a linear combination of these.  

First, let $\phi:G\to\integers$ be the quasimorphism provided by 
Lemma~\ref{lem:quasimorphisms_1}, which applies since $F$ is hyperbolic and therefore every cohomology class is bounded~\cite[Theorem 15]{Mineyev}.  Let $\psi_0$ 
be the homogenisation of $\phi$.  

Second, for each $\alpha\in I$, we can choose a homogeneous quasimorphism $\psi'_\alpha:F\to\reals$ such that $\psi'_\alpha(a_\alpha)=1$ and 
$\psi'_\alpha(a_{\alpha'})=0$ for $\alpha'\in I-\{\alpha\}$.   This is possible because of malnormality of the collection $\{\langle a_\alpha\rangle\}_{\alpha\in I}$ of infinite cyclic 
subgroups (using either the construction in~\cite{EpsteinFujiwara} or combining~\cite[Theorem 4.2]{HullOsin} with~\cite[Corollary 6.6, Theorem 6.8]{DGO}).  So, $\psi_\alpha=\psi'_\alpha\circ\pi:G\to 
\reals$ is a quasimorphism for each $\alpha$.

Now let $$\psi=\psi_0-\sum_{\alpha\in I}\psi_0(a_\alpha)\cdot\psi_\alpha.$$  Then we claim that $\psi$ is a quasimorphism on $G$ with the required properties. Indeed, $\psi|_{\iota(\mathbb Z)}=\psi_0|_{\iota(\mathbb Z)}$, so $\psi$ is unbounded on $\iota(\mathbb Z)$. Next, note that $\psi(C_\alpha)=\{0\}$, so that for any $c\in C_\alpha$ and $g,x\in G$ we have that $|\psi(gcg^{-1}x)-\psi(x)|$ is bounded by 3 times the defect of $\psi$, showing that $\psi(gC_\alpha g^{-1}x)$  is contained in an interval of uniformly bounded size.
\end{proof}

We now use the two preceding lemmas to produce blow-up data.

\begin{lemma}[Existence of blow-up data]\label{lem:blow_up_data_existence}
	There exists blow-up data for $A_\Gamma$ and $Y$. 
\end{lemma}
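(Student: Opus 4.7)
The plan is to use Lemma~\ref{lem:extending_equivariantly} to reduce the problem to constructing, for each $H\in\calH$ (not each coset), a quasi-line $\Lambda_{N(H)}$ with an action of $N(H)$, plus a coarsely Lipschitz projection $\phi_{N(H)}:N(H)^{+r}\to\Lambda_{N(H)}$ satisfying (B') and (C'). The production of the action is the crux, and the obvious tool for it is Lemma~\ref{lem:quasimorphisms_2}: what we need is to recognise, for each $H\in\calH$, the data of a central extension by $H$ with hyperbolic quotient, together with a finite collection of cyclic subgroups that (i) cover, up to $N(H)$-conjugation, every incident direction, and (ii) have malnormal image in the quotient.

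First I would set up the central extension $1\to H\to N(H)\to Q_H\to 1$ in each case. When $H=\langle a\rangle$, Lemma~\ref{lem:stab_tree} gives $N(H)=\langle a\rangle\times F_k$ with $F_k$ a finitely generated free group (so $Q_H=F_k$ is hyperbolic, and the extension is trivial). When $H=\langle z_{ab}\rangle$, Lemma~\ref{lem:stab_dihedral} gives $N(H)=A_{ab}$ and Lemma~\ref{lem:virtually splits} says $Q_H=A_{ab}/\langle z_{ab}\rangle$ is virtually free, hence hyperbolic; the extension is central and corresponds to a bounded class by~\cite{Mineyev} (which is exactly what Lemma~\ref{lem:quasimorphisms_1}/Lemma~\ref{lem:quasimorphisms_2} need). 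Next I would enumerate the incident directions. By Lemma~\ref{lem:iota}, adjacency in $Y$ respects the bipartition, so every incident direction at $N(H)$ involves a subgroup $hH'h^{-1}\subset N(H)$ where $H'$ has opposite type to $H$; and by Lemma~\ref{lem:I_finite} there are finitely many $N(H)$-orbits of these. Choose one representative $C_\alpha$ per orbit.

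The heart of the argument is verifying malnormality of $\{\pi(C_\alpha)\}$ in $Q_H$. In the tree-type case, $F_k$ acts on the standard tree $T_a$ with trivial edge stabilisers (Lemma~\ref{lem:stab_tree}), and the incident-direction subgroups project to vertex stabilisers of this action; Bass--Serre theory then yields that the collection of vertex stabilisers is malnormal, and cocompactness gives finiteness. In the dihedral-type case, the $C_\alpha$'s project to cyclic subgroups generated by images of conjugates of $a$ and $b$; these images act loxodromically on the quasi-tree $\calT_{ab}$ from Definition~\ref{def:quasi_tree_simplices} (the proof of Lemma~\ref{lem:virtually splits} shows their orbits span embedded lines), so each has a well-defined pair of fixed points in $\partial\calT_{ab}$, and any nontrivial intersection between conjugates would force commutation between distinct conjugates of standard generators in $A_{ab}$, contradicting Lemma~\ref{lem:dihedral Z2}. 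With malnormality in hand, Lemma~\ref{lem:quasimorphisms_2} provides a generating set $\mathcal S$ so that $\Lambda_{N(H)}:=\cayley(N(H),\mathcal S)$ is a quasi-line on which $N(H)$ acts with $H$-orbits unbounded (by the ``unbounded on $\iota(\integers)$'' clause) and $C_\alpha$-orbits bounded (by the last clause).

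Finally I would define $\phi_{N(H)}:N(H)^{+r}\to \Lambda_{N(H)}$ by writing every $x\in N(H)^{+r}$ as $x=nc$ with $n\in N(H)$ and $c\in B(e,r)$, making any fixed choice and setting $\phi_{N(H)}(x)=n$. Since two such choices differ by an element of $B(e,2r)$ and $N(H)$ acts on $\Lambda_{N(H)}$ with one orbit (hence boundedly many orbits of pairs at bounded word-length distance), this is well-defined up to bounded error, $N(H)$-equivariant, and coarsely Lipschitz. Axiom (C') follows directly from the bounded orbits property of Lemma~\ref{lem:quasimorphisms_2} together with the fact that every incident direction lies in an $N(H)$-conjugate of some $C_\alpha$. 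I expect the main technical obstacle to be the malnormality step in the dihedral-type case, because $Q_H$ is only virtually free and the images of $\langle a\rangle$, $\langle b\rangle$ need not be maximal cyclic; a clean fix, if needed, is to replace $C_\alpha$ by a sufficiently high power (allowed by the statement of Lemma~\ref{lem:quasimorphisms_2}, which only constrains $\pi(C_\alpha)$), using that finitely generated virtually free groups have finitely many conjugacy classes of maximal virtually cyclic subgroups containing the given elements.
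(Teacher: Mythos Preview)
Your overall strategy matches the paper's: reduce via Lemma~\ref{lem:extending_equivariantly}, set up the central extension $H\to N(H)\to F$ with $F$ virtually free, choose finitely many incident-direction representatives using Lemma~\ref{lem:I_finite}, verify that their projections form a malnormal collection, and apply Lemma~\ref{lem:quasimorphisms_2}. Your construction of $\phi_{N(H)}$ is equivalent to the paper's (which builds $\Lambda_{N(H)}$ as a graph on $N(H)^{+r}$ itself and takes $\phi_{N(H)}$ to be the identity).

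The substantive difference is the malnormality step. You split into two cases. Your tree-type argument---the $\pi(C_\alpha)$ are precisely the vertex stabilisers for the action of $F_k$ on $T_a$ with trivial edge groups, hence malnormal by Bass--Serre---is correct and is not the route the paper takes. Your dihedral-type argument, however, has a gap: from $g\pi(c_\alpha)^pg^{-1}=\pi(c_\beta)^q$ in $Q_H$ you cannot deduce commutation of $gc_\alpha g^{-1}$ and $c_\beta$ in $A_{ab}$ merely from shared boundary fixed points on the quasi-tree $\calT_{ab}$; loxodromics with a common power in a group that is only virtually free need not commute. Your proposed fix (passing to high powers, or to maximal virtually cyclic subgroups) does not help: the nontrivial intersection of conjugates persists under taking powers. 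What actually closes the gap is Lemma~\ref{lem:centraliser_power}: lifting to $A_{ab}$ gives $gc_\alpha^pg^{-1}=c_\beta^q z_{ab}^r$, the right-hand side centralises $c_\beta$, so $c_\beta\in C((gc_\alpha g^{-1})^p)=C(gc_\alpha g^{-1})$, and then Lemma~\ref{lem:dihedral Z2} yields the contradiction.

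The paper runs exactly this argument, but uniformly over both types without splitting into cases: if malnormality fails then $kc_j^pk^{-1}=c_i^qc^r$ for some $k\in N(H)$, with $c$ a generator of $H$; the right-hand side centralises $c_i$, so Lemma~\ref{lem:centraliser_power} gives that $c_i$ centralises $kc_jk^{-1}$, whence $c,c_i,kc_jk^{-1}$ pairwise commute and the corresponding three vertices of $Y$ span a triangle, contradicting Lemma~\ref{lem:no square}. This is shorter and avoids the case split.
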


\begin{proof}
Fix $H\in\mathcal H$.  We have a central extension $H\stackrel{\iota}{\longrightarrow}N(H)\stackrel{\pi}{\longrightarrow}F$, where $H\cong\integers$ and $F$ is virtually free (and in particular, 
hyperbolic).  This follows from Definition~\ref{defn:fancy_H} and either Lemma~\ref{lem:stab_tree} or  Lemma~\ref{lem:virtually splits}, according to whether $H$ is generated by a 
standard generator or by an element of the form $z_{ab}$.

Choose one element of each $N(H)$--orbit in $\mathcal I(N(H))$; there are finitely many by Lemma~\ref{lem:I_finite}.  Each of these is an orbit $g_iH_ig_i^{-1}\cdot x_i$ in $N(H)^{+r}$.

Each $g_iH_ig_i^{-1}$ is infinite cyclic.  Moreover, $g_iH_ig_i^{-1}\cap H=\{1\}$ by Lemma~\ref{lem:intersection_normalisers}, so $\pi(g_iH_ig_i^{-1})$ 
is again infinite cyclic.  

Observe that $\{\pi(g_iH_ig_i^{-1}):\alpha\in I\}$ is a malnormal collection of subgroups of $F$.  Indeed, suppose not, so that for distinct $i,j$, the following holds.  Choose elements $c,c_i,c_j$ 
(either standard generators or elements of the form $z_{ab}$) generating $H,H_i,H_j$ respectively.  Then, for some $k\in N(H)$, there exist positive integers $p,q$ and an integer $r$ such that 
$kc_j^pk^{-1}=c_i^qc^r$.  Now, $c_i^qc^r$ centralises $c_i$, whence $kc_j^pk^{-1}$ centralises $c_i$.  So, $c_i$ centralises $kc_jk^{-1}$, by Lemma~\ref{lem:centraliser_power}.  Hence 
$c_i,kc_jk^{-1}$, and $c$ pairwise commute, contradicting Lemma~\ref{lem:no square}.

So, Lemma~\ref{lem:quasimorphisms_2} yields a generating set $\mathcal S$ of $N(H)$. Recall that $N(H)^{+r}$ is a finite disjoint union of right cosets $N(H)c_i$ of $N(H)$. We can then consider the graph with vertex set $N(H)^{+r}$ with vertices $g,h$ connected by an edge if $g^{-1}h$ or $h^{-1}g$ belong to $\mathcal S\cup \{c_i\}$. By \cite[Theorem 5.1]{CharneyCrisp} (or a direct argument) the vertex set $\Lambda_{N(H)}$ endowed with the induced metric is $N(H)$-equivariantly quasi-isometric to to the quasiline $\mathrm{Cay}(G,\mathcal S)$.

We can define $\phi_{N(H)}:N(H)\to\Lambda_{N(H)}$ to be the identity map, and we observe that $\Lambda_{N(H)}$ and $\phi_{N(H)}$ satisfies the conditions in Lemma~\ref{lem:extending_equivariantly}. 

 Since there are finitely many subgroups $H\in\mathcal H$, we can 
choose constants $B_0,L_0$ such that each $\Lambda_{N(H)}$ is $(B_0,B_0)$--quasi-isometric to $\integers$, the maps 
$\Phi_{N(H)}$ are $(L_0,L_0)$--coarsely lipschitz (where $N(H)$ carries the word metric from the fixed finite generating set 
of $A_\Gamma$), and each element of $\mathcal I(N(H))$ maps to a set in $\Lambda_{N(H)}$ of diameter at most $B_0$, for all 
$H$.  By construction, there are boundedly many $N(H)$--orbits in $\Lambda_{N(H)}$.

We thus have that $\{(\Lambda_{N(H)},\phi_{N(H)}\}$ satisfies 
conditions (B') and (C') from Lemma~\ref{lem:extending_equivariantly}. By Lemma~\ref{lem:extending_equivariantly}, it possible to extend the construction equivariantly to obtain 
condition~~\eqref{item:phi_equivariance}. 
\end{proof}

\subsection{The blown-up commutation graph}\label{subsec:blown_up_graph}
We now define the simplicial complex $X$.  Fix a blow-up data as in Definition~\ref{defn:blow_up_data}.

\begin{defn}[Blown-up commutation graph]
\label{defn:blow_up_commute}
We define a simplicial graph, called the \textbf{blown-up 
commutation graph}, as follows: 
	
\begin{itemize}
	\item The graph has a vertex for each element of  
	$$\bigsqcup_{H \in \calH}G/N(H) ,$$ 
	and for each coset $gN(H)$ in the above set, we add $\Lambda_{gN(H)}$ to the vertex set.
	\item For every $g \in G$ and $H \in \calH$, we put an edge between $gN(H)$ and each vertex of $\Lambda_{gN(H)}$. 
Moreover, whenever $g,h \in G$ 
and $H, H'\in \calH$ are such that $gHg^{-1}$ and $hH'h^{-1}$ are distinct and commute (i.e. when $gN(H)$ and $hN(H')$ are 
$Y$--adjacent), we join every vertex in 
$\{gN(H)\}\cup\Lambda_{gN(H)}$ to every vertex of $\{hN(H')\}\cup\Lambda_{hN(H')}$.
\end{itemize}
	We denote by $X$ the flag completion of this graph. 
	\end{defn} 

Observe that $A_\Gamma$ acts on $X$ by simplicial automorphisms, because of 
Definition~\ref{defn:blow_up_data}.\eqref{item:phi_equivariance}.  

\begin{defn}[Projection, fibre, roots and leaves]\label{def:proj_blowup}
	There is a natural $A_\Gamma$--equivariant simplicial projection $p: X \rightarrow Y$ obtained as follows. At the 
level of vertices, for $H \in \calH$ and $g \in G$, we set 
	$$p(gN(H)) = gN(H), ~~~ p(\Lambda_{gN(H)})=gN(H).$$ 
Note that two adjacent vertices of $X$ are either sent to the same vertex or to two adjacent vertices of $Y$. 
Thus, this definition extends to a map from $X$ to $Y$.  For a simplex $\Delta$ of $X$, we denote by $\overline{\Delta}$ the image simplex $p(\Delta) \subset Y$.

Note that for each vertex $v:=gN(H) \in Y$, the preimage $X_v \coloneqq p^{-1}(v)$, called the \textbf{fibre of} $v$, is a 
tree that is the simplicial cone over $\Lambda_{gN(H)}$: 
$$X_v = \{gN(H)\} \ast \Lambda_{gN(H)},$$
where ``$\ast$'' denotes the simplicial join between two sub-complexes.  
The \textbf{root} of $X_v$ will be the vertex $gN(H)$, while the other vertices of $X_v$ will be called \textbf{leaves}.
\end{defn}

Thus, $X$ can be thought of as being  obtained from  $Y$   by blowing-up each vertex $gN(H)$ of $Y$ into a 
cone over a quasiline, in such a way that fibres $X_v$ and $X_{v'}$ over adjacent 
vertices $v, v'$ of $Y$ span a simplicial join $X_v \ast X_{v'}$ in $X$. 

\begin{figure}[h]
	\begin{center}
		\scalebox{1}{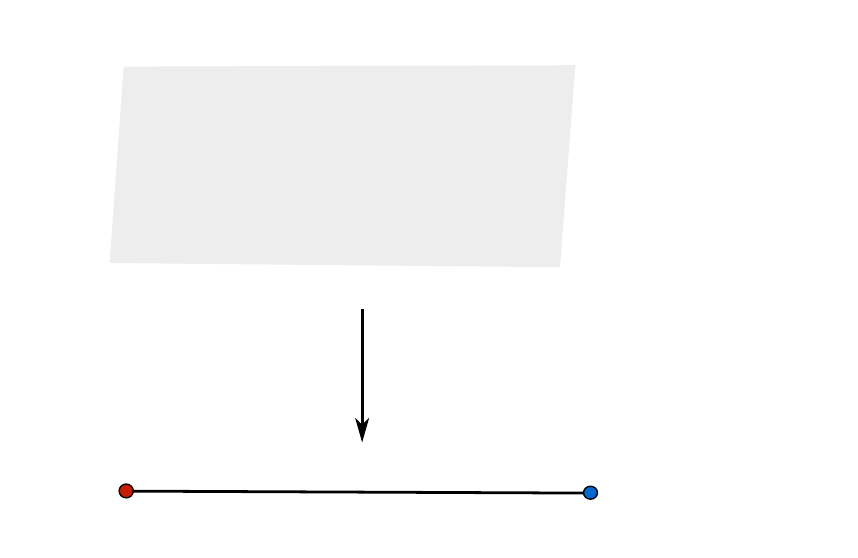}
		\caption{A portion of the blow-up $X$ over an edge of $Y$. The fibres $X_v$ and $X_{v'}$ span a simplicial join in $X$. The figure illustrates how an edge of $X_v$ and an edge of $X_{v'}$ span a tetrahedron of $X$.}
		\label{fig:blowup}
	\end{center}
\end{figure}

\begin{defn}
	We define the map $q: X \rightarrow \widehat{D}$ as $q\coloneqq \iota \circ p$, where $\iota:Y\to \widehat D$ is the map to the coned-off Deligne complex defined in Lemma \ref{lem:iota_vertex} and Definition \ref{def:iota}.
\end{defn}

\begin{lemma}\label{lem:q_qi}
	The map $q: X \rightarrow \widehat{D}$ is an $A_\Gamma$--equivariant simplicial quasi-isometry.
\end{lemma}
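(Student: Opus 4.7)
The plan is to factor $q = \iota \circ p$ and use that $\iota: Y \to \widehat{D}$ is already an $A_\Gamma$-equivariant simplicial quasi-isometry by Lemma~\ref{lem: extending iota}. It therefore suffices to show that the projection $p: X \to Y$ is an $A_\Gamma$-equivariant simplicial quasi-isometry, since these properties all pass to the composition $\iota \circ p$. Equivariance and simpliciality of $p$ are immediate from Definition~\ref{def:proj_blowup}, using the equivariance condition~\eqref{item:phi_equivariance} of Definition~\ref{defn:blow_up_data} to make sense of the $A_\Gamma$-action on the fibres.

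The geometric content is the quasi-isometry statement for $p$, and the essential observation is that each fibre of $p$ has small diameter in $X$, even though the quasilines $\Lambda_{gN(H)}$ are unbounded. Indeed, each fibre $X_v = \{v\} \ast \Lambda_v$ is by construction a simplicial cone with apex the root $v$, so any two vertices of $X_v$ lie at $X$-distance at most $2$ (going through $v$). Moreover, by Definition~\ref{defn:blow_up_commute}, whenever $v, v'$ are adjacent vertices of $Y$ the fibres $X_v$ and $X_{v'}$ span a simplicial join in $X$, so any vertex of $X_v$ is at $X$-distance exactly $1$ from any vertex of $X_{v'}$.

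From these two observations, the quasi-isometry verification is immediate. Since $p$ is simplicial we have $d_Y(p(x), p(y)) \leq d_X(x, y)$ for every pair of vertices $x, y \in X$. Conversely, given a $Y$-geodesic $p(x) = v_0, v_1, \ldots, v_n = p(y)$ of length $n$, the sequence $x, v_0, v_1, \ldots, v_n, y$ is a path in $X$ of length at most $n+2$ (using the join property between consecutive fibres and the cone structure of the endpoint fibres). Thus
\[ d_Y(p(x), p(y)) \leq d_X(x, y) \leq d_Y(p(x), p(y)) + 2, \]
and $p$ is surjective on vertices by construction, so $p$ is a quasi-isometry. Composing with $\iota$ gives the claim.

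I do not anticipate a serious obstacle here: the argument is genuinely short, and the whole point of the join structure built into Definition~\ref{defn:blow_up_commute} is precisely to ensure that blowing up vertices of $Y$ into unbounded quasilines does not distort distances measured in $Y$. The only minor subtlety is to remember that the fibre $X_v$, while containing the infinite set $\Lambda_v$, is a simplicial cone in $X$ and hence has diameter at most $2$ in $X$; once this is noted the estimate above is forced.
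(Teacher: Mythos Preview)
Your argument is correct and follows the same route as the paper: factor $q=\iota\circ p$, invoke Lemma~\ref{lem: extending iota} for $\iota$, and observe that $p$ is a surjective simplicial map whose fibres (being cones) have diameter at most~$2$, forcing $p$ to be a quasi-isometry. The paper's version is terser, simply asserting that bounded fibres yield a quasi-isometry, whereas you give the explicit path and the inequality $d_Y(p(x),p(y))\le d_X(x,y)\le d_Y(p(x),p(y))+2$; this extra detail is fine and matches the paper's intent.
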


\begin{proof}
The map $q$ is simplicial, being a composition of simplicial maps, and is equivariant by construction. We have that $p$ is a surjective simplicial whose fibres have uniformly bounded diameter (the bound being 2), so that $p$ is a quasi-isometry. Also, $\iota$ is a quasi-isometry by Lemma \ref{lem:iota QI}, and hence $q$ is also a quasi-isometry.
\end{proof}

\subsection{Simplices of the blow-up and their links}\label{subsec:simplices_of_X} In this subsection, we describe the simplices of $X$ and their links, and prove  that $X$  satisfies several of the properties from Definition~\ref{defn:combinatorial_HHS} and conditions from Theorem~\ref{thm:main criterion}.  

\begin{defn}[Join decomposition of simplices] A simplex $\Delta$ of $X$ naturally decomposes as a join as follows. 
	For each vertex $v \in \overline{\Delta}$, we define the \textbf{fibre} of $\Delta$ at $v$ by $\Delta_v \coloneqq 
\Delta \cap X_v \subset \Delta$, which is either a single vertex or a single edge of $\Delta$. Then $\Delta$ is naturally the 
join of all its fibres:
	$$\Delta = \bigast_{v \in V(\overline{\Delta})} \Delta_v.$$ 
\end{defn}

\begin{lemma}\label{cor:simplex projection}
	Let $\Delta$ be a simplex of $X$. Then its projection $\overline{\Delta}$ is either a vertex or an edge. In 
particular, $X$ is a $3$-dimensional simplicial complex. \qed
\end{lemma}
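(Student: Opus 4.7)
The plan is to extract $\overline{\Delta}$ as a clique in $Y$ and then use the fact that $Y$ is triangle-free. By Definition~\ref{defn:blow_up_commute}, any two vertices of $X$ that are either equal or adjacent in $X$ project to vertices of $Y$ that are either equal or adjacent; so the vertex set of a simplex $\Delta$ projects to a clique in $Y$ (with possible repetitions). First I would note that $\overline{\Delta}$ is precisely this image and is therefore a clique in $Y$.

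Next, since $Y$ is bipartite with respect to the vertex types (tree type and dihedral type) and contains no triangles by Lemma~\ref{lem:no square}, every clique in $Y$ has at most two vertices. Therefore $\overline{\Delta}$ is either a single vertex of $Y$ or a single edge of $Y$, which gives the first assertion.

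For the dimension statement, I would use the join decomposition $\Delta = \bigast_{v \in V(\overline{\Delta})} \Delta_v$ just introduced. Each fibre $\Delta_v = \Delta \cap X_v$ lies in the cone $X_v = \{v\} * \Lambda_{v}$ of Definition~\ref{def:proj_blowup}; by construction $X_v$ is a tree (a simplicial cone on the discrete set $\Lambda_v$), so any simplex of $X_v$ has at most two vertices, i.e.\ $\Delta_v$ is a vertex or an edge. Since $|V(\overline{\Delta})| \leq 2$ by the first part, $\Delta$ is a join of at most two simplices, each of dimension at most $1$, so $\dim(\Delta) \leq 3$. Hence $X$ is $3$-dimensional, as claimed.

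Nothing here seems to present a genuine obstacle: the argument is essentially combinatorial bookkeeping once one has Lemma~\ref{lem:no square} and the cone description of the fibres. The only small subtlety worth double-checking is that a simplex of $X$ cannot contain two distinct leaves of the same fibre $X_v$; this is immediate because $\Lambda_v$ carries no edges in $X$ (by Definition~\ref{defn:blow_up_commute} distinct leaves of $X_v$ are joined only to the root $v$, not to each other).
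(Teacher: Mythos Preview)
Your proof is correct and follows essentially the same approach as the paper: the image $\overline{\Delta}$ is a clique in $Y$, which by Lemma~\ref{lem:no square} has at most two vertices, and each fibre $\Delta_v$ is a simplex of the cone $X_v$ and hence at most an edge. Your version is simply a more detailed unpacking of the paper's two-line argument, including the useful explicit remark that distinct leaves of $X_v$ are non-adjacent.
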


\begin{proof}
	Note that the projection $\overline{\Delta}$  is a simplex of $Y$. It thus follows from Lemma \ref{lem:no square} 
that $\overline{\Delta}$ is either a vertex or an edge. Since the simplices of $X_v$ are either vertices or edges, the result 
follows. 
\end{proof}

\begin{lemma}
	Let $\Delta$ be a simplex of $X$. 
	The link $\link_X(\Delta)$ decomposes as the following simplicial join: 
	$$\link_X(\Delta) = p^{-1}\big(\link_Y(\overline{\Delta})\big) \ast \big(\bigast_{v\in V(\overline{\Delta})} 
	\link_{X_v}(\Delta_v)\big).$$
	\label{lem:join decomposition}
\end{lemma}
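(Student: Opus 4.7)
The plan is to check a vertex $w$ of $X$ lies in $\link_X(\Delta)$ if and only if it belongs to one of the claimed pieces of the join, and then verify that vertices in distinct pieces are always $X$--adjacent. Throughout, I will use Corollary~\ref{cor:simplex projection} to restrict attention to $\overline{\Delta}$ being a vertex or an edge of $Y$, and the fact that each fibre $X_v$ is a simplicial cone (so $\Delta_v$ is a vertex or an edge of that cone).

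First I would unpack the adjacency relation in $X$ from Definition~\ref{defn:blow_up_commute}: two distinct vertices $w,w'$ of $X$ are $X$--adjacent precisely when either $p(w)=p(w')$ and the pair $\{w,w'\}$ spans an edge of the tree $X_{p(w)}$ (equivalently, one is the root and the other is a leaf), or $p(w)\neq p(w')$ and $p(w),p(w')$ are $Y$--adjacent (in which case the full join $X_{p(w)}\ast X_{p(w')}$ forces adjacency regardless of whether the endpoints are roots or leaves).

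Next I would classify the vertices $w\in\link_X(\Delta)$ according to $\overline{w}=p(w)$. Suppose first that $\overline{w}=v$ for some $v\in V(\overline{\Delta})$, so $w\in X_v$. For every other $v'\in V(\overline{\Delta})$ the vertices $v,v'$ are $Y$--adjacent because $\overline{\Delta}$ is a simplex of $Y$, hence the join $X_v\ast X_{v'}$ automatically makes $w$ adjacent to every vertex of $\Delta_{v'}\subseteq X_{v'}$. The only nontrivial condition therefore is that $w$ be adjacent to every vertex of $\Delta_v$ inside $X_v$ without lying in $\Delta_v$, which is precisely the condition $w\in\link_{X_v}(\Delta_v)$. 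Suppose now that $\overline{w}\notin V(\overline{\Delta})$. Then $w$ lies in no fibre over $V(\overline{\Delta})$, so $X$--adjacency of $w$ with any vertex of $\Delta_v$ forces $\overline{w}$ to be $Y$--adjacent to $v$; applying this for every $v\in V(\overline{\Delta})$ yields $\overline{w}\in\link_Y(\overline{\Delta})$. Conversely, if $\overline{w}\in\link_Y(\overline{\Delta})$ then $w$ is $X$--adjacent to \emph{every} vertex of each $X_v$, $v\in V(\overline{\Delta})$, by the join $X_v\ast X_{\overline w}$, and in particular to every vertex of $\Delta$. Thus such a $w$ lies in $\link_X(\Delta)$ iff $w\in p^{-1}(\link_Y(\overline{\Delta}))$.

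Finally I would verify that these two families assemble into the claimed join: any vertex of $p^{-1}(\link_Y(\overline{\Delta}))$ is $X$--adjacent to every vertex of each $\link_{X_v}(\Delta_v)\subseteq X_v$, since its image is $Y$--adjacent to $v$ and so the full join $X_{\overline{w}}\ast X_v$ applies; and for distinct $v,v'\in V(\overline{\Delta})$ (which exist only when $\overline{\Delta}$ is an edge, and in that case $v,v'$ are $Y$--adjacent), every vertex of $\link_{X_v}(\Delta_v)$ is $X$--adjacent to every vertex of $\link_{X_{v'}}(\Delta_{v'})$ by the join $X_v\ast X_{v'}$. Combined with the classification above this gives the claimed decomposition. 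There is no real obstacle here; the only mildly delicate points are the degenerate cases where $\overline{\Delta}$ is a single vertex, or where $\Delta_v$ is already an edge of $X_v$ (so $\link_{X_v}(\Delta_v)=\varnothing$), but the standard convention $A\ast\varnothing=A$ absorbs these, and the right-hand side then correctly collapses to $p^{-1}(\link_Y(\overline{\Delta}))$.
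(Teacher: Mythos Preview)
Your proof is correct and follows essentially the same approach as the paper's: classify vertices of $\link_X(\Delta)$ according to whether they project into $V(\overline{\Delta})$ or into $\link_Y(\overline{\Delta})$, using that fibres over $Y$--adjacent vertices span full joins in $X$. The paper's argument is considerably terser, but your extra care with the join verification and the degenerate cases is accurate and adds nothing incorrect.
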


\begin{proof}
	A vertex in $\link_X(\Delta)$ is adjacent to every vertex of $\Delta$ by definition. Since edges of $X$ either 
	project to vertices or edges of $Y$, we have that $p\big( \link_X(\Delta)\big) \subset \Star_Y(\overline{\Delta})$.  For two 
	adjacent vertices $v, v'$ of $Y$, every vertex of $X_v$ is adjacent to every vertex of $X_{v'}$. It follows that for $v \in 
	\Star_Y(\overline{\Delta})$, a vertex $w \in X_v$ is in $\link_X(\Delta)$ if and only if it is in $\link_{X_v}(\Delta\cap 
	X_v)$. The result follows. 
\end{proof}

Theorem~\ref{thm:main criterion} requires that the action of $A_\Gamma$ on $X$ has finitely many orbits of links, which we 
now verify.  

\begin{lemma}[Finite index set]\label{lem:index_set}
	The action of $A_\Gamma$ on $X$ has finitely many orbits of subcomplexes of the form $\link_X(\Delta)$, where $\Delta$ is a 
	simplex of $X$.
\end{lemma}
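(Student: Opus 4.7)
The plan is to use the join decomposition of links from Lemma~\ref{lem:join decomposition} to show that each link $\link_X(\Delta)$ is determined, up to $A_\Gamma$-translation, by a small amount of combinatorial data attached to $\Delta$, and then to combine this with the finiteness of $A_\Gamma$-orbits of simplices of $Y$.

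First I would invoke Lemma~\ref{cor:simplex projection} to observe that the projection $\overline{\Delta}=p(\Delta)$ is a vertex or an edge of $Y$. The finiteness of $\calH$ (Definition~\ref{defn:fancy_H}) together with Corollary~\ref{cor:Y_orbits} implies that the action of $A_\Gamma$ on $Y$ has finitely many orbits of simplices, hence finitely many orbits of possible $\overline{\Delta}$. Next, for each vertex $v$ of $\overline{\Delta}$, the fibre $X_v$ is the cone $\{v\}\ast\Lambda_v$ (Definition~\ref{def:proj_blowup}), so the fibre $\Delta_v=\Delta\cap X_v$ is necessarily one of: the root $\{v\}$, a single leaf $\{\lambda\}$ with $\lambda\in\Lambda_v$, or a root-leaf edge $\{v,\lambda\}$. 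The corresponding fibre link $\link_{X_v}(\Delta_v)$ is then $\Lambda_v$, $\{v\}$, or $\emptyset$ respectively. The crucial observation is that this fibre link depends only on which of these three ``types'' $\Delta_v$ has, and in particular not on the specific choice of leaf $\lambda$ involved.

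Therefore, by Lemma~\ref{lem:join decomposition}, $\link_X(\Delta)$ is determined by the pair $(\overline{\Delta},\tau)$, where $\tau$ assigns one of three types to each vertex of $\overline{\Delta}$. Since $\overline{\Delta}$ has at most two vertices, there are at most $3^2=9$ such type assignments, and combined with the finitely many $A_\Gamma$-orbits of $\overline{\Delta}$ in $Y$ this yields finitely many $A_\Gamma$-orbits of links overall.

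I do not anticipate any substantial obstacle here: once the join decomposition of Lemma~\ref{lem:join decomposition} is in hand, the argument reduces to the essentially trivial observation that $\link_{X_v}(\{\lambda\})=\{v\}$ carries no residual dependence on \emph{which} leaf $\lambda$ was chosen. This is a direct consequence of the cone structure built into each fibre in Definition~\ref{defn:blow_up_commute}, and it is exactly what allows the uncountably many leaves in each $\Lambda_v$ to collapse to finitely many possibilities at the level of links.
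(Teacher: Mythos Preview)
Your proof is correct and follows essentially the same approach as the paper's. The paper carries out an explicit nine-case analysis according to the possible shapes of $\overline{\Delta}$ and the fibres $\Delta_v$, while you package the same reasoning uniformly through Lemma~\ref{lem:join decomposition}; the key observation in both is that $\link_{X_v}(\Delta_v)$ depends only on whether $\Delta_v$ is a root, a leaf, or a root--leaf edge, and not on the particular leaf chosen. (One small slip: the leaves in $\Lambda_v$ are countably many, not uncountably many, since $\Lambda_v$ is a discrete space quasi-isometric to $\mathbb Z$; this does not affect the argument.)
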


In the proof of the lemma we will give a classification of links which will also be useful later on.

\begin{proof}
	Let $\Delta$ be a non-maximal simplex of $X$ and let $\bar\Delta$ be the simplex of $Y$ to which $\Delta$ projects.  We 
	divide into cases according to $\bar\Delta$ and the fibers in $\Delta$ over the vertices of $\bar \Delta$.
	
	\begin{enumerate}
		\item $\bar\Delta$ is a vertex $gN(H)$, and $\Delta$ is the single vertex $gN(H)$.  There are finitely many $A_\Gamma$ 
		orbits of such $\Delta$, since $\mathcal H$ is finite.  Hence there are finitely many orbits of links of such $\Delta$.
		
		\item $\bar\Delta$ is a vertex $gN(H)$, and $\Delta$ is a vertex $\lambda\in\Lambda_{gN(H)}$.  There are boundedly many $gN(H)g^{-1}$ 
		orbits of $\lambda$ for the given $gN(H)$, by Definition~\ref{defn:blow_up_data}, so there are finitely many orbits of such 
		$\Delta$.
		
		\item $\bar\Delta$ is a vertex $gN(H)$, and $\Delta$ is spanned by $\{gN(H),\lambda\}$ for some $\lambda\in\Lambda_{gN(H)}$. 
		Since $gN(H)g^{-1}$ acts on $\Lambda_{gN(H)}$ with boundedly many orbits, and there are finitely many orbits of vertices $gN(H)$, 
		there are finitely many orbits of such simplices $\Delta$.
		
		\item $\bar\Delta$ is an edge joining $gN(H)$ to $hN(H')$, and $\Delta$ is the $1$--simplex spanned by $\{gN(H),hN(H')\}$.  
		By Corollary \ref{cor:Y_orbits}, there are finitely many orbits of such simplices.
		
		\item $\bar\Delta$ is an edge joining $gN(H)$ to $hN(H')$, and $\Delta$ joins $gN(H)$ to some $\mu\in\Lambda_{hN(H')}$.  
		Observe that all simplices that join $gN(H)$ to a point in $\Lambda_{hN(H')}$ have the same link, so there are at most as 
		many $A_\Gamma$ orbits of links of such $\Delta$ as there are orbits of edges in $Y$, i.e. finitely many by Corollary \ref{cor:Y_orbits}.
		
		\item $\bar\Delta$ is an edge joining $gN(H)$ to $hN(H')$, and $\Delta$ joins $\lambda\in \Lambda_{gN(H)}$ to 
		$\mu\in\Lambda_{hN(H')}$.  Then $\link_X(\Delta)$ is the edge joining $gN(H),hN(H')$; there are finitely many orbits of 
		these.
		
		\item $\bar\Delta$ is an edge joining $gN(H),hN(H')$ and $\Delta$ is spanned by $\{gN(H),\lambda,hN(H')\}$, where 
		$\lambda\in\Lambda_{gN(H)}$.  Then $\link_X(\Delta)=\Lambda_{hN(H')}$, and there are finitely many orbits of such 
		subcomplexes since there are finitely many orbits of vertices in $Y$.
		
		\item $\bar\Delta$ is an edge joining $gN(H),hN(H')$, and $\Delta$ is spanned by $\{gN(H),\lambda,\mu\}$, where 
		$\lambda\in\Lambda_{gN(H)}$ and $\mu\in\Lambda_{hN(H')}$.  Then $\link_X(\Delta)=hN(H')$, and there are finitely many orbits 
		of such links because $Y$ has finitely many orbits of vertices.
		
		\item $\Delta=\emptyset$, in which case $\link_X(\Delta)=X$, and there is a single orbit.
	\end{enumerate}
	This exhausts all the cases.
\end{proof}

The list in the previous proof allows to prove that the blow-up $X$ satisfies the finite complexity condition from Definition~\ref{defn:combinatorial_HHS}.

\begin{prop}[Finite complexity]\label{prop:finite_complexity}
	There exists $N$ such that the following holds.  Let 
	$\Delta_1,\ldots,\Delta_n$ be non-maximal simplices of $X$ such that 
	$\link_X(\Delta_i)\subsetneq\link_X(\Delta_{i+1})$ for $1\leq i\leq n-1$.  Then $n\leq N$.
\end{prop}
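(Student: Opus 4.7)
The plan is to exploit the explicit classification of links of non-maximal simplices given in the proof of Lemma~\ref{lem:index_set}, which divides them into nine types according to the combinatorial nature of $\Delta$ (whether $\bar\Delta\subset Y$ is empty, a vertex or an edge, and which of its fibres contain roots or leaves from $\Delta$).  By Lemma~\ref{lem:join decomposition}, each link can be written as a simplicial join involving roots $\{v\}$, leaf-sets $\Lambda_v$ and preimages $p^{-1}(\link_Y(v))$, and a direct reading of the classification shows that the nine types are distributed by simplicial dimension as follows: $X$ itself at dimension $3$ (for $\Delta=\emptyset$); the two types $\Lambda_v\star p^{-1}(\link_Y(v))$ and $\{v\}\star p^{-1}(\link_Y(v))$ at dimension $2$; four types at dimension $1$ (namely $p^{-1}(\link_Y(v))$, $\Lambda_v\star\Lambda_{v'}$, $\{v'\}\star\Lambda_v$ and $\{v,v'\}$); and two types at dimension $0$ (namely $\Lambda_{v'}$ and $\{v'\}$).

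Since strict inclusion of simplicial complexes is monotone in dimension, the dimensions $\dim\link_X(\Delta_i)$ in a chain form a non-decreasing sequence in $\{0,1,2,3\}$.  The heart of the argument is the claim that \emph{within each dimension, no two of the above links are strictly comparable}; granting this, the dimensions in the chain are strictly increasing, so $n\leq 4$ and we may take $N=4$.

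Establishing the claim is the main, though combinatorially routine, obstacle, and rests on three structural ingredients.  First, distinct fibres $X_v,X_w$ have disjoint vertex sets, and the root of any fibre is not a leaf of any fibre; in particular a subcomplex such as $\Lambda_v\cup p^{-1}(\link_Y(v))$ distinguishes roots from leaves and remembers the projection to $Y$.  Second, by Lemma~\ref{lem:no square} the graph $Y$ is triangle-free, so $\link_Y(v)\cap\link_Y(v')=\emptyset$ whenever $v,v'$ are $Y$--adjacent; more generally, $|\link_Y(v)\cap\link_Y(v')|\leq 1$ for any two distinct vertices $v,v'$ of $Y$, by Lemma~\ref{lem:iota} and Corollary~\ref{cor:intersect_trees}.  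Third, under Convention~\ref{conv:connected}, $\link_Y(v)$ is infinite for every $v\in V(Y)$: for tree-type vertices this follows from Lemma~\ref{lem:stab_tree}, using that the free factor $F$ of $N(\langle a\rangle)=\langle a\rangle\times F$ is nontrivial whenever $\Gamma$ has an edge at $a$ and acts cocompactly on the infinite standard tree $T_a$; and for dihedral-type vertices it follows from Corollary~\ref{cor: link coneoff} together with Lemma~\ref{lem:virtually splits}.

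With these three ingredients in hand, the remaining case analysis is short and uniform.  In dimension $0$ no inclusion is strict because roots and leaves form disjoint subsets of $V(X)$ and each type is determined by a single vertex of $Y$.  In dimension $1$, for each of the six type-pairings between the four types, an alleged strict inclusion would either require a finite set to contain a set projecting onto the infinite $\link_Y(v)$, or would produce a $Y$--triangle by making some vertex $Y$--adjacent to both endpoints of a $Y$--edge.  Finally, in dimension $2$, a strict inclusion among $\Lambda_v\star p^{-1}(\link_Y(v))$ and $\{w\}\star p^{-1}(\link_Y(w))$ (in any combination, and for any $v,w$) forces $v,w$ to be $Y$--adjacent and then, using triangle-freeness, forces $\link_Y(v)\subseteq\{w\}$, contradicting infiniteness.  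This yields $n\leq 4$, establishing the bound with $N=4$.
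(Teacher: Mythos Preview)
Your approach mirrors the paper's: both use the nine-type classification from Lemma~\ref{lem:index_set} and argue that links of the same type (or, in your refinement, the same dimension) cannot be strictly nested.  The paper's proof simply asserts this and concludes $N\le 9$; you attempt a sharper bound $N\le 4$ with more detail.

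However, your third ingredient is false: $\link_Y(v)$ need not be infinite for tree-type $v$.  If $a$ is a leaf of $\Gamma$ whose incident edge has even label $m_{ab}$, then the standard tree $T_a$ is a star with a single dihedral-type vertex $v_{ab}$ (see the explicit description in the proof of Lemma~\ref{lem:no_conj_norm_H}, also reflected in Lemma~\ref{lem:quasitree} and Corollary~\ref{cor:unbounded link}), so $\link_Y(u_a)=\{u_{ab}\}$.  Your justification conflates the infinitude of $T_a$ with the infinitude of its set of dihedral-type vertices; the free factor $F\cong\integers$ of $N(\langle a\rangle)$ acts on $T_a$ by fixing the central vertex and permuting the leaves.

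This actually produces a strict inclusion in dimension $1$: if $\Gamma$ has at least three vertices and $a$ is such a leaf with neighbour $b$, then $b$ has a further neighbour $c$, so $\link_Y(u_a)=\{u_{ab}\}\subsetneq\link_Y(u_b)\ni u_{ab},u_{bc}$, whence $p^{-1}(\link_Y(u_a))\subsetneq p^{-1}(\link_Y(u_b))$ is a strict inclusion of two type-(3) links.  So your claimed bound $N=4$ fails (and, incidentally, the paper's terser assertion that same-type links are never strictly nested has the same gap).  Your dimension-$2$ argument also appeals to infiniteness at the final step; there the conclusion does survive, but one must instead track which specific root or leaf set fails to embed.

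The repair is immediate using ingredients you already have: by your second ingredient ($|\link_Y(v)\cap\link_Y(v')|\le 1$ for $v\ne v'$, from square-freeness), a strict chain of type-(3) links has length at most $2$, while all other within-dimension comparisons are genuinely ruled out.  The corrected argument gives $N\le 5$.
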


\begin{proof}
	Suppose that $\Sigma,\Delta$ are non-maximal simplices with $\link_X(\Sigma)\subsetneq\link_X(\Delta)$.  Considering the 
	nine types of simplex explained in the proof of Lemma~\ref{lem:index_set}, we see that links of the same type cannot be properly contained in each other.
 Hence the length of a chain as in the statement is at most $9$.
\end{proof}

In the rest of this subsection is to prove the following:

\begin{prop}[Lattice-ness of links]
	The poset of links of simplices of $X$, ordered by inclusion, has finite height.  Moreover, this poset satisfies the 
	following.
	
	For all simplices $\Delta_1, \Delta_2 $ of $X$ such that there exists a simplex $\Sigma$ of $X$ with unbounded link and with the property that $  \link_{X}(\Sigma) \subset \link_{X}(\Delta_1)\cap \link_{X}(\Delta_2)$ the following holds. There exists a 
	simplex $\Delta$ of $X$ containing $\Delta_1$, and such that for every simplex $\Sigma$ of $X$ with unbounded link and such 
	that  $  \link_{X}(\Sigma) \subset \link_{X}(\Delta_1)\cap \link_{X}(\Delta_2)$, we have $\link_X(\Sigma) \subset 
	\link_X(\Delta)$. 
	\label{prop: links lattice}
\end{prop}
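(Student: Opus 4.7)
The first assertion, that the poset of links has finite height, is immediate from Proposition~\ref{prop:finite_complexity}: a strictly ascending chain of links realises a chain of simplices with $\link_X(\Delta_i) \subsetneq \link_X(\Delta_{i+1})$, hence has length bounded by the constant provided there.

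For the second assertion, my plan is to proceed by a case analysis on $\Delta_1$ (and $\Delta_2$) using the nine-type classification of simplices from the proof of Lemma~\ref{lem:index_set}. The main technical tool is the join decomposition of Lemma~\ref{lem:join decomposition},
$$\link_X(\Delta) = p^{-1}(\link_Y(\bar\Delta)) \ast \bigast_{v \in V(\bar\Delta)} \link_{X_v}(\Delta_v),$$
combined with Lemma~\ref{lem:no square}, which makes the factor $p^{-1}(\link_Y(\bar\Delta))$ empty whenever $\bar\Delta$ is an edge of $Y$. First I would classify the simplices $\Sigma$ whose link is unbounded: such a $\Sigma$ must have one of its join factors unbounded, and the only unbounded join factors appearing in $X$ are the entire fibres $\Lambda_{gN(H)}$ (arising in $\link_{X_v}(\Sigma_v)$ exactly when $\Sigma_v$ is the root of $X_v$) and the sets $p^{-1}(\link_Y(v))$ for $v \in V(Y)$, which are infinite via the bijection of Lemma~\ref{lem:iota} and the local structure of $\widehat{D}$ (Lemma~\ref{lem:link_tree_cocompact}, Corollary~\ref{cor: link coneoff}).

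Next, I would use the hypothesis $\link_X(\Sigma) \subset \link_X(\Delta_2)$ to restrict the projections $\bar\Sigma$ of such an unbounded $\Sigma$ to lie in a canonical subgraph $R \subseteq Y$ depending on the pair $(\bar\Delta_1, \bar\Delta_2)$; the absence of squares in $Y$ (Lemma~\ref{lem:no square}) will force $R$ to consist of $\bar\Delta_1$ together with only finitely many vertices $Y$--adjacent to both $\bar\Delta_1$ and $\bar\Delta_2$. The simplex $\Delta$ is then constructed by enlarging $\Delta_1$ with the vertices of $\link_X(\Delta_1) \cap \link_X(\Delta_2)$ that lie in fibres over $Y$--vertices \emph{outside} $R$; these vertices span a simplex together with $\Delta_1$ thanks to the full-join structure across $Y$--edges built into Definition~\ref{defn:blow_up_commute} and the bipartite type structure from Lemma~\ref{lem:iota}. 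By construction, adjoining such vertices kills off precisely the unbounded ``directions'' whose supports lie outside $R$, while leaving intact every unbounded $\link_X(\Sigma)$ whose projection $\bar\Sigma$ lies in $R$, so $\link_X(\Sigma) \subset \link_X(\Delta)$ as desired.

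The main obstacle is the bookkeeping across the case matrix, and, in the cases where $\bar\Delta_1$ or $\bar\Delta_2$ project to edges of $Y$, carefully verifying that the vertices to be adjoined really do form a simplex together with $\Delta_1$ and that no unbounded sublink ``escapes'' $R$. This is exactly where Lemma~\ref{lem:no square} together with the fibre description of Definition~\ref{def:proj_blowup} is essential, since squares or triangles in $Y$ would create additional unbounded join factors outside the canonical region $R$ that the construction could not absorb.
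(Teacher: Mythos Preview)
Your reduction of the finite-height claim to Proposition~\ref{prop:finite_complexity} is correct. For the second assertion, however, the plan is both more complicated than necessary and contains a genuine gap. First, your classification of simplices with unbounded link is incomplete: having an unbounded join factor is not sufficient, because a \emph{nontrivial} join has diameter at most $2$. For instance, if $\Sigma$ is a single root of $X_v$, then $\link_X(\Sigma)=p^{-1}(\link_Y(v))\ast\Lambda_v$ has two unbounded factors yet is bounded. The correct statement is that $\link_X(\Sigma)$ is unbounded precisely when $\Sigma$ is the empty simplex, an edge contained in a fibre $X_v$, or a triangle with exactly two roots. The paper organises the whole argument around \emph{this} trichotomy on $\Sigma$, rather than a nine-fold case split on $\Delta_1$: if some such $\Sigma$ is an edge of $X_v$, then $\link_X(\Sigma)=p^{-1}(\link_Y(v))$ and containment in $\link_X(\Delta_i)$ together with Lemma~\ref{lem:no square} forces $\Delta_1,\Delta_2\subset X_v$, so one may take $\Delta$ to be any edge of $X_v$ containing $\Delta_1$; otherwise every relevant $\Sigma$ is a triangle with two roots, and again Lemma~\ref{lem:no square} shows all such $\Sigma$ have \emph{identical} link (the leaves of a single $X_v$), whence $\Delta$ is built as a suitable triangle with root at $v$.

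The more serious issue is your construction of $\Delta$: ``enlarging $\Delta_1$ with the vertices of $\link_X(\Delta_1)\cap\link_X(\Delta_2)$ that lie in fibres over $Y$-vertices outside $R$'' does not in general produce a simplex. When $\bar\Delta_1=\{v\}$, this set is typically infinite (it contains entire fibres over all neighbours of $v$ lying in $\link_X(\Delta_2)$), and even restricting to a single neighbour $w$ you must choose a specific edge of $X_w$; since $Y$ has no triangles, vertices over distinct neighbours of $v$ are never adjacent, so the full-join structure across a single $Y$-edge cannot assemble them into a common simplex with $\Delta_1$. What you actually want is essentially the opposite: you need $\link_X(\Delta)$ to \emph{equal} (or contain) the relevant unbounded piece, and this is achieved by choosing $\Delta$ to be a specific edge or triangle, not by adjoining an indeterminate collection of vertices.
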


\begin{proof}
	The finite height condition follows from Proposition~\ref{prop:finite_complexity}.
	
	Note that $\link(\Sigma)$ is unbounded if and only if $\Sigma$ is either the empty simplex, an edge contained in a fibre of $X$, or a triangle of $X$ containing exactly two 
	roots, by the classification given in the proof of Lemma \ref{lem:index_set}.
	
	Let $\Delta_1, \Delta_2,\Sigma$ be as in the statement.
	
	The case $\Sigma = \varnothing$ is immediate since in that case $\link_X(\Sigma) =  \link_X(\Delta_1)= 
	\link_X(\Delta_2)= X$, and we can take $\Delta = \varnothing$.  We thus focus on the other two cases. 
	
	\textit{Case 1:} Suppose that there exists such a simplex $\Sigma$ such that $\link_X(\Sigma)\subset 
	\link_X(\Delta_1)\cap \link_X(\Delta_2)$, and such that $\Sigma$ is an edge contained in a fibre of $X$. Then its projection 
	is a single vertex $v\in Y$, and by Lemma \ref{lem:join decomposition} we have $\link_X(\Sigma) = 
	p^{-1}\big(\link_Y(v)\big)$. Since $\link_X(\Sigma)$ is contained in $\link_X(\Delta_1) \cap \link_X(\Delta_2)$, it follows 
	from Lemma \ref{lem:no square} that $\Delta_1, \Delta_2$ are also contained in $X_v$. Since $\Delta_1$ is either a vertex or 
	an edge of $X_v$, we can choose an edge $\Delta$ contained in $\Star_{X_v}(\Delta_1)$. We thus have $\link_X(\Delta) = 
	p^{-1}\big(\link_Y(v)\big)$ by Lemma \ref{lem:join decomposition}, and $\link_X(\Delta)$ satisfies the required property. 
	Indeed, any link contained in $\link_X(\Delta_1)\cap \link_X(\Delta_2)$ and strictly larger than $\link_X(\Delta)$ must 
	contain a vertex of $X_v$, and so this link is a simplicial join, hence bounded.
	
	\textit{Case 2:} Suppose that for every simplex $\Sigma$ such that $\link_X(\Sigma)\subset \link_X(\Delta_1)\cap 
	\link_X(\Delta_2)$, $\Sigma$ is a triangle of $X$ with exactly two roots.  Choose such a simplex $\Sigma$. Then there exists 
	a vertex $v \in Y$ such that $\link_X(\Sigma)$ consists of all the leaves of $X_v$. Moreover, it follows from Lemma 
	\ref{lem:no square} that for every other simplex $\Sigma'$ with unbounded augmented link such that $\link_X(\Delta_1)\cap 
	\link_X(\Delta_2)$, we have $\link_X(\Sigma') \subset X_v$, and in particular $\link_X(\Sigma') = \link_X(\Sigma)$. Now since 
	$\link_X(\Sigma)\subset \link_X(\Delta_1)$, then $\overline{\Delta_1}$ is either a vertex or an edge of $\Star_Y(v)$. 
	
	If $\overline{\Delta_1} = \{v\}$, then we choose a vertex $u$ adjacent to $v$, an edge $f_u$ of $X_u$, and we define 
	$\Delta$ as the triangle of $X$ spanned by $f_u$ and the root of $X_v$. We thus have $\Delta \subset \Star_X(\Delta_1)$, and 
	$\Delta$ satisfies the required conditions. 
	
	If $\overline{\Delta_1} $ is an edge containing $v$, we denote by  $u$ the other vertex of $\overline{\Delta_1} $. We 
	choose an edge $f_u$ of $X_u$ that belongs to $\Star_{X_u}((\Delta_1)_u)$, and we define $\Delta$ as the triangle of $X$ 
	spanned by $f_u$ and the root of $X_v$. We thus have $\Delta \subset \Star_X(\Delta_1)$, and $\Delta$ satisfies the required 
	conditions.
\end{proof}

\subsection{From maximal simplices to elements of $A_\Gamma$}\label{subsec:map_to_group}

In this subsection, we associate to each maximal simplex of $X$ a uniformly bounded subset of $A_\Gamma$. We first describe the maximal simplices of $X$:

\begin{rem}[Description of maximal simplices]\label{rem:max simplex}
	Let $\Delta$ be a maximal simplex of $X$.  Then $\Delta$ has the following form.  First, $\overline{\Delta}$ is an edge of $Y$ 
	joining a vertex $gN(H)$ to a vertex $hN(H')$.  Then, there are vertices $\lambda\in\Lambda_{gN(H)}$ and 
	$\mu\in\Lambda_{hN(H')}$ such that $\Delta$ is the join of the edges $\{gN(H),\lambda\}$ and $\{hN(H'),\mu\}$.
\end{rem}

\begin{defn}\label{defn:max_simplex}
	For simplices $\sigma_1, \sigma_2$ of $X$ projecting to vertices of $Y$  and spanning a simplex of $X$, we denote 
	that 
	simplex by $\Delta(\sigma_1,  \sigma_2)$.
\end{defn}

Recall that one of our goals is to define an $X$--graph $W$ quasi-isometric to $A_\Gamma$, and whose vertex set is the set of maximal simplices of $X$, see Definition~\ref{defn:combinatorial_HHS}. We 
start by defining the vertices of this graph and we construct a map to $A_\Gamma$. The edges of $W$ and the quasi-isometry  $W\rightarrow A_\Gamma$ will be constructed in Section~\ref{sec:augment}.

\begin{defn}
	We denote by $W^{(0)}$ the set of maximal simplices of $X$.
\end{defn}

The blow-up data was constructed to obtain the following crucial lemma:

\begin{lemma}[Bounded sets in $A_\Gamma$]\label{lem:bounded_sets}
	There exists $B_1\geq 0$ such that the following holds for all $B\geq B_1$.  Let $gN(H),hN(H')$ be $Y$--adjacent 
	cosets.  Let $\lambda\in\Lambda_{gN(H)},\mu\in\Lambda_{hN(H')}$.  Then the subset
	$$\phi_{gN(H)}^{-1}(N_{B}(\lambda))\cap \phi_{hN(H')}^{-1}(N_{B}(\mu))$$ of $A_\Gamma$ is nonempty and has diameter bounded 
	in terms of $B$. Moreover, $\phi_{gN(H)}(gN(H)^{+r}\cap hN(H')^{+r})$ is $B_1$-dense.
\end{lemma}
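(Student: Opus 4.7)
The plan is to exploit Lemma~\ref{lem:intersection_of_cosets}, which shows that $I := gN(H)^{+r} \cap hN(H')^{+r}$ is nonempty and acted upon coboundedly, with quotient diameter at most $\ell$, by the $\mathbb Z^2$-subgroup $K = gN(H)g^{-1}\cap hN(H')h^{-1}$, generated by the two commuting cyclic factors $gHg^{-1}$ and $hH'h^{-1}$. By the blow-up data axioms, these two factors act on the two quasilines in complementary ways: $gHg^{-1}$ acts unboundedly on $\Lambda_{gN(H)}$ but, being an incident direction at $hN(H')$, has bounded $\phi_{hN(H')}$-image; symmetrically for $hH'h^{-1}$. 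This near-independence of the two projections will be the engine of the proof.

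For the ``moreover'' part, fix $y_0\in I$. Since $I\subseteq N_\ell(K\cdot y_0)$ and $\phi_{gN(H)}$ is $(L_0,L_0)$-coarsely Lipschitz, it suffices to show that $\phi_{gN(H)}(K\cdot y_0)$ is coarsely dense in $\Lambda_{gN(H)}$. By equivariance and condition (C), this image equals $\bigcup_{\alpha\in gHg^{-1}}\alpha\cdot\phi_{gN(H)}(hH'h^{-1}\cdot y_0)$, a union of $gHg^{-1}$-translates of a set of diameter at most $B_0$. Since $gN(H)g^{-1}$ acts on $\Lambda_{gN(H)}$ with at most $B_0$ orbits and fixes the Gromov boundary, the action factors through translations of an underlying line, and $gHg^{-1}$ acts by nontrivial translations. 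Hence the $gHg^{-1}$-orbit of $\phi_{gN(H)}(y_0)$ is coarsely dense in $\Lambda_{gN(H)}$, with density bounded by the translation length of a generator of $gHg^{-1}$; by equivariance of blow-up data and finiteness of $\mathcal H$, there are finitely many $A_\Gamma$-orbits of such data, yielding a uniform constant $B_1$.

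For non-emptiness with $B\geq B_1$, the density statement gives $y_1\in I$ with $\phi_{gN(H)}(y_1)\in N_{B}(\lambda)$. Translating by $\beta\in hH'h^{-1}$ moves $\phi_{gN(H)}(y_1)$ by at most $B_0$ by condition (C), while $\phi_{hN(H')}(\beta y_1)=\beta\cdot\phi_{hN(H')}(y_1)$ sweeps through a coarsely dense subset of $\Lambda_{hN(H')}$ by the symmetric density argument. Choose $\beta$ so that $\phi_{hN(H')}(\beta y_1)$ is within $B_1$ of $\mu$, and take $B_1$ large enough to absorb all additive constants: then $\beta y_1$ lies in the required preimage set. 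For bounded diameter, given $y_1,y_2$ in the preimage set, both lie in $I$, so Lemma~\ref{lem:intersection_of_cosets} gives $k=\alpha\beta\in K$ with $\alpha\in gHg^{-1}$, $\beta\in hH'h^{-1}$, and $\dist_{A_\Gamma}(ky_1,y_2)\leq\ell$. Since $\beta$ moves $\phi_{gN(H)}$ by at most $B_0$ and both $\phi_{gN(H)}(y_i)\in N_B(\lambda)$, the element $\alpha$ translates $\phi_{gN(H)}(y_1)$ by at most $2B+B_0+O(L_0\ell)$ on $\Lambda_{gN(H)}$; since $\alpha$ acts by a multiple of a fixed positive translation length, this confines $\alpha$ to a subset of $gHg^{-1}$ of size bounded in $B$, and symmetrically for $\beta$. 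Hence $k$ is drawn from a subset of $K$ of size bounded in $B$, and by local finiteness of the Cayley graph, $\dist_{A_\Gamma}(y_1,y_2)$ is bounded in terms of $B$.

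The main obstacle is the density step: it requires the $gHg^{-1}$-orbit on $\Lambda_{gN(H)}$ to be coarsely dense with a constant uniform across all $A_\Gamma$-orbits of adjacent-coset pairs. This uniform translation-length bound does not follow purely from the abstract axioms of Definition~\ref{defn:blow_up_data} but is ensured by the specific construction of blow-up data in Lemma~\ref{lem:blow_up_data_existence}, where the quasilines arise from quasimorphism-based generating sets whose translation lengths can be controlled uniformly over the finitely many equivariance classes of $(gN(H),hN(H'))$ in $Y$.
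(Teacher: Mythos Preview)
Your argument is correct and follows essentially the same strategy as the paper. The paper packages the bounded-diameter step into two short auxiliary lemmas: first, that a $\integers^2$ acting on a product of quasilines with exactly the bounded/unbounded pattern you identify acts properly and coboundedly on the product; second, that an equivariant coarsely Lipschitz map from a space with a cobounded $G$--action to a space with a proper $G$--action is uniformly metrically proper. Combining these shows that $\phi_{gN(H)}\times\phi_{hN(H')}$ is uniformly proper on $gN(H)^{+r}\cap hN(H')^{+r}$, which gives the diameter bound. Your inline translation-length argument is the unpacked version of the same idea: bounding the displacement of $\alpha$ on $\Lambda_{gN(H)}$ and of $\beta$ on $\Lambda_{hN(H')}$ is exactly the properness of the diagonal action, and then reading this back to $A_\Gamma$ via the $\ell$--cobounded $K$--action on $I$ is the content of the second lemma.

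One correction: your final paragraph's concern is misplaced. The uniformity of $B_1$ across all pairs of $Y$--adjacent cosets does follow from the abstract blow-up axioms, specifically the equivariance axiom~(A), together with the fact that $Y$ has only finitely many $A_\Gamma$--orbits of edges (Corollary~\ref{cor:Y_orbits}). This is precisely the mechanism you already invoked in your density step (``by equivariance of blow-up data and finiteness of $\mathcal H$\ldots''), and the paper uses it in the same way. You do not need to appeal to the specific quasimorphism construction of Lemma~\ref{lem:blow_up_data_existence}.
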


We prove the lemma after the following two auxiliary lemmas (coboundedness in the first lemma will only be needed in the next section).

\begin{lemma}\label{lem:product_quasilines}
 Suppose $C_0,C_1$ are infinite cyclic groups and $\Lambda_0$ and $\Lambda_1$ are quasilines. Suppose that $C_0\times C_1$ acts on both $\Lambda_i$ with the property that $C_i$ acts with unbounded 
orbits on $\Lambda_i$ and with bounded orbits on $\Lambda_{i+1}$ (where the action of $C_i$ is the restriction of the action to the subgroup $C_i$ of $C_0\times C_1$). Then the diagonal action of 
$C_0\times C_1$ on $\Lambda_0\times\Lambda_1$ is proper  and cobounded (where the product is given the $\ell^1$ metric).
\end{lemma}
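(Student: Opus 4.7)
The key input is a structural fact about isometric actions on quasilines: if $c$ acts by isometries on a quasiline $\Lambda$ with unbounded orbit on some (equivalently, every) point, then $c$ has positive translation length, meaning $d_{\Lambda}(c^n x,x)\to\infty$ as $|n|\to\infty$, and the orbit $\{c^n x\}_{n\in\integers}$ is coarsely dense in $\Lambda$ with density constants depending only on those of the quasi-isometry $\Lambda\to\integers$. To establish this, I fix a quasi-isometry $\phi:\Lambda\to\integers$; then $\phi\circ c\circ\phi^{-1}$ is a self-quasi-isometry of $\integers$, hence within bounded distance of the isometry $n\mapsto\pm n+k$ for some $k\in\integers$. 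The orientation-reversing case would make $c^2$ have bounded displacement, whence $c$ would have bounded orbits; the case $k=0$ is excluded for the same reason. So $c$ acts, up to the quasi-isometry, as a nontrivial translation, and both claims follow.

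For coboundedness, apply the structural lemma to the generators $c_i\in C_i$ acting on $\Lambda_i$. Fix a basepoint $(p_0,p_1)$ and let $(x_0,x_1)\in\Lambda_0\times\Lambda_1$ be arbitrary. By coarse density of the $C_0$--orbit in $\Lambda_0$, I pick $n$ with $c_0^n p_0$ close to $x_0$, and symmetrically I pick $m$ with $c_1^m p_1$ close to $x_1$. The element $(c_0^n,c_1^m)$ acts on each factor as the commuting composition $c_0^n\circ c_1^m$. In $\Lambda_0$, I have $c_0^n c_1^m p_0=c_0^n(c_1^m p_0)$; the bounded-orbit assumption for $c_1$ on $\Lambda_0$ ensures $c_1^m p_0$ is within uniformly bounded distance of $p_0$, and since $c_0^n$ is an isometry this displacement is preserved, so the image stays close to $c_0^n p_0$, hence to $x_0$. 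The argument in the $\Lambda_1$--coordinate is symmetric, giving uniform coboundedness.

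For properness, suppose a sequence $(n_k,m_k)$ satisfies $(c_0^{n_k},c_1^{m_k})(p_0,p_1)\in B$ for a fixed bounded set $B\subset\Lambda_0\times\Lambda_1$. Projecting to $\Lambda_0$, the points $c_0^{n_k}(c_1^{m_k} p_0)$ lie in a bounded set; since $\{c_1^m p_0:m\in\integers\}$ has bounded diameter and $c_0^{n_k}$ is an isometry, the points $c_0^{n_k} p_0$ also lie in a bounded subset of $\Lambda_0$. The positive translation length of $c_0$ on $\Lambda_0$ then forces $\{n_k\}$ to be finite. The symmetric argument in $\Lambda_1$ bounds $\{m_k\}$, and properness follows. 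The only nontrivial step is the structural lemma about quasiline isometries; after that, everything is a short orbit-chasing exercise driven by the hypothesis that $C_i$ acts with bounded orbits on $\Lambda_{i+1}$.
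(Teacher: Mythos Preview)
Your proof is correct and follows essentially the same approach as the paper's: in both cases one peels off the bounded $C_{i+1}$-contribution in $\Lambda_i$ using that the action is by isometries, reducing properness and coboundedness to the corresponding facts for the $C_i$-action on $\Lambda_i$ alone. The paper simply asserts these latter facts (unbounded cyclic action on a quasiline is proper and cobounded), whereas you take the extra step of sketching why via the conjugated self-quasi-isometry of $\integers$. One small remark: your justification of the structural lemma is slightly glib at the point where you conclude ``both claims follow'' from $\phi c\bar\phi$ being close to a nontrivial translation, since for a single quasi-isometry the errors could in principle accumulate under iteration; the reason this does not happen here is that $c$ is a genuine isometry, so $n\mapsto\phi(c^n x_0)$ is a quasimorphism on $\integers$ (the defect is controlled by the quasi-isometry constants of $\phi$, independently of $n$), hence linear up to bounded error. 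With that clarification your argument is complete.
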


\begin{proof} 
Fix $\mu_0\in\Lambda_0,\mu_1\in\Lambda_1$.  Let $L\geq 0$ be given and suppose that $(c_0,c_1),(d_0,d_1)\in C_0\times C_1$ have the 
property that $\dist_{\Lambda_0\times \Lambda_1}((c_0,c_1)\cdot(\mu_0,\mu_1),(\mu_0,\mu_1))\leq L,$ i.e.
$$\dist_{\Lambda_0}((c_0,c_1)\cdot\mu_0,\mu_0)+\dist_{\Lambda_1}((c_0,c_1)\cdot\mu_1,\mu_1)\leq L.$$

Let $B$ be the bound on $C_i$ orbits in $\Lambda_{i+1}$.  Then
$\dist_{\Lambda_0}((c_0,c_1)\cdot\mu_0,(c_0,1)\cdot\mu_0)\leq B$ and
$\dist_{\Lambda_1}((c_0,c_1)\cdot\mu_1,(1,c_1)\cdot\mu_1)\leq B.$
So $$\dist_{\Lambda_0}((c_0,1)\cdot\mu_0,\mu_0)\leq L+B$$ and $$\dist_{\Lambda_0}((1,c_1)\cdot\mu_1,\mu_1)\leq L+B.$$
Since $C_i$ has unbounded orbits on the quasiline $\Lambda_i$, there are finitely many such $c_i$, for $i\in\{0,1\}$.  So 
$(c_0,c_1)$ is one of finitely many elements of $C_0\times C_1$, as required.

Regarding coboundedness, the action of the cyclic group $C_i$ on the quasiline $\Lambda_i$ is, say, $B_i$-cobounded, which easily implies that the action of $C_0\times C_1$ is $(B_0+B_1+2B)$-cobounded.
\end{proof}

\begin{lemma}\label{lem:f_proper}
 Let $X$ and $Y$ be metric spaces, and let $f:X\to Y$ be a coarsely Lipschitz map. Suppose that a group $G$ acts metrically properly on $Y$ and coboundedly on $X$, and that $f$ is 
$G$--equivariant. Then $f$ is uniformly metrically proper, that is, for every $L$ there exists $D$ such that the preimage under $f$ of any ball of radius $L$ in $Y$ has diameter at most $D$.
\end{lemma}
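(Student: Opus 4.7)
The plan is to exploit coboundedness on $X$ to reduce any pair of points in a fibre of $f$ to a pair of $G$--translates of a single basepoint, then use $G$--equivariance to transfer the bound to $Y$, where metric properness will force only finitely many group elements to appear. This is a standard cobounded-plus-proper argument and should go through once the constants are tracked.

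Concretely, fix a basepoint $x_0\in X$, let $R\geq 0$ be a constant such that $d_X(x,Gx_0)\leq R$ for every $x\in X$ (by coboundedness), and let $(\lambda,c)$ be coarse Lipschitz constants for $f$. Given $L\geq 0$, $y\in Y$, and $x_1,x_2\in f^{-1}(B(y,L))$, I would pick $g_1,g_2\in G$ with $d_X(g_ix_0,x_i)\leq R$. Applying the coarse Lipschitz estimate to $x_i$ and $g_ix_0$ and then the triangle inequality, and using $f(g_ix_0)=g_if(x_0)$ by equivariance, one gets
\[
d_Y\bigl(g_if(x_0),y\bigr)\leq L+\lambda R+c.
\]
Combining the two inequalities for $i=1,2$ and using that $G$ acts by isometries on $Y$,
\[
d_Y\bigl(f(x_0),\,g_1^{-1}g_2 f(x_0)\bigr)\leq 2(L+\lambda R+c).
\]

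Metric properness of the $G$--action on $Y$ then ensures that the set $F_L$ of $g\in G$ with $d_Y(f(x_0),gf(x_0))\leq 2(L+\lambda R+c)$ is finite, so $g_1^{-1}g_2\in F_L$. Letting $M_L=\max_{g\in F_L}d_X(x_0,gx_0)$, which is finite since $F_L$ is finite, and using again that $G$ acts on $X$ by isometries, we deduce $d_X(g_1x_0,g_2x_0)\leq M_L$ and hence $d_X(x_1,x_2)\leq 2R+M_L$. Taking $D=2R+M_L$ proves the claim.

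There is no real obstacle here; the only subtlety is simply keeping track of the coarse Lipschitz error when passing between $X$ and $Y$, and tacitly using the paper's convention that all group actions are isometric (which is needed to pass from $d_Y(g_1f(x_0),g_2f(x_0))$ to $d_Y(f(x_0),g_1^{-1}g_2f(x_0))$, and similarly on $X$).
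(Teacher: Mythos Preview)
Your proof is correct and follows essentially the same approach as the paper's: both fix a basepoint, use coboundedness to replace arbitrary points by $G$--translates of the basepoint, push through the coarse Lipschitz bound via equivariance, and invoke metric properness on $Y$ to conclude that only finitely many group elements arise, yielding the diameter bound $2R+M_L$. The constants and structure match up to trivial notational differences.
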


\begin{proof}
 Let $K$ be such that $f$ is $(K,K)$--coarsely 
lipschitz.  Fix $x\in X$ and $R\geq 0$ such that $G\cdot B_R^X(x)=X$.  Let $L\geq 0$.  By properness, there exists a finite set $F_L\subset G$ such that $\dist_Y(hf(x),f(x))\leq L+2(KR+K)$ implies 
$h\in F_L$.

We will show that if two points of $X$ map $L$-close in $Y$ then they are $D(L)$-close in $X$; this is easily seen to imply the lemma (after increasing $D$).

Suppose $x_0,x_1\in X$ satisfy $\dist_Y(f(x_0),f(x_1))\leq L$.  Choose $g_0,g_1\in G$ such that $\dist_X(x_i,g_ix)\leq R$ for $i\in\{0,1\}$.  Then $\dist_Y(g_0f(x),g_1f(x))\leq L + 2(KR+K)$, so 
$\dist_Y(g_1^{-1}g_0f(x),f(x))\leq L+2(KR+K)$.  Thus $g_1^{-1}g_0\in F_L$, so $$\dist_X(x_0,x_1)\leq 2R+\dist_X(g_0x,g_1x)\leq 2R+\max_{h\in 
F_L}\dist_X(x,hx),$$
as required.
\end{proof}

\begin{proof}[Proof of Lemma~\ref{lem:bounded_sets}]
	Fix $Y$--adjacent cosets $gN(H),hN(H')$.  Recall from Lemma \ref{lem:intersection_normalisers} that $\langle gHg^{-1}, hH'h^{-1}\rangle$ is naturally isomorphic to $gHg^{-1}\times hH'h^{-1}$. By definition of the blow-up data, we have a $gHg^{-1}\times hH'h^{-1}$--equivariant map $$\phi_{gN(H)}\times 
	\phi_{hN(H')}:gN(H)^{+r}\cap hN(H')^{+r}\to\Lambda_{gN(H)}\times\Lambda_{hN(H')}.$$
	Equipping $gN(H)^{+r}\cap hN(H')^{+r}$ with the word metric and $\Lambda_{gN(H)}\times\Lambda_{hN(H)}$ with the $\ell_1$--metric, we 
	see that $\phi_{gN(H)}\times \phi_{hN(H')}$ is $(2L_0,2L_0)$--coarsely 
	lipschitz.
	
	Since $gHg^{-1}$ stabilises $hN(H')^{+r}$ (because it is contained in $hN(H')h^{-1}$ by definition of the edges of $Y$), we have that it also acts on $gN(H)^{+r}\cap hN(H')^{+r}$. Since $gHg^{-1}$ has unbounded orbits in $\Lambda_{gN(H)}$ by Condition~\eqref{item:unbounded_orbits}, we have that $\phi_{gN(H)}$ 
	restricts to a $B_1$--coarsely surjective map on $gN(H)^{+r}\cap hN(H')^{+r}$ for some $B_1$ (proving the moreover part). By equivariance (Condition ~\eqref{item:phi_equivariance} ) and the fact that there are finitely many orbits of edges of $Y$ (Corollary \ref{cor:Y_orbits}), the constant 
	$B_1$ can be chosen independently of the cosets in question.
	
	Let us prove the ``nonempty'' part. Fix $\lambda\in \Lambda_{gN(H)}$ and $\mu\in\Lambda_{hN(H')}$. By coarse surjectivity, there exists $x\in 
	gN(H)^{+r}\cap hN(H')^{+r}$ such that $\dist_{\Lambda_{gN(H)}}(\phi_{gN(H)}(x),\lambda)\leq B_1$. In view of Condition \eqref{item:bounded_cosets}, the $hH'h^{-1}$--orbit of $\phi_{gN(H)}(x)$ is contained in the $(B_0+B_1)$--neighbourhood of $\lambda$. Since the action of $hH'h^{-1}$ on $\Lambda_{hN(H)}$ is cobounded (with uniform constant), up to enlarging $B_1$ we can find an element in $hH'h^{-1}$--orbit of $x$ in $gN(H)^{+r}\cap hN(H')^{+r}$ that maps $B_1$--close to both $\lambda$ and $\mu$, as required.

We now make the following claim, which will also be used later.

\begin{claim}\label{claim:quasilines_proper}
 Fix $Y$--adjacent cosets $gN(H),hN(H')$. Then the map
 $$\phi_{gN(H)}\times 
	\phi_{hN(H')}:gN(H)^{+r}\cap hN(H')^{+r}\to\Lambda_{gN(H)}\times\Lambda_{hN(H')}$$
	is proper. More precisely, there exists $f_0:\reals_{\ge0}\to\reals_{\ge0}$ independent of $gN(H),hN(H')$ such that, for all $s\in \reals_{\ge0}$, the preimage 
	under $\phi_{gN(H)}\times \phi_{hN(H')}$ of any $s$--ball in 
	$\Lambda_{gN(H)}\times\Lambda_{hN(H')}$ has diameter at most $f_0(s)$. Moreover, the action of $\langle gHg^{-1}, hH'h^{-1}\rangle$ on $\Lambda_{gN(H)}\times\Lambda_{hN(H')}$ is cobounded.
\end{claim}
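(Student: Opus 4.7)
The plan is to combine Lemma~\ref{lem:product_quasilines}, which produces a proper cobounded action on a product of quasilines, with Lemma~\ref{lem:f_proper}, which converts coboundedness on the domain into uniform properness of the map. Set $K := \langle gHg^{-1}, hH'h^{-1}\rangle$; by Lemma~\ref{lem:intersection_normalisers}, $K \cong gHg^{-1}\times hH'h^{-1} \cong \mathbb{Z}^2$. Since $Y$-adjacency forces $gHg^{-1}$ to centralise $hH'h^{-1}$, we have $gHg^{-1}\subset hN(H')h^{-1}$, so $gHg^{-1}$ acts by isometries on $\Lambda_{hN(H')}$, and symmetrically for the other factor.

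The main step, and the point where the hypotheses on the blow-up data are essential, is to verify that the two factors of $K$ satisfy the orbit conditions of Lemma~\ref{lem:product_quasilines} on $\Lambda_{gN(H)}\times\Lambda_{hN(H')}$. The unbounded-orbit halves are immediate from Condition~\eqref{item:unbounded_orbits}. For the bounded-orbit halves, fix any point $x\in gN(H)^{+r}\cap hN(H')^{+r}$, which is nonempty by Lemma~\ref{lem:intersection_of_cosets}; then $gHg^{-1}\cdot x$ is an incident direction at $hN(H')$ in the sense of Definition~\ref{def:incident_direction}, so Condition~\eqref{item:bounded_cosets} bounds $\diam(\phi_{hN(H')}(gHg^{-1}\cdot x))$ by $B_0$. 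The subtle point is that this bound on a single orbit must upgrade to a uniform bound on all orbits of $gHg^{-1}$ in $\Lambda_{hN(H')}$; this is where the quasiline hypothesis is used. Because $hN(H')h^{-1}$ acts on the quasiline $\Lambda_{hN(H')}$ by isometries fixing the Gromov boundary pointwise (Definition~\ref{defn:blow_up_data}), each element acts, via the $(B_0,B_0)$-quasi-isometry with $\mathbb{Z}$, at bounded distance from a translation; the existence of one bounded orbit forces this translation to be trivial for the generator of $gHg^{-1}$, so every orbit has diameter bounded uniformly in terms of $B_0$. Lemma~\ref{lem:product_quasilines} then yields that $K$ acts properly and coboundedly on $\Lambda_{gN(H)}\times\Lambda_{hN(H')}$, which is the ``moreover'' assertion of the claim.

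To conclude, apply Lemma~\ref{lem:f_proper} with $G := K$, with $X := gN(H)^{+r}\cap hN(H')^{+r}$ under the subspace word metric from $A_\Gamma$, with $Y := \Lambda_{gN(H)}\times\Lambda_{hN(H')}$ under the $\ell^1$-metric, and with $f := \phi_{gN(H)}\times\phi_{hN(H')}$. The map $f$ is $K$-equivariant by Condition~\eqref{item:phi_equivariance} and is $(2L_0,2L_0)$-coarsely Lipschitz, as already established in the preceding paragraph of the proof of Lemma~\ref{lem:bounded_sets}; $K$ acts coboundedly on $X$ by Lemma~\ref{lem:intersection_of_cosets} (with constants independent of the cosets) and properly on $Y$ by the previous step. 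Lemma~\ref{lem:f_proper} then produces a function $f_0$ bounding the diameter of preimages of balls. Independence of $f_0$ on the specific cosets $gN(H),hN(H')$ follows from the $A_\Gamma$-equivariance of the entire construction combined with Corollary~\ref{cor:Y_orbits}: translating by an appropriate element of $A_\Gamma$ reduces to one of finitely many orbit representatives of edges of $Y$, for each of which the constants above are the same.
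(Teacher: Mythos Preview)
Your proof is correct and follows the same two-step strategy as the paper: first apply Lemma~\ref{lem:product_quasilines} via Conditions~\eqref{item:unbounded_orbits} and~\eqref{item:bounded_cosets} to get a proper cobounded action on the product of quasilines, then feed this into Lemma~\ref{lem:f_proper} together with the coboundedness from Lemma~\ref{lem:intersection_of_cosets}. You are in fact more careful than the paper on one point: the paper simply cites Condition~\eqref{item:bounded_cosets} for the bounded-orbits hypothesis of Lemma~\ref{lem:product_quasilines}, whereas you correctly note that this condition only bounds orbits of points arising as $\phi$-images of incident directions and then bridge the gap using the boundary-fixing hypothesis in Definition~\ref{defn:blow_up_data} (an alternative bridge is the coarse surjectivity of $\phi_{gN(H)}$ on the intersection, established just before the claim).
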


\begin{proof}
	By Lemma \ref{lem:product_quasilines} and Conditions \eqref{item:unbounded_orbits} and \eqref{item:bounded_cosets}, the action of $gHg^{-1}\times hH'h^{-1}$ on $\Lambda_{gN(H)}\times\Lambda_{hN(H')}$ is proper and cobounded.
	Then, by Lemma \ref{lem:f_proper} (with $f=\phi_{gN(H)}\times \phi_{hN(H')}$) and Lemma \ref{lem:intersection_of_cosets} we have there exists $f_0:\reals_{\ge0}\to\reals_{\ge0}$ such that, for all $s\in \reals_{\ge0}$, the preimage under $\phi_{gN(H)}\times \phi_{hN(H')}$ of any $s$--ball in 
	$\Lambda_{gN(H)}\times\Lambda_{hN(H')}$ has diameter at most $f_0(s)$.  Finiteness of $\mathcal H$ and 
	condition~\eqref{item:phi_equivariance} imply that this $f_0$ can be chosen independently of the 
	cosets $gN(H),hN(H')$.
\end{proof}

	Now, if $x,y\in gN(H)^{+r}\cap hN(H')^{+r}$ map $B$--close 
	to the image of $x$ under both $\phi_{gN(H)}$ and $\phi_{hN(H')}$, then $\dist_{A_\Gamma}(x,y)\leq f_0(2B)$, yielding the 
	required bound on the diameter of $\phi_{gN(H)}^{-1}(N_{B}(\lambda))\cap \phi_{hN(H')}^{-1}(N_{B}(\mu))$.
\end{proof}

We are almost ready to fix constants for the rest of the paper, but we first need lemma about the geometry of quasilines.

\begin{lemma}\label{lem:order_quasiline}
 Given a quasiline $\Lambda$, there exists a constant $M_0$ such that for all $M\geq M_0$ the following holds. There exists a partial order $\prec$ on $\Lambda$ with the following properties:
 \begin{itemize}
  \item whenever $d_{\Lambda}(x,y)\geq M$ then $x$ and $y$ are $\prec$-comparable.
  \item for each $z$ there exist $z'_1$ and $z'_2$ such that for all $z_i$ with $d_{\Lambda}(z'_i,z_i)\leq M/10$ we have $z_1\prec z \prec z_2$ and $d_{\Lambda}(z,z_i)\in [M, 2M]$.
  
  \item if $x\prec y \prec z$ and $d_{\Lambda}(x,y),d_{\Lambda}(y,z)\geq M$ then any geodesic from $x$ to $z$ passes within $M/10$ of $y$.
 \end{itemize}
\end{lemma}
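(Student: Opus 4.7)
The strategy is to transport the linear order on $\mathbb R$ to $\Lambda$ via a height function with only additive quasi-isometric error. Since $\Lambda$ is a quasiline, after thickening it to a two-ended geodesic hyperbolic space (e.g.\ by connecting with an edge every pair of $\Lambda$-points at distance at most $10$), one defines a Busemann function $h\colon\Lambda\to\mathbb R$ at one of the two ends of $\Lambda$. Standard hyperbolic geometry yields a constant $C_0$, depending only on the quasi-isometry constants of $\Lambda$, such that $|h(x)-h(y)|\leq d_\Lambda(x,y)\leq|h(x)-h(y)|+C_0$ for all $x,y\in\Lambda$; in particular $h$ is coarsely surjective onto $\mathbb R$. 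Set $C:=C_0+1$ and declare $x\prec y$ iff $h(y)-h(x)>C$; this is manifestly a strict partial order.

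Property~(1) is then immediate: $d_\Lambda(x,y)\geq M$ forces $|h(x)-h(y)|\geq M-C_0>C$ once $M_0$ is large enough, hence $\prec$-comparability. For property~(2), given $z\in\Lambda$ I would pick $z_1',z_2'\in\Lambda$ with $h(z_i')\approx h(z)\pm 3M/2$, which exist by coarse surjectivity of $h$; then $d_\Lambda(z,z_i')\in[3M/2-C_0,\,3M/2]$, and any $z_i$ with $d_\Lambda(z_i,z_i')\leq M/10$ satisfies both $d_\Lambda(z,z_i)\in[7M/5-C_0,\,8M/5]\subseteq[M,2M]$ and $|h(z_i)-h(z)|\geq 7M/5-C_0>C$, giving the required distance bounds and $\prec$-comparability on the correct side. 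For property~(3), a geodesic from $x$ to $z$ in $\Lambda$ can be realised as a unit-step vertex sequence $\gamma_0,\dots,\gamma_n$ (since the quasilines arising from our blow-up data are Cayley graphs); then $h\circ\gamma$ is $1$-Lipschitz and runs from $h(x)$ to $h(z)$, so since $h(x)<h(y)<h(z)$ the discrete intermediate value theorem produces some $\gamma_i$ with $|h(\gamma_i)-h(y)|\leq 1$, whence $d_\Lambda(\gamma_i,y)\leq 1+C_0\leq M/10$ for $M_0$ large.

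The hard part is choosing the right ``coordinate'' on $\Lambda$: a raw quasi-isometry $f\colon\Lambda\to\mathbb Z$ has multiplicative error which would preclude the tight placement of points at distance $\approx 3M/2$ needed in property~(2), since stretching by the multiplicative constant can force the interval of attainable distances to overshoot $[M,2M]$. Replacing $f$ by a Busemann function $h$ converts this multiplicative error into the additive constant $C_0$, which is absorbed by taking $M_0$ sufficiently large; the remainder of the argument is a bookkeeping exercise in the constants $C_0,C$ and~$M$.
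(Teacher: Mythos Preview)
Your proof is correct and follows the paper's approach: the paper's entire argument is ``Consider a quasi-isometry $f:\Lambda\to\mathbb R$ and define $x\prec y$ if $f(x)<f(y)$. The details are left to the reader.'' You have supplied those details, and your choice of a Busemann function is precisely the right way to do so---it upgrades the coordinate map to a $(1,C_0)$-quasi-isometry, so that only additive error enters the estimates.

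Your observation about the multiplicative error is well taken and worth recording: with a generic $(K,K)$-quasi-isometry, property~(2) is genuinely awkward, because placing $z'_i$ at $\Lambda$-distance $\approx 3M/2$ from $z$ only forces $|f(z'_i)-f(z)|\gtrsim 3M/(2K)$, while an $M/10$-perturbation can shift $f$ by as much as $KM/10$; for $K$ large these are incompatible. Passing to an additive-error coordinate resolves this, and the paper's sketch implicitly relies on the reader making such a choice.

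One small caveat: the inequality $d_\Lambda(x,y)\leq |h(x)-h(y)|+C_0$ is not a general fact about Busemann functions in hyperbolic spaces (it fails in $\mathbb H^2$, for instance). It holds here specifically because a quasiline contains a coarsely dense bi-infinite geodesic; you might make this dependence explicit rather than calling it ``standard hyperbolic geometry''.
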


\begin{proof}
Consider a quasi-isometry $f:\Lambda\to \mathbb R$ and define $x\prec y$ if $f(x)<f(y)$. The details are left to the reader.
\end{proof}

\begin{conv}\label{conv:constants}
 We fix $B_0$ as in the definition of blow-up-data, then choose $B_1$ as in Lemma \ref{lem:bounded_sets}, with the follwing additional properties:
 \begin{itemize}
  \item for each $Y$-adjacent pair $gN(H), hN(H')$ the action of each $gHg^{-1}$ on $\Lambda_{gN(H)}$ is $B_1$-cobounded (see Claim \ref{claim:quasilines_proper}),
  \item $\phi_{gN(H)}(gN(H)^{+r}\cap hN(H')^{+r})$ is $B_1/100$-dense,
   \item $B_1\geq 10M_0$, where $M_0$ satisfies Lemma \ref{lem:order_quasiline} for all quasilines $\Lambda_{gN(H)}$.
 \end{itemize}

 When $B=B_1$, we denote by $B_2$ the bound provided by Lemma \ref{lem:bounded_sets}.
\end{conv}

Fix $gN(H)$.  Recall that any $kH'k^{-1}\cdot x\in\mathcal I(gN(H))$ has image in $\Lambda_{gN(H)}$ of diameter 
bounded by $B_0$ (and $B_0$ does not depend on $H,H',g,k,x$), by 
Definition~\ref{defn:blow_up_data}.\eqref{item:bounded_cosets}.

Let $\delta\in W^{(0)}$ be a maximal simplex spanned by 
vertices $gN(H),hN(H')$ and $\lambda\in\Lambda_{gN(H)},\mu\in \Lambda_{hN(H')}$.  Recall from Lemma~\ref{lem:bounded_sets} that 
$$\phi_{gN(H)}^{-1}(N_{B_1}(\lambda))\cap \phi_{hN(H')}^{-1}(N_{B_1}(\mu))$$
is a nonempty subset of $A_\Gamma$, and the corresponding subset with $B_1$ replaced by $B_1+B_0$ has diameter (in the word metric) at most $B_2$.

\begin{defn}\label{defn:little_w}
	Let $\delta\in W^{(0)}$ be a maximal simplex spanned by 
	vertices $gN(H),hN(H')$ and $\lambda\in\Lambda_{gN(H)},\mu\in hN(H')$. Let 
	$$\sigma(\lambda)= \phi_{gN(H)}^{-1}(N_{B_1}(\lambda))$$
	and
	$$w(\delta)=\sigma(\lambda)\cap \sigma(\mu).$$ 
	This defines a coarse map $w:W^{(0)}\to A_\Gamma$.  
\end{defn}

\begin{lemma}[Basic properties of $w$]\label{lem:basic_w_properties}
The coarse map $w$ is $A_\Gamma$--equivariant and sends each maximal simplex to a nonempty subset of $A_\Gamma$ of diameter 
at most $B_2$.  Moreover, if $\delta\in W^{(0)}$ projects to the edge of $Y$ joining cosets $gN(H),hN(H')$, then 
$w(\delta)\subset gN(H)^{+r}\cap hN(H')^{+r}$. 
\end{lemma}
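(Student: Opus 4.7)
The three properties should each follow quite directly from the set-up: the equivariance from Condition~\eqref{item:phi_equivariance} of Definition~\ref{defn:blow_up_data}, the containment from the definition of the domain of the projection maps, and the bounded-nonempty statement from Lemma~\ref{lem:bounded_sets} together with the choice of $B_2$ in Convention~\ref{conv:constants}. There is no serious obstacle here; the work is just bookkeeping.

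First I would handle the containment claim. Recall $\delta$ is the join of edges $\{gN(H),\lambda\}$ and $\{hN(H'),\mu\}$, as in Remark~\ref{rem:max simplex}. The set $\sigma(\lambda)=\phi_{gN(H)}^{-1}(N_{B_1}(\lambda))$ lies in the domain of $\phi_{gN(H)}$, which by Definition~\ref{defn:blow_up_data} is $gN(H)^{+r}$. The same reasoning applied to $\phi_{hN(H')}$ gives $\sigma(\mu)\subset hN(H')^{+r}$. Intersecting yields $w(\delta)\subset gN(H)^{+r}\cap hN(H')^{+r}$, which establishes the last assertion.

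Next I would verify the nonemptiness and diameter bound. Since $gN(H)$ and $hN(H')$ are adjacent in $Y$ (being the two roots appearing in $\overline{\Delta}$), Lemma~\ref{lem:bounded_sets} applies with $B=B_1$: the set $w(\delta)=\phi_{gN(H)}^{-1}(N_{B_1}(\lambda))\cap \phi_{hN(H')}^{-1}(N_{B_1}(\mu))$ is nonempty and its diameter is bounded by a constant depending only on $B_1$. By Convention~\ref{conv:constants}, this constant is $B_2$, giving $\diam_{A_\Gamma}(w(\delta))\leq B_2$.

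Finally, for equivariance, fix $g'\in A_\Gamma$ and observe that $g'\delta$ is the join of $\{g'gN(H),g'\lambda\}$ and $\{g'hN(H'),g'\mu\}$, so
\[
w(g'\delta)=\phi_{g'gN(H)}^{-1}(N_{B_1}(g'\lambda))\cap \phi_{g'hN(H')}^{-1}(N_{B_1}(g'\mu)).
\]
By the equivariance condition~\eqref{item:phi_equivariance} of Definition~\ref{defn:blow_up_data}, we have $\phi_{g'gN(H)}(g'x)=g'\cdot\phi_{gN(H)}(x)$ for every $x\in gN(H)^{+r}$, and moreover $g'$ acts by isometries between the quasilines, so it sends $N_{B_1}(\lambda)$ to $N_{B_1}(g'\lambda)$. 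Consequently $\phi_{g'gN(H)}^{-1}(N_{B_1}(g'\lambda))=g'\,\phi_{gN(H)}^{-1}(N_{B_1}(\lambda))$, and similarly for the factor indexed by $hN(H')$. Intersecting gives $w(g'\delta)=g'\,w(\delta)$, completing the proof.
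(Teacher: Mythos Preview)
Your proof is correct and follows essentially the same approach as the paper's own proof, which is very terse: it simply says the nonemptiness and diameter bound hold ``by our choice of $B_1$'' and ``by the choice of $B_2$'' (i.e.\ Convention~\ref{conv:constants} and Lemma~\ref{lem:bounded_sets}), and that the containment and equivariance hold ``by construction'' with a reference to Condition~\eqref{item:phi_equivariance}. You have merely unpacked these references in detail, which is entirely appropriate.
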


\begin{proof}
 Let $\delta$ be a maximal simplex, as above.  By construction, $w(\delta)$ is $$\phi_{gN(H)}^{-1}(N_{B_1}(\lambda))\cap \phi_{hN(H')}^{-1}(N_{B_1}(\mu)),$$ which is nonempty by our choice of 
$B_1$ (which we made in Convention \ref{conv:constants}). 

Also, by the choice of $B_2$, $w(\delta)$ has diameter at most $B_2$.

The ``moreover'' assertion holds by construction, as does $A_\Gamma$--equivariance (see Condition \ref{item:phi_equivariance}).
\end{proof}

We also have:

\begin{lemma}[Surjection $W\to A_\Gamma$]\label{lem:surjection_W}
Each $x\in A_\Gamma$ satisfies $x\in w(\delta)$ for some $\delta\in W^{(0)}$.   
\end{lemma}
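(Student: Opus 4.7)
The plan is to use $A_\Gamma$-equivariance to reduce the problem to finding a single concrete pair of $Y$-adjacent cosets whose $r$-neighborhoods intersect, and then translate. Specifically, given $x \in A_\Gamma$, unwinding Definitions~\ref{defn:blow_up_commute} and~\ref{defn:little_w} shows that $x \in w(\delta)$ whenever $\delta$ is the maximal simplex spanned by $Y$-adjacent cosets $gN(H),hN(H')$ together with leaves $\lambda:=\phi_{gN(H)}(x)$ and $\mu:=\phi_{hN(H')}(x)$ --- provided $x$ lies in $gN(H)^{+r}\cap hN(H')^{+r}$ so that these projections make sense. So the only real task is to produce such a pair of $Y$-adjacent cosets containing $x$ in both of their $r$-neighborhoods.

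For this, I first fix, once and for all, any edge of $Y$; such an edge exists by Lemma~\ref{lem:Y_conn} together with Convention~\ref{conv:connected}, which ensures that $\Gamma$, hence $Y$, is not a single vertex. Call the endpoints of this edge $g_0N(H_0)$ and $h_0N(H_0')$. By the nonemptiness assertion of Lemma~\ref{lem:bounded_sets} --- or directly by Lemma~\ref{lem:intersection_of_cosets} --- I can pick a point $y \in g_0N(H_0)^{+r}\cap h_0N(H_0')^{+r}$.

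Now, given $x\in A_\Gamma$, I set $g:=xy^{-1}$ so that $g\cdot y=x$. Left multiplication by $g$ preserves $Y$-adjacency, so $gN(H):=gg_0N(H_0)$ and $hN(H'):=gh_0N(H_0')$ are $Y$-adjacent. Since Definition~\ref{def:coset_neighbourhood} makes $r$-neighborhoods equivariant under left multiplication (i.e.\ $(gC)^{+r}=g\cdot C^{+r}$), we get $x=gy\in gN(H)^{+r}\cap hN(H')^{+r}$. Defining $\lambda,\mu$ as in the first paragraph and letting $\delta$ be the simplex spanned by $\{gN(H),hN(H'),\lambda,\mu\}$, Definition~\ref{defn:blow_up_commute} confirms that these four vertices pairwise join in $X$ --- each root joins each leaf of its own fibre, and the $Y$-edge between $gN(H)$ and $hN(H')$ forces all cross-fibre joins --- so after flag completion they span a $3$-simplex, maximal by Remark~\ref{rem:max simplex}. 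Since $\phi_{gN(H)}(x)=\lambda\in N_{B_1}(\lambda)$ and $\phi_{hN(H')}(x)=\mu\in N_{B_1}(\mu)$, Definition~\ref{defn:little_w} gives $x\in w(\delta)$.

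There is no serious obstacle in this argument: it is a direct construction from the blow-up definitions combined with $A_\Gamma$-equivariance, leveraging Lemma~\ref{lem:bounded_sets} only to obtain a single seed point $y$. The only thing to verify carefully is that the four vertices above really span a simplex of $X$, which is immediate once one unpacks the adjacencies in Definition~\ref{defn:blow_up_commute}.
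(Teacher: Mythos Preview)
Your proof is correct and follows essentially the same approach as the paper: both arguments use $A_\Gamma$--equivariance and transitivity of the left action to reduce to a single seed point. The paper's proof is the one-liner ``this follows since $w$ is $A_\Gamma$--equivariant and the action of $A_\Gamma$ on itself has one orbit,'' relying on Lemma~\ref{lem:basic_w_properties} for nonemptiness of some $w(\delta_0)$; you have simply unpacked this by exhibiting the seed point and the translated simplex explicitly.
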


\begin{proof}
This follows since $w$ is $A_\Gamma$--equivariant and the action of $A_\Gamma$ on itself has one orbit.
\end{proof}

\section{The augmented complex}\label{sec:augment}

As per Convention \ref{conv:connected}, we are considering an Artin group of large and hyperbolic type $A_\Gamma$ with $\Gamma$ connected, and we work with the commutation graph $Y$ and blow-up $X$ constructed in the preceding sections. In this section we 
construct a combinatorial HHS with underlying simplicial complex $X$.  

\subsection{Construction of the augmented complex}\label{subsec:augmented}
Let $r_1,r_2\geq 0$ be constants to be determined.  The first step is to define a graph $W_{r_1,r_2}$ whose vertex set is 
the set $W^{(0)}$ of maximal simplices of $X$; recall that Remark \ref{rem:max simplex} and Definition \ref{defn:max_simplex} describe maximal simplices. The edges of $W_{r_1,r_2}$ are as follows.

\begin{defn}[$W_{r_1,r_2}$-edges]\label{defn:W_edges} 
Let $\delta_0,\delta_1$ be 
maximal simplices of $X$, where $\delta_i=\Delta(\alpha_i,\beta_i)$.  Here, $\alpha_i$ is an edge joining $g_iN(H_i)$ to 
$\lambda_i\in\Lambda_{g_iN(H_i)}$ and $\beta_i$ is an edge joining $h_iN(H'_i)$ to $\mu_i\in\Lambda_{h_iN(H_i')}$.

	\begin{itemize}
		\item $W$\textbf{-edges of type 1}:  Suppose $g_0N(H_0)=g_1N(H_1)$ and 
$h_0N(H_0')=h_1N(H_1')$, and $\mu_0=\mu_1$.  We declare $\delta_0$ to be $W_{r_1,r_2}$--adjacent to $\delta_1$ if 
$$\dist_{\Lambda_{g_0N(H_0)}}(\lambda_0,\lambda_1)\leq r_1.$$
Note that simplices joined by an edge of type 1 differ on a single vertex and  project to the same edge of $Y$. 
		
		\item $W$\textbf{-edges of type 2}: In this case, $h_0N(H_0')=h_1N(H_1')$ and $\mu_0=\mu_1$, while 
$g_0N(H_0)\neq g_1N(H_1)$.  (So, $p(\delta_0\cup\delta_1)$ is the union of two distinct edges of $Y$ that share a vertex.)  
In this case, we declare $\delta_0$ and $\delta_1$ to be $W_{r_1,r_2}$--adjacent if 
$$\dist_{A_\Gamma}(\sigma(\lambda_0),\sigma(\lambda_1))\leq 
r_2.$$
Note that simplices joined by an edge of type 2 intersect in an edge of the fibre over 
$h_0N(H_0')=h_1N(H_1')$.
	\end{itemize}
\end{defn}

The action of $A_\Gamma$ on $W^{(0)}$ extends to an action of $A_\Gamma$ on $W$ by graph automorphisms.  

The (finitely many) conditions needed on $r_1,r_2$ for our constructions to work appear in the proofs of the various results below.

Morally, the definition of $W$-edges of type 2 should be the inequality in the following lemma. However, we need the definition exactly as stated to show fullness of links (see Proposition~\ref{prop: C=C0}).

\begin{lemma}\label{lem:close_levels}
 For every $r_2$ there exists $r_3$ with the following property. Fix the notation of the ``$W$-edges of type 2'' part of Definition \ref{defn:W_edges}. Then
 $$\dist_{A_\Gamma}(w(\delta_0),w(\delta_1))\leq r_3.$$
\end{lemma}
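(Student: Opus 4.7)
The plan is to construct specific points $y_i \in w(\delta_i)$ whose $A_\Gamma$-distance is controlled by $r_2$. Set $N := h_0N(H_0') = h_1N(H_1')$ and $N_i := g_iN(H_i)$, and fix $x_0 \in \sigma(\lambda_0)$, $x_1 \in \sigma(\lambda_1)$ realising $d_{A_\Gamma}(x_0, x_1) \leq r_2$. The strategy is to modify each $x_i$ into a point of $w(\delta_i)$ while tracking how the $A_\Gamma$-distance changes, then conclude via the triangle inequality.

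The first step is to produce points $x'_i \in \sigma(\lambda_i) \cap N_i^{+r} \cap N^{+r}$ with $d_{A_\Gamma}(x_i, x'_i) \leq C(r_2)$, where $C(r_2)$ depends only on $r_2$ and the blow-up data. The main geometric input is the CAT($-1$) structure of $\widehat D$: by Lemma \ref{lem:iota} and the absence of triangles in $Y$ (Lemma \ref{lem:no square}), the vertices $\iota(N_0), \iota(N_1)$ are both $\widehat D$-adjacent to $\iota(N)$ but distinct, hence at $\widehat D$-distance $2$ with $\iota(N)$ lying on the unique geodesic between them. The hyperbolicity of $\widehat D$ then forces points of $A_\Gamma$ respectively close to $N_0^{+r}$ and $N_1^{+r}$ (via the $r_2$-close pair $x_0, x_1$) to both be close to $N^{+r}$. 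Transferring this from $\widehat D$ back to $A_\Gamma$ uses the quasi-isometry $q : X \to \widehat D$ of Lemma \ref{lem:q_qi} together with the map $w$, whose values have bounded diameter (Lemma \ref{lem:basic_w_properties}), to ``thicken'' points of $\widehat D$ to bounded subsets of $A_\Gamma$.

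Next, translate $x'_i$ by an element $c_i \in h_0H_0'h_0^{-1}$ so that $\phi_N(c_i x'_i)$ is within $B_1$ of $\mu_0$. This is possible by Condition (B) in Definition \ref{defn:blow_up_data}, which provides unbounded $h_0H_0'h_0^{-1}$-orbits on $\Lambda_N$; by Condition (C), the $\phi_{N_i}$-projection of $c_i x'_i$ differs from that of $x'_i$ by at most $B_0$, so $c_i x'_i$ lies in a slight enlargement of $w(\delta_i)$ (with $B_1$ replaced by $B_1 + B_0$), which still has bounded diameter by Lemma \ref{lem:bounded_sets}. Pick $y_i \in w(\delta_i)$ at bounded distance from $c_i x'_i$. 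Crucially, $|c_0^{-1} c_1|$ is bounded: since $\phi_N$ is coarsely Lipschitz and $d_{A_\Gamma}(x'_0, x'_1) \leq r_2 + 2C(r_2)$, the projections $\phi_N(x'_0), \phi_N(x'_1)$ lie at bounded distance in $\Lambda_N$, so the elements $c_0, c_1 \in h_0H_0'h_0^{-1} \cong \mathbb Z$ needed to translate them to within $B_1$ of $\mu_0$ differ by a bounded amount in $A_\Gamma$. The triangle inequality then gives
\[ d_{A_\Gamma}(y_0, y_1) \leq d(y_0, c_0 x'_0) + d(c_0 x'_0, c_1 x'_1) + d(c_1 x'_1, y_1) \leq d(x'_0, x'_1) + |c_0^{-1}c_1| + O(1) \leq r_3, \]
where $r_3 = r_3(r_2)$.

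The main obstacle is the first step: extracting the bound $C(r_2)$ from the CAT($-1$) geometry of $\widehat D$. The difficulty is that the orbit map $A_\Gamma \to \widehat D$ is only Lipschitz (the action being acylindrical rather than metrically proper), so $A_\Gamma$-closeness cannot directly be converted into $\widehat D$-closeness and back. Overcoming this requires careful use of the combinatorial structure of $X$, the quasi-isometry $q$, and the uniform coarse quasi-isometry statement from Claim \ref{claim:quasilines_proper}, to obtain constants depending only on $r_2$ and the blow-up data, not on the specific cosets involved.
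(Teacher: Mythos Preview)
Your overall structure matches the paper's: first move each $x_i \in \sigma(\lambda_i)$ to a point $x'_i \in \sigma(\lambda_i)\cap N^{+r}$ at controlled distance, then translate into $w(\delta_i)$ using the $h_0H_0'h_0^{-1}$--action. However, your first step has a genuine gap. You propose to extract the bound $C(r_2)$ from the CAT($-1$) geometry of $\widehat D$, but --- as you yourself flag --- the orbit map $A_\Gamma\to\widehat D$ is not proper, so closeness in $\widehat D$ gives no control on $A_\Gamma$-distance. Your closing paragraph lists tools that ``should'' fix this (the quasi-isometry $q$, Claim~\ref{claim:quasilines_proper}) but does not explain how; in particular, Claim~\ref{claim:quasilines_proper} concerns the map $\phi_{gN(H)}\times\phi_{hN(H')}$ on a \emph{single} intersection $gN(H)^{+r}\cap hN(H')^{+r}$, and says nothing about how close a point of $N_0^{+r}$ must be to $N^{+r}$ given only that it is $r_2$-close to some point of $N_1^{+r}$.

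The paper's argument for this step does \emph{not} use $\widehat D$ at all. Instead it proves two claims by elementary finiteness. The first (analogous to producing your $C(r_2)$) is: given $x_i\in N_i^{+r}$ with $d_{A_\Gamma}(x_0,x_1)\leq d$, the distance from $x_0$ to $N_1^{+r}\cap N^{+r}$ is bounded by some $s(d)$. The trick is to translate so $x_0=e$; then there are only finitely many cosets $N_0,N_1$ meeting $B(e,d+r)$, and for each such pair the middle vertex $N$ is unique (no triangles in $Y$), so the supremum is over a finite set. The second claim shows that once $x\in\sigma(\lambda)$ is close to $N_0^{+r}\cap N^{+r}$, one can find $y\in\sigma(\lambda)\cap N^{+r}$ nearby; this uses the order structure on the quasiline $\Lambda_{N_0}$ from Lemma~\ref{lem:order_quasiline} to pick one of two candidate points on the correct side of $\phi_{N_0}(x)$.

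For the second step, your argument bounding $|c_0^{-1}c_1|$ is workable but roundabout. The paper does it more cleanly: translate the \emph{pair} $(x'_0,x'_1)$ by a \emph{single} element of $h_0H_0'h_0^{-1}$ so that the first point lands $B_1$-close to $\mu_0$ in $\Lambda_N$; since $\phi_N$ is coarsely Lipschitz and the pair stays at the same $A_\Gamma$-distance, the second point automatically lands close to $\mu_0$ as well. Then both translated points lie in the appropriate intersections and project close to $(\lambda_i,\mu_0)$, so properness of $\phi_{N_i}\times\phi_N$ (Claim~\ref{claim:quasilines_proper}) bounds their distances to $w(\delta_i)$.
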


\begin{proof}
 We first make two preliminary claims.
 
 \begin{claim}
  There exists a function $s$ such that the following holds. Given cosets $g_0N(H_0)$, $hN(H),$ and $g_1N(H_1)$ that, as vertices of $Y$, form a path, and $x_i\in g_iN(H_i)^{+r}$ we have 

  $$\dist_{A_{\Gamma}}(x_0, g_1N(H_1)^{+r}\cap hN(H)^{+r})\leq s\Big(\dist_{A_\Gamma}(x_0,x_1)\Big).$$
 \end{claim}

 \begin{proof}
Set $d=\dist_{A_\Gamma}(x_0,x_1)$. Up to multiplying all objects on the left by $x_0^{-1}$, we can assume that $x_0$ is the identity. Note that there are $N=N(d)<+\infty$ cosets $gN(H')$ with $H'\in \mathcal H$ within distance $d+r$ of $x_0$. Therefore, it suffices to consider fixed cosets $g_0N(H_0),g_1N(H_1)$ as in the statement, and prove the claimed inequality for $x_i$ in those particular $g_iN(H_i)^{+r}$. Note that since $Y$ does not contain triangles (Lemma \ref{lem:no square}), for each pair $g_0N(H_0),g_1N(H_1)$ there is at most one $hN(H)$ at distance $1$ in $Y$ from both. In particular, there are at most finitely many $hN(H)$ that can occur, so that once again we can fix one. But at this point, the left hand side is a number, so we are done.
 \end{proof}
 
\begin{claim}
 There exists a function $t$ such that the following holds. Suppose that $gN(H'), hN(H)$ are adjacent vertices of $Y$ and $\lambda\in \Lambda_{gN(H')}$. Given $x\in \sigma(\lambda)$ there exists $y\in \sigma(\lambda)\cap hN(H)^{+r}$ such that  $\dist_{A_\Gamma}(x,y)\leq t(\dist_{A_{\Gamma}}(x, gN(H')^{+r}\cap hN(H)^{+r}))$.
\end{claim}

\begin{proof}
Similarly to the proof of the previous claim, we can assume that $x$ is the identity and that $gN(H'), hN(H)$ are fixed. However, $\lambda$ is not; there are infinitely many possible ones. To overcome this, we will find two candidates for $y$, and we will see that at least one of them is in $\sigma(\lambda)\cap hN(H)^{+r}$, which suffices.

Towards this, recall the order $\prec$ from Lemma \ref{lem:order_quasiline}. From Convention \ref{conv:constants} we know that the lemma applies with $M=B_1/10$, and that $\phi_{gN(H')}(gN(H')^{+r}\cap hN(H)^{+r})$ is $B_1/100$-dense. Hence, we can find $y_1,y_2\in gN(H')^{+r}\cap hN(H)^{+r}$ such that $z_i=\phi_{gN(H')}(y_i)\in \Lambda_{gN(H')}$ satisfy the following:
\begin{itemize}
 \item $\dist_{\Lambda_{gN(H')}} (z,z_i)\in [B_1/10,B_1/5]$ where $z=\phi_{gN(H')}(x)$,
 \item $z_1\prec z \prec z_2$.
\end{itemize}

There are now two cases. If $\dist_{\Lambda_{gN(H')}}(z,\lambda)\leq B_1/2$, then either $z_i$ lies in $N_{B_1}(\lambda)$, that is, either $y_i$ lies in $\sigma(\lambda)$, and we are done. If not, any two points $\lambda, z, z_1,z_2$ are either $\prec$-comparable by construction, or at distance at least $B_1/10$, whence, again, $\prec$-comparable. If $\lambda\prec z_1$, then it is readily seen from the last item of Lemma \ref{lem:order_quasiline} that $z_1\in N_{B_1}(\lambda)$, since $z$ lies in said ball and a geodesic from $\lambda$ to $z$ passes within distance $B_1/100$ of $z_1$, so that
$$d_{\Lambda_{gN(H')}}(\lambda,z_1)\leq d_{\Lambda_{gN(H')}}(\lambda,z)-d_{\Lambda_{gN(H')}}(z,z_1)+2 B_1/100\leq B_1-B_1/10 + B_1/50\leq B_1.$$
Similarly, if $z_2\prec \lambda$ then $z_2\in N_{B_1}(\lambda)$. On the other hand, we cannot have $z_1\prec \lambda \prec z$ or $z\prec \lambda \prec z_2$, for otherwise we would have $\dist_{\Lambda_{gN(H')}}(z,\lambda)<B_1/2$. We now covered all cases.
\end{proof}

We now argue that for every $r_2$ there exists $r_{2.5}$ so that if $\dist_{A_\Gamma}(\sigma(\lambda_0),\sigma(\lambda_1))\leq 
r_2$ then 
$$\dist_{A_\Gamma}(\sigma(\lambda_0)\cap h_0N(H'_0)^{+r},\sigma(\lambda_1)\cap h_0N(H'_0)^{+r})\leq r_{2.5}$$
(That is, if the $\sigma$ come close, then they come close in the subspace of $A_{\Gamma}$ corresponding to the middle vertex $h_0N(H'_0)=p(\delta_0)\cap p(\delta_1)$.)

Indeed, for $x_i\in \sigma(\lambda_i)$ such that $\dist_{A_\Gamma}(x_0,x_1)\leq r_2$, we can find $y_i$ as in the second claim above, and in view of both claims we have $\dist_{A_\Gamma}(y_1,y_2)\leq 2 t(s(r_2))+r_2$.

Pick $p\in \sigma(\lambda_0)\cap h_0N(H'_0)^{+r}$ and $q\in\sigma(\lambda_1)\cap h_0N(H'_0)^{+r})$ with $\dist_{A_\Gamma}(p,q)\leq r_{2.5}$. By translating the pair $p,q$ by an element $h_0H'_0h^{-1}$, we find points $p',q'$ so that
\begin{itemize}
 \item $\dist_{A_\Gamma}(p',q')\leq r_{2.5}$ (since we multiplied by an element of $A_\Gamma$,
 \item $\dist_{\Lambda_{g_0N(H_0)}}(\phi_{g_0N(H_0)}(p),\phi_{g_0N(H_0)}(p'))\leq B_0$ and $\dist_{\Lambda_{g_1N(H_1)}}(\phi_{g_1N(H_1)}(q),\phi_{g_1N(H_1)}(q'))\leq B_0$ (Condition \ref{item:bounded_cosets}),
 \item $\dist_{\Lambda_{h_0N(H'_0)}}(\mu_0,\phi_{h_0N(H'_0)}(p'))\leq B_1$ (Convention \ref{conv:constants}, coboundedness property of $B_1$).
 
 Furthermore, by Definition \ref{defn:blow_up_data} (blow-up data), the first and third items yield
$\dist_{\Lambda_{h_0N(H'_0)}}(\mu_0,\phi_{h_0N(H'_0)}(q))\leq L_0r_{2.5}+L_0+B_1.$
\end{itemize}

Pick $p''\in w(\delta_0)$ and $q''\in w(\delta_1)$. The we see that $p'$ and $p''$ are both points in $g_0N(H_0)^{+r}\cap h_0N(H'_0)^{+r}$ that map within bounded distance of $(\lambda_0,\mu_0)\in \Lambda_{g_0N(H_0)}\times \Lambda_{h_0N(H'_0)}$ under $\phi_{g_0N(H_0)}\times \phi_{h_0N(H'_0)}$. By the properness part of Claim \ref{claim:quasilines_proper}, there is a bound on $\dist_{A_{\Gamma}}(p'',p')$. A similar argument applies to $q''$, and therefore we get a bound on $\dist_{A_{\Gamma}}(p'',q'')$, as required.
\end{proof}

To use 
Theorem~\ref{thm:main criterion}, we will need the following: 

\begin{prop}\label{prop:W_qi_to_A_Gamma}
There exist $R_1,R_2\ge0$ such that the following holds for all $r_1\geq R_1,r_2\geq R_2$.  Let $W=W_{r_1,r_2}$.  Then $W$ is connected and the action of 
$A_\Gamma$ on $W$ is proper and cobounded. In particular, any 
orbit map $A_\Gamma\to W$ is a quasi-isometry.
\end{prop}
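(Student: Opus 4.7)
The plan is to show that the $A_\Gamma$-equivariant coarse map $w\colon W^{(0)}\to A_\Gamma$ from Definition~\ref{defn:little_w} is a quasi-isometry (for appropriately large $r_1,r_2$), and to deduce the proposition from this. The map $w$ is already coarsely surjective and has uniformly bounded image on each vertex (Lemmas~\ref{lem:basic_w_properties} and~\ref{lem:surjection_W}), so what is needed is a two-sided comparison between $d_W$ and $d_{A_\Gamma}$.

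For the upper bound I would check that each $W$-edge sends $w$ to a pair of $A_\Gamma$-close sets. For type-$2$ edges this is exactly Lemma~\ref{lem:close_levels}. For type-$1$ edges joining $\delta,\delta'$ that agree except in one fiber coordinate $\lambda,\lambda'\in\Lambda_{gN(H)}$ at $\Lambda$-distance at most $r_1$, both $w$-sets lie in the common intersection $gN(H)^{+r}\cap hN(H')^{+r}$, and their $\phi_{gN(H)}\times\phi_{hN(H')}$-images lie within distance $O(r_1+B_1)$ of each other; the properness statement of Claim~\ref{claim:quasilines_proper} then converts this into the desired $A_\Gamma$-bound.

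The heart of the argument is the lower bound, which reduces to the following short-distance claim: for $r_1,r_2$ large enough, there exists $N$ such that any $\delta,\delta'$ with $d_{A_\Gamma}(w(\delta),w(\delta'))\le 1$ satisfy $d_W(\delta,\delta')\le N$. Granting this, the general inequality $d_W(\delta,\delta')\lesssim d_{A_\Gamma}(w(\delta),w(\delta'))+1$ follows by lifting an $A_\Gamma$-geodesic joining chosen basepoints of $w(\delta),w(\delta')$ and picking intermediate maximal simplices at each step via Lemma~\ref{lem:surjection_W}. I would prove the short-distance claim by case analysis on the projected edges $p(\delta),p(\delta')$ in $Y$. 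When $p(\delta)=p(\delta')$, coarse-Lipschitzness of the $\phi$-maps bounds the $\Lambda$-distance between corresponding fiber coordinates in terms of $B_1$ and $L_0$, so a uniformly bounded number of type-$1$ edges suffice once $r_1$ exceeds this threshold. When $p(\delta),p(\delta')$ share exactly one vertex $v$ of $Y$, the $\mu$-coordinates in $\Lambda_v$ are similarly close, are aligned by type-$1$ edges in the fiber $X_v$, and the resulting simplices are then joined by a single type-$2$ edge, whose defining inequality is satisfied for $r_2$ large because $\sigma(\lambda_\delta)\supset w(\delta)$ and analogously for $\delta'$. When $p(\delta),p(\delta')$ are disjoint edges of $Y$, I would use that for each $x\in A_\Gamma$ the set $V_x:=\{gN(H):x\in gN(H)^{+r}\}$ has cardinality at most $|\mathcal H|\cdot|B(e,r)|$ and, via the orbit map $A_\Gamma\to\widehat D$ together with the quasi-isometry $\iota\colon Y\to\widehat D$ of Lemma~\ref{lem:iota QI}, has uniformly bounded $Y$-diameter; this yields a $Y$-path of bounded length connecting $p(\delta)$ to $p(\delta')$, along which the shared-vertex case can be iterated a bounded number of times.

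With the two-sided comparison established, the three conclusions follow. Connectedness of $W$ is immediate from the geodesic-lifting argument. Coboundedness follows from the finitely many $A_\Gamma$-orbits of maximal simplices of $X$: there are finitely many orbits of $Y$-edges by Corollary~\ref{cor:Y_orbits}, and the stabiliser of each such edge equals $gHg^{-1}\times hH'h^{-1}\cong \integers^2$ by Lemma~\ref{lem:intersection_normalisers} and acts cocompactly on the discrete locally finite product $\Lambda_{gN(H)}\times\Lambda_{hN(H')}$ by Lemma~\ref{lem:product_quasilines}, hence with finitely many orbits. Properness is then automatic: the stabiliser of a $W$-ball $B$ stabilises the bounded subset $w(B)\subset A_\Gamma$, on which $A_\Gamma$ acts freely, and so is finite. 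The main obstacle is the disjoint-projection subcase of the short-distance claim, which requires the $V_x$-estimate via $\widehat D$ and a careful iteration choosing intermediate fiber coordinates so that each step of the induction is a bona fide $W$-edge.
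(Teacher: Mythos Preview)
Your proposal is correct and close in spirit to the paper's proof; the coarse-Lipschitz half is handled identically (type-1 via Claim~\ref{claim:quasilines_proper}, type-2 via Lemma~\ref{lem:close_levels}), yielding properness. The difference is organisational. Rather than proving a short-distance claim and deducing everything, the paper proves connectedness and coboundedness together by a direct argument: using connectedness of $Y$ (Lemma~\ref{lem:Y_conn}) it reduces immediately to the same-edge and shared-vertex cases, bypassing your disjoint-projection subcase entirely; for the shared-vertex case it uses finite generation of $N(H)$ and finitely many $\mathrm{Stab}(v)$-orbits of incident $Y$-edges to reduce to finitely many $A_\Gamma$-orbits of pairs, then takes $r_2$ large enough for each. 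Your iteration in the disjoint subcase is the same idea one level removed, and it does work, but note that the shared-vertex case cannot be invoked as a black box there (intermediate simplices along the $Y$-path need not have $w$-values within $1$ of each other). What makes the iteration go through is precisely the observation you flag at the end: there are only finitely many configurations to consider (finitely many pairs $(\delta,\delta')$ with $1\in w(\delta)$ and $d_{A_\Gamma}(w(\delta),w(\delta'))\le 1$, using local finiteness of the Cayley graph and of the quasilines), so one may simply choose $r_1,r_2$ large enough to force a $W$-edge at each of the boundedly many steps.
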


\begin{proof}

\textbf{The action is proper.}  Recall the $A_\Gamma$--equivariant coarse map $w:W^{(0)}\to A_\Gamma$ from Definition \ref{defn:little_w}. Using Lemma~\ref{lem:surjection_W}, choose 
$\delta_0\in W^{(0)}$ such that $1\in w(\delta_0)$. We show that $w$ is coarsely Lipschitz, for any $r_1,r_2$, which proves properness of the action since it provides a bound on $\dist_{A_{\Gamma}}(w(\gamma_1\delta_0),w(\gamma_2\delta_0))\geq \dist_{A_{\Gamma}}(\gamma_1,\gamma_2)-2B_2$ when $\dist_W(\gamma_1\delta_0,\gamma_2\delta_0)$ is bounded.

Let $r_1,r_2\geq 0$. Let $\delta_0,\delta_1$ be $W$--adjacent maximal simplices of $X$, where 
$\delta_i=\Delta(\alpha_i,\beta_i)$ and $\alpha_i,\beta_i$ are as in Definition~\ref{defn:W_edges}.

First suppose that $\delta_0,\delta_1$ are joined by a $W$--edge of type 2.  Then, by Lemma \ref{lem:close_levels}, we have 
$\dist_{A_\Gamma}(w(\delta_0),w(\delta_1))\leq r_3$.  

Second, suppose that $\delta_0,\delta_1$ are joined by a $W$--edge of type 1.  Then, by definition, we have the following: 
associated to the coset $h_0N(H_0')=h_1N(H_1')$, we have $\mu_0=\mu_1$, and 
associated to the coset $g_0N(H_0)=g_1N(H_1)$, we have $\lambda_0,\lambda_1\in\Lambda_{g_nN(H_0)}$ with 
$\dist_{\Lambda_{g_0N(H_0)}}(\lambda_0,\lambda_1)\leq r_1$. 

Consider the map $\phi_{g_0N(H_0)}\times \phi_{h_0N(H_0')}:g_0N(H_0)\cap h_0N(H_0')\to \Lambda_{g_0N(H_0)}\times 
\Lambda_{h_0N(H_0')}$, and let $f_0:\mathbb R_{\ge0}\to\mathbb R_{\ge0}$ be the (properness) function as in Claim \ref{claim:quasilines_proper} (which says that $\phi_{g_0N(H_0)}\times \phi_{h_0N(H_0')}$ is proper).  

Letting $x\in w(\delta_0)$ and $y\in w(\delta_1)$, we have by Lemma \ref{lem:basic_w_properties}:
$$\dist_{\Lambda_{g_0N(H_0)}}(\phi_{g_0N(H_0)}(x),\lambda_0)\leq B_1+B_0$$ and 
$$\dist_{\Lambda_{g_0N(H_0)}}(\phi_{g_0N(H_0)}(y),\lambda_1)\leq B_1+B_0.$$  In $\Lambda_{h_0N(H_0')}$, the images of $x,y$ 
are 
both $(B_1+B_0)$--close to $\mu_0=\mu_1$.

So, $$\dist_{A_\Gamma}(x,y)\leq f_0(4B_1+4B_0+r_1).$$  This completes the proof that $w$ is coarsely lipschitz.  As a side remark, the 
constants 
depend on $A_\Gamma$, the blow-up data, and the choice of $r_1,r_2$, but we have not yet needed to impose any restriction on 
$r_1, r_2$.

\textbf{Connectedness and coboundedness.}  We will show at the same time that, for sufficiently large $r_1,r_2$,  $W_{r_1,r_2}$ is connected and the action of $A_\Gamma$ is cobounded. Note that a set of representatives for the $A_\Gamma$-orbits can be taken to be $\{\delta'_i=\Delta(\alpha_i,\beta_i)\}$ where

\begin{itemize}
 \item the set $\{p(\delta'_i)\}$ of edges of $Y$ is finite (we can arrange this since there are finitely many orbits of edges of $Y$ by Corollary \ref{cor:Y_orbits}), and
 \item there exists a constant $C$ so that if $p(\delta'_i)=p(\delta'_j)$ then the following holds. Suppose that the endpoints of $p(\delta'_i)$ are $gN(H)$ and $hN(H')$ then $\dist_{\Lambda_{gN(H)}}(\lambda_i,\lambda_j)\leq C$, where $\alpha_k=\{gN(H),\lambda_k\}$, and similarly in $\Lambda_{hN(H')}$.
\end{itemize}

The second item can be arranged because the action of $\langle gHg^{-1},hH'h^{-1}\rangle$ on $\Lambda_{gN(H)}\times \Lambda_{h'N(H')}$ is cobounded by Claim \ref{claim:quasilines_proper} (``moreover'' part).

Since $Y$ is connected by Lemma \ref{lem:Y_conn} (recalling that we are assuming that $\Gamma$ is connected throughout), to show connectedness and coboundedness it suffices to show that there is a path in $W_{r_1,r_2}$ connecting $\delta_0$ to $\delta_1$ when either
 \begin{enumerate}
  \item $\delta_0$, $\delta_1$ are maximal simplices with $p(\delta_0)=p(\delta_1)$, or 
  \item $\delta_0$, $\delta_1$ are maximal simplices such that $p(\delta_0)$ intersects $p(\delta_1)$ at a vertex $gN(H)$,
 \end{enumerate}

 and that moreover in the first case the length of the path is bounded if we are in the situation of the second bullet point above.
 
 In the first case, it is easily seen that we can in fact connect the maximal simplices only using edges of type 1 (roughly, changing one vertex at a time moving a bounded amount in one of the relevant quasilines). This requires $r_1$ to be sufficiently large to move in the quasilines.

 Since there are only finitely many $gN(H)g^{-1}$-orbits of edges of $Y$ emanating from $gN(H)$ and $N(H)$ is finitely generated, there is a sequence of edges $p(\delta_0)=e_0,\dots,e_n=p(\delta_n)$ emanating from $gN(H)$ so that the possible pairs $(e_i,e_{i+1})$ belong to a fixed set of $A_\Gamma$-orbits of pairs of edges of $Y$. From here, we see that we can change the orders of quantifiers, meaning that it suffices to show that given maximal simplices $\delta_0,\delta_1$ of $X$ as in case 2, there exist $r_1$ and $r_2$ such that $\delta_0$ and $\delta_1$ are $W_{r_1,r_2}$-adjacent. 
 
 To show that this holds, denote $h_iN(H'_i)$ the other endpoint of $p(\delta_i)$, and observe that since the distance between the sets $gN(H)^{+r}\cap h_iN(H'_i)^{+r}$ is at most some $R_2$, for any $r_2\geq R_2$ there is a $W$-edge of type 2 connecting maximal simplices $\delta'_0,\delta'_1$ with $p(\delta_i)=\delta'_i$. We know already that $\delta_i$ can be connected to $\delta'_i$, so we are done.
\end{proof}

To save notation, we often denote $W_{r_1,r_2}$ by $W$, even though we will not fix $r_1,r_2$ until later.

\subsection{Fullness of links}
On our way to proving that $(X,W)$ is a combinatorial HHS, so as to apply Theorem~\ref{thm:main criterion}, we need to verify condition~\eqref{item:fullness} from 
Definition~\ref{defn:combinatorial_HHS}, which we do in the following proposition.

\begin{prop}[$W$-fullness of links]
	Let $\Delta$ be a non-maximal simplex of $X$, and let $v_1,v_2$ be two distinct non-adjacent vertices of 
$\link_X(\Delta)$  that are contained in $W$-adjacent maximal simplices. Then $v_1, v_2$ are contained in $W$-adjacent maximal 
simplices of $\Star_X(\Delta)$. 
	\label{prop: C=C0}
\end{prop}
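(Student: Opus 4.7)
The plan is to use the join decomposition of $\link_X(\Delta)$ from Lemma \ref{lem:join decomposition}, namely
$$\link_X(\Delta) = p^{-1}(\link_Y(\overline{\Delta})) \ast \bigast_{v \in V(\overline{\Delta})} \link_{X_v}(\Delta_v),$$
so non-adjacent vertices $v_1, v_2$ must share a common factor of the join. I will case-analyze on this factor, and in each case modify the original $W$-adjacent maximal simplices $\delta_1 \ni v_1$, $\delta_2 \ni v_2$ into simplices $\delta_1', \delta_2' \in \Star_X(\Delta)$ while preserving the $W$-adjacency.

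\textbf{Case A:} $v_1, v_2$ are distinct vertices of a common fiber $X_v$ (covering both the case where they lie in some $\link_{X_v}(\Delta_v)$, and the case where they both lie in $p^{-1}(\link_Y(\overline{\Delta}))$ with $p(v_1) = p(v_2) = v$). Since $X_v$ is a cone on a discrete set, both $v_i$ are leaves of $X_v$. Inspecting Definition \ref{defn:W_edges}, a type 2 $W$-edge forces $v_1 = v_2$ when both sit in the same fiber (the shared $\mu$-leaf is in the middle fiber), so the original $W$-edge must be of type 1. Hence $\overline{\delta_1} = \overline{\delta_2}$ is a common edge $\{v, v'\}$ of $Y$, the simplices $\delta_1, \delta_2$ share a leaf $\mu$ in $X_{v'}$, and the leaves in $X_v$ are exactly $v_1, v_2$ with $\dist_{\Lambda_{gN(H)}}(v_1, v_2) \leq r_1$. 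I then build $\delta_i' = \Delta(\{v, v_i\}, \{u, \mu^*\})$ where $u$ is the vertex of $\overline{\Delta}$ distinct from $v$ if any, else $u = v'$, and $\mu^*$ is any leaf with $\Delta_u \subseteq \{u, \mu^*\}$; using the same $u$ and $\mu^*$ for both $i = 1, 2$, the simplices $\delta_1', \delta_2'$ lie in $\Star_X(\Delta)$ and are $W$-adjacent of type 1.

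\textbf{Case B:} $v_1, v_2$ both lie in $p^{-1}(\link_Y(\overline{\Delta}))$ with $p(v_1) \neq p(v_2)$. The join structure of $X$ forces $p(v_1), p(v_2)$ to be non-adjacent in $Y$, as otherwise $v_1, v_2$ would be adjacent in $X$. Enumerating the positions of $p(v_1), p(v_2)$ relative to $\overline{\Delta}$ and using the absence of triangles and squares in $Y$ from Lemma \ref{lem:no square}, one checks that $\overline{\Delta}$ is either empty (so $\Delta=\emptyset$ and we take $\delta_i' = \delta_i$) or a single vertex $u$ adjacent to both $p(v_i)$, with $u \neq p(v_i)$. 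A type 1 $W$-edge is ruled out (it would force $p(v_1), p(v_2)$ adjacent), so the original $W$-edge is of type 2, and its shared middle vertex is a common neighbour of $p(v_1), p(v_2)$, hence equals $u$ by the uniqueness just established. I then set $\delta_i' = \Delta(\alpha_i, \{u, \mu^*\})$, keeping the $g$-side edge $\alpha_i$ of $\delta_i$ unchanged and choosing a common leaf $\mu^*$ with $\Delta_u \subseteq \{u, \mu^*\}$. Since the type 2 condition depends only on the $g$-side data $\sigma(\lambda_i)$ and is independent of the $\mu$-leaf, $\delta_1', \delta_2'$ inherit $\dist_{A_\Gamma}(\sigma(\lambda_1), \sigma(\lambda_2)) \leq r_2$ from $\delta_1, \delta_2$ and are $W$-adjacent of type 2.

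The main obstacle is pinning down the type of the original $W$-edge in each case; once identified, the construction of $\delta_1', \delta_2'$ is natural, since for both types the $W$-adjacency hinges on data that we leave unchanged (the $v$-fiber leaves in Case A, the $\sigma(\lambda_i)$-levels in Case B), while we use the remaining fiber edge to absorb $\Delta$.
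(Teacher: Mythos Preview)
Your proof is correct and essentially the same as the paper's; the only difference is that you organize the case split by the location of $v_1,v_2$ (same fibre versus distinct fibres in $p^{-1}(\link_Y(\overline\Delta))$) and then deduce the $W$-edge type, whereas the paper splits first on the $W$-edge type (Lemmas~\ref{prop: C=C0 W1} and~\ref{prop: C=C0 W2}) and then locates $v_1,v_2$. The substantive steps---identifying $v_1,v_2$ as the two varying leaves $\lambda_0,\lambda_1$ in the type~1 case, using the absence of triangles and squares in $Y$ to pin $\overline\Delta$ down to the shared middle vertex $u$ in the type~2 case, and then rebuilding the maximal simplices inside $\Star_X(\Delta)$ while keeping the edge-defining data ($\dist(\lambda_0,\lambda_1)\le r_1$, resp.\ $\dist(\sigma(\lambda_0),\sigma(\lambda_1))\le r_2$) unchanged---are identical to the paper's.
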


Since $W$-edges come in two types, we split Proposition~\ref{prop: C=C0} in two. 

\begin{lemma}[Type 1 fullness]
Let $\Delta$ be a non-maximal simplex of $X$, and let $v_1,v_2$ be two distinct non-adjacent vertices of $\link_X(\Delta)$. 
Suppose that there exist maximal simplices $\Delta_1, \Delta_2$ of $X$ such that $v_1 \in \Delta_1, v_2 \in \Delta_2$, and 
$\Delta_1, \Delta_2$ are connected by a $W$-edge of type $1$. Then $v_1, v_2$ are contained in $W$-adjacent maximal simplices 
of  $\Star_X(\Delta)$. 
	\label{prop: C=C0 W1}
\end{lemma}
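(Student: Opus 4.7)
The plan is to first pin down exactly what the hypotheses say about $v_1,v_2$, then analyse the possibilities for $\Delta$, and finally assemble the simplices $\Sigma_1,\Sigma_2$ by a direct combinatorial construction. The key input is that the $W$--edge of type $1$ between $\Delta_1,\Delta_2$ forces $v_1,v_2$ to be precisely the two ``swapped'' leaves, after which everything reduces to choosing the right leaf in an adjacent fibre.

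The first step is to identify $v_1,v_2$. By Definition~\ref{defn:W_edges}, $\Delta_1$ and $\Delta_2$ share three vertices $\{gN(H),hN(H'),\mu\}$ and differ only in leaves $\lambda_0\in\Delta_1,\lambda_1\in\Delta_2$ of $\Lambda_{gN(H)}$ with $\dist_{\Lambda_{gN(H)}}(\lambda_0,\lambda_1)\leq r_1$. If $v_1$ lay in $\Delta_1\cap\Delta_2$ (or symmetrically $v_2$), then both $v_1$ and $v_2$ would lie in $\Delta_2$, hence would be adjacent in $X$ as vertices of a common simplex, contradicting the non-adjacency hypothesis. Therefore $v_1=\lambda_0$ and $v_2=\lambda_1$, both leaves of the same fibre $X_{gN(H)}$ at distance at most $r_1$ in $\Lambda_{gN(H)}$.

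The second step is to constrain $\Delta$. Every vertex $w$ of $\Delta$ must be adjacent in $X$ to both leaves $v_1,v_2\in\Lambda_{gN(H)}$. By Definition~\ref{defn:blow_up_commute}, $w$ can be adjacent to a leaf of $X_{gN(H)}$ only if either $w=gN(H)$ (two leaves of the same fibre are not adjacent) or $w$ lies in a fibre $X_{v'}$ with $v'\sim_Y gN(H)$. If $\Delta$ had vertices in two distinct such fibres $X_{v'},X_{v''}$, then $v',v''$ would have to be $Y$--adjacent for $\Delta$ to be a simplex of $X$, which is ruled out by triangle-freeness of $Y$ (Lemma~\ref{lem:no square}). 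Hence all non-root vertices of $\Delta$ lie in a single fibre $X_{v'}$ with $v'\sim_Y gN(H)$, and $\Delta\cap X_{gN(H)}\subseteq\{gN(H)\}$.

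Finally, we construct the required simplices. If $\Delta\subseteq\{gN(H)\}$, take $v'=hN(H')$; otherwise take $v'$ as above. Choose a leaf $\nu'\in\Lambda_{v'}$ as follows: if $\Delta$ contains a leaf of $X_{v'}$, let $\nu'$ be that leaf; otherwise let $\nu'$ be any leaf of $\Lambda_{v'}$, which exists since $\Lambda_{v'}$ is quasi-isometric to $\integers$. Set
\[
\Sigma_i=\{gN(H),\,v_i,\,v',\,\nu'\}, \qquad i=1,2.
\]
By Remark~\ref{rem:max simplex} each $\Sigma_i$ is a maximal simplex of $X$. Since $v_i\notin\Delta$ (as $v_i\in\link_X(\Delta)$), the choice of $\nu'$ ensures $\Delta\subseteq\{gN(H),v',\nu'\}\subseteq\Sigma_i$, so $\Sigma_i\subseteq\Star_X(\Delta)$. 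The simplices $\Sigma_1,\Sigma_2$ share three vertices $\{gN(H),v',\nu'\}$ and differ only in the leaf $v_i\in\Lambda_{gN(H)}$, with $\dist_{\Lambda_{gN(H)}}(v_1,v_2)\leq r_1$; by Definition~\ref{defn:W_edges} they are $W$--adjacent via a type~$1$ edge, as required. I do not expect any serious obstacle in this argument: it is essentially bookkeeping, the only real subtlety being the use of Lemma~\ref{lem:no square} to confine the non-root part of $\Delta$ to a single fibre.
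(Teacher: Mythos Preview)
Your proof is correct and follows essentially the same route as the paper's. Both arguments first pin down $v_1,v_2$ as the two swapped leaves $\lambda_0,\lambda_1\in\Lambda_{gN(H)}$, then produce an edge in an adjacent fibre $X_{v'}$ lying in $\Star_X(\Delta)$, and finally form the two maximal simplices as the join of $\{gN(H),v_i\}$ with that edge. The only cosmetic difference is that the paper packages your Step~2 case analysis into a single appeal to the join decomposition (Lemma~\ref{lem:join decomposition}), whereas you unwind it directly using adjacency in $X$ and triangle-freeness of $Y$ (Lemma~\ref{lem:no square}); the content is the same.
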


\begin{proof}
		Recall that $W$-edges of type $1$ connect maximal simplices with the same projection in $Y$.  Let $e=\bar\Delta_1=\bar\Delta_2$ be the edge of $Y$ to which $\Delta_1,\Delta_2$ project, and denote the endpoints of $e$ by $u$ and $v$.  Then $\Delta_i=\Delta(f^i_u,f^i_v)$, where $f^i_u$ is an edge with an endpoint $\lambda^1_u\in \Lambda_u$, and similarly for $v$.
Also, we have say, $f^1_u=f^2_u$ and $\dist_{\Lambda_v}(\lambda^1_v,\lambda^2_v)\leq r_1$.

Since $v_1,v_2$ are not adjacent in $X$, we have $v_1=\lambda_v^1$ and $v_2=\lambda_v^2$, since those are the only non-common vertices of $\Delta_1$ and $\Delta_2$.  Since $v_1,v_2\in\link_X(\Delta)$, Lemma~\ref{lem:join decomposition} implies that there exist a vertex $u'$ of $Y$ adjacent to $v$  
and an edge $f_{u'}$ of $X_{u'}$ such that $f_{u'}$ is contained in $\Star_X(\Delta)$.  By Definition~\ref{defn:W_edges}, the maximal simplices $\Delta(f_v^1,f_{u'})$ and $\Delta(f_v^2,f_{u'})$ are 
joined by a type $1$ edge in $W$.  These simplices respectively contain $v_1,v_2$ and lie in the star of $\Delta$, as 
required.
\end{proof}

\begin{lemma}[Type 2 fullness]
	Let $\Delta$ be a non-maximal simplex of $X$, and let $v_1,v_2$ be two distinct non-adjacent vertices of 
$\link_X(\Delta)$. Suppose that there exist maximal simplices $\Delta_1, \Delta_2$ of $X$ such that $v_1 \in \Delta_1, v_2 
\in \Delta_2$, and $\Delta_1, \Delta_2$ are connected by a $W$-edge of type $2$. Then $v_1, v_2$ are contained in 
$W$-adjacent maximal simplices of  $\Star_X(\Delta)$. 
	\label{prop: C=C0 W2}
\end{lemma}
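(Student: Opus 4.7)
The plan is to show that $\Delta$ is forced to lie inside a single fibre of $p\colon X\to Y$ (namely, the fibre over the common $Y$-vertex of $p(\Delta_1)$ and $p(\Delta_2)$), and then to produce $\Sigma_1,\Sigma_2$ by replacing only the shared ``middle'' edge of $\Delta_1,\Delta_2$ with a common edge containing $\Delta$, leaving the outer data intact so that type~2 adjacency is preserved automatically.

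Using the notation of Definition~\ref{defn:W_edges}, I will write $\Delta_i=\Delta(\alpha_i,\beta_i)$ with $\alpha_i=\{u_i,\lambda_i\}$, $\lambda_i\in\Lambda_{u_i}$, and $\beta_1=\beta_2=\{v,\mu_0\}$, $\mu_0\in\Lambda_v$, where $u_i=g_iN(H_i)$ are distinct and $v=h_0N(H_0')$.  Any vertex of the shared edge $\beta_1$ is $X$-adjacent to every vertex of $\Delta_2$, so the non-adjacency of $v_1$ and $v_2$ immediately forces each $v_i\in\{u_i,\lambda_i\}$.

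The crux of the argument is then to show $\Delta\subset X_v$.  Every vertex $w\in\Delta$ is $X$-adjacent to both $v_1$ and $v_2$, and a vertex of $X$ is adjacent to $v_i$ only if it lies in $X_{u_i}$ or in a fibre over a $Y$-neighbour of $u_i$.  Since $u_1,u_2$ are not $Y$-adjacent --- being two $Y$-neighbours of $v$, they lie in the same part of the bipartition given by Lemma~\ref{lem:iota} --- these constraints rule out $w\in X_{u_1}\cup X_{u_2}$, so $p(w)$ must be a common $Y$-neighbour of $u_1$ and $u_2$.  By the no-squares conclusion of Lemma~\ref{lem:no square}, $v$ is the only such vertex, and hence $w\in X_v$.

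With $\Delta\subset X_v$ and $X_v=\{v\}\ast\Lambda_v$ a tree, $\Delta$ is either a vertex or an edge of $X_v$, so there is an edge $\beta'=\{v,\mu'\}$ of $X_v$ containing $\Delta$ as a face.  Setting $\Sigma_i:=\Delta(\alpha_i,\beta')$ then yields maximal simplices of $X$ by Remark~\ref{rem:max simplex}; each contains $\Delta\subset\beta'$, hence lies in $\Star_X(\Delta)$, and each contains $v_i\in\alpha_i$.  The pair $(\Sigma_1,\Sigma_2)$ is $W$-adjacent of type~2: the shared middle edge is $\beta'$, the outer cosets $u_1\neq u_2$ coincide with those of $\Delta_1,\Delta_2$, and the estimate $\dist_{A_\Gamma}(\sigma(\lambda_1),\sigma(\lambda_2))\leq r_2$ holds by the original type~2 adjacency of $(\Delta_1,\Delta_2)$, since $\lambda_1,\lambda_2$ are unchanged.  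The only nontrivial input is the fibre-localisation of $\Delta$, powered by the bipartite and square-free structure of $Y$; the rest is pure bookkeeping.
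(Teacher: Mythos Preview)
Your proof is correct and follows essentially the same approach as the paper's own argument: identify that $v_i\in\alpha_i$, localise $\Delta$ to the fibre $X_v$ over the shared $Y$-vertex using the bipartite and square-free structure of $Y$, extend $\Delta$ to an edge $\beta'$ of $X_v$, and set $\Sigma_i=\Delta(\alpha_i,\beta')$, observing that the type~2 adjacency condition depends only on the unchanged $\lambda_i$. The paper reaches $\overline{\Delta}=\{v\}$ by invoking Lemma~\ref{lem:join decomposition} together with Lemma~\ref{lem:no square}, while you argue it directly vertex-by-vertex via the bipartition and no-squares, but this is a purely cosmetic difference.
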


\begin{proof}
	Since $\Delta_1$ and $\Delta_2$ are connected by a $W$-edge of type 2, the edges $\overline{\Delta_1}$ and 
$\overline{\Delta_2}$ share exactly one vertex $u$ of $Y$. Let us denote by $u_1, u_2$ the  vertices of 
$\overline{\Delta_1}, 
\overline{\Delta_2}$, respectively, that differ from $u$. 

By the definition of $W$--edges of type 2, there is an edge $f_u$ of $X_u$ and edges $f_{u_1},f_{u_2}$ of $X_{u_1},X_{u_2}$ such that $\Delta_i=\Delta(f_{u_i},f_u)$, for $i=1,2$.  Letting $\lambda_i$ be the endpoint of $f_{u_i}$ which is not $u_i$,  we also have $\dist_{A_\Gamma}(\sigma(\lambda_1),\sigma(\lambda_2))\leq r_2$.

\medskip

\textbf{Candidate simplices:}  Now, since $v_1,v_2$ are not adjacent in $X$, we have $v_i\in f_{u_i}$ for $i=1,2$.  Since $Y$ does not contain triangles, by Lemma~\ref{lem:no square}, the vertices 
$u_1,u_2$ are at distance $2$ in $Y$.  Since $v_1,v_2\in\link_X(\Delta)$, we therefore have by Lemma~\ref{lem:join decomposition} that $\overline{\Delta}$ is a 
vertex.  Since, by Lemma~\ref{lem:no square}, $Y$ does not contain squares, it follows 
that $\overline{\Delta}=u$. 

Let $f_u'$ be an edge of $X_u$ containing $\Delta$.  Let $\delta_i=\Delta(f_u',f_{u_i})$ for $i=1,2$.  So, $\delta_1,\delta_2$ are maximal simplices that share the edge $f'_u$ of $X_u$ and project to 
$\bar\Delta_1,\bar\Delta_2$ respectively.  Note that $\delta_1$ and $\delta_2$ contain $f_u'$ and hence contain $\Delta$.  Thus $\delta_1,\delta_2\subset\Star_X(\Delta)$.

\medskip

\textbf{$W$--adjacency:} We are only left to observe that there is a $W$-edge of type 2 connecting $\delta_1$ and $\delta_2$, since the $\sigma$ sets associated to $\delta_i$ and $\Delta_i$ coincide. (That is, roughly, in the definition of edges of type $2$ the ``middle'' edge does not play a role.)
\end{proof}

Now the proposition is immediate:

\begin{proof}[Proof of Proposition~\ref{prop: C=C0}]
Combine Lemma~\ref{prop: C=C0 W1} and Lemma~\ref{prop: C=C0 W2}.
\end{proof}

\subsection{Maps between augmented complexes}

Recall that the augmented complex $X^{+W}$ was defined in Definition~\ref{def:aug_X}.  
We will also use a 
corresponding construction in $Y$:

\begin{defn}
	We define the \textbf{augmented graph} $Y^{+W}$ which is obtained from $Y$ by adding a $W$-edge between two vertices 
$v, v'$ of $Y$ whenever their fibres $X_v, X_{v'}$ contain vertices that are connected by a $W$-edge.
In that case, we say that $v, v'$ are \textbf{connected by a $W$-edge}.
	Similarly, for a full subgraph $Y_0$ of $Y$, we denote by $Y_0^{+W} $  the full subgraph of $Y^{+W}$ induced by 
$Y_0$.
\end{defn}

\begin{rem}\label{rem:2-Lipschitz}
Note that vertices of $Y$ connected by a $W$-edge lie at distance at most $2$ from each other, by definition of the $W$-edges.
\end{rem}

The simplicial map $p$ mentioned below is defined in Definition \ref{def:proj_blowup}, while $\iota$ is defined in  Lemma \ref{lem:iota_vertex} and Definition \ref{def:iota}. Recall from Lemma \ref{lem:iota QI} that $\iota$ is a simplicial quasi-isometry, and recall also that $q$ is just the composition of $p$ and $\iota$.

\begin{defn}\label{defn:augmented_maps}
	The simplicial maps 
	$$p: X \rightarrow Y, ~~~ q: X \rightarrow \widehat{D}, ~~~ \iota: Y \rightarrow \widehat{D} $$
	extend to maps that we still denote by 
	$$p: X^{+W} \rightarrow Y^{+W}, ~~~ q: X^{+W} \rightarrow \widehat{D}, ~~~ \iota: Y^{+W} \rightarrow \widehat{D}.  $$
	The extended map $p: X^{+W} \rightarrow Y^{+W}$ is still simplicial, while the other two can be taken to $2$-Lipschitz by Remark \ref{rem:2-Lipschitz}

Again by Remark \ref{rem:2-Lipschitz} the inclusion $Y\hookrightarrow Y^{+W}$ is an $A_\Gamma$--equivariant quasi-isometry, so 
$\iota:Y^{+W}\to\widehat D$ is an $A_\Gamma$--equivariant quasi-isometry.  Similarly, $p:X^{+W}\to Y^{+W}$ is an 
$A_\Gamma$--equivariant quasi-isometry.  Hence, so is $q:X^{+W}\to\widehat D$.
\end{defn}

We will need the following preliminary lemma, in a similar spirit to Proposition \ref{prop:W_qi_to_A_Gamma}.

\begin{lemma}\label{lem:links_connected}
 For any sufficiently large $r_2$ the following holds. Let $v$ be a vertex of $Y$ and let $\Delta$ be an edge of $X$ contained in a fibre $X_v$. Then the augmented links $\link_Y(v)^{+W}$ and $\link_X(\Delta)^{+W}$ are connected, and $p$ restricts to a quasi-isometry of said augmented links.
\end{lemma}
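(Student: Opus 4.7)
The plan is to first identify $\link_X(\Delta)$ with the preimage $p^{-1}(\link_Y(v))$. Since $\Delta$ is an edge of the one-dimensional simplicial cone $X_v=\{v\}\ast\Lambda_v$, its internal link $\link_{X_v}(\Delta)$ is empty, so Lemma~\ref{lem:join decomposition} reduces the link to $p^{-1}(\link_Y(v))$. I would then show that $p:\link_X(\Delta)^{+W}\to\link_Y(v)^{+W}$ is a quasi-isometry provided both sides are connected. It is simplicial, hence $1$-Lipschitz, with each fibre $X_u$ (for $u\in\link_Y(v)$) of diameter at most $2$ in $X^{+W}$, and it is surjective on vertices. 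For the reverse Lipschitz bound, any $Y^{+W}$-edge between $u_0,u_1\in\link_Y(v)$ must be a $W$-edge, since $Y$ is triangle-free by Lemma~\ref{lem:no square}; by the definition of $Y^{+W}$, it is then witnessed by a $W$-edge in $X^{+W}$ between some vertices of $X_{u_0}$ and $X_{u_1}$. Combined with the fibre diameter bound, any $Y^{+W}$-step lifts to an $X^{+W}$-path of length at most $5$ lying entirely inside $\link_X(\Delta)^{+W}$.

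The main work is the connectedness of $\link_Y(v)^{+W}$; as a subgraph of $Y$ it consists of isolated vertices (again by Lemma~\ref{lem:no square}), so connectivity relies entirely on $W$-edges. I would transport the problem along the quasi-isometry $\iota:Y\to\widehat D$: the image $\iota(\link_Y(v))$ sits inside $\link_{\widehat D}(\iota(v))$, which is connected, by Lemma~\ref{lem:link_tree_cocompact} when $v$ is of tree type (the link being a standard tree) and by Corollary~\ref{cor: link coneoff} when $v$ is of dihedral type. Any two vertices of $\iota(\link_Y(v))$ are joined in $\link_{\widehat D}(\iota(v))$ by a path through auxiliary vertices, and consecutive vertices $u_0,u_1$ of $\iota(\link_Y(v))$ along such a path share a common $\widehat D$-neighbour. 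Such a common neighbour corresponds, under the explicit description of the (coned-off) Deligne complex, to a coset in $A_\Gamma$ that produces a point simultaneously in $u_0^{+r}\cap v^{+r}$ and in $u_1^{+r}\cap v^{+r}$. Via the equivariance and orbit-boundedness conditions \eqref{item:phi_equivariance},~\eqref{item:bounded_cosets} of the blow-up data, together with the $\mathbb Z^2$-cobounded action of Lemma~\ref{lem:intersection_of_cosets}, one then produces leaves $\lambda_i\in\Lambda_{u_i}$ with $\sigma(\lambda_0)\cap\sigma(\lambda_1)\neq\varnothing$. For all $r_2$ larger than a universal constant, this yields a type-$2$ $W$-edge between maximal simplices projecting to $\{v,u_0\}$ and $\{v,u_1\}$, hence the required $W$-edge in $\link_Y(v)^{+W}$ between $u_0$ and $u_1$.

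The hard part will be this lifting step: turning a common $\widehat D$-neighbour of two vertices of $\iota(\link_Y(v))$ into leaves of the quasilines $\Lambda_{u_0},\Lambda_{u_1}$ whose $\sigma$-neighbourhoods meet in $A_\Gamma$, using the fine properties of the blow-up data and the coboundedness coming from Lemma~\ref{lem:intersection_of_cosets}. Once this lifting is established for each consecutive pair along any path in $\link_{\widehat D}(\iota(v))$, connectivity of $\link_Y(v)^{+W}$ follows; combined with the quasi-isometry estimate of the first paragraph, this also gives connectivity of $\link_X(\Delta)^{+W}$ and completes the proof.
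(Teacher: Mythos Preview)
Your quasi-isometry argument for $p:\link_X(\Delta)^{+W}\to\link_Y(v)^{+W}$ is essentially the paper's (fibres of diameter $2$, simplicial surjection, lift $Y^{+W}$-edges via the definition of $W$-edges). The overall connectivity strategy---transport to $\link_{\widehat D}(\iota(v))$ and use its connectedness---is also the paper's starting point.

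Where you diverge is in how to turn $\widehat D$-paths into type-$2$ $W$-edges, and here there is a genuine gap. Your key step is that consecutive vertices of $\iota(\link_Y(v))$ along a path in $\link_{\widehat D}(\iota(v))$ share a common $\widehat D$-neighbour. This is fine when $v$ is of tree type (two dihedral vertices of the standard tree $T_a$ at distance $2$ share a cyclic coset), but it fails when $v$ is of dihedral type. In $\link_{\widehat D}(v_{ab})$, the vertices of $\iota(\link_Y(v))$ are the tree-type apices, and no two of these share a neighbour: each cyclic coset $g\langle a\rangle$ lies in a unique standard tree, and trivial cosets $g\{1\}$ are never adjacent to apices (Corollary~\ref{cor: link coneoff}). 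The shortest apex-to-apex path has the form $v_T - g\langle a\rangle - g\{1\} - g\langle b\rangle - v_{T'}$, of length $4$. Your common-neighbour argument therefore does not apply; one can salvage it by instead using the intermediate element $g$ (from the trivial coset) to produce a point in $u_0^{+r}\cap u_1^{+r}$, but this is not what you wrote.

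The paper avoids all of this case analysis. It uses cocompactness of $\mathrm{Stab}(\iota(v))$ on $\link_{\widehat D}(\iota(v))$ (Lemmas~\ref{lem:link_tree_cocompact} and~\ref{lem:link_dihedral_cocompact}) to choose a finite connected fundamental domain $K$, takes the finite set $S$ of tree/dihedral-type vertices in $K$, and then---since $\iota^{-1}(S)$ is finite---simply declares $r_2$ large enough that every pair in $\iota^{-1}(S)$ is joined by a type-$2$ $W$-edge. Translates of $K$ then connect any two vertices of $\link_Y(v)^{+W}$ by a chain of $W$-edges, with uniformity automatic from equivariance and finiteness of $\mathcal H$. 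This gives the choice of $r_2$ for free, without ever producing an explicit element of $A_\Gamma$ or invoking the blow-up data or Lemma~\ref{lem:intersection_of_cosets}.
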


\begin{proof} 
\textbf{Connectedness:} We start by showing that $\link_Y(v)^{+W}$ is connected. 

Set $v'\coloneqq \iota(v)$. By Lemmas~\ref{lem:link_tree_cocompact} and~\ref{lem:link_dihedral_cocompact} , the action of $\stabilizer(v')$ on $\link_{\widehat{D}}(v')$ is cocompact.  We choose a finite connected subcomplex $K$ of $\link_{\widehat{D}}(v')$ such that its $\stabilizer(v')$-orbits cover $\link_{\widehat{D}}(v')$. We can further assume that $K$ satisfies the following additional property: Whenever $K$ contains a vertex $w$ that is not of tree or dihedral type, then it also contains every vertex of tree or dihedral type adjacent to it. Indeed, vertices of $\widehat D$ that are not of dihedral or tree type are cosets of the form $g\{1\}$ or $g\langle a \rangle$, and they have only finitely many neighbours in $D$ that are vertices of dihedral type by construction of the modified Deligne complex. Moreover, the cone-off procedure adds no additional neighbour for vertices of the form $g\{1\}$ and add only one additional neighbour for vertices of the form $g\langle a \rangle$ (namely, the apex of the standard tree $gT_a$). Without loss of generality, we can thus assume that $K$ contains all such neighbours.

Let $S$ be the set of vertices of $K$ that are of dihedral or tree type. The set $\iota^{-1}(S)$ forms a finite set of vertices of $Y$ that are adjacent to $v$ by Lemma~\ref{lem:iota}. Let us choose an edge $e$ of $X_v$. For each $w \in \iota^{-1}(S)$, we choose a maximal simplex $\Delta_w$ of $X$ above the edge $vw$ of $Y$ and whose fibre over $v$ is the edge $e$. Since the action of $A_\Gamma$ on $Y$ is cocompact by Lemma~\ref{lem:cocompact_Y}, we can now require the constant $r_2$ in the definition of $W$-edges (see Definition~\ref{defn:W_edges}) to be large enough so that for every distinct $w, w'$ in $\iota^{-1}(S)$, we have that $\Delta_w$ and $\Delta_{w'}$ are connected by a $W$-edge of type $2$. In particular, all pairs of vertices of $\iota^{-1}(S)$ are connected by an edge of $Y^{+W}$. 

Let us now show that $\link_Y(v)^{+W}$ is connected. Indeed, if $x, x'$ are two vertices of $\link_Y(v)^{+W}$, then by definition of the subcomplex $K$ we can find a finite sequence of elements $g_1, \ldots, g_n \in A_\Gamma$ such that $\iota(x) \in g_1K$, $\iota(x')\in g_nK$ and for every $1\leq i <n$, $g_iK \cap g_{i+1}K \neq \varnothing$. By the additional property imposed on $K$, it follows that each such $g_iK \cap g_{i+1}K $ contains a vertex $v_i'$ of $\widehat{D}$ that is either of tree or dihedral type. By Lemma~\ref{lem:iota}, we can define $v_i \coloneqq \iota^{-1}(v_i')$. We thus have a sequence of vertices $v_0\coloneqq x, v_1, \ldots, v_{n-1}, v_n\coloneqq x'$ and by construction, for every $0\leq i <n$ we have that $g_{i+1}^{-1}v_i$ and $g_{i+1}^{-1}v_{i+1}$ are in $\iota^{-1}(S)$, hence  $v_i$ and $v_{i+1}$ are either equal or connected by an edge of $Y^{+W}$.   This show that $\link_Y(v)^{+W}$ is connected.

Let us now show that $\link_X(\Delta)^{+W}$ is connected. Since $\Delta$ is an edge contained in the fibre $X_v$, it follows from Lemma~\ref{lem:join decomposition} that $\link_X(\Delta)$ is the disjoint union of all the fibres $X_w$ for $w\in \link_Y(v)$. For every vertex $w$ of $\link_Y(v)$, the fibre $X_w$ is connected by construction. Since in addition $\link_Y(v)^{+W}$ is connected by the above, it is now straightforward to check that $\link_X(\Delta)^{+W}$ is also connected.

\medskip

\textbf{Quasi-isometry:} 	By construction of $Y^{+W}$, for every $e$ edge of $\link_Y(v)^{+W}$, there exists an edge $e'$ in $\link_X(\Delta)^{+W}$ such that $p(e')=e$. Thus, we can construct a set-theoretic section $\overline{p}: \link_Y(v)^{+W} \rightarrow \link_X(\Delta)^{+W}$ by choosing for every point  $x\in \link_Y(v)^{+W}$ a point in $p^{-1}(x)$. Let us show that  $\overline{p}$ is a quasi-inverse of $p$. 

We have $p\circ \overline{p} = Id$ by construction. Since all fibres of the form $X_w$ are connected of diameter $2$ by construction, it follows that for every vertex $w \in \link_Y(v)$ and every point $x \in X_w$, we have $\dist(x, \overline{p}\circ p(x))\leq 2$. Moreover,  for every other point $x \in \link_X(\Delta)^{+W}$, $x$ is thus contained in an edge with endpoints in two distinct fibres $X_w$ and $X_{w'}$, so we have that $\overline{p}\circ p(x)$ is contained in a possibly different edge between $X_w$ and $X_{w'}$. It follows that $$\dist(x, \overline{p}\circ p(x))\leq  2 + \mbox{diam}(X_w)\leq 4.$$ 
	
	Moreover, both $p$ and $\overline{p}$ are coarsely Lipschitz. That $p$ is coarsely Lipschitz follows from the fact that it sends an edge of $\link_X(\Delta)^{+W}$  to an edge or a vertex of $\link_Y(v)^{+W}$. A similar argument as above shows that if $x, y$ are two points belonging to some edge of $Y$, then $\dist(\overline{p}(x), \overline{p}(y))\leq 4$. Thus, $\overline{p}$ is coarsely Lipschitz, and it then follows that $p$ is a quasi-isometry.
\end{proof}

The following lemma is crucial as it allows us to compare links in $X$ (which are the ones we are interested in) to links in $\widehat D$ (which are the ones we understand).

\begin{lemma}\label{lem: QI link}
	For any sufficiently large $r_1,r_2$ the following holds for $W=W_{r_1,r_2}$: Let $v$ be a vertex of $Y$. Then the quasi-isometry $$\iota: Y^{+W} \rightarrow 
\widehat{D}$$ induces a 
quasi-isometry between $\link_Y(v)^{+W}$ and $\link_{\widehat{D}}(\iota(v))$. 

 Moreover, let $\Delta$ be an edge of $X$ contained in a fibre $X_v$, for some vertex $v$ of $Y$.  Then the quasi-isometry $$q: X^{+W} \rightarrow \widehat{D}$$ restricts to a 
quasi-isometry  $q: (\link_X(\Delta)^{+W})^{(0)}\rightarrow \link_{\widehat{D}}(\iota(v))$. 
\end{lemma}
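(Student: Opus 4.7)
The plan is to derive the second assertion from the first via $q = \iota \circ p$: Lemma~\ref{lem:links_connected} provides (for $r_2$ sufficiently large) a quasi-isometry $p : \link_X(\Delta)^{+W} \to \link_Y(v)^{+W}$, so composing with a quasi-isometry $\iota : \link_Y(v)^{+W} \to \link_{\widehat{D}}(\iota(v))$ and restricting to $0$-skeleta yields the second claim. I therefore focus on the first statement.

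Since $\iota$ carries $Y$-edges to $\widehat{D}$-edges (Lemma~\ref{lem:iota}), it restricts to a map $\link_Y(v)^{(0)} \to \link_{\widehat{D}}(\iota(v))^{(0)}$, and I first check that this restriction has $1$-dense image. When $v = gu_a$ is of tree type, $\link_{\widehat{D}}(\iota(v)) = gT_a$ by Lemma~\ref{lem:link_tree_cocompact}, and $\iota$ hits precisely the dihedral-type vertices of $gT_a$; every remaining (cyclic-type) vertex is adjacent in $gT_a$ to a dihedral one. When $v = gu_{ab}$ is of dihedral type, Corollary~\ref{cor: link coneoff} shows $\iota$ hits exactly the tree-type apex vertices of the link, and every other vertex is joined to some such apex by an edge introduced by the cone-off.

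To promote this density to a quasi-isometry, I exploit two structural inputs: $\mathrm{Stab}(v) = \mathrm{Stab}(\iota(v))$ (by Lemmas~\ref{lem:stab_tree} and~\ref{lem:stab_dihedral}), and this common stabiliser acts cocompactly both on $\link_{\widehat{D}}(\iota(v))$ (Lemma~\ref{lem:link_tree_cocompact} and Corollary~\ref{cor: link coneoff}) and on $\link_Y(v)$ (via Corollary~\ref{cor:Y_orbits}). I build an equivariant quasi-inverse $\bar\iota$ by sending each tree- or dihedral-type vertex $w$ to $\iota^{-1}(w)$, and each ``extra'' vertex $w$ to $\iota^{-1}$ of an equivariantly chosen adjacent tree/dihedral-type neighbor; this is well-defined because the stabiliser of any such extra vertex in $\mathrm{Stab}(v)$ fixes the chosen neighbor (e.g., $\langle a \rangle$ fixes $T_a$ pointwise in the tree case). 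Both $\iota\circ\bar\iota$ and $\bar\iota\circ\iota$ are then within bounded distance of the identity. For $r_2$ large enough, Lemma~\ref{lem:links_connected} ensures $\link_Y(v)^{+W}$ is connected, so the $\link_Y(v)^{+W}$-distance between $\bar\iota$-images of any adjacent pair in $\link_{\widehat{D}}(\iota(v))$ is finite; by equivariance and the finite number of $\mathrm{Stab}(v)$-orbits of such pairs, this distance is uniformly bounded, so $\bar\iota$ is coarsely Lipschitz. A symmetric orbit argument on $\iota$ (using finitely many $\mathrm{Stab}(v)$-orbits of $W$-edges in $\link_Y(v)^{+W}$) together with the bounded-distance identities yields the quasi-isometry; uniformity of the constants across $v$ follows from Lemma~\ref{lem:cocompact_Y}.

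The main technical obstacle is arranging $\bar\iota$ coherently on the ``extra'' vertices: one has to check that the stabiliser in $\mathrm{Stab}(v)$ of each such vertex is contained in the stabiliser of a suitable adjacent tree/dihedral-type neighbor, which rests on the explicit descriptions of $\link_{\widehat{D}}(\iota(v))$ in both types and ensures that the required equivariant selection exists.
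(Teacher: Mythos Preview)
Your approach is essentially the paper's: both build an equivariant coarse inverse $\bar\iota$ on $\link_{\widehat D}(\iota(v))$ by assigning to each ``extra'' vertex a tree/dihedral-type neighbour, use cocompactness of the $\mathrm{Stab}(v)$-action on $\link_{\widehat D}(\iota(v))$ (finitely many orbits of edges) together with connectedness of $\link_Y(v)^{+W}$ to make $\bar\iota$ coarsely Lipschitz, and derive the ``moreover'' clause from $q=\iota\circ p$ and Lemma~\ref{lem:links_connected}.

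There is one genuine gap. To conclude that $\iota$ is coarsely Lipschitz on $\link_Y(v)^{+W}$ you invoke ``finitely many $\mathrm{Stab}(v)$-orbits of $W$-edges in $\link_Y(v)^{+W}$'' as if it were symmetric to the $\bar\iota$ step. It is not: cofiniteness of $\mathrm{Stab}(v)$ on the \emph{vertices} of $\link_Y(v)$ (which you have via Lemma~\ref{lem:I_finite}) does not yield cofiniteness on $W$-edges, because $\link_Y(v)^{+W}$ is typically not locally finite. The paper proves this separately: fixing one endpoint $z=h_1N(H_1)$, a type-2 $W$-edge to $z'=h_2N(H_2)$ forces
\[
\dist_{A_\Gamma}\big(N(H)^{+r}\cap h_1N(H_1)^{+r},\ N(H)^{+r}\cap h_2N(H_2)^{+r}\big)\leq r_2,
\]
and then the cobounded action of $\mathrm{Stab}(v)\cap\mathrm{Stab}(z)$ on $N(H)^{+r}\cap h_1N(H_1)^{+r}$ from Lemma~\ref{lem:intersection_of_cosets} bounds the number of orbits of possible $z'$. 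Without this argument you do not obtain the upper Lipschitz bound for $\iota$ on the augmented link, and the quasi-isometry does not follow from the properties of $\bar\iota$ alone.
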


\begin{proof}
Using the actions of $A_\Gamma$, we can and will assume that $v=N(H)$ for some $H\in\mathcal H$. We first notice that the map $\iota$ induces a $\mathrm{Stab}(v)$--equivariant bijection between the vertices of  $\link_Y(v)^{+W}$ and the vertices of 
$\link_{\widehat{D}}(\iota(v))$ of tree or dihedral type. Indeed, this follows from Lemma~\ref{lem:iota}, the fact that the bijection between tree-type/dihedral-type sets of vertices of $Y$ and $\widehat{D}$ stated in Lemma~\ref{lem:iota} passes to links is a direct consequence of the characterisation of edges of $Y$ therein.  

 We remark that $\link_{\widehat{D}}(\iota(v))$ is connected, since it is either a tree when $v$ is of tree type or we can use Remark \ref{rem:graph_of_orbits} and Corollary \ref{cor: link coneoff}. We can thus extend the restriction $\iota: \link_Y(v)\rightarrow \link_{\widehat{D}}(\iota(v))$ into a map $\iota: \link_Y(v)^{+W}\rightarrow \link_{\widehat{D}}(\iota(v))$ by sending a $W$-edge between two distinct vertices  $w, w' \in \link_Y(v)$ to a geodesic path between $\iota(w)$ and $\iota(w')$. Let us show that this extension is coarsely Lipschitz.

Since $\iota$ is $\mathrm{Stab}(v)$-equivariant, it suffices to show that there are finitely many $\mathrm{Stab}(v)$-orbits of $W$-edges in $\link_Y(v)^{+W}$ (note that a $G$-equivariant map between graphs $\Gamma_1\to\Gamma_2$ is coarsely Lipschitz provided that $\Gamma_2$ is connected and there are finitely many $G$-orbits of edges in $\Gamma_1$). Since there are finitely many $\mathrm{Stab}(v)$-orbits of vertices in $\link_Y(v)$ (Lemma \ref{lem:I_finite}), it suffices to fix a vertex $z=h_1N(H_1)\in \link_Y(v)$ and show that there are finitely many $(\mathrm{Stab}(v)\cap \mathrm{Stab}(z))$-orbits of $W$-edges in $\link_Y(v)^{+W}$ containing $z$. Consider some vertex $z'=h_2N(H_2)\in \link_Y(v)$ connected by a $W$-edge to $z$, and notice that the $W$-edge needs to be of type 2 (since $W$-edges of type 1 connect maximal simplices that project to the same edge in $Y$). By definition of $W$-edges, we must have
$$\dist_{A_\Gamma}(N(H)^{+r}\cap h_1N(H_1)^{+r},N(H)^{+r}\cap h_2N(H_2)^{+r})\leq r_2.\ \ \ (*)$$
Since $K=\mathrm{Stab}(v)\cap \mathrm{Stab}(z)=N(H)\cap h_1N(H_1)h_1^{-1}$ acts coboundedly on $N(H)^{+r}\cap h_1N(H_1)^{+r}$ (Lemma \ref{lem:intersection_of_cosets}) and $\mathcal H$ is finite, there are finitely many $K$-orbits of cosets $h_2N(H_2)$ satisfying $(*)$, there must be finitely many $K$-orbits of vertices $z'$ as required.

Let us now construct a coarsely Lipschitz quasi-inverse. By Lemma~\ref{lem:link_dihedral_cocompact} or Lemma~\ref{lem:link_tree_cocompact}, the action of $\stabilizer(\iota(v))$ on $\link_{\widehat D}(\iota(v))$ has finitely many orbits of edges and $\link_Y(v)$ is connected by Lemma \ref{lem:links_connected}, and as above this suffices. 

It thus follows that $\iota: \link_Y(v)^{+W}\rightarrow \link_{\widehat{D}}(\iota(v))$ is a quasi-isometry.

Regarding the moreover clause, in view of Lemma \ref{lem:links_connected}, $q=\iota\circ p$ restricts to a composition of quasi-isometries of the relevant links.  
\end{proof}

In this section we will often make the following abuse of notation: 

\begin{conv}Let $A$ be a subcomplex of a simplicial complex $B$. Then by $B-A$ we mean the full subcomplex of $B$ with vertex set $B^{(0)}-A^{(0)}$. Note for example that with this abuse we have $\big(X-\sat_X(\Delta)\big)^{+W}=\big(X^{(0)}-\sat_X(\Delta)\big)^{+W}$.
\end{conv}

We also mention the following related result that will be needed in the Section~\ref{subsec:QIembedded}:

\begin{lemma}\label{lem:coarseLipSat}
	For any sufficiently large $r_1,r_2$ the following holds. Let $\Delta$ be an edge of $X$ contained in a fibre $X_v$, for some vertex $v$ of $Y$. 
		If  the presentation graph $\Gamma$ is a single edge, we have $X-\sat_X(\Delta) = \link_X(\Delta).$
	Otherwise, $X-\sat_X(\Delta) $ is the full subcomplex of $X$ spanned by $X-X_v$, and the  restriction $$q: X-\sat_X(\Delta)\rightarrow \widehat{D} - \iota(v)$$ is well-defined and extends to a coarsely Lipschitz map $$\overline{q}: (X-\sat_X(\Delta))^{+W}\rightarrow \widehat{D} - \iota(v).$$ 
\end{lemma}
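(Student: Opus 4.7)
The plan is to first identify $\sat_X(\Delta)$ explicitly, then split on whether $\Gamma$ is a single edge, and finally verify the coarse-Lipschitz property. Since $\Delta$ is an edge inside the simplicial cone $X_v$, Lemma~\ref{lem:join decomposition} gives $\link_X(\Delta) = p^{-1}(\link_Y(v))$ (the contribution from $\link_{X_v}(\Delta)$ is empty, as any edge of $X_v$ is maximal in the cone). Going through the classification of simplices in the proof of Lemma~\ref{lem:index_set}, the only simplices $\Sigma$ whose link takes the form $p^{-1}(\link_Y(v'))$ for some vertex $v'$ of $Y$ are edges of fibres $X_{v'}$, and one thus has $\sat_X(\Delta)=\bigcup_{v':\link_Y(v')=\link_Y(v)} X_{v'}^{(0)}$.

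Next the plan is to do the case split. If $\Gamma=\{a,b\}$ is a single edge, then $Y$ is a star at the unique dihedral vertex $u_{ab}$; checking the two subcases ($v=u_{ab}$ versus $v$ tree-type), the saturation is either $X_v^{(0)}$ or the union of all tree-type fibres, and in both cases the star shape forces $X-\sat_X(\Delta)=\link_X(\Delta)$. In the ``otherwise'' case, the claim is that $v$ is the unique $Y$-vertex with link $\link_Y(v)$; the main tools are Corollary~\ref{cor:intersect_trees} (two distinct standard trees intersect in at most one vertex) together with the bipartite structure of $Y$ from Lemma~\ref{lem:iota}, using that every dihedral vertex of $\widehat D$ lies in at least two standard trees whose apices correspond to distinct $Y$-vertices (via Garside-element conjugation when $m_{ab}$ is odd) and that a standard tree of $D$ is determined by its set of dihedral vertices when it contains at least two of them. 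This yields $\sat_X(\Delta) = X_v^{(0)}$, so $X-\sat_X(\Delta)$ is the full subcomplex of $X$ spanned by $X-X_v$. Well-definedness of $q$ is then immediate: for any vertex $x\notin X_v^{(0)}$, $p(x)\neq v$, and injectivity of $\iota$ on vertices (Lemma~\ref{lem:iota}) forces $q(x)\neq \iota(v)$; since $q$ is simplicial, no simplex of $X-\sat_X(\Delta)$ has $\iota(v)$ among the vertices of its image.

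For the coarsely Lipschitz extension, the plan is to use that $q:X^{+W}\to\widehat D$ is $2$-Lipschitz (Definition~\ref{defn:augmented_maps}), so $W$-adjacent vertices in $(X-\sat_X(\Delta))^{+W}$ have $q$-images at $\widehat D$-distance at most~$2$; the task is to upgrade this to a bound in $\widehat D-\iota(v)$ via a case analysis on the $W$-edge type. For a $W$-edge of type~$1$ the projection $\bar\Delta_1=\bar\Delta_2$ is a single edge of $Y$, and either this edge avoids $v$ (in which case the bound transfers directly) or both endpoints project to the same non-$v$ vertex (so the $q$-images coincide). For a $W$-edge of type~$2$ one reduces to the subcase where the shared $Y$-vertex is $v$; there the $q$-images both lie in $\link_{\widehat D}(\iota(v))$, and one converts the type-$2$ $W$-adjacency condition into a uniform distance bound via Lemma~\ref{lem:close_levels} (translating $W$-closeness into $A_\Gamma$-closeness) together with cocompactness of $\mathrm{Stab}(\iota(v))$ on its link (Lemma~\ref{lem:link_tree_cocompact} or Corollary~\ref{cor: link coneoff}); in the tree-type case, convexity of $T_v$ in the punctured complex from Lemma~\ref{lem:tree_convex_coneoff} identifies link-distance with distance in $\widehat D-\iota(v)$ along $T_v$. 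The hard part will be this type-$2$-shared-vertex subcase when $v$ is of dihedral type, since $\widehat D-\iota(v)$ then loses its CAT$(-1)$ structure and one must leverage the combinatorial description of the link in Corollary~\ref{cor: link coneoff} together with the cocompact stabiliser action directly to control detours around $\iota(v)$.
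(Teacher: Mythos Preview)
Your overall shape is right, but there are two places where the paper's proof is cleaner and your argument has gaps.

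\textbf{Uniqueness of $v$ in the general case.} Your approach via Deligne-complex geometry (``a standard tree is determined by its set of dihedral vertices when it contains at least two of them'') explicitly excludes the case where the tree contains exactly one dihedral vertex, and you never return to it. This case genuinely occurs: when $a$ is a leaf of $\Gamma$ whose incident edge has even label, the tree $T_a$ is a star with a single dihedral vertex $v_{ab}$, so $\link_Y(u_a)$ is a singleton. The paper instead uses Lemma~\ref{lem:no square} (no squares in $Y$): if $\link_Y(v)=\link_Y(v')$ with $v\neq v'$, then the common link must be a single vertex; since dihedral vertices of $Y$ have at least two tree-type neighbours, both $v,v'$ are tree-type, and one then argues that both correspond to leaves of $\Gamma$ which are moreover adjacent, forcing $\Gamma$ to be a single edge. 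This argument is short and complete; yours would need a separate treatment of the singleton-link case to close the gap.

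\textbf{The coarsely Lipschitz extension.} You anticipate a ``hard part'' in the dihedral case and propose to handle it via Lemma~\ref{lem:close_levels}, cocompactness, and the combinatorial description of the link. This is unnecessary: the paper simply invokes Lemma~\ref{lem: QI link}, which was proved immediately before and states that $\iota$ restricts to a quasi-isometry $\link_Y(v)^{+W}\to\link_{\widehat D}(\iota(v))$. If the midpoint $w''$ of the type-2 configuration equals $v$, then $w,w'$ are adjacent in $\link_Y(v)^{+W}$, so their $\iota$-images are at uniformly bounded distance in $\link_{\widehat D}(\iota(v))\subset\widehat D-\iota(v)$. This handles the tree-type and dihedral cases simultaneously with no further work. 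Your route via Lemma~\ref{lem:close_levels} would essentially re-prove Lemma~\ref{lem: QI link} inside this argument; there is no separate hard case.
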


\begin{proof}
\textbf{Case where $\Gamma$ is a single edge:}	Let us first consider the case where $\Gamma$ is a single edge between the two standard generators $a, b$. Then by construction, $Y$ contains a unique vertex of dihedral type, namely $u_{ab}$. 
	
	Let us first assume that $\Delta$ is contained in the fibre of a vertex of tree type. Since $Y$ is bipartite with respect to the vertex type (tree or dihedral) by Lemma~\ref{lem:iota}, it follows that for every vertex $u$ of $Y$ of tree type, its link consists of the single vertex $u_{ab}$. It now follows from Lemma~\ref{lem:join decomposition} that the link of any edge contained in the fibre of a vertex of tree type is equal to the fibre $X_{u_{ab}}$. Moreover, no vertex of $X_{u_{ab}}$ has $X_{u_{ab}}$ in its link, so we get that  $$X-\sat_X(\Delta) = X_{u_{ab}}= \link_X(\Delta).$$
	
	Let us now assume that $\Delta$ is contained in the fibre $X_{u_{ab}}$. It follows from Lemma~\ref{lem:join decomposition} that the link of any edge of $X_{u_{ab}}$ is exactly $X-X_{u_{ab}}$. It also follows from the above discussion that no other simplex of $X$ has link $X-X_{u_{ab}}$, hence  
	$$X-\sat_X(\Delta) = X-X_{u_{ab}}= \link_X(\Delta).$$

\textbf{General case:} Let us now consider the case where $\Gamma$ is not a single edge. Let us first show that no other vertex of  $Y$ has the same link as $v$. Assume by contradiction that there exists another such vertex $v'$. Since $Y$ does not contain squares by Lemma~\ref{lem:no square}, we have that $\link_Y(v) = \link_Y(v')$ is a single vertex. Since a vertex of $Y$ of dihedral type $gv_{ab}$ contains at least two vertices in its link, namely $gv_a$ and $gv_b$ by Lemma~\ref{lem:iota}, it follows that $v$ and $v'$ are of tree type, say $v=gv_a$ and $v'=hv_b$. 
This now forces $a$ and $b$ to be leaves of $\Gamma$: Indeed, if $a$ had at least two neighbours $c, d$ in $\Gamma$, then $\link_Y(gv_a)$ would contain the two distinct vertices $gv_{ac}$ and $gv_{ad}$. Thus, $a$ and $b$ are leaves of $\Gamma$. Moreover, since $gv_a$ and $hv_{b}$ have a common neighbour that is of dihedral type, this implies that this common neighbour is of the form $kv_{ab}$, and in particular $a$ and $b$ are adjacent in $\Gamma$. Since $\Gamma$ is connected and $a$ and $b$ are two adjacent leaves of $\Gamma$, it follows that $\Gamma$ is a single edge, a contradiction. Thus, no other vertex of $Y$ has the same link as $v$.

By Lemma~\ref{lem:join decomposition}, the link of $\Delta$ is the disjoint union of all the fibres of the form $X_w$ for $w$ a vertex of $ \link_Y(v)$. Moreover, since distinct vertices of $Y$ do not have the same link by the above, a simplex of $X$ has the same link as $\Delta$ if and only if it is an edge of $X_{v}$. Thus,  $X-\sat_X(\Delta)$ is the full subcomplex of $X$ obtained by 
removing the fibre  $X_{v}$. 

It follows from the above description of $X-\sat_X(\Delta)$ that the restriction map
$$q: X-\sat_X(\Delta)\rightarrow \widehat{D} - \iota(v)$$
is well-defined (recall that $q$ is simplicial on $X$, so it is enough to check that the restriction is well-defined on the 0-skeleton).

Since $\widehat{D} - \iota(v)$ is connected (since the link of $\iota(v)$ is connected as explained in the proof of Lemma \ref{lem: QI link}), we extend this map to a map 
$$\overline{q}: (X-\sat_X(\Delta))^{+W}\rightarrow \widehat{D} - \iota(v)$$ 
by sending edges to geodesics in $\widehat{D} - \iota(v)$.

To show that this map is coarsely Lipschitz, it is enough to find a uniform constant $C$ such that for two distinct vertices $w, w'$ of $Y-v$ connected by a $W$-edge, we have $\dist_{\widehat{D} - \iota(v)}(\iota(w), \iota(w'))\leq C$. Note that since $w$ and $w'$ are distinct, this $W$-edge must be of type $2$, and in particular $w$ and $w'$ are at distance at most $2$ in $Y$. Since $Y$ does not contain triangles and squares by Lemma~\ref{lem:no square}, this distance is exactly $2$ and we can consider the midpoint $w''$. 

If $w'' \in Y-v$, then $\iota(w'')\in \widehat{D}-\iota(v)$ since $\iota$ is injective by Lemma \ref{lem:iota}, and we thus have $\dist_{\widehat{D} - \iota(v)}(\iota(w), \iota(w'))\leq 2$.

 Otherwise, $w''=v$ and so $w, w' $ are adjacent vertices of $\link_Y(v)^{+W}$. 
It follows from Lemma~\ref{lem: QI link} that the maps $p: \link_X(\Delta)^{+W}\rightarrow  \link_Y(v)^{+W}$ and  $q: \link_X(\Delta)^{+W}\rightarrow \link_{\widehat{D}}(\iota(v))$ are quasi-isometries. Since $w, w' $ are adjacent vertices of $\link_Y(v)^{+W}$, it follows that there exists a constant $C$ (depending only on $A_\Gamma$) such that $\dist_{\link_{\widehat D}(\iota(v))}(\iota(w), \iota(w'))\leq C$, and hence $\dist_{\widehat{D} - \iota(v)}(\iota(w), \iota(w'))\leq C$. This shows that the map $\bar q: (X-\sat_X(\Delta))^{+W}\rightarrow \widehat{D} - \iota(v)$ is coarsely Lipschitz.
\end{proof}

We will later prove a similar statement for simplices $\Delta$ that are triangles of $X$, see Lemma~\ref{lem:coarseLipSatTriangle}.

\section{The geometry of the augmented complex}\label{sec:augment_geom}

In this section we finish the proof of Theorem \ref{thmintro:main}, by verifying that all conditions of Theorem \ref{thm:main criterion} apply to the pair $(X,W)$ where $X$ is the blown-up commutation graph (Definition \ref{defn:blow_up_commute}) and $W=W_{r_1,r_2}$ is described in Subsection \ref{subsec:augmented}. Up until Subsection \ref{subsec:assembling} (the final argument) we work under Convention \ref{conv:connected}, that is, we work with an Artin group of large and hyperbolic type $A_\Gamma$ with $\Gamma$ connected and not a single vertex. 

\subsection{Hyperbolicity of the augmented links}
The goal of this section is to prove the following proposition, verifying part of condition~\eqref{item:hyperbolic_links} from Definition~\ref{defn:combinatorial_HHS} for the candidate combinatorial 
HHS $(X,W)$:

\begin{prop}
	For all sufficiently large $r_1,r_2$ the following holds. The augmented links $\link_X(\Delta)^{+W}$ of all non-maximal simplices  $\Delta$ of $X$ are connected and 
hyperbolic.
	\label{prop:hyp_link}
\end{prop}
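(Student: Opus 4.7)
The plan is to proceed by a case analysis on the non-maximal simplex $\Delta$, using the join decomposition of $\link_X(\Delta)$ from Lemma~\ref{lem:join decomposition} together with the enumeration of simplex types in the proof of Lemma~\ref{lem:index_set}. First I would observe that in most cases $\link_X(\Delta)$ decomposes as a simplicial join $A \ast B$ in which both factors are nonempty, giving graph-metric diameter at most $2$; passing to $\link_X(\Delta)^{+W}$ only adds edges, so such augmented links are bounded, hence trivially connected and hyperbolic. A quick inspection of the nine simplex types shows that the cases not handled by this observation are exactly: (i) $\Delta=\varnothing$, where $\link_X(\Delta)=X$; (ii) $\Delta$ is an edge contained in a fibre $X_v$, where $\link_X(\Delta)=p^{-1}(\link_Y(v))$; and (iii) $\Delta$ is a triangle with two roots and one leaf, where $\link_X(\Delta)$ is a single $\Lambda_{hN(H')}$.

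For case (iii), the subcomplex of $X^{+W}$ supported on $\Lambda_{hN(H')}$ is, by the definition of $W$--edges of type $1$ (Definition~\ref{defn:W_edges}), quasi-isometric to the quasiline $\Lambda_{hN(H')}$ from the blow-up data, which is quasi-isometric to $\mathbb Z$ by Definition~\ref{defn:blow_up_data}, so it is connected and hyperbolic. Case (i) follows immediately from Definition~\ref{defn:augmented_maps}: the map $q: X^{+W} \to \widehat{D}$ is an $A_\Gamma$--equivariant quasi-isometry, and $\widehat{D}$ is CAT($-1$) by Convention~\ref{conv:metric_coneoff}, hence Gromov hyperbolic and connected.

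Case (ii) is the heart of the argument. Lemma~\ref{lem:links_connected} supplies connectedness of $\link_X(\Delta)^{+W}$, and Lemma~\ref{lem: QI link} provides an $A_\Gamma$--equivariant quasi-isometry $\link_X(\Delta)^{+W} \to \link_{\widehat{D}}(\iota(v))$, so it suffices to show that $\link_{\widehat{D}}(\iota(v))$ is Gromov hyperbolic. When $v$ is of tree type, Lemma~\ref{lem:link_tree_cocompact} identifies this link with the standard tree $T_a$, and trees are hyperbolic.

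The main obstacle will be the subcase where $\iota(v) = v_{ab}$ is of dihedral type. Here, Corollary~\ref{cor: link coneoff} describes $\link_{\widehat{D}}(v_{ab})$ as the graph of orbits $\link_D(v_{ab})\cong \mathrm{Orbit}_{a,b}(\mathrm{Cayley}_{a,b}(A_{ab}))$ with each $\langle \Delta_{ab}\rangle$--orbit of coset-vertices $g\langle a\rangle,\ g\langle b\rangle$ coned off. My plan is to show that this link is a quasi-tree. The key input is Lemma~\ref{lem:Garside orbit}: two coset-vertices lie in the same $\langle \Delta_{ab}\rangle$--orbit if and only if they correspond to edges in a common standard tree of $D$, so coning off these orbits collapses precisely the cosets belonging to a single simplex of the quasi-tree of simplices $\calT_{ab}$ from Definition~\ref{def:quasi_tree_simplices}. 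This should produce a graph equivariantly quasi-isometric to $\calT_{ab}$, which is a quasi-tree, hence hyperbolic. As a backup, one can exploit the CAT($1$) angular metric that $\link_{\widehat{D}}(v_{ab})$ inherits from the CAT($-1$) structure of $\widehat{D}$, combined with the cocompactness of the $A_{ab}$--action (Corollary~\ref{cor: link coneoff}) and the virtual splitting $A_{ab}\cong \langle z_{ab}\rangle\times K$ with $K$ free (Lemma~\ref{lem:virtually splits}), to argue hyperbolicity via the induced action of the virtually free quotient $A_{ab}/\langle z_{ab}\rangle$ on the link.
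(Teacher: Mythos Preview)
Your overall case analysis is correct and matches the paper's: Lemma~\ref{lem:join decomposition} forces the link to be a nontrivial join (hence bounded) except in the three cases you list, and your arguments for $\Delta=\varnothing$, for the triangle with two roots, and for the tree-type subcase of the fibre-edge are essentially the paper's Lemmas~\ref{lem:maximal_Deligne}, \ref{lem:quasiline}, and the first half of Lemma~\ref{lem:quasitree}.

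The gap is in the dihedral subcase. Your Plan~A misidentifies what gets coned off. A $\langle\Delta_{ab}\rangle$--orbit of a coset vertex $g\langle a\rangle$ under the right action is the \emph{infinite} set $\{g\Delta_{ab}^k\langle a\rangle: k\in\integers\}$ (or the alternating version when $m_{ab}$ is odd), whereas a simplex of $\calT_{ab}$ is the \emph{finite} set $\{g, ga, gab,\ldots\}$ of $m_{ab}$ consecutive vertices. These are transverse, not aligned: the orbit runs in the ``$\Delta_{ab}$--direction'' across infinitely many simplices of $\calT_{ab}$. In fact $\link_{\widehat D}(v_{ab})$ is \emph{not} quasi-isometric to $\calT_{ab}$: the paper shows it is quasi-isometric to $\cayley\bigl(A_{ab}/\langle z_{ab}\rangle;\ \langle a\rangle,\langle b\rangle\bigr)$, which is obtained from $A_{ab}/\langle z_{ab}\rangle$ (hence from $\calT_{ab}$) by \emph{further} coning off the images of the $\langle a\rangle$-- and $\langle b\rangle$--cosets. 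Your Plan~B is too vague to rescue this: the CAT($1$) structure on the link does not by itself give hyperbolicity, and $z_{ab}$ does not act trivially on $\link_{\widehat D}(v_{ab})$ (only on the cone points), so the passage to the quotient needs justification.

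The paper's route is: use cocompactness of the $A_{ab}$--action and the description of maximal point stabilisers (conjugates of $\langle a,z_{ab}\rangle$ and $\langle b,z_{ab}\rangle$) together with \cite[Theorem~5.1]{CharneyCrisp} to identify the link quasi-isometrically with $\cayley(A_{ab};\langle a,z_{ab}\rangle,\langle b,z_{ab}\rangle)$, pass to the central quotient to get $\cayley(A_{ab}/\langle z_{ab}\rangle;\langle a\rangle,\langle b\rangle)$, and then invoke \cite[Theorem~2.19]{Balasubramanya_quasitree} (cone-offs of quasiconvex subgroups of a virtually free group are quasi-trees). You should replace your Plan~A with this argument.
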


Before the proof, we assemble the ingredients.

Let $\Delta$ be a simplex of $\Delta$. If $\link_X(\Delta)$ is connected and of bounded diameter, so is its augmented link 
and there is nothing to prove. Thus, we only need focus on unbounded links. 

\begin{lemma}
	Let $\Delta$ be a non-maximal simplex of $X$. Then its link (and hence its augmented link) is connected and of 
bounded diameter, except possibly when $\Delta$ is one of the following:
	\begin{itemize}
		\item the empty simplex,
		\item an edge of $X$ contained in a fibre,
		\item a triangle of $X$ containing exactly two roots.
	\end{itemize}
	\label{lem:disconnected links}
\end{lemma}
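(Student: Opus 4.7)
The strategy is a direct case analysis using the classification of non-maximal simplices of $X$ from the proof of Lemma~\ref{lem:index_set}, combined with the join decomposition of links from Lemma~\ref{lem:join decomposition}. The key observation is that whenever the link of $\Delta$ decomposes as a simplicial join $A \ast B$ with both $A$ and $B$ nonempty, it has diameter at most $2$, hence is connected and bounded (so the same holds for the augmented link).

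Given a non-maximal simplex $\Delta$, Lemma~\ref{lem:join decomposition} gives
\[
\link_X(\Delta) = p^{-1}\big(\link_Y(\overline{\Delta})\big) \,\ast\, \bigast_{v\in V(\overline{\Delta})} \link_{X_v}(\Delta_v).
\]
Recall also that, since $Y$ contains no triangles (Lemma~\ref{lem:no square}), if $\overline{\Delta}$ is an edge of $Y$ then $\link_Y(\overline{\Delta}) = \varnothing$, and that each fibre $X_v$ is a cone with apex the root, so $\link_{X_v}(\{\text{root},\lambda\}) = \varnothing$ for any leaf $\lambda$, while $\link_{X_v}(\{\text{root}\}) = \Lambda_v$ and $\link_{X_v}(\{\lambda\}) = \{\text{root}\}$.

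I will go through each of the nine cases in the proof of Lemma~\ref{lem:index_set}. For $\Delta$ a single root vertex $gN(H)$, the link is $p^{-1}(\link_Y(gN(H))) \ast \Lambda_{gN(H)}$, a nonempty join (both factors nonempty under Convention~\ref{conv:connected}), hence bounded. For $\Delta$ a single leaf, the link is $p^{-1}(\link_Y(gN(H))) \ast \{gN(H)\}$, a cone. For $\Delta$ an edge from a root to a leaf across a $Y$-edge, or an edge between two roots, or a triangle with a root, a leaf, and a root across fibres, or a triangle with a root and two leaves, or an edge between two leaves: each time the computation yields a nontrivial join with all factors nonempty, hence diameter at most $2$. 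The only remaining cases are:
\begin{itemize}
\item $\Delta = \varnothing$: then $\link_X(\Delta) = X$ (the empty simplex case).
\item $\Delta$ is an edge contained in a fibre $X_v$: here the fibre contribution $\link_{X_v}(\Delta_v)$ is empty, so $\link_X(\Delta) = p^{-1}(\link_Y(v))$, which is typically unbounded (e.g.\ $\link_Y(v)$ is a tree when $v$ is of tree type).
\item $\Delta$ is a triangle with exactly two roots $gN(H),hN(H')$ and one leaf, say $\lambda \in \Lambda_{gN(H)}$: then $\link_Y(\overline{\Delta}) = \varnothing$ and $\link_{X_{gN(H)}}(\{gN(H),\lambda\}) = \varnothing$, so $\link_X(\Delta) = \Lambda_{hN(H')}$, a quasi-line.
\end{itemize}

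These are precisely the three exceptional cases listed in the statement. The case analysis is entirely mechanical; no delicate step is expected. The only mild subtlety is checking that in the ``generic'' cases the relevant joinands are nonempty, which follows from Convention~\ref{conv:connected} (so that $\link_Y(v)\neq \varnothing$ for every vertex $v$ of $Y$) and from the fact that each $\Lambda_{gN(H)}$ is infinite by Definition~\ref{defn:blow_up_data}.
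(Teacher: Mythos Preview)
Your proposal is correct and follows essentially the same approach as the paper: both rely on the join decomposition of Lemma~\ref{lem:join decomposition} and the observation that a nontrivial simplicial join has diameter at most $2$. The paper's proof is organized slightly more concisely---rather than running through all nine cases of Lemma~\ref{lem:index_set}, it directly identifies the two situations in which the join decomposition degenerates (namely, when $\link_Y(\overline{\Delta})\neq\varnothing$ but all fibre-links vanish, or $\link_Y(\overline{\Delta})=\varnothing$ and exactly one fibre-link survives)---but the content is the same.
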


\begin{proof}
	Let $\Delta$ be a non-empty simplex of $X$. Note that if a link decomposes as a non-trivial join, it is automatically 
connected and bounded.  In view of Lemma \ref{lem:join decomposition}, $\link_X(\Delta)$ is connected and bounded, by virtue of decomposing non-trivially as a join, except possibly in one of the following two situations.

	\textbf{Case 1:} $\link_Y(\overline{\Delta}) \neq \varnothing$ and all the links of fibres are empty. In that case, 
$\overline{\Delta}$ is a single vertex $v$ of $Y$, and its associated fibre $\Delta_v = \Delta$ is an edge (and this case is listed in the statement).
	
	\textbf{Case 2:} $\link_Y(\overline{\Delta}) = \varnothing$ and exactly one of the fibres of $\Delta$ has an empty link. In 
that case, $\overline{\Delta}$ is an edge, and $\Delta$ is a triangle (since it is non-maximal). If $\Delta$ contains only one root, then $\link_X(\Delta)$ is a single vertex, so it is bounded. (The remaining case of two root is listed in the statement.)
\end{proof}

We now treat the remaining cases separately.

\begin{lemma}\label{lem:maximal_Deligne}
	The augmented link of the empty simplex of $X$ is equivariantly quasi-isometric to $\widehat{D}$, and in particular is hyperbolic.
\end{lemma}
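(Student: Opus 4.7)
The plan is to reduce this to facts already established in Section~\ref{sec:augment}. By definition, $\link_X(\emptyset)=X$, so $\link_X(\emptyset)^{+W}=X^{+W}$, and we need only identify $X^{+W}$ up to equivariant quasi-isometry with $\widehat D$ and invoke the fact that the coned-off Deligne complex is hyperbolic.

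First, I would recall from Definition~\ref{defn:augmented_maps} that the simplicial projection $p:X\to Y$ and the map $\iota:Y\to\widehat D$ extend to maps $p:X^{+W}\to Y^{+W}$ and $\iota:Y^{+W}\to\widehat D$, and that $q=\iota\circ p$ is the relevant composition $X^{+W}\to\widehat D$. The discussion immediately following Definition~\ref{defn:augmented_maps} already records that $p$, $\iota$, and hence $q$ are $A_\Gamma$-equivariant quasi-isometries: $p:X^{+W}\to Y^{+W}$ is a quasi-isometry because its fibres are simplicial cones of uniformly bounded diameter, the inclusion $Y\hookrightarrow Y^{+W}$ is a quasi-isometry by Remark~\ref{rem:2-Lipschitz} (added edges join vertices at $Y$-distance at most $2$), and $\iota:Y\to\widehat D$ is a quasi-isometry by Lemma~\ref{lem:iota QI}.

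The second ingredient is that $\widehat D$ is hyperbolic. This is built into our setup: by Convention~\ref{conv:metric_coneoff}, the coned-off Deligne complex is equipped with the Martin--Przytycki piecewise hyperbolic CAT($-1$) metric, and any CAT($-1$) space is Gromov hyperbolic. Since hyperbolicity is a quasi-isometry invariant of geodesic (or more generally quasigeodesic) metric spaces, the quasi-isometry $q:X^{+W}\to\widehat D$ transfers hyperbolicity from $\widehat D$ to $X^{+W}=\link_X(\emptyset)^{+W}$. Connectedness of $\link_X(\emptyset)^{+W}$ is automatic from the same quasi-isometry, since $\widehat D$ is connected.

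There is no real obstacle here: the work has been packaged into Lemma~\ref{lem:iota QI} and the construction in Definition~\ref{defn:augmented_maps}. The only small point to articulate is the convention that the empty simplex has link equal to the ambient complex, which is already used in the statement of condition~\eqref{item:hyperbolic_links} of Definition~\ref{defn:combinatorial_HHS} and in case~(9) of the classification in the proof of Lemma~\ref{lem:index_set}.
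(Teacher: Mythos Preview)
Your proposal is correct and follows essentially the same approach as the paper's proof. The paper cites Lemma~\ref{lem:q_qi} for the quasi-isometry $X\to\widehat D$ and then observes that $X^{+W}$ is quasi-isometric to $X$ since $W$-edges join vertices at distance at most $2$, while you cite the equivalent packaging in Definition~\ref{defn:augmented_maps}; these are the same argument.
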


\begin{proof}
	By definition, the link of the empty simplex of $X$ is $X$ itself, which is quasi-isometric to $\widehat{D}$ by Lemma \ref{lem:q_qi}. Note that $X^{+W}$ is quasi-isometric to $X$ since $W$-edges connect vertices at distance at most $2$ in $X$ by Definition \ref{defn:W_edges}.
\end{proof}

\begin{lemma}\label{lem:quasiline}
	Suppose $r_1\geq 1$. Let $\Delta$ be a triangle of $X$ containing exactly two  roots. Then its augmented link $\link_X(\Delta)^{+W}$ is  
hyperbolic, and in fact quasi-isometric to $\mathbb Z$.
\end{lemma}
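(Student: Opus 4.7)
My plan is to identify $\link_X(\Delta)^{+W}$ concretely as a graph on one of the quasilines $\Lambda_{hN(H')}$, and then read off hyperbolicity from the fact that $\Lambda_{hN(H')}$ is quasi-isometric to $\mathbb Z$.

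First, I would invoke the classification of simplices from the proof of Lemma~\ref{lem:index_set} to pin down the shape of $\Delta$: a triangle with exactly two roots projects under $p$ to an edge $\overline{\Delta}$ of $Y$ joining two cosets $gN(H)$ and $hN(H')$, and, up to swapping the two cosets, $\Delta$ is the triangle spanned by $\{gN(H),\lambda,hN(H')\}$ for some leaf $\lambda\in\Lambda_{gN(H)}$. By the join decomposition of Lemma~\ref{lem:join decomposition}, the link is
$$
\link_X(\Delta)\;=\;\link_{X_{hN(H')}}\bigl(hN(H')\bigr)\;=\;\Lambda_{hN(H')},
$$
viewed as a set of vertices with no edges of $X$ among themselves (because $X_{hN(H')}$ is a simplicial cone, whose leaves are pairwise non-adjacent in $X$).

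Next I would determine exactly which $W$--edges of $X^{+W}$ land in $\Lambda_{hN(H')}$. A maximal simplex $\delta$ of $X$ containing a leaf $\mu\in\Lambda_{hN(H')}$ necessarily has the form $\Delta\bigl(\{hN(H'),\mu\},\,\{kN(H''),\nu\}\bigr)$ for some vertex $kN(H'')$ of $\link_Y(hN(H'))$ and some $\nu\in\Lambda_{kN(H'')}$; note $\link_Y(hN(H'))$ is nonempty by Convention~\ref{conv:connected}. I would then go through the two types of $W$--edges in Definition~\ref{defn:W_edges}. Type~$1$ adjacencies between two such maximal simplices with distinct leaves $\mu_0\neq \mu_1\in\Lambda_{hN(H')}$ happen precisely when, for some common $kN(H'')$ and $\nu\in\Lambda_{kN(H'')}$, one has $\dist_{\Lambda_{hN(H')}}(\mu_0,\mu_1)\leq r_1$; conversely, whenever this distance inequality holds, such a $kN(H''),\nu$ exists and yields the edge. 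Type~$2$ adjacencies, by contrast, force equality of the common leaf on the shared side of the simplices, so they cannot produce $W$--edges between \emph{distinct} leaves of $\Lambda_{hN(H')}$ (a short case check inserting $\mu_0,\mu_1\in\Lambda_{hN(H')}$ into the two possible roles in Definition~\ref{defn:W_edges} is the only thing needed here).

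Combining these observations, $\link_X(\Delta)^{+W}$ is exactly the graph on the vertex set $\Lambda_{hN(H')}$ in which two vertices are joined by an edge iff their distance in $\Lambda_{hN(H')}$ is at most $r_1$. Since $\Lambda_{hN(H')}$ is a discrete metric space $(B_0,B_0)$--quasi-isometric to $\mathbb Z$ (Definition~\ref{defn:blow_up_data}), and we are free to take $r_1$ as large as we wish, this thickened graph is connected and quasi-isometric to $\Lambda_{hN(H')}$, hence to $\mathbb Z$; in particular it is hyperbolic. The only real choice in the argument is $r_1\geq 1$ (as in the hypothesis), which is all that is needed because the concrete quasilines produced by Lemma~\ref{lem:blow_up_data_existence} are already graphs where consecutive vertices are at distance one. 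The main point requiring care is the case analysis in the second step, ensuring no unexpected $W$--edge (especially of type~$2$, or from a configuration in which the two maximal simplices assign the vertex $hN(H')$ different roles) connects distinct leaves of $\Lambda_{hN(H')}$; once that is confirmed, hyperbolicity is immediate from the classification of quasilines.
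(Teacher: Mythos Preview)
Your proposal is correct and follows essentially the same route as the paper: identify $\link_X(\Delta)$ as the set of leaves $\Lambda_{hN(H')}$, determine that the $W$--edges among these leaves are exactly those joining pairs at $\Lambda$--distance at most $r_1$, and conclude via the quasi-isometry of $\Lambda_{hN(H')}$ with $\integers$. The only notable difference is in the middle step: the paper invokes Proposition~\ref{prop: C=C0} (fullness of links) to restrict attention to $W$--edges witnessed by maximal simplices in $\Star_X(\Delta)$ --- all of which project to the single edge $\overline{\Delta}$ and hence can only be of type~1 --- whereas you bypass fullness and carry out a direct case analysis on all maximal simplices of $X$ containing a leaf of $\Lambda_{hN(H')}$, checking both edge types explicitly. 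Your case check is sound (in particular, the exclusion of type~2 edges between distinct leaves is correct), so the argument stands on its own; the paper's route is shorter but imports a heavier lemma.
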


\begin{proof}
	Let $v$ be the vertex of $Y$ such that $\Delta_v$ is a single root. By Lemma \ref{lem:join decomposition}, the link 
of $\Delta$ is the disjoint union of the leaves of $X_v$. By Proposition \ref{prop: C=C0}, the augmented link 
$\link_X(\Delta)^{+W}$ is obtained from $\link_X(\Delta)$ by adding an edge between all vertices of $\link_X(\Delta)$ that 
are connected by a $W$-edge coming from maximal simplices of $\Star_X(\Delta)$. By construction of $W$-edges of type 1, this 
graph is connected and quasi-isometric to $\mathbb{Z}$, provided $r_1\geq 1$.
Indeed, given a graph and $C\geq1$, the graph with the same vertex set and edges connecting pairs of vertices within distance $C$ of each other in the original graph is quasi-isometric to the original graph.
\end{proof}

\begin{lemma}\label{lem:quasitree}
	Let $\Delta$ be an edge of $X$ contained in a fibre $X_v$ for some vertex $v$ of $Y$. Then its augmented link $\link_X(\Delta)^{+W}$ is a connected quasi-tree. Moreover, it is unbounded except when $v$ corresponds to a standard generator of $A_\Gamma$ that comes from a leaf of $\Gamma$ contained in an edge of $\Gamma$ with even label.
\end{lemma}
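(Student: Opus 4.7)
The plan is to use Lemma~\ref{lem: QI link}, which provides a quasi-isometry between $(\link_X(\Delta)^{+W})^{(0)}$ and $\link_{\widehat D}(\iota(v))$, reducing the problem to analysing the link in the coned-off Deligne complex. Connectedness of $\link_X(\Delta)^{+W}$ is given by Lemma~\ref{lem:links_connected} (for $r_2$ large enough), and both quasi-tree-ness and unboundedness are quasi-isometry invariants, so it suffices to show that $\link_{\widehat D}(\iota(v))$ is a quasi-tree and to identify exactly when it is bounded. Accordingly I would split into cases based on the type of $v$.

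If $v = gu_a$ is of tree type, then after translation $\link_{\widehat D}(\iota(v)) = T_a$ by Lemma~\ref{lem:link_tree_cocompact}, which is a tree and hence a quasi-tree. By Lemma~\ref{lem:stab_tree}, the stabiliser decomposes as $\langle a\rangle \times K$, with $K$ a finitely generated free group acting cocompactly on $T_a$ with trivial edge stabilisers. Thus $T_a$ is bounded precisely when the $K$-action has bounded orbits. Using the explicit free basis of $K$ recalled in the proof of Lemma~\ref{lem:no_conj_norm_H} (one generator $z_{ab'}$ per neighbour $b'$ of $a$ in $\Gamma$, with additional conjugates from odd-labelled edges when the relevant neighbour has further neighbours), I would check that this forces $K$ to be infinite cyclic with a global fixed point on $T_a$ exactly in the listed exceptional case — $a$ is a leaf whose unique incident edge carries an even label — yielding the star-shaped tree described in the proof of Lemma~\ref{lem:no_conj_norm_H}. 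In all other cases $K$ has rank at least two, so it cannot have a global fixed point on a tree with trivial edge stabilisers, forcing $T_a$ to be unbounded.

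If $v = gu_{ab}$ is of dihedral type, then after translation $\link_{\widehat D}(\iota(v))$ is obtained from the graph of orbits $\link_D(v_{ab})$ by coning off each $\langle\Delta_{ab}\rangle$-orbit of cosets of the form $g\langle a\rangle$ or $g\langle b\rangle$, with $A_{ab}$ acting cocompactly (Corollary~\ref{cor: link coneoff}). Since $\langle z_{ab}\rangle \leq \langle\Delta_{ab}\rangle$ by Lemma~\ref{lem:centre dihedral}, the central subgroup $\langle z_{ab}\rangle$ fixes every cone point and moves each cyclic-coset vertex only within the star of the corresponding cone point; after a short additional argument for the middle vertices $g\{1\}$ via their adjacency to cone points, $\langle z_{ab}\rangle$ acts with uniformly bounded orbits on the coned-off link. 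Combining cocompactness of the $A_{ab}$-action with these bounded orbits, the quotient $A_{ab}/\langle z_{ab}\rangle$ quasi-acts properly coboundedly on $\link_{\widehat D}(v_{ab})$; by Lemma~\ref{lem:virtually splits} this quotient is virtually free, and a Svarc--Milnor-type argument for quasi-actions then identifies $\link_{\widehat D}(v_{ab})$ up to quasi-isometry with a Cayley graph of a virtually free group, which is an unbounded quasi-tree.

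The main obstacle lies in the dihedral case: namely, rigorously verifying both the uniform boundedness of $\langle z_{ab}\rangle$-orbits across all vertex types of the coned-off link and the proper coboundedness of the induced quasi-action of the virtually free quotient, so that a Svarc--Milnor argument for quasi-actions applies cleanly. An attractive alternative is to build a direct $A_{ab}$-equivariant quasi-isometry between $\link_{\widehat D}(v_{ab})$ and the quasi-tree $\calT_{ab}$ of Definition~\ref{def:quasi_tree_simplices}, exploiting the compatible $A_{ab}/\langle\Delta_{ab}\rangle$-actions, which would sidestep the Svarc--Milnor technicalities at the cost of matching up two quite different combinatorial models.
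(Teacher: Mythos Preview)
Your tree-type case follows the paper's approach closely; the paper just defers directly to~\cite[Remark~4.6]{MP} for the characterisation of when $T_a$ is bounded, but your more explicit discussion using the free basis of $K$ amounts to the same thing.

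In the dihedral case, however, there is a genuine gap. You assert that $A_{ab}/\langle z_{ab}\rangle$ quasi-acts \emph{properly} on $\link_{\widehat D}(v_{ab})$, but this is false: the cone points (apices over the $\langle\Delta_{ab}\rangle$-orbits) have stabiliser $\langle a, z_{ab}\rangle$ or $\langle b, z_{ab}\rangle$ in $A_{ab}$, so after quotienting by $\langle z_{ab}\rangle$ the stabilisers are still infinite cyclic (the images of $\langle a\rangle$, $\langle b\rangle$, which have infinite order in the quotient by the proof of Lemma~\ref{lem:virtually splits}). Hence \v{S}varc--Milnor does not apply, and knowing only that a virtually free group acts coboundedly with infinite cyclic stabilisers does not force the space to be a quasi-tree. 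Your acknowledged ``main obstacle'' is therefore not a technicality to verify but an actual failure.

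The paper handles this differently: it identifies the maximal point stabilisers as $\langle a, z_{ab}\rangle$ and $\langle b, z_{ab}\rangle$, invokes~\cite[Theorem~5.1]{CharneyCrisp} to conclude that $\link_{\widehat D}(v_{ab})$ is quasi-isometric to the coned-off Cayley graph $\cayley(A_{ab}; \langle a, z_{ab}\rangle, \langle b, z_{ab}\rangle)$, observes this is quasi-isometric to $\cayley(A_{ab}/\langle z_{ab}\rangle; \langle a\rangle, \langle b\rangle)$, and then applies~\cite[Theorem~2.19]{Balasubramanya_quasitree} (coning off quasiconvex subgroups of a hyperbolic group yields a quasi-tree) to the virtually free quotient. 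This machinery is precisely what absorbs the infinite stabilisers that break your \v{S}varc--Milnor argument. Unboundedness is then obtained from~\cite{Osin,DGO}. Your proposed alternative via a direct comparison with $\calT_{ab}$ might be workable but is not what the paper does and would require substantial additional argument.
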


\begin{proof}
By Lemma \ref{lem: QI link} we have that $\link_X(\Delta)^{+W}$ is quasi-isometric to $\link_{\widehat{D}}(\iota(v))$ where $v\coloneqq \overline{\Delta}$. Therefore, we have to prove that $\link_{\widehat{D}}(\iota(v))$ is a quasi-tree. There are two cases.

\textbf{Tree type:} Suppose that $v$ is a vertex of $Y$ of tree type.
By construction of $\widehat{D}$ (or Lemma \ref{lem:link_tree_cocompact}) we have that $\link_{\widehat{D}}(\iota(v))$ is a standard tree of $D$, and the characterisation of when standard trees are unbounded follows directly from the graph of groups structure given in  \cite[Remark~4.6]{MP}. In particular, it follows from that description that when $v$ corresponds to a standard generator of $A_\Gamma$ that comes from a leaf of $\Gamma$ contained in an edge of $\Gamma$ with even label, then the corresponding standard tree has diameter $2$.
\\

\textbf{Dihedral type:}  Suppose that $v$ is the vertex $v_{ab}$ of $Y$ of dihedral type for some $a, b \in \Gamma$. 

By Corollary~\ref{cor: link coneoff}, the action of $\stabilizer(v)=A_{ab}$ on $\link_{\widehat{D}}(\iota(v))$ is cocompact. Let us describe the maximal stabilisers for this action. Since the action is without inversion, it is enough to describe vertex stabilisers. By Lemma~\ref{lem:link_dihedral_cocompact}, the stabiliser of a vertex of $\link_D(\iota(v))$ is either trivial or conjugate to $\langle a \rangle$ or $\langle b \rangle$. Moreover, it follows from Lemma~\ref{lem:stab_tree} that the stabiliser of the vertex of tree type corresponding to the standard tree $T_a$ is equal to $A_{ab} \cap C(a)$, i.e. equal to the centraliser of $a$ in $A_{ab}$. By \cite[Lemma~7]{Crisp}, this subgroup is equal to $\langle a, z_{ab} \rangle$. Thus, the stabilisers of vertices of tree type in  $\link_{\widehat{D}}(\iota(v))$ are conjugate (again in $A_{ab}$) to $ \langle a, z_{ab}\rangle $ or $  \langle b, 
z_{ab}\rangle$.

It follows from the above that the maximal stabilisers for the cocompact action of $A_{ab}$ on $\link_{\widehat{D}}(\iota(v)) $ are the conjugates of $ \langle a, z_{ab}\rangle $ and $  \langle b, 
z_{ab}\rangle$. It now follows from 
	 \cite[Thm 5.1]{CharneyCrisp}  that $\link_{\widehat{D}}(\iota(v))$ is quasi-isometric to the Cayley graph 
$\mbox{Cayley}(A_{ab};  \langle a, z_{ab}\rangle ,   \langle b, z_{ab}\rangle ).$
	Note that this Cayley graph is quasi-isometric to 
	$\mbox{Cayley}(A_{ab}/ \langle z_{ab}\rangle;  \langle a\rangle  ,  \langle b\rangle ).$ Moreover, since $A_{ab}$ 
contains a finite-index subgroup isomorphic to $ \langle z_{ab}\rangle \times F$ with $F$ a finitely-generated free group by Lemma~\ref{lem:virtually splits}, the central  quotient  $A_{ab}/ \langle  
z_{ab}\rangle $ is virtually free. 
By \cite[Theorem 2.19]{Balasubramanya_quasitree} (which applies since $\langle a\rangle  $  and $\langle b\rangle$ are quasiconvex in the Cayley graph of $A_{ab}/ \langle  
z_{ab}\rangle $), the Cayley graph $\mbox{Cayley}(A_{ab}/ \langle 
z_{ab}\rangle ;  \langle a\rangle  ,  \langle b\rangle )$ is a quasitree, hence so are $\link_{\widehat{D}}(\iota(v))$ and  $\link_X(\Delta)^{+W}$. 

Unboundedness follows from \cite[Lemma 5.12]{Osin} and results in \cite{DGO} using the same argument as in \cite[Corollary 3.22]{Balasubramanya_quasitree}.
\end{proof}

\begin{cor}\label{cor:unbounded link}
	The simplices of $X$ that have an unbounded augmented link are exactly: \begin{itemize}
		\item the empty simplex of $X$,
		\item the triangles of $X$ containing exactly two roots.
		\item the edges contained in a fibre $X_v$ of $X$, except when $v$ corresponds to a standard generator of $A_\Gamma$ that comes from a leaf of $\Gamma$ contained in an edge of $\Gamma$ with even label.
	\end{itemize}
All other simplices of $X$ have an augmented link of diameter at most $3$. \qed
\end{cor}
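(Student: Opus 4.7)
The plan is to assemble the results of Lemmas \ref{lem:disconnected links}, \ref{lem:maximal_Deligne}, \ref{lem:quasiline}, and \ref{lem:quasitree} into a classification.

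First I would invoke Lemma \ref{lem:disconnected links} to reduce the problem to three candidate types of simplex: the empty simplex, edges contained in a fibre, and triangles containing exactly two roots. For any non-maximal simplex $\Delta$ outside these three classes, the proof of Lemma \ref{lem:disconnected links} shows (via Lemma \ref{lem:join decomposition}) that $\link_X(\Delta)$ either decomposes as a non-trivial simplicial join, in which case its diameter is at most $2$, or it is a single vertex (the case of a triangle containing exactly one root). Since passing to the augmented link only adds edges, the diameter bound of at most $3$ (and in fact at most $2$) is preserved, handling the final clause of the statement.

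Then I would verify unboundedness of the augmented links for the three candidate classes. For the empty simplex, $\link_X(\emptyset)^{+W} = X^{+W}$ is equivariantly quasi-isometric to $\widehat{D}$ by Lemma \ref{lem:maximal_Deligne}, which is unbounded since $A_\Gamma$ is infinite and acts cocompactly on $\widehat D$ (using Convention \ref{conv:connected} that $\Gamma$ is connected and not a single vertex). For a triangle $\Delta$ containing exactly two roots, Lemma \ref{lem:quasiline} identifies $\link_X(\Delta)^{+W}$ as quasi-isometric to $\mathbb{Z}$, hence unbounded. Finally, for an edge $\Delta$ contained in a fibre $X_v$, Lemma \ref{lem:quasitree} characterises $\link_X(\Delta)^{+W}$ as a quasi-tree that is unbounded exactly when $v$ does \emph{not} correspond to a standard generator arising from a leaf of $\Gamma$ lying in an edge with even label.

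There is no real obstacle here: the classification is a direct bookkeeping exercise combining the cited lemmas. The only point requiring a small argument is ruling out bounded augmented links for triangles with exactly one root and for simplices whose link is a non-trivial join, both of which are immediate from the join decomposition in Lemma \ref{lem:join decomposition} together with the elementary fact that the diameter of a non-trivial join of simplicial complexes is at most $2$.
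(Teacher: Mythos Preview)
Your proposal is correct and follows the same approach as the paper, which simply marks the corollary with \qed as an immediate consequence of the preceding lemmas. You have in fact supplied more detail than the paper does.

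There is one small omission: the final clause asserts that \emph{all} other simplices have augmented link of diameter at most $3$, and this includes the exceptional edges in a fibre $X_v$ where $v$ corresponds to a leaf of $\Gamma$ on an even-labelled edge. You handle the join and single-vertex cases but do not explicitly bound the diameter in this exceptional edge case. The fix is easy: when $a$ is such a leaf, the standard tree $T_a$ has a unique vertex of dihedral type (as noted in the proof of Lemma~\ref{lem:quasitree}, which cites the description in \cite[Remark~4.6]{MP}), so $\link_Y(v)$ is a single vertex $v'$, whence $\link_X(\Delta)=X_{v'}$ is a single fibre of diameter $2$, and augmenting cannot increase this.
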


\subsection{Quasi-isometric embeddings of augmented links}\label{subsec:QIembedded}
It remains to verify that $(X,W)$ satisfies the quasi-isometric embedding part of condition~\eqref{item:hyperbolic_links} from Definition~\ref{defn:combinatorial_HHS}.  This is achieved by the 
following proposition:

\begin{prop}
	For every non-maximal simplex $\Delta$ of $X$, the augmented link $\link_X(\Delta)^{+W}$ is quasi-isometrically 
embedded in the augmented complex $\big(X^{(0)}-\sat_X(\Delta)\big)^{+W}.$
	\label{prop:QI}
\end{prop}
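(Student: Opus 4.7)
The plan is to proceed by case analysis based on the classification of simplices with unbounded augmented link in Corollary~\ref{cor:unbounded link}. Simplices not appearing in that list have augmented links of diameter at most $3$, so the inclusion into any ambient space is automatically a quasi-isometric embedding (with constants depending on the diameter bound). For $\Delta=\emptyset$, no simplex has link equal to all of $X$, so $\sat_X(\emptyset)=\emptyset$, and the inclusion is the identity of $X^{+W}$. This leaves two substantial cases.

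The first substantial case is when $\Delta$ is an edge contained in a fibre $X_v$. If $\Gamma$ is a single edge, Lemma~\ref{lem:coarseLipSat} gives the equality $X-\sat_X(\Delta)=\link_X(\Delta)$, and we are done. Otherwise, Lemma~\ref{lem:coarseLipSat} yields a coarsely Lipschitz map $\bar{q}\colon (X-\sat_X(\Delta))^{+W}\to\widehat{D}-\iota(v)$, and Lemma~\ref{lem: QI link} gives a quasi-isometry $\link_X(\Delta)^{+W}\to\link_{\widehat{D}}(\iota(v))$. Combining these, the problem reduces to showing that $\link_{\widehat{D}}(\iota(v))$ is quasi-isometrically embedded in $\widehat{D}-\iota(v)$ equipped with the induced length metric. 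When $v$ is of tree type, Lemma~\ref{rem:subspace_coneoff} says $\widehat{D}-\iota(v)$ is CAT($-1$) and Lemma~\ref{lem:tree_convex_coneoff} says the standard tree $\link_{\widehat{D}}(\iota(v))$ is convex therein, hence isometrically embedded, and we are done.

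The hard part of this first case is when $v$ is of dihedral type, so $\iota(v)=v_{ab}$. Here Lemma~\ref{rem:subspace_coneoff} does not directly apply, and the link has the more intricate graph structure of Corollary~\ref{cor: link coneoff}. My plan is to prove quasi-convexity rather than convexity: for $x,y$ in $\link_{\widehat{D}}(v_{ab})$, a geodesic in $\widehat{D}-v_{ab}$ from $x$ to $y$ projects via CAT($-1$) radial projection from $v_{ab}$ (well-defined on all of $\widehat{D}-v_{ab}$) to a path in $\link_{\widehat{D}}(v_{ab})$ whose angular length at $v_{ab}$ is controlled by CAT($-1$) comparison against the geodesic's length. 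The cocompact action of $A_{ab}=\mathrm{Stab}(v_{ab})$ on the link (Corollary~\ref{cor: link coneoff}) then gives a QI between the angular metric inherited from $\widehat{D}$ and the combinatorial link metric, yielding the desired QI embedding with uniform constants.

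The second substantial case is when $\Delta$ is a triangle containing exactly two roots, so $\bar\Delta$ is an edge $v_1 v_2$ of $Y$ and $\link_X(\Delta)$ consists of the leaves of (say) $X_{v_2}$. The augmented link is a quasiline (Lemma~\ref{lem:quasiline}) coming from the blow-up data $\Lambda_{v_2}$. A direct reduction to $\widehat{D}$ fails here because $q$ collapses all leaves of $X_{v_2}$ to the single vertex $\iota(v_2)$. Instead, my plan is to use the blow-up data directly: for leaves $\mu,\mu'$ of $X_{v_2}$ that are $W$-close in $(X-\sat_X(\Delta))^{+W}$, trace through the maps $w$ (Definition~\ref{defn:little_w}) and $\phi_{v_2}$ to deduce $\Lambda_{v_2}$-closeness, using Lemma~\ref{lem:basic_w_properties} and the properness statement of Claim~\ref{claim:quasilines_proper}. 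Together with the dihedral sub-case of the edge-in-fibre situation, this is where I expect most of the work to lie; careful combinatorial bookkeeping will be needed to handle long $W$-paths in $(X-\sat_X(\Delta))^{+W}$ that detour through fibres other than those adjacent to $v_2$.
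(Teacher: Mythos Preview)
Your edge-in-fibre case is essentially the paper's argument, just split unnecessarily into tree and dihedral subcases. The paper handles both at once by applying Lemma~\ref{lem:coarse retraction} with $\calX=\widehat{D}$ and $C=\{\iota(v)\}$: a single vertex is convex, so there is a coarsely Lipschitz retraction $\widehat{D}-\{\iota(v)\}\to\partial N(\iota(v))=\link_{\widehat{D}}(\iota(v))$, and the commuting-up-to-bounded-error diagram you describe finishes the job. Your radial-projection/angular-length argument in the dihedral subcase is morally the same construction.

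The triangle case, however, has a genuine gap. Your plan is to trace a $W$-path in $(X-\sat_X(\Delta))^{+W}$ through the maps $w$ and $\phi_{v_2}$ and invoke the properness of Claim~\ref{claim:quasilines_proper}. The obstruction is that $\phi_{v_2}$ is only defined on the thickened coset $v_2^{+r}$, and the properness statement only controls points inside a single intersection $gN(H)^{+r}\cap hN(H')^{+r}$. A $W$-path in $(X-\sat_X(\Delta))^{+W}$, which is essentially all of $X^{+W}$ minus the star of one root, can wander arbitrarily far from $v_2$ in $A_\Gamma$; there is no map to $\Lambda_{v_2}$ available along most of that path, and no bookkeeping will produce one. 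What is needed is precisely a coarse retraction of the ambient space onto (a model of) the quasiline, and for this the paper uses substantial geometric input that your plan does not mention: it first builds a hybrid target $Z$ (Lemma~\ref{lem:coarseLipSatTriangle}) mixing $\widehat{D}-\Star_{\widehat{D}}(\iota(u))$ with $\Lambda_u$, then uses CAT($-1$) retractions to reduce to a neighbourhood of the relevant standard tree or dihedral vertex, and finally invokes either Vaskou's result that $\langle z_{ab}\rangle$-orbits are quasi-isometrically embedded in $\link_D(v_{ab})$ (Lemma~\ref{lem:Vaskou_QI_embedded_link}, for Lemma~\ref{lem:QI3}) or the quasi-tree structure of $\calT_{ab}=\mathrm{Cayley}_{a,b}(A_{ab})/\langle\Delta_{ab}\rangle$ together with a blown-up graph-of-orbits construction (the five-step argument of Lemma~\ref{lem:QI4}). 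These are the missing ideas; the blow-up data alone cannot see why the quasiline stays undistorted once you leave the immediate vicinity of the two adjacent cosets.
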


The proposition is clear when the augmented link is bounded. We treat the remaining cases, listed in Corollary \ref{cor:unbounded link}, separately.  Note that there is nothing to do in the case of 
the empty simplex, as the inclusion $\link_X(\emptyset)^{+W} 
\hookrightarrow \big(X-\sat_X(\emptyset)\big)^{+W}$ is the identity map of $X^{+W}$.\\

\textbf{Overview.} Let us give an overview of the general strategy. We will use the quasi-isometries between $X^{+W}, Y^{+W}$ and 
$\widehat{D}$ (see Lemma \ref{lem:iota QI} and Definition \ref{defn:augmented_maps}) to construct models for $\link_X(\Delta)^{+W}$ and $ \big(X-\sat_X(\Delta)\big)^{+W}$ that are easier to work 
with. More precisely, we will construct a  diagram of the form 

	\[	\xymatrixcolsep{7pc}\xymatrix{
	\link_X(\Delta)^{+W} \ar@{^{(}->}[d] \ar[r] & U_\Delta    \ar@{^{(}->}[d] \\
	\big(X-\sat_X(\Delta)\big)^{+W} \ar[r]  & V_\Delta} \]

where 
\begin{itemize}
 \item the complexes $U_\Delta, V_\Delta$ are built out of subcomplexes of $\widehat{D}$, 
 \item the top horizontal map is a quasi-isometry (in the spirit of Lemma \ref{lem: QI link}),
  \item the bottom horizontal map is coarsely lipschitz (in the spirit of Lemma \ref{lem:coarseLipSat}).
  \item both horizontal maps coincide with the map $q$ when restricted to vertex sets,
  	\item the diagram commutes up to bounded error. This is due to some bounded choices appearing when constructing the various horizontal arrows (and more precisely extending the map $q$ to $W$-edges), see the proof of Lemma~\ref{lem: QI link}, \ref{lem:coarseLipSat}, and~\ref{lem:coarseLipSatTriangle}.
\end{itemize}

Then, it will be enough to show that $U_\Delta \hookrightarrow V_\Delta$ is a quasi-isometric 
embedding.  
This in turn will be done using either the CAT(0) geometry 
of $\widehat{D}$ to construct a coarsely Lipschitz retraction $V_\Delta \rightarrow U_\Delta$ (in Lemma 
\ref{lem:QI2}), or by using additional properties of dihedral Artin groups when the complexes $U_\Delta, V_\Delta$ are 
defined using neighbourhoods of vertices of dihedral type (in Lemma \ref{lem:QI3} and \ref{lem:QI4}).\\

We start by introducing the main tool for constructing coarsely Lipschitz retractions in $\widehat{D}$, and more generally in 
CAT(0) complexes:

\newcommand{\No}[1]{\mathring{N}(#1)}

\begin{defn}[Combinatorial neighbourhood] Let $C$ be a subcomplex of a complex $\calX$.  The \textbf{combinatorial 
neighbourhood} of $C$, denoted $N(C)$, is the subcomplex consisting of the union of all the simplices of $\calX$ intersecting $C$. The 
\textbf{combinatorial sphere} around $C$, denoted $\partial N(C)$, is the full subcomplex with vertex set $N(C)^{(0)}-C^{(0)}$. Finally, we let $\No{C}=N(C)-\partial N(C)$.
\end{defn}

For $C$ a subcomplex of a metric complex $\calX$, note that $\calX-\No{C}$ is the full subcomplex spanned by $\calX-C$, meaning the full subcomplex with vertex set the vertices of $\calX$ that are not contained in $C$.

In the statement below we regard $\calX-\No{C}$ as endowed with its intrinsic path metric, and similarly for $\partial N(C)$. 

\begin{lemma}\label{lem:coarse retraction}
	Let $\calX$ be a CAT(0) simplicial complex with finitely many isometry types of simplices, and let $C$ be a convex 
subcomplex of $\calX$. Then there exists a coarsely Lipschitz retraction  from $\calX-\No{C}$ to $\partial N(C)$. In particular, $\partial N(C)$ is quasi-isometrically embedded in $\calX- \No{C}$. 
\end{lemma}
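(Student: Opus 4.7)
The plan is to build the retraction $r:\calX-\No{C}\to\partial N(C)$ using the CAT(0) nearest-point projection $\pi_C:\calX\to C$, which is well-defined and $1$-Lipschitz since $C$ is a closed convex subset of a CAT(0) space. For a vertex $v$ of $\calX-\No{C}$ already lying in $\partial N(C)$, I set $r(v)=v$. For a vertex $v\notin N(C)$, I consider the CAT(0) geodesic $\gamma_v$ from $v$ to $\pi_C(v)$. Convexity of $C$ and the fact that $v\notin C$ ensure that the first intersection $p_v$ of $\gamma_v$ with $C$ is a single well-defined point, and for $t$ just before the parameter of $p_v$ the geodesic lies in the interior of a simplex $\tau_v\subset N(C)$ with $p_v$ on its boundary. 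Since $\gamma_v$ lies outside $C$ right up until $p_v$, the simplex $\tau_v$ cannot be contained in $C$, so it must contain at least one vertex in $\partial N(C)$; I choose one such vertex and declare it to be $r(v)$.

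To show $r$ is coarsely Lipschitz, it suffices (after extending across edges by short interpolating paths) to bound $d_{\calX-\No{C}}(r(v),r(v'))$ uniformly whenever $v,v'$ are adjacent vertices of $\calX-\No{C}$. In this situation $d_\calX(v,v')$ is uniformly bounded, so the $1$-Lipschitz property of $\pi_C$ together with the CAT(0) convexity of the distance function between two geodesics with close endpoints imply that $p_v$ and $p_{v'}$ lie at uniformly bounded distance in $\calX$. The uniform bound on simplex diameters, coming from the finitely many isometry types of simplices in $\calX$, then forces $r(v)$ and $r(v')$ themselves to be at uniformly bounded distance in the ambient complex $\calX$.

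The main obstacle is that proximity in $\calX$ does not automatically yield proximity in $\calX-\No{C}$, whose intrinsic path metric could in principle be much larger. I would overcome this by exhibiting a bounded-length path from $r(v)$ to $r(v')$ staying inside $\partial N(C)\subset\calX-\No{C}$, obtained by lifting the CAT(0) geodesic from $\pi_C(v)$ to $\pi_C(v')$ (which lies in $C$ by convexity) to $\partial N(C)$: one picks, for each of the uniformly boundedly many simplices of $C$ that this geodesic crosses, a vertex of $\partial N(C)$ spanning a simplex of $N(C)$ with it, and checks that consecutive choices can be taken to span a common simplex of $N(C)$ and hence an edge of $\partial N(C)$. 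The finite simplex-type hypothesis provides the uniform control required for this lift. Once $r$ is established as a coarsely Lipschitz retraction, the ``in particular'' assertion that $\partial N(C)$ quasi-isometrically embeds in $\calX-\No{C}$ follows immediately from the standard fact that a subspace admitting a coarsely Lipschitz retraction is quasi-isometrically embedded.
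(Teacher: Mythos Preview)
Your overall strategy---use the CAT(0) nearest-point projection and snap to a nearby vertex of $\partial N(C)$---matches the paper's, but your proposed fix for the ``main obstacle'' has a genuine gap. You want to lift the geodesic from $\pi_C(v)$ to $\pi_C(v')$, which lies entirely in $C$, to a path in $\partial N(C)$ by choosing, for each simplex $\sigma$ of $C$ that this geodesic crosses, a vertex of $\partial N(C)$ spanning a simplex of $N(C)$ with $\sigma$. Nothing guarantees such a vertex exists: if every simplex of $\calX$ containing $\sigma$ already lies in $C$ there is none, and a short geodesic between two nearby points on the metric frontier of $C$ can perfectly well pass through such interior simplices. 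Even where individual lifts exist, you give no argument that consecutive lifts can be chosen adjacent in $\partial N(C)$; the simplices $\sigma_i,\sigma_{i+1}$ share a face lying in $C$, which need not be adjacent to anything in $\partial N(C)$.

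The paper sidesteps both issues by projecting not onto $C$ but onto the closed metric neighbourhood $\calN(C,\varepsilon)$, with $\varepsilon>0$ small enough that $\calN(C,\varepsilon)\subset N(C)$. For a point outside this neighbourhood the projection lands at distance exactly $\varepsilon$ from $C$, and if $x,y\in\calX-\No{C}$ are at distance at most $\varepsilon/3$ then the CAT(0) geodesic between their projections has length at most $\varepsilon/3$ and therefore stays at distance at least $2\varepsilon/3$ from $C$. This geodesic thus lies in $N(C)$ but never touches $C$: every simplex it crosses has a vertex in $\partial N(C)$, and consecutive simplices intersect \emph{outside} $C$, so their intersection already contains such a vertex. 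The uniform bound on the number of simplices a short geodesic meets (finitely many isometry types of simplices) then yields the coarse Lipschitz bound directly in the intrinsic metric of $\partial N(C)$, with no lifting needed.
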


\begin{proof}
	We construct a projection from $\calX-\No{C}$ to $\partial N(C)$ as follows. Since the CAT(0) space $\calX$ has finitely 
many isometry types of simplices, we can choose a number $\varepsilon>0$ such that the closed metric neighbourhood $\calN(C, \varepsilon)$ (for the CAT(0) 
metric) is contained in $N(C)$. Since $C$, whence $\calN(C, \varepsilon)$, is convex for the CAT(0) metric, the closest-point projection $\pi: \calX-\No{C} 
\rightarrow \calN(C, \varepsilon)$ is well-defined and $1$-Lipschitz. Given a point $x\in \calX - \No{C}$, we  first compute the 
projection $\pi(x)$, we choose a simplex $\Delta$ of $N(C)$ containing $\pi(x)$ (such a simplex cannot be contained in $C$), 
and we define $\pi'(x)$ to be any vertex of $\Delta$ not in $C$. This  defines the map $\pi': \calX-\No{C}\rightarrow\partial 
N(C)$.
	
	Let $x, y \in \calX-\No{C}$ be two points at distance at most $\varepsilon/3$ in $\calX-\No{C}$, and therefore in $\calX$. Their closest-point projections $\pi(x), 
\pi(y)$ are also at distance at most $\varepsilon/3$. Thus, the CAT(0) geodesic $\gamma$ between $\pi(x)$ and $\pi(y)$ is of length at 
most $\varepsilon/3$, and in particular $\gamma$ does not intersect $C$ since its endpoints are at distance $\varepsilon$ from $C$. Also, $\gamma$ is contained in the combinatorial neighbourhood $N(C)$ since it is even contained in the convex subspace $\calN(C, \varepsilon)$. By \cite[Corollary 7.30]{Bridson-Haefliger}, there is a uniform bound $k$ depending only on $\calX$ and $\varepsilon$ 
but not on $x, y$ such that $\gamma$ intersects at most $k$ simplices of $N(C)$ (none of them contained in $C$). This now 
implies that there is a path in $\partial N(C)$ connecting $\pi'(x)$ and $\pi'(y)$ consisting of a concatenation of at most $k$ paths each contained in a simplex of $\partial N(C)$ (indeed, note that simplices $\Delta_1, \Delta_2$ of $N(C)$ that intersect outside of $C$ also satisfy $\Delta_1\cap\Delta_2\cap \partial N(C)\neq \emptyset$).
 Since $\calX$ has 
finitely many isometry types of simplices, the $k$ paths can be taken to have uniformly bounded length, and this implies that the map $\pi': \calX-\No{C}\rightarrow\partial N(C)$ is coarsely Lipschitz.
	
	Moreover, again since $\calX$ has finitely many isometry types of simplices, a similar reasoning implies that there exists a constant 
$k'$ such that for any vertex $v$ of $\partial N(C)$, the CAT(0) geodesic between $v$ and $\pi(v)$ meets at most $k'$ 
simplices, which implies that $v$ and $\pi'(v)$ lie within bounded distance in $\calX-\No{C}$. Thus, $\pi'$ 
is a coarse retraction.
	\end{proof}

\begin{lemma}
	Let $\Delta$ be an edge of $X$ contained in the fibre of a vertex of $Y$. Then the augmented 
link $\link_X(\Delta)^{+W}$ is quasi-isometrically embedded in  $\big(X-\sat_X(\Delta)\big)^{+W}.$
	\label{lem:QI2}
\end{lemma}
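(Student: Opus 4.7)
The plan is to implement the overview described above with $U_\Delta=\link_{\widehat D}(\iota(v))=\partial N(\iota(v))$ and $V_\Delta=\widehat D-\No{\iota(v)}$, where $v=\bar\Delta$ is the vertex of $Y$ whose fibre contains $\Delta$. First, I dispose of the trivial case: if $\Gamma$ is a single edge, then by Lemma~\ref{lem:coarseLipSat} one has $X-\sat_X(\Delta)=\link_X(\Delta)$, so the inclusion is the identity and there is nothing to prove. Throughout the rest, I assume $\Gamma$ is not a single edge, so that Lemma~\ref{lem:coarseLipSat} applies in its non-degenerate form.

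The next step is to assemble the commutative diagram described in the overview. Lemma~\ref{lem: QI link} provides a quasi-isometry $q\colon\link_X(\Delta)^{+W}\to\link_{\widehat D}(\iota(v))=U_\Delta$. Lemma~\ref{lem:coarseLipSat} provides a coarsely Lipschitz extension $\bar q\colon(X-\sat_X(\Delta))^{+W}\to\widehat D-\iota(v)=V_\Delta$. Since both maps restrict to $q=\iota\circ p$ on vertices, the square
\[
\xymatrixcolsep{5pc}\xymatrix{
\link_X(\Delta)^{+W}\ar@{^{(}->}[d]\ar[r]^-{q}&U_\Delta\ar@{^{(}->}[d]\\
(X-\sat_X(\Delta))^{+W}\ar[r]^-{\bar q}&V_\Delta
}
\]
commutes up to bounded error (the only discrepancies come from how each horizontal arrow chooses to send $W$--edges to paths in $\widehat D$, which by construction involves bounded choices). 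Given this, it suffices to verify that the right-hand inclusion $U_\Delta\hookrightarrow V_\Delta$ is a quasi-isometric embedding, since the horizontal arrows are a quasi-isometry (top) and coarsely Lipschitz (bottom).

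For the inclusion $U_\Delta\hookrightarrow V_\Delta$, I would invoke the CAT($-1$)--geometry of $\widehat D$ established in~\cite{MP} (see Convention~\ref{conv:metric_coneoff}). Since $\widehat D$ is CAT($-1$) hence CAT($0$), has finitely many isometry types of simplices (the action of $A_\Gamma$ on $\widehat D$ is cocompact), and the singleton $\{\iota(v)\}$ is a convex subcomplex, Lemma~\ref{lem:coarse retraction} applied to $\calX=\widehat D$ and $C=\{\iota(v)\}$ yields a coarsely Lipschitz retraction $V_\Delta=\widehat D-\No{\iota(v)}\to\partial N(\iota(v))=U_\Delta$. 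The existence of such a retraction is precisely what one needs to conclude that $U_\Delta$ is quasi-isometrically embedded in $V_\Delta$.

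The main technical ingredient is really packaged inside Lemma~\ref{lem:coarse retraction} and Lemma~\ref{lem: QI link}; the remaining work is bookkeeping to confirm that the diagram above commutes up to uniformly bounded error and that composing a quasi-isometry with a quasi-isometric embedding and with a coarse inverse of a coarsely Lipschitz map still yields a quasi-isometric embedding. The only mild subtlety I expect is ensuring that the image of $\link_X(\Delta)^{+W}$ under the diagonal composition $\bar q\circ(\text{inclusion})$ agrees (up to bounded error) with its image under the retraction composed with the top arrow $q$; this follows because both maps are, on vertices, equal to $\iota\circ p$, and the retraction fixes $\partial N(\iota(v))$ coarsely.
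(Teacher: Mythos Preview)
Your proposal is correct and follows essentially the same approach as the paper: handle the single-edge case via Lemma~\ref{lem:coarseLipSat}, then build the commutative-up-to-bounded-error square with $U_\Delta=\partial N(\iota(v))$ and $V_\Delta=\widehat D-\{\iota(v)\}$ using Lemmas~\ref{lem: QI link} and~\ref{lem:coarseLipSat}, and finish with the coarse retraction from Lemma~\ref{lem:coarse retraction} applied to the convex singleton $\{\iota(v)\}$.
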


\begin{proof} 
	If $\Gamma$ is a single edge, then we have  $\link_X(\Delta) = X-\sat_X(\Delta)$ by Lemma~\ref{lem:coarseLipSat}, hence $\link_X(\Delta)^{+W} = \big(X-\sat_X(\Delta)\big)^{+W}$, and the inclusion is a quasi-isometric embedding.
	
	Let us now consider the case where $\Gamma$ is not reduced to a single edge. The edge $\Delta$ is contained in the fibre over a vertex that we call $u \in Y$, and we set $v \coloneqq \iota(u)\in\widehat{D}$.
	 It follows from Lemma~\ref{lem:coarseLipSat} that  $X-\sat_X(\Delta)$ is the full subcomplex of $X$ obtained by removing the fibre $X_{u}$. Recall also from the same lemma the map $\bar q:(X-\sat(\Delta))^{+W}\to \widehat{D}-\{v\}$. Also, Lemma \ref{lem: QI link} gives us a quasi-isometry $q':\link_X(\Delta)^{+W}\to \link_{\widehat{D}}(v)$ (note that, in the statement of the lemma, the map $q$ is restricted to the $0$-skeleton of $\link_X(\Delta)^{+W}$, and $q'$ here is an extension of $q$ across all edges).

	Consider now the following diagram: 
	
	\[	\xymatrix{
		\link_X(\Delta)^{+W} \ar@{^{(}->}[d] \ar[r]^{q'} & \partial N(v)  \ar@{^{(}->}[d] \\
		\big(X-\sat_X(\Delta)\big)^{+W} \ar[r]_{\ \ \ \ \ \ \ \bar q}  & \widehat{D} - \{v\}  } \]

Note that the diagram commutes at the level of vertices, so that it commutes up to bounded arrow since all maps are coarsely Lipschitz. 

By Lemma \ref{lem:coarse retraction}, there exists a coarsely 
Lipschitz retraction $\pi: \widehat{D} - \{v\} \rightarrow \partial N(v)$. Thus,  the inclusion $\partial 
N(v)\hookrightarrow \widehat{D} - \{v\} $ is a quasi-isometric embedding.  Since the top and right arrows are 
quasi-isometric embeddings, so is their composition.  Since the left arrow is coarsely Lipschitz, and so is the bottom arrow, the left 
arrow is a quasi-isometric embedding because the diagram commutes up to bounded error.  In other words, $\link_X(\Delta)^{+W}\hookrightarrow 
\big(X-\sat_X(\Delta)\big)^{+W}$ is also a quasi-isometric embedding.
\end{proof}

Before considering the final two cases where $\Delta$ is a triangle of $X$, we prove the following result that will allow us to show that the bottom map of our (almost) commutative diagram is coarsely Lipschitz. Recall that we have simplicial maps $p:X\to Y$, and $\iota: Y\to\widehat{D}$, and we set $q=\iota\circ p$.

\begin{lemma} \label{lem:coarseLipSatTriangle}
	Let $\Delta$ be a triangle of $X$ that contains an edge in the fibre of a vertex of $Y$ and an additional root $u$. Then we have
	$$X-\sat_X(\Delta) =  p^{-1}\big(Y- \Star_{Y}(u))\big) \sqcup  \Lambda_{u}.$$
	Moreover, let $Z=Z_u$ be the graph 
	obtained from the disjoint union $$\big(\widehat{D} - \Star_{\widehat{D}}(\iota(u))\big) \sqcup \Lambda_u^{+W}$$ as follows: We add an edge between a vertex $\lambda 
	\in \Lambda_u$ and a vertex $v\in \widehat{D} -\Star_{\widehat{D}}(\iota(u))$ if there exists a $W$-edge of $X^{+W}$ between $\lambda$ and a 
	vertex of $X_{v}$. 
	
	Then the restriction 
	$$q: X-\sat_X(\Delta) \rightarrow Z $$ 
	is well-defined and extends to a coarsely Lipschitz map 
	$$q': (X-\sat_X(\Delta))^{+W}\rightarrow Z.$$ 
	
\end{lemma}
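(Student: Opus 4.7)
My plan is to prove the lemma in three stages: identify $\sat_X(\Delta)$ explicitly, define the map $q$ at the level of the underlying simplicial complex, and then extend it coarsely Lipschitz across $W$-edges.

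First, I would compute $\sat_X(\Delta)$ by combining the join decomposition in Lemma~\ref{lem:join decomposition} with the classification of links in the proof of Lemma~\ref{lem:index_set}. From that classification, $\link_X(\Delta) = \Lambda_u$. A direct check shows that a simplex $\Sigma$ of $X$ satisfies $\link_X(\Sigma) = \Lambda_u$ exactly when $\bar\Sigma = \{u,v'\}$ is an edge of $Y$ with $v'\in\link_Y(u)$ and $\Sigma$ is a triangle $\{u,v',\lambda'\}$ for some $\lambda'\in\Lambda_{v'}$: since $\bar\Sigma$ is an edge, $\link_Y(\bar\Sigma) = \emptyset$ by Lemma~\ref{lem:no square}, and for the join $\link_{X_u}(\Sigma_u)\ast\link_{X_{v'}}(\Sigma_{v'})$ to equal $\Lambda_u$ one needs $\Sigma_u = \{u\}$ and $\Sigma_{v'}$ to be a maximal edge of the cone $X_{v'}$. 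Hence $\sat_X(\Delta) = \{u\}\cup\bigcup_{v'\in\link_Y(u)} X_{v'}^{(0)}$, and the vertex set of $X-\sat_X(\Delta)$ is $p^{-1}(Y-\Star_Y(u))^{(0)}\sqcup\Lambda_u^{(0)}$. Every $X$-neighbour of a leaf in $\Lambda_u$ lies in $\{u\}\cup\bigcup_{v'\in\link_Y(u)} X_{v'}^{(0)}$, so these two pieces are disjoint subcomplexes of $X-\sat_X(\Delta)$.

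Next, I verify that $q(x) = \iota(p(x))$ on $p^{-1}(Y-\Star_Y(u))$ and $q(\lambda) = \lambda$ on $\Lambda_u$ defines a simplicial map into $Z$. By Lemma~\ref{lem:iota}, $\iota$ is injective and two vertices of $Y$ are adjacent iff their $\iota$-images are adjacent in $\widehat D$ in the tree/dihedral pattern described there; any $\widehat D$-adjacency involving a tree- or dihedral-type vertex either is of that form or else runs to a cyclic- or trivial-type vertex (not in the image of $\iota$). Thus for $w\in Y-\Star_Y(u)$ we have $\iota(w)\in\widehat D-\Star_{\widehat D}(\iota(u))$, so the first part of the definition lands in the correct component of $Z$. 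Since $X-\sat_X(\Delta)$ is a disjoint union of subcomplexes matching the two pieces of $Z$, the map is simplicial.

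To extend $q$ across $W$-edges to a coarsely Lipschitz $q':(X-\sat_X(\Delta))^{+W}\to Z$, I send each $W$-edge to a path in $Z$ of uniformly bounded length. The easy cases are: $W$-edges inside $\Lambda_u$ map to edges of $\Lambda_u^{+W}\subset Z$; $W$-edges between $\Lambda_u$ and $p^{-1}(Y-\Star_Y(u))$ are, by the very definition of the bridge edges in $Z$, single edges there; and $W$-edges inside $p^{-1}(Y-\Star_Y(u))$ whose $Y$-projections coincide (type 1) or share a midpoint $v'\in Y-\Star_Y(u)$ yield a path of length at most two in $\widehat D-\Star_{\widehat D}(\iota(u))$ via $\iota(v')$.

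The main obstacle is a type-2 $W$-edge between $x_1\in X_{w_1}$ and $x_2\in X_{w_2}$ with $Y$-midpoint $v'\in\link_Y(u)$: the bounded path between $\iota(w_1)$ and $\iota(w_2)$ in $\link_{\widehat D}(\iota(v'))$ provided by Lemma~\ref{lem: QI link} may pass through $\iota(u)\in\Star_{\widehat D}(\iota(u))$ and hence fail to lie in $Z$. I route around $\iota(u)$ via $\Lambda_u$. By the definition of type-2 adjacency, the maximal simplices $\delta_1,\delta_2$ realising the $W$-edge share a leaf $\mu\in\Lambda_{v'}$ in the fibre $X_{v'}$, and the sets $\sigma(\lambda_i)\subset A_\Gamma$ associated to their $\Lambda_{w_i}$-leaves lie within $r_2$ of each other. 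I would produce $\lambda\in\Lambda_u$ such that $\Delta(\{u,\lambda\},\{v',\mu\})$ is $W$-adjacent of type 2 to both $\delta_1$ and $\delta_2$ by invoking Lemma~\ref{lem:bounded_sets} along the edge $u-v'$ to find a point $g$ in the intersection of the $r$-neighbourhoods of $u$ and $v'$ whose $\phi_{v'}$-image is close to $\mu$, then translating $g$ by the $\mathbb{Z}^2$-subgroup from Lemma~\ref{lem:intersection_normalisers} (which acts coboundedly on that intersection by Lemma~\ref{lem:intersection_of_cosets}) so that its $A_\Gamma$-location is close to $w(\delta_i)$, and finally setting $\lambda:=\phi_u(g)$. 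The resulting bridge edges in $Z$ from $\lambda$ to $\iota(w_1)$ and $\iota(w_2)$ give a path of length two; producing this bridging leaf $\lambda$ with the correct $A_\Gamma$-alignment is the main obstacle, with everything else following the pattern of Lemma~\ref{lem:coarseLipSat}.
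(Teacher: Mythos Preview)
Your computation of $\sat_X(\Delta)$, the verification that the restricted $q$ lands in $Z$, and your handling of the easy $W$-edge cases are all correct and essentially match the paper. The issue is in the hard case, where the midpoint $u'$ of the type-2 edge lies in $\link_Y(u)$.

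Your plan is to produce a single $\lambda\in\Lambda_u$ so that $\Delta(\{u,\lambda\},\{u',\mu\})$ is type-2 $W$-adjacent to both $\delta_i$. The step that fails is ``translating $g$ by the $\mathbb Z^2$-subgroup \dots so that its $A_\Gamma$-location is close to $w(\delta_i)$''. The $\mathbb Z^2$ from Lemma~\ref{lem:intersection_normalisers} acts coboundedly on $u^{+r}\cap u'^{+r}$, so any translate of $g$ stays in that set. But $w(\delta_i)$ lies in $w_i^{+r}\cap u'^{+r}$, a \emph{different} $\mathbb Z^2$-coset inside $u'^{+r}$. These two cosets share only the central $hH'h^{-1}$-direction; passing to the virtually-free quotient $N(H')/H'$, they project to two distinct quasi-lines, whose distance is unbounded as $w_i$ ranges over $\link_Y(u')$. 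Equivalently, in the eventual HHS structure the projections of $P_u$ and $P_{w_i}$ to the unbounded quasitree $\mathcal C(u'^{\text{tree}})$ can be arbitrarily far apart. So there is no uniform way to reach $w(\delta_i)$ from inside $u^{+r}\cap u'^{+r}$, and hence no uniform $r_2$ makes $\dist(\sigma(\lambda),\sigma(\lambda_i))\leq r_2$ for all such configurations.

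The paper takes a genuinely different route for this case. Rather than building an explicit length-2 bridge through $\Lambda_u$, it introduces an auxiliary connected graph $Z''\subset X^{+W}$ (the span of the quasilines $\Lambda_w$ for $w\in\link_Y(u')$), on which $\mathrm{Stab}(u')=N(z_{u'})$ acts coboundedly, and collapses it to a subgraph $Z'\subset Z$. The orbit map $N(z_{u'})^{+2r}\to Z''\to Z'$ is then coarsely Lipschitz; since $w(\delta_1),w(\delta_2)\in N(z_{u'})^{+r}$ are within $r_3$ of each other (Lemma~\ref{lem:close_levels}), their images in $Z'$ are uniformly close, and these images are uniformly close to $\iota(w_1),\iota(w_2)$. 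This orbit-map trick replaces your missing direct construction: it produces a bounded-length path in $Z$ without having to exhibit a specific bridging leaf in $\Lambda_u$.
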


\begin{proof}
	It follows from Lemma \ref{lem:join decomposition} that 
	$\sat_X(\Delta)$ consists of all the triangles $\Delta'$ of $X$ such that $(\Delta')_{u} = \Delta_{u}$ (that is, $\Delta'$ is the join of the root $u$ and some edge). The structure of $X-\sat_X(\Delta)$ follows immediately. Since $\iota$ is injective by Lemma \ref{lem:iota}, as well as simplicial, we have $\iota^{-1}(\Star_{\widehat{D}}(\iota(u))\subseteq \Star_Y(u)$, and therefore $q^{-1}(\Star_{\widehat{D}}(\iota(u))\subseteq p^{-1}(\Star_Y(u))$. Therefore, the restriction of $q$ as in the statement is well-defined.
	
	The required coarsely Lipschitz extension exists provided that any two distinct vertices  $v, v' \in X-\sat_X(\Delta)$ joined by an edge get mapped to vertices of $Z$ that lie within uniformly bounded (in particular, finite) distance.

	First notice that, by construction, edges that are not $W$-edges between two distinct vertices of $ X-\sat_X(\Delta)$ are either mapped to an edge of $Z$ or collapsed to a point. Moreover, if $v\in X-\sat_X(\Delta)-\Lambda_u$ and a point $\lambda \in \Lambda_{u}$ are joined by a $W$-edge, then their images under $q$ are joined by an edge in $Z$ by definition of $Z$. Thus, it is enough to show that there exists a uniform constant $C$ such that if $v, v' \in \big(X-\sat_X(\Delta)\big) - \Lambda_{u}$ are distinct vertices joined by a $W$-edge, then $\dist_Z(q(v), q(v'))\leq C$. 
	
	Let $v, v' \in \big(X-\sat_X(\Delta)\big) - \Lambda_{u}$ be two  distinct vertices joined by a $W$-edge. First notice that $p(v), p(v')$ belong to $Y - \Star_Y(u)$ by the first part of the statement. It is enough to consider the case where $p(v)$ and $p(v')$ are not connected by an edge in $Y$. Since $p(v)$ and $p(v')$ are connected by a $W$-edge, they are either equal (for $W$-edges of type $1$) or at distance $2$ in $Y$ (for $W$-edges of type $2$). It is enough to consider the second case, so let $\delta$, $\delta'$ be maximal simplices of $X$, containing $v, v'$ respectively, that are connected by a $W$-edge of type 2. 
	
	Let $u'$ be the midpoint between $p(v)$ and $p(v')$. If $u'\in Y - \Star_Y(u)$, then $q(v), \iota(u'), q(v')$ is a combinatorial path of length $2$ in $Z$, so $\dist_Z(q(v), q(v'))\leq 2$. 
	
	Otherwise, $u'$ must be a vertex  of $\link_Y(u)$ (since $u'\neq u$ for otherwise $v$ and $v'$ would lie in $\sat(\Delta)$). Up to the action of $A_\Gamma$, we can assume that $u, u'$ are of the form $u_a, u_{ab}$ (up to permutation) and in particular there are finitely many possible pairs to consider. Hence, it suffices to fix $u,u'$ and produce a constant for those.
	By construction, we have that  $w(\delta), w(\delta')$ are contained in the ``thickened'' coset $N(z_{u'})^{+r}$. Moreover, since $\delta$ and $\delta'$ are connected by a $W$-edge, it follows from  Lemma~\ref{lem:close_levels} that $\dist_{A_\Gamma}(w(\delta), w(\delta'))\leq r_3$. 
	
	To show that $q(v), q(v')$ are at uniformly bounded distance in $Z$, it will be enough to show that they are at uniformly bounded distance in the full subgraph $Z'$ of $Z$ spanned by $(\link_{\widehat{D}}(\iota(u'))- \Star_{\widehat{D}}(\iota(u)))\cup \Lambda_{u}$.

	We modify the complex $Z'$ in an equivariant way to obtain a graph on which  $N(z_{u'})$ acts naturally. Let $Z''$ be the full subcomplex of $X^{+W}$ spanned by the quasi-lines $\Lambda_w^{+W}$ for $w\in \link_Y(u')$. By construction of the augmented link $\link_Y(u')^{+W}$, the projection $p: X^{+W}\rightarrow Y^{+W}$ induces a surjective simplical map $p: Z'' \rightarrow \link_Y(u')^{+W}$. 
	 Moreover, since the link $\link_Y(u')^{+W}$ is connected by Lemma~\ref{lem:links_connected} and the quasi-lines $\Lambda_w^{+W}$ (that is, the fibers of $p$ over the vertices) are connected by Lemma~\ref{lem:quasiline}, it follows that $Z''$ is connected. Since $\stabilizer_Y(u')=N(z_{u'})$ stabilises $\link_Y(u')^{+W}$ and preserves the family of quasi-lines $\Lambda_w$ by construction of the action $A_\Gamma \curvearrowright X^{+W}$, it follows that there is an induced action of $N(z_{u'})$ on the connected graph $Z''$.

	 In particular, for $x \in Z''$,  the orbit map
	 $$\omega_x: N(z_{u'})\rightarrow Z'', g \mapsto g \cdot x$$
	 is coarsely Lipschitz (with respect to any given word metric on $N(z_{u'})$), and the Lipschitz constant does not depend on $x\in Z''$ since the action is cobounded, see Lemmas \ref{lem:I_finite} and \ref{lem:blow_up_data_existence}.
	 
	 We can extend $\omega_x$ to a map $\hat\omega_x: N(z_{u'})^{+2r}\rightarrow Z''$ which is still $N(z_{u'})$-equivariant and coarsely Lipschitz when $N(z_{u'})^{+2r}$ is endowed with the metric inherited from $A_\Gamma$.
	 
	 Moreover, by construction of $Z$, the projection $q: X^{+W}\rightarrow \widehat{D}^{+W}$ induces a Lipschitz map $q: Z'' \rightarrow Z'$ obtained by collapsing each quasi-line $\Lambda_w^{+W}$ other than $\Lambda_u$ to the corresponding vertex $\iota(w)$. 
	Thus, the composition map 
	$N(z_{u'})\overset{\omega_x}{\longrightarrow} Z'' \overset{q}{\longrightarrow} Z'$ is coarsely Lipschitz, where the coarse Lipschitz  constant can be chosen to be uniform.

	Let us now bound above the distance $\dist_{Z'}(q(v), q(v'))$. After perturbing $w(\delta)$ up to distance $r$, we can assume that it is contained in $yN(z)\cap N(z_{u'})^{+2r}$, where $yN(z)=q(v)$ for some element $z$ belonging to the finite set of elements of the form $a$ or $z_{ab}$ (for some $a, b \in \Gamma$). Similarly, we perturb $w(\delta')$ and define $y',z'$ analogously for $\delta'$.	
	We pick as basepoint for the orbit map the point $x \coloneqq N(z)$. The  coarsely Lipschitz map $ q\circ \hat\omega_x$ sends $w(\delta)$ to a set (uniformly bounded by \ref{lem:basic_w_properties}) containing $yN(z) = q(v)$. Moreover, if we denote by $k$ a uniform upper bound on the distance in $ \link_{Y}(u')$ between vertices of the form $N(z_1)$ and $N(z_2)$, for $z_1, z_2$  of the form $a$ or $z_{ab}$ ($a, b \in \Gamma$), then  the map $ q\circ \hat\omega_x$ sends $w(\delta')$ to a set (again uniformly bounded by \ref{lem:basic_w_properties}) containing $y'N(z)$, which is at distance at most $k$ from $y'N(z') = q(v')$. Moreover, since $q\circ \hat\omega_x$ is coarsely Lipschitz and  $\dist_{A_{\Gamma}}(w(\delta), w(\delta'))\leq r_3+4r$ (recall the $r$-perturbation), it follows that there exists a constant $C'$ such that $\dist_{Z'}(q(v), q(v')) \leq C'$, hence $\dist_{Z}(q(v), q(v')) \leq C'$.  This constant $C'$ depends on $k,r,r_3$ and the coarse Lipschitz constant for $\hat\omega_x$ (in particular, it is independent of $v,v',u'$).
	  
	  We thus have that $\dist_{Z}(q(v), q(v'))$ is uniformly bounded above, and it follows that  the map $q'$ is coarsely Lipschitz.
\end{proof}

\begin{lemma}
	Let $\Delta$ be a triangle of $X$ that contains two roots, and an edge in the fibre of a vertex of tree type of 
$Y$. 
 Then the augmented link $\link_X(\Delta)^{+W}$ is quasi-isometrically embedded in the augmented complex 
$\big(X-\sat_X(\Delta)\big)^{+W}.$
	\label{lem:QI3}
\end{lemma}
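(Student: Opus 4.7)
The plan is to mimic Lemma~\ref{lem:QI2} via a commutative diagram, reducing to showing that a certain inclusion is a quasi-isometric embedding by constructing a coarse retraction.

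First, I compute $\link_X(\Delta)$ and $\sat_X(\Delta)$. Let $u_d$ denote the dihedral-type root of $\Delta$ and $u_t$ the tree-type root, so $\Delta_{u_t}$ is an edge of the fibre $X_{u_t}$ and $\Delta_{u_d}$ is a root. By Lemma~\ref{lem:join decomposition}, $\link_Y(\overline\Delta)$ is empty (no triangles in $Y$, by Lemma~\ref{lem:no square}) and $\link_{X_{u_t}}(\Delta_{u_t})$ is empty (an edge in the simplicial cone $X_{u_t}$), whence $\link_X(\Delta) = \Lambda_{u_d}$. Running through the classification of links in Lemma~\ref{lem:index_set}, the simplices of $X$ with link equal to $\Lambda_{u_d}$ are precisely the triangles $\Delta(\{u_t, \lambda'\}, u_d)$ for $\lambda' \in \Lambda_{u_t}$, so $\sat_X(\Delta)^{(0)} = \{u_d\} \cup X_{u_t}^{(0)}$. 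Lemma~\ref{lem:coarseLipSatTriangle} then applies with additional root $u = u_d$, yielding a coarsely Lipschitz map $q' : (X - \sat_X(\Delta))^{+W} \to Z_{u_d}$ that restricts to the identity on $\Lambda_{u_d}$.

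I next form the commutative diagram
$$\xymatrix{
\link_X(\Delta)^{+W} = \Lambda_{u_d}^{+W} \ar@{=}[r] \ar@{^{(}->}[d] & \Lambda_{u_d}^{+W} \ar@{^{(}->}[d] \\
(X - \sat_X(\Delta))^{+W} \ar[r]_{q'} & Z_{u_d}
}$$
which commutes on vertices of $\Lambda_{u_d}$, hence up to bounded error globally since all arrows are coarsely Lipschitz. A routine diagram chase reduces the lemma to showing that the inclusion $\Lambda_{u_d}^{+W} \hookrightarrow Z_{u_d}$ is a quasi-isometric embedding, and for this I will construct a coarsely Lipschitz retraction $\rho : Z_{u_d} \to \Lambda_{u_d}^{+W}$.

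The key technical observation is that, for each $v \in \widehat{D} - \Star_{\widehat{D}}(\iota(u_d))$, the set $S_v$ of vertices $\lambda \in \Lambda_{u_d}$ adjacent to $v$ in $Z_{u_d}$ has uniformly bounded diameter in $\Lambda_{u_d}^{+W}$. Such an adjacency arises only from a $W$-edge of type $2$ in $X^{+W}$, which forces $v$ to be of dihedral type and to share with $u_d$ a unique common $Y$-neighbor $w$ (unique by Lemma~\ref{lem:no square}), which is of tree type since $Y$ is bipartite. Writing $u_d = u_{ab}$, Lemma~\ref{lem:dihedral Z2} lets us identify (up to conjugation) the cyclic subgroup associated to $w$ as $\langle c\rangle$ for some standard generator $c \in \{a,b\}$. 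The $W$-edge then forces $\sigma(\lambda)$ to lie in a bounded neighborhood of $A_{ab}^{+r}\cap w^{+r}$, which by Lemmas~\ref{lem:intersection_of_cosets} and~\ref{lem:intersection_normalisers} is a bounded neighborhood of a coset of the $\mathbb Z^2$-subgroup $\langle c, z_{ab}\rangle$, and simultaneously within distance $r_2$ of a translate of the coset corresponding to $v$. Since the $\mathbb Z^2$ associated to $v$ shares only the $\langle c\rangle$-direction with $\langle c, z_{ab}\rangle$, the second condition pins the $\langle z_{ab}\rangle$-coordinate of $\sigma(\lambda)$ to a bounded range. Blow-up condition~\eqref{item:bounded_cosets}, applied to the incident direction $c$, then forces $\phi_{A_{ab}}(\sigma(\lambda))$, and hence $\lambda$, into a bounded portion of $\Lambda_{u_d}^{+W}$. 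The retraction $\rho$ is defined as the identity on $\Lambda_{u_d}^{+W}$, as choosing an arbitrary element of $S_v$ when $S_v \neq \emptyset$, and extending over the remaining vertices of $\widehat{D} - \Star_{\widehat{D}}(\iota(u_d))$ via the CAT$(-1)$ closest-point projection of Lemma~\ref{lem:coarse retraction}.

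The main obstacle is verifying that $\rho$ is coarsely Lipschitz across edges of $Z_{u_d}$ lying inside $\widehat{D} - \Star_{\widehat{D}}(\iota(u_d))$: this requires showing that $S_v$ and $S_{v'}$ are uniformly close in $\Lambda_{u_d}^{+W}$ whenever $v, v'$ are $\widehat{D}$-adjacent with both sets non-empty, which should follow from an analogous intersection-of-cosets analysis combined with the Lipschitz property of the CAT$(-1)$ projection.
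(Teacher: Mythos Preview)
Your setup and the commutative diagram are correct, and you have correctly identified that the problem reduces to showing $\Lambda_{u_d}^{+W}\hookrightarrow Z_{u_d}$ is a quasi-isometric embedding. However, the retraction $\rho$ you sketch has two genuine gaps.

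First, the CAT($-1$) projection step is not well-defined. Lemma~\ref{lem:coarse retraction} requires a convex subcomplex of an ambient CAT(0) complex; but the space $\widehat D-\Star_{\widehat D}(\iota(u_d))$ does not contain $\iota(u_d)=v_{ab}$, so there is no obvious convex target. The paper resolves this by \emph{adding back} the vertex $v_{ab}$: the complex $Z'$ obtained from $\widehat D$ by removing only the tree-type apices at $v_{ab}$ (but keeping $v_{ab}$ itself) is still CAT($-1$) by Lemma~\ref{rem:subspace_coneoff}, so Lemma~\ref{lem:coarse retraction} gives a retraction $Z=Z'-\{v_{ab}\}\to\link_{Z'}(v_{ab})$. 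Crucially, $\link_{Z'}(v_{ab})$ is exactly $\link_D(v_{ab})$ (the non-coned-off link), not $\Lambda_{u_d}$.

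Second, and more seriously, the ``main obstacle'' you flag is the entire content of the lemma and cannot be dispatched by the coset analysis you outline. Showing that $S_v$ and $S_{v'}$ are uniformly close for $\widehat D$-adjacent $v,v'$ amounts to showing that a unit step in $\link_D(v_{ab})$ moves the $z_{ab}$-coordinate by a bounded amount, i.e.\ that $\langle z_{ab}\rangle$-orbits are quasi-isometrically embedded in $\link_D(v_{ab})$. This is a nontrivial fact about dihedral Artin groups, proved by Vaskou and recorded as Lemma~\ref{lem:Vaskou_QI_embedded_link}; your intersection-of-cosets argument does not reproduce it. The paper's proof invokes exactly this lemma after the retraction to $\link_D(v_{ab})$, which is the missing ingredient in your approach.
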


\begin{proof} Up to the action of $A_\Gamma$, we can assume that there exist adjacent generators $a, b \in\Gamma$ such that 
$\overline{\Delta}$ is the edge between $u_{a}$ and $u_{ab}$. It follows from Lemma \ref{lem:join decomposition} that 
$\sat_X(\Delta)$ consists of all the triangles $\Delta'$ of $X$ such that $(\Delta')_{u_{ab}} = \Delta_{u_{ab}}$ (that is, $\Delta'$ is a join of $u_{ab}$ and some edge). Moreover, by Proposition \ref{prop: C=C0}, the augmented link $\link_X(\Delta)^{+W}$ consists of the quasi-line 
$\Lambda_{u_{ab}}$, equipped with its $W$--edges (this augmented link is still quasi-isometric to $\mathbb Z$ by 
Lemma~\ref{lem:quasiline}). 

Let $q': (X-\sat_X(\Delta))^{+W} \rightarrow Z$ be the coarsely Lipschitz map constructed in Lemma~\ref{lem:coarseLipSatTriangle}. 

We have the 
following  diagram that commutes up to bounded error (as it commutes exactly on vertices and all maps are coarsely Lipschitz): 
	
	\[		\xymatrixcolsep{6pc}\xymatrix{
	\link_X(\Delta)^{+W} \ar@{^{(}->}[d]  \ar[r]^{=}   & \Lambda_{u_{ab}}  \ar@{^{(}->}[d] \\
	\big(X-\sat_X(\Delta)\big)^{+W} \ar[r]_{q'}  & Z } \]
	 
	 In particular, to show that  $\link_X(\Delta)^{+W} \hookrightarrow \big(X-\sat_X(\Delta)\big)^{+W}$  is a 
quasi-isometric embedding, it is enough to show that the inclusion $\Lambda_{u_{ab}} \hookrightarrow Z$ is a quasi-isometric 
embedding.
By construction of the $W$-edges, the inclusion $\Lambda_{u_{ab}} \hookrightarrow Z$ is a quasi-isometric 
embedding provided the $\langle z_{ab} \rangle$-orbits are quasi-isometrically embedded in $Z$.

Let $Z'$ be the full subcomplex of $\widehat{D}$ generated by $Z\cup \{v_{ab}\}$. By construction of $Z$ (see Lemma~\ref{lem:coarseLipSatTriangle}), the subcomplex $Z'$ is obtained from $\widehat{D}$ by removing the apices of all the standard trees containing $v_{ab}$. Since  $Z'$ is obtained from $\widehat{D}$ by removing a family of open cones, it is still CAT(0) 
for the induced metric by Lemma~\ref{rem:subspace_coneoff}. In particular, it follows from Lemma \ref{lem:coarse retraction} that there is a coarsely Lipschitz 
retraction $$Z = Z' - \{v_{ab}\}  \rightarrow \link_{Z'}(v_{ab}).$$
	 Thus, it is enough to show that the $\langle z_{ab} \rangle$-orbits are quasi-isometrically embedded in 
$\link_{Z'}(v_{ab})$. But by construction of $Z'$, $\link_{Z'}(v_{ab})$ is $A_{ab}$-equivariantly isomorphic to 
$\link_D(v_{ab})$. Thus, it is enough to show that the $\langle z_{ab} \rangle$-orbits are quasi-isometrically embedded in 
$\link_D(v_{ab})$, where $D$ is the original Deligne complex. Since $A_{ab}$ is of large type, this now follows from Lemma~\ref{lem:Vaskou_QI_embedded_link} applied to $g=z_{ab}$.
	\end{proof}

\begin{lemma}
	Let $\Delta$ be a triangle of $X$ that contains two roots, and an edge in the fibre of a dihedral vertex of $Y$.  Then the augmented 
link $\link_X(\Delta)^{+W}$ is quasi-isometrically embedded in the augmented complex $\big(X-\sat_X(\Delta)\big)^{+W}.$
	\label{lem:QI4}
\end{lemma}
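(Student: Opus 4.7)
Up to the action of $A_\Gamma$, we may assume $\Delta$ is the triangle spanned by the roots $u_a,u_{ab}$ and a leaf $\mu\in\Lambda_{u_{ab}}$, for some adjacent generators $a,b\in V(\Gamma)$. By Lemma~\ref{lem:join decomposition} we have $\link_X(\Delta)=\Lambda_{u_a}$, and $\sat_X(\Delta)$ consists of the triangles $\{u_a,u_{ab},\mu'\}$ for $\mu'\in\Lambda_{u_{ab}}$. Applying Lemma~\ref{lem:coarseLipSatTriangle} with $u=u_a$ produces a coarsely Lipschitz map $q':(X-\sat_X(\Delta))^{+W}\to Z:=Z_{u_a}$ together with the diagram
\[
\xymatrix{
\link_X(\Delta)^{+W}\ar@{^{(}->}[d]\ar[r]^{=}&\Lambda_{u_a}^{+W}\ar@{^{(}->}[d]\\
(X-\sat_X(\Delta))^{+W}\ar[r]_{q'}& Z_{u_a}
}
\]
that commutes up to bounded error. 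Since the top arrow is a quasi-isometry and the bottom arrow is coarsely Lipschitz, the left inclusion is a quasi-isometric embedding as soon as the right inclusion is. Moreover, by Lemma~\ref{lem:quasiline} and the definition of the $W$--edges internal to $\Lambda_{u_a}^{+W}$, it suffices to show that the $\langle a\rangle$--orbit of a single point of $\Lambda_{u_a}$ is quasi-isometrically embedded in $Z_{u_a}$.

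From the structure of the gluing in $Z_{u_a}$ (cf.\ Lemma~\ref{lem:coarseLipSatTriangle}), the only vertices of $(\widehat D-\Star_{\widehat D}(v_a))^{(0)}$ that are joined to some $\lambda\in\Lambda_{u_a}$ are apices $v_{a'}$ of standard trees $T_{a'}\neq T_a$ sharing a dihedral vertex with $T_a$; equivalently, the exits from $\Lambda_{u_a}$ are to tree-type vertices at $Y$--distance $2$ from $u_a$, via a common dihedral vertex $u_c\in\link_Y(u_a)$. The strategy is to produce a coarsely Lipschitz function $\tilde\psi:Z_{u_a}\to\mathbb R$ satisfying
\[
|\tilde\psi(a^n x)-\tilde\psi(x)-n\tau_a|\leq C
\]
for some $\tau_a>0$, $C\geq 0$ and all $x\in Z_{u_a}$, $n\in\mathbb Z$. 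Such a function immediately implies $d_{Z_{u_a}}(x,a^n x)\geq (\tau_a|n|-C)/L$, where $L$ is the Lipschitz constant, yielding the required quasi-isometric embedding of $\langle a\rangle$--orbits.

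On $\Lambda_{u_a}$ we take $\tilde\psi$ to be (a bounded perturbation of) the quasimorphism $\psi:N(a)\to\mathbb R$ underlying the blow-up data constructed in Lemma~\ref{lem:blow_up_data_existence}, which is unbounded on $\langle a\rangle$ by Lemma~\ref{lem:quasimorphisms_1}. On $\widehat D-\Star(v_a)$ we exploit that, by Lemmas~\ref{rem:subspace_coneoff} and~\ref{lem:tree_convex_coneoff}, the space $\widehat D-\{v_a\}$ is CAT$(-1)$ and the tree $T_a$ is convex in it; the closest-point projection $\pi:\widehat D-\Star(v_a)\to T_a$ is therefore $1$--Lipschitz, and it is $\langle a\rangle$--invariant because $\langle a\rangle$ fixes $T_a$ pointwise (Lemma~\ref{lem:stab_tree}). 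The fibres of $\pi$ are $\langle a\rangle$--invariant, and on each fibre we define $\tilde\psi$ using $\psi$ and a measurable choice of $\langle a\rangle$--section, so that $\tilde\psi\circ a^n=\tilde\psi+n\tau_a$ on the nose.

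The main obstacle will be verifying that $\tilde\psi$ is coarsely Lipschitz across the gluing edges between $\Lambda_{u_a}$ and the tree-apex exits $v_{a'}$ in $\widehat D-\Star(v_a)$. The key ingredient here is condition~\eqref{item:bounded_cosets} of the blow-up data: the $\langle z_{ac}\rangle$--orbits (corresponding to incident directions at $u_a$, in the sense of Definition~\ref{def:incident_direction} and Lemma~\ref{lem:intersection_normalisers}) have $\phi_{N(a)}$--image of diameter at most $B_0$. Translated into a statement about $\tilde\psi$, this means that passage through a dihedral vertex $v_c\in T_a$ changes $\tilde\psi$ by a bounded amount, which will allow us to control $\tilde\psi$ along excursions in $\widehat D-\Star(v_a)$ and along the glue edges. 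Once $\tilde\psi$ is shown to be coarsely Lipschitz, the quasi-isometric embedding of $\langle a\rangle$--orbits in $Z_{u_a}$, and hence of $\Lambda_{u_a}\hookrightarrow Z_{u_a}$, follows as explained above.
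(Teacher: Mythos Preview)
Your reduction via the diagram from Lemma~\ref{lem:coarseLipSatTriangle} is correct and coincides with the paper's Step~1. The gap is in the construction of $\tilde\psi$ on $\widehat D-\Star_{\widehat D}(v_a)$. Choosing a measurable $\langle a\rangle$--section on each fibre of the closest-point projection $\pi$ certainly gives the equivariance $\tilde\psi\circ a^n=\tilde\psi+n\tau_a$, but it gives \emph{no control} on the coarse Lipschitz constant: the sections over different fibres are chosen independently, and nothing prevents $\tilde\psi$ from jumping unboundedly between nearby points in adjacent fibres (or even within a single fibre, which need not be connected). In fact, the existence of a coarsely Lipschitz $\tilde\psi$ with this equivariance on all of $Z_{u_a}$ is essentially \emph{equivalent} to the quasi-isometric embedding of $\langle a\rangle$--orbits that you are trying to prove, so the argument as written is circular. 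Your appeal to condition~\eqref{item:bounded_cosets} concerns only the map $\phi_{N(a)}$ on the quasiline $\Lambda_{u_a}$; it says nothing about paths that wander through $\widehat D-\Star(v_a)$, and that is precisely where the difficulty lies. (A minor point: your description of $\sat_X(\Delta)$ is incomplete---it contains $\{u_a\}\star X_{u'}$ for \emph{every} neighbour $u'$ of $u_a$, not just $u_{ab}$---though you then invoke the correct Lemma~\ref{lem:coarseLipSatTriangle}.)

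The paper's proof replaces the global height function by a geometric decomposition. After the same Step~1, it uses convexity of $T_a$ in $\widehat D-\{v_a\}$ (Lemma~\ref{lem:tree_convex_coneoff}) together with Lemma~\ref{lem:coarse retraction} to retract $Z$ onto a combinatorial neighbourhood $Z_a$ of $T_a$ (Step~2). This neighbourhood decomposes as a union of ``pages'' $Z_v$, one for each dihedral vertex $v\in T_a$, any two of which meet within distance $1$ of $\Lambda_a$; a short argument (Step~5) then reduces to the local statement that $\Lambda_a\hookrightarrow Z_v$ is a quasi-isometric embedding for each $v$. That local problem is solved (Steps~3--4) by identifying $Z_{v_{ab}}$, after quotienting by $\langle\Delta_{ab}\rangle$, with a blown-up graph of orbits modelled on the tree $\mathrm{Cayley}_{a,b}(A_{ab})/\langle\Delta_{ab}\rangle$, where the embedding of $L_a$ follows from a standard cone-off argument for quasiconvex quasilines in a hyperbolic space. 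The point is that the paper never attempts a single global Lipschitz height function; it instead works page by page, where the relevant dihedral Artin group action is completely understood.
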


	The proof of this lemma is rather long, but follows a similar strategy of reduction. The rest of this section will be 
devoted to it.

Up to the action of the group, we can assume that the projection $\overline{\Delta}$ is an edge of $Y$ between the vertices 
$u_a$ and $u_{ab}$. Since the fibre of $\Delta_{u_a}$ is the apex of $X_{u_a}$, the saturation $\mbox{Sat}_X(\Delta)$ 
consists of the union of all the triangles of $X$ that have the same fibre at $u_{a}$ as $\Delta$. In particular, we have 
$$X-\mbox{Sat}(\Delta) = \Lambda_{u_a} \sqcup q^{-1}\big(\widehat{D} - \widehat{T}_a\big),$$
and $\big(X-\mbox{Sat}(\Delta) \big)^{+W}$ is obtained from the previous graph by adding all the $W$-edges whose endpoints 
are in $X-\mbox{Sat}(\Delta) $. 

\medskip
	
\noindent 	\textbf{Step 1:} Let $q'=\pi_1: \big(X-\sat_X(\Delta)\big)^{+W} \rightarrow  Z$ be the coarsely Lipschitz map constructed in Lemma~\ref{lem:coarseLipSatTriangle}. The the following diagram commutes up to bounded error: 
		\[	\xymatrixcolsep{6pc}\xymatrix{
		\link_X(\Delta)^{+W} \ar@{^{(}->}[d]  \ar[r]^{=}   & \Lambda_{a}  \ar@{^{(}->}[d] \\
		\big(X-\sat_X(\Delta)\big)^{+W} \ar[r]_{\pi_1}  & Z } \]
	
	In particular, to show that the inclusion $$	\link_X(\Delta)^{+W}\hookrightarrow	
\big(X-\sat_X(\Delta)\big)^{+W}$$ is a quasi-isometric embedding, it is enough to show that the inclusion 
$$\Lambda_a\hookrightarrow Z$$ is a quasi-isometric embedding.

\medskip 
	
	We reduce the problem further, by using the CAT(0) geometry of $\widehat{D}$ to show that $Z$ coarsely retracts 
onto a neighbourhood of $\widehat{T}_a$. Consider the combinatorial neighbourhood $\partial N(\widehat{T}_a) = N(T_a) - v_a$.
	We define the complex $Z_a$ as the full subcomplex of $Z$ spanned by 
	$\Lambda_a \sqcup \partial N(\widehat{T}_a) $. \\
	
\noindent 	\textbf{Step 2:} There exists a coarsely Lipschitz retraction $\pi_2: Z \rightarrow Z_a$ such that the following 
diagram  commutes: 
		\[	\xymatrixcolsep{5pc}\xymatrix{
		\Lambda_a  \ar@{^{(}->}[d]  \ar[r]^{=}   & \Lambda_{a}  \ar@{^{(}->}[d] \\
	Z \ar[r]_{\pi_2}  & Z_a } \]
		In particular, to show that the inclusion $\Lambda_a \hookrightarrow Z$ is a quasi-isometric embedding, it is enough to 
	show that   the inclusion $\Lambda_a \hookrightarrow Z_a$ is a quasi-isometric embedding.
	
\begin{proof}[Proof of Step 2:]  Since $T_a$ is convex in $\widehat{D} - v_a$ by Lemma~\ref{lem:tree_convex_coneoff}, there exists a coarse retraction of $\widehat{D} - \widehat{T}_a$ on $\partial 
	N(\widehat{T}_a)$ by Lemma \ref{lem:coarse retraction}. We define the map $\pi_2: Z\rightarrow Z_a$ as follows: $\pi_2$ is the identity on $Z_a$ and is the coarse retraction $\widehat{D} - \widehat{T}_a \rightarrow \partial N(\widehat{T}_a)$ on $Z-\Lambda_a$. This definition makes sense as $Z_a \cap (Z-\Lambda_a) = \partial N(\widehat{T}_a)$ and $\pi_2$ is defined as the identity on $\partial N(\widehat{T}_a)$  in both cases. To conclude that this defines a coarse retraction $Z \rightarrow Z_a$, let us check that an edge between a vertex $x$ of $Z - \Lambda_a$ and a vertex $y$ of $Z_a$ is either contained in $Z - \Lambda_a$ or in $Z_a$. If $y \notin \Lambda_a$, then $x, y $ belong to $ Z - \Lambda_a$, and so does the edge between them. If $y \in \Lambda_a$, then by construction the edge $e$ between $x$ and $y$ comes from a $W$-edge of type $2$ of $X^{+W}$.  But by construction of $W$-edges of type $2$, this forces $x$ to belong to $\partial N(T_a)$, and thus $x, y$, and $e$ belong to $Z_a$. 
\end{proof}

	We now want to show that the inclusion  $\Lambda_a \hookrightarrow Z_a$ is a quasi-isometric embedding. 	For a vertex $v$ of $T_a$, we denote by $Z_v$ the subgraph of $Z_a$ spanned by $\Lambda_a$ and $\partial N(\widehat{T}_a)\cap 
N(v)$. In particular, $Z_a$ is the union of all the $Z_v$, where $v$ runs among the vertices of $T_a$. For a generator $b \in 
\Gamma$ adjacent to $a$, we simply denote by $Z_{ab}$ the graph $Z_{v_{ab}}$. To show that the inclusion  $\Lambda_a \hookrightarrow Z_a$ is a quasi-isometric embedding, we will first show that each inclusion $\Lambda_a \hookrightarrow Z_v$ is a quasi-isometric embedding (Steps 3 and 4) and then combine these quasi-isometric embeddings to show that $\Lambda_a \hookrightarrow Z_a$ is a quasi-isometric embedding (Step 5). 
	
	Before reducing the situation further, we need further information about  the graph of orbits introduced in Definition \ref{def:graph of orbits} and some variants. As the graphs we are looking at have been obtained from $\link_D(v_{ab})$ by removing the vertices corresponding to 
the tree $T_a$, we introduce a variant of the graph of orbits:
	
\begin{defn}
	Let $\calG$ be either  $\mathrm{Cayley}_{a,b}(A_{a,b})$ or $\mathrm{Cayley}_{a,b}(A_{a,b})/\langle \Delta_{ab}\rangle 
$ (where as usual, $\langle \Delta_{ab}\rangle$ acts by multiplication on the right). We define a new graph, called the 
\textbf{blown-up graph of orbits} and denoted $\mathrm{Orbit}_{a, b}^*\big(\calG\big)$ as follows.
	
	We denote by $L_a$ the simplicial line in $\calG$ spanned by the $\langle a \rangle$-orbit of the identity element. We put a 
vertex for every $\langle a \rangle$-orbit of $\calG$ except the one corresponding to $L_a$,  one vertex for every $\langle b 
\rangle$-orbit in $\calG$, and one vertex for each vertex of $L_a$. If two such subsets of $\calG$ have a non-empty 
intersection, we put an edge between them. Moreover, for every vertex $v$ of $L_a$, we also put an 
edge between $v$ and $av$. 
\end{defn}	
	
	This construction can be thought of as obtained from the original graph of orbits  $\mathrm{Orbit}_{a, 
b}^*\big(\calG\big)$ by blowing up the vertex corresponding to the coset $\langle a \rangle$, and replacing it with a copy of $L_a$. \\

	\noindent	\textbf{Step 3:}  There exists a coarsely Lipschitz map 
				$$ \pi_3: Z_{ab} \rightarrow  \mathrm{Orbit}_{a, b}^*\big(	
\mathrm{Cayley}_{a,b}(A_{a,b})\big)\ /\ \langle \Delta_{ab} \rangle $$ 
		and a quasi-isometry $\Lambda_a \rightarrow L_a$ such that the following diagram commutes up to bounded error and the inclusion $\Lambda_a \hookrightarrow Z_{ab}$ coarsely Lipschitz: 
		
				\[	\xymatrixcolsep{10pc}\xymatrix{
			\Lambda_a  \ar@{^{(}->}[d]  \ar[r]^{\cong}   & L_{a}  \ar@{^{(}->}[d] \\
			Z_{ab} \ar[r]_{\pi_3}  & \mathrm{Orbit}_{a, b}^*\big(	\mathrm{Cayley}_{a,b}(A_{a,b})\big) / \langle 
\Delta_{ab} \rangle  } \]

		In particular, to show that the inclusion $\Lambda_a \hookrightarrow Z_{ab}$ is a quasi-isometric embedding, it is 
enough to show that the inclusion 
		$$L_a \hookrightarrow\mathrm{Orbit}_{a, b}^*\big(	\mathrm{Cayley}_{a,b}(A_{a,b})\big) / \langle 
\Delta_{ab} \rangle $$
		is a quasi-isometric embedding. \\
		
\begin{proof}[Proof of Step 3]First notice that one can identify the subgraph $Z_{ab} - \Lambda_a$ with the subgraph $\link_{\widehat{D}}(v_{ab}) -  \link_{\widehat{D}\cap T_a}(v_{ab})$.

Now consider the (coarsely Lipschitz) quotient map $Z_{ab}\rightarrow Z_{ab}/\langle \Delta_{ab}\rangle$ for the action of $\langle \Delta_{ab}\rangle$ on the right. 
The image in that quotient of the apex of a standard tree is a vertex of valence $1$. The graph  $Z_{ab}/\langle 
\Delta_{ab}\rangle$ thus retracts via a Lipschitz map onto the subgraph obtained by removing all these valence $1$ 
vertices. Using the identification of $\link_D(v_{ab})$ with the graph of orbits  $\mathrm{Orbit}_{a, 
	b}\big(\mathrm{Cayley}(A_{ab})\big)$, one sees that the graph obtained is  the blown-up graph of orbits $\mathrm{Orbit}_{a, 
	b}^*\big(	\mathrm{Cayley}_{a,b}(A_{a,b})\big) / \langle \Delta_{ab} \rangle $.  	The composition $$\pi_3: Z_{ab} \rightarrow Z_{ab}/\langle \Delta_{ab}\rangle \rightarrow \mathrm{Orbit}_{a, 
	b}^*\big(	\mathrm{Cayley}_{a,b}(A_{a,b})\big) / \langle \Delta_{ab} \rangle $$ satisfies the required property. Moreover, $\pi_3$ is $\langle a \rangle$--equivariant. 

Pick a bounded set $B$ whose $\langle a \rangle$-translates cover $\Lambda_a$ containing exactly one element of each orbit. There is a unique $\langle a \rangle$-equivariant map $\Lambda_a \rightarrow L_a$ that sends $B$ to the identity element. This is a quasi-isometry since $\langle a \rangle$ acts coboundedly on both $\Lambda_a$ and $L_a$. We use this as the top arrow in the diagram. Since the other three maps are also $\langle a \rangle$-equivariant, the diagram commutes up to bounded error. 

Note that the map $\Lambda_a \hookrightarrow Z_{ab}$ is simplicial, hence coarsely Lipschitz.
\end{proof}

\noindent 	\textbf{Step 4:} The inclusion 
		$$L_a \hookrightarrow \mathrm{Orbit}_{a, b}^*\big(	\mathrm{Cayley}_{a,b}(A_{a,b})\big) / \langle 
\Delta_{ab} \rangle $$
is a quasi-isometric embedding. 

\begin{proof}[Proof of Step 4]Let us start with a preliminary remark. Starting from the Cayley graph of $A_{ab}$, there are two operations one can perform: 
We can mod out by the right-action of $\langle \Delta_{ab} \rangle$, or we can construct the aforementioned blown-up graph of 
orbits  $ \mathrm{Orbit}_{a, b}^*\big(	\mathrm{Cayley}_{a,b}(A_{a,b})\big)$. These two constructions commute. More precisely, there exists an isomorphism (in dotted lines below) that makes the following diagram commute: 

\[	\xymatrix{
	& \mathrm{Orbit}_{a, b}^*\big(	\mathrm{Cayley}_{a,b}(A_{a,b})/\langle \Delta_{ab}\rangle \big) \ar@{.>}[dd]^{\cong} 
\\  L_a \ar@{^{(}->}[rd] \ar@{^{(}->}[ru]  &  \\
	&  \mathrm{Orbit}_{a, b}^*\big(	\mathrm{Cayley}_{a,b}(A_{ab})\big) / \langle \Delta_{ab} \rangle  }\]

Thus, it is enough to show that the inclusion 
$$L_a \hookrightarrow  \mathrm{Orbit}_{a, b}^*\big(	\mathrm{Cayley}_{a,b}(A_{a,b})/\langle \Delta_{ab}\rangle \big) $$
is a quasi-isometric embedding.  The graph $\mathrm{Cayley}_{a,b}(A_{a,b})/\langle \Delta_{ab}\rangle$ (for the action of $\langle 
\Delta_{ab} \rangle$ on the right) is a tree, as it is obtained from the quasi-tree $\calT_{ab}$ defined in Section \ref{subsec: 
toolbox} by removing the edges of $\calT_{ab}$ labelled by an atom on more than one generator. Moreover, the various
$gL_a$ and $gL_b $ are  quasi-lines in the tree $\mathrm{Cayley}_{a,b}(A_{a,b})/\langle \Delta_{ab}\rangle$ that are $g\langle a \rangle g^{-1}$-invariant and $g\langle b \rangle g^{-1}$-invariant respectively. Moreover, for any $r \geq 0$ there is a constant $f(r)$ such that the  intersection of the $r$-neighbourhoods of any two such distinct quasi-lines has diameter at most $f(r)$. The fact that $L_a \hookrightarrow  \mathrm{Orbit}_{a, b}^*\big(	\mathrm{Cayley}_{a,b}(A_{a,b})/\langle \Delta_{ab}\rangle \big) $ is a quasi-isometric embedding is now a consequence of Step 3 and the following Claim, applied to the case $Z=  \mathrm{Cayley}_{a,b}(A_{a,b})/\langle \Delta_{ab}\rangle$, $L_{i_0} = L_a$,  and the $L_i$ are the $gL_b$ and the $gL_a$ other than $L_a$. 

\medskip

\noindent \textbf{Claim.} For all $\delta,\kappa$ and functions $f:\reals_+\to\reals_+$, there exists $D\geq 0$ such that the following holds.  Let $Z$ be a $\delta$--hyperbolic space and let $\{L_i\}$ be a family of $(\kappa,\kappa)$--quasilines in $Z$ such that for all distinct $i,j$ and all $r$, the intersection of the $r$--neighbourhoods of $L_i$ and $L_j$ has diameter at most $f(r)$. Then the following holds for all $L_{i_0}$.  Let $\widehat Z$ be obtained from $Z$ by coning off each $L_i,i\neq i_0$.  Then the image of $L_{i_0}$ under the inclusion $Z\to \widehat Z$ is a $(D,D)$--quasi-line.

\begin{proof}[Proof of Claim.]
The closest-point projection $proj: Z\to L_{i_0}$ has the property that the image of all $L_i, i\neq i_0$ is uniformly bounded. Therefore, $proj$ gives a coarsely Lipschitz coarse retraction of $\widehat Z$ onto the image of $L_{i_0}$ in $\widehat Z$. The existence of a coarse retraction implies that $L_{i_0}$ is quasi-isometrically embedded in $\widehat Z$, as required.
\end{proof}

This concludes the proof of Step~4.
\end{proof}
	
It follows from Steps 3 and 4 that the various inclusions $\Lambda_a \hookrightarrow Z_v$ are
quasi-isometric embeddings, for 
vertices $v $ of $T_a$. Note that because the action of $\mathrm{Stab}(T_a)$ on $T_a$ is cocompact by Lemma~\ref{lem:link_tree_cocompact}, we can choose a constant $K \geq 0$ such that all such inclusions $\Lambda_a \hookrightarrow Z_v$ are $(K,K)$-quasi-isometric embeddings. We now want to combine these various quasi-isometric embeddings into a global quasi-isometric embedding $\Lambda_a 
\hookrightarrow Z_a$.\\
	
\noindent 	\textbf{Step 5:} The inclusion 
	$\Lambda_a \hookrightarrow Z_a$
	is a quasi-isometric embedding. \\
	
\begin{proof}[Proof of Step 5] For every distinct vertices $v, w$ of $T_a$, notice that vertices in $Z_v \cap Z_w$ are at (combinatorial) distance at most $1$ from $\Lambda_a.$
	In particular, given a finite combinatorial geodesic $\gamma$ in $Z_a$ between two points $x, x'$ of $\Lambda_a$, there 
exist points $x_0\coloneqq x, x_1, \ldots, x_n\coloneqq x'$ of $\Lambda_a$, and distinct points $y_0\coloneqq x, y_1, \ldots, y_n 
\coloneqq x'$ of $\gamma\subset Z_a$ such that the portion of $\gamma$ between $y_i$ and $y_{i+1}$ is contained in some 
$Z_v$, and each $y_i$ is either equal or adjacent to $x_i$. From Steps 3 and 4 and the definition of the $x_i$ and $y_i$, it 
follows that for each $i$, we get the following inequality between combinatorial length: $$\dist_{\Lambda_a}(x_i, x_{i+1})\leq K\dist_{Z_v}(y_i, 
y_{i+1}) + K +2 \leq   (2K+2)\dist_{Z_v}(y_i,y_{i+1}),$$
	the last inequality following from the fact that $\dist_{Z_v}(y_i, 	y_{i+1})\geq 1$. We thus have 
	
	$$d_{\Lambda_a}(x,x')\leq \sum_i\dist_{\Lambda_a}(x_i,x_{i+1})\leq  (2K+2)\sum_i\dist_{Z_v}(y_i,y_{i+1})= 
	(2K+2)\dist_{Z_a}(x,x').$$

This shows that the inclusion 
	$\Lambda_a \hookrightarrow Z_a$
	is a quasi-isometric embedding.
	\end{proof}

	Combining all the previous steps, it follows that the inclusion $\link_X(\Delta)^{+W} \hookrightarrow \big( X - 
\mbox{Sat}(\Delta) \big)^{+W}$ is a quasi-isometric embedding, which concludes the proof of Lemma \ref{lem:QI4}.

\begin{proof}[Proof of Proposition~\ref{prop:QI}]
	The preceding sequence of Lemmas~\ref{lem:QI2}, \ref{lem:QI3}, \ref{lem:QI4},  together with Corollary~\ref{cor:unbounded link}, imply  the proposition.
\end{proof}

\subsection{Obtaining hierarchical hyperbolicity}\label{subsec:assembling} 
We are now ready to prove the main theorem of this article, Theorem \ref{thmintro:main}, which says that Artin groups of large and hyperbolic type are hierarchically hyperbolic. First, we just prove that a structure exists, and later on we give a more detailed description for the interested reader.

\begin{proof}[Proof of Theorem \ref{thmintro:main}]
There are three cases to consider.

\smallskip

\noindent \textbf{Single vertex:} If $\Gamma$ is a single vertex, then $A_\Gamma$ is isomorphic to $\mathbb Z$, which is an HHG.

\par\smallskip
	
\noindent \textbf{Connected case:}  We consider the case where $\Gamma$ is connected and contains at least two vertices, so that we are in the case covered by Convention \ref{conv:connected}. Let us first check that $(X, W)$ satisfy the conditions of 
Definition \ref{defn:combinatorial_HHS}: It satisfies (I) by Proposition \ref{prop:finite_complexity}, condition (II) by Propositions~\ref{prop:hyp_link} and~\ref{prop:QI}, condition (III) by 
Proposition~\ref{prop: C=C0}, and condition (IV) by Proposition~\ref{prop: links lattice} and Corollary~\ref{cor:unbounded link}. Moreover, the action of $A_\Gamma$ is proper and cobounded by 
Proposition~\ref{prop:W_qi_to_A_Gamma} and there are finitely many orbits of links by Proposition \ref{lem:index_set}. It thus follows from  Theorem~\ref{thm:main criterion}  that $A_\Gamma$ is 
hierarchically hyperbolic.

\par\smallskip

\noindent \textbf{General case:}  In general, $A_\Gamma$ is the free product of  Artin groups on connected graphs, which are hierarchically hyperbolic by the above.  Taking free products preserves 
the property of being a hierarchically hyperbolic group, by \cite[Corollary 8.24]{HHS_II} or~\cite[Theorem 9.1]{HHS_II}. 
\end{proof}

\paragraph{\textbf{Description of the HHG structure}}

We will now describe the HHG structure, and we introduce some notation to this end; we will use standard HHS terminology from \cite[Definition 1.1]{HHS_II}.

Given an HHS $(X,\mathfrak S)$, we can write $\mathfrak S=\mathfrak S_{fin} \sqcup \mathfrak S_{\infty} $, where $\mathfrak S_{\infty}$ is the set of indices  for which the corresponding hyperbolic space is unbounded. We typically have various bounded hyperbolic spaces in an HHS structure which need to be there because of the orthogonality axiom, but do not otherwise play a major role. The interesting part of the structure is $\mathfrak S_{\infty}$.

Recall from Definition \ref{defn:fancy_H} that given an Artin group  $A_\Gamma$ we denote
	$$\calH \coloneqq \big\{ \langle a \rangle ~|~ a \mbox{ a standard generator of } A_\Gamma \big\} \cup \big\{ \langle 
z_{ab} \rangle ~|~ a, b \mbox{ span an edge of } \Gamma  \mbox{ with } m_{ab}\geq 3 \big\}. $$ 

We denote two distinct copies of $\bigsqcup_{H\in\mathcal H}G/N(H)$ by $\mathfrak S^{line}$ and $\mathfrak S^{tree}$. We denote the copy of $gN(H)$ in $\mathfrak S^{line}$ by $gN(H)^{line}$, and similarly for $\mathfrak S^{tree}$.

Finally, we denote by $\mathfrak S^{leaf}$ the subset of $\mathfrak S^{tree}$ consisting of all cosets of the form $gN(\langle a\rangle)^{tree}$, where $a$ is a standard generator corresponding to a valence-1 vertex of $\Gamma$ whose incident edge has even label.

\begin{thm}\label{thm:artin_HHG}
Let $A_\Gamma$ be an Artin group of large and hyperbolic type, with $\Gamma$ connected and not a single vertex. Then there is an HHG structure $\mathfrak S$ on $A_\Gamma$ with the following
properties.  We have $\mathfrak S_\infty= \{S\}\sqcup \mathfrak S^{line}\sqcup \left(\mathfrak S^{tree}-\mathfrak S^{leaf}\right)$, and all of the following hold:
\begin{enumerate}

\item $S$ is the $\nest$-maximal element of $\mathfrak S$ and $\mathcal C(S)$ can be taken to be either the coned-off Deligne complex or the complex of irreducible parabolics of finite type.

\item For each $gN(H)\in \mathfrak S^{line}$, $\mathcal C(gN(H))$ is a quasiline on which $gN(H)g^{-1}$ acts, with $gHg^{-1}$ having unbounded orbits.

\item For each $gN(H)\in \mathfrak S^{tree}-\mathfrak S^{leaf}$, the space $\mathcal C(gN(H))$ is an unbounded quasitree. 

\item For each $gN(H)$, we have $gN(H)^{line}\orth gN(H)^{tree}$.

\item $gN(H)^{line}$ is nested in $hN(H')^{tree}$ if and only if $gHg^{-1}$ and $hH'h^{-1}$ are distinct and commute.

\item $gN(H)^{line}$ is orthogonal to $hN(H')^{line}$ if and only if $gHg^{-1}$ and $hH'h^{-1}$ are distinct and commute.

\item The items above completely determine the nesting and orthogonality relation within $\mathfrak S_{\infty}$.

\item No element of $\mathfrak S$ is orthogonal to any of its $A_\Gamma$-translates.
\end{enumerate}
\end{thm}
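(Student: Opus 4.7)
My plan is to apply Theorem \ref{thm:main criterion} to the combinatorial HHS $(X, W)$ constructed in the preceding sections — all hypotheses have been verified in Propositions \ref{prop:finite_complexity}, \ref{prop:W_qi_to_A_Gamma}, \ref{prop: C=C0}, \ref{prop:hyp_link}, \ref{prop:QI}, \ref{prop: links lattice}, and Lemma \ref{lem:index_set} — and then to unwind the resulting HHG structure by reading off the combinatorial data directly from the simplices of $X$. The CHHS formalism produces an index set $\mathfrak{S}$ in natural bijection with equivalence classes of non-maximal simplices of $X$ under $\Delta \sim \Delta'$ iff $\link_X(\Delta) = \link_X(\Delta')$, with $\mathcal{C}([\Delta]) = \link_X(\Delta)^{+W}$; the nesting and orthogonality relations are determined by link-containment and by the existence of representatives whose vertex-set union spans a simplex of $X$, respectively.

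The first step is to enumerate $\mathfrak{S}_\infty$. By Corollary \ref{cor:unbounded link}, the simplices of $X$ with unbounded augmented link are (i) the empty simplex, (ii) triangles with exactly two roots, and (iii) edges contained in a fibre $X_v$ with $v$ not of the exceptional leaf-with-even-label type. Lemma \ref{lem:join decomposition} identifies the link of a triangle of type (ii) whose complete root vertex is $gN(H)$ as exactly $\Lambda_{gN(H)}$, and distinct $gN(H) \in Y$ produce distinct links; this yields a bijection with the indexing set of $\mathfrak{S}^{line}$. Similarly, the link of any edge of $X_{gN(H)}$ is $p^{-1}(\link_Y(gN(H)))$, independent of the chosen edge, and by Lemma \ref{lem:coarseLipSat} this link uniquely determines $gN(H)$ under Convention \ref{conv:connected}; this yields $\mathfrak{S}^{tree} - \mathfrak{S}^{leaf}$, with the excluded cases matching precisely those in which Lemma \ref{lem:quasitree} produces a bounded link. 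The $\mathcal{C}$-space identifications follow immediately: $\mathcal{C}(S) = X^{+W}$ is equivariantly quasi-isometric to $\widehat{D}$ by Lemma \ref{lem:q_qi}, and hence to the graph of irreducible parabolic subgroups of finite type by Proposition \ref{prop:irred_parabolic}; the quasiline $\mathcal{C}(gN(H)^{line})$ with its $gN(H)g^{-1}$-action comes from Definition \ref{defn:blow_up_data} and Lemma \ref{lem:quasiline}; and the unbounded quasitree structure of $\mathcal{C}(gN(H)^{tree})$ is supplied by Lemma \ref{lem:quasitree}.

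The nesting and orthogonality relations reduce to direct computations from Lemma \ref{lem:join decomposition}. Nesting $gN(H)^{line} \nest hN(H')^{tree}$ holds iff $\Lambda_{gN(H)} \subseteq p^{-1}(\link_Y(hN(H')))$, iff $gN(H)$ and $hN(H')$ are distinct and $Y$-adjacent, iff (by Definition \ref{def:commutation graph}) $gHg^{-1}$ and $hH'h^{-1}$ are distinct and commute. For the orthogonality $gN(H)^{line} \orth gN(H)^{tree}$, the triangle $\Delta(\{v, \lambda\}, gN(H))$ and the edge $\{gN(H), \mu\}$ have vertex-set union $\{v, \lambda, gN(H), \mu\}$, which spans a maximal simplex of $X$ by virtue of fibres over $Y$-adjacent vertices spanning a simplicial join; the line-line case is handled analogously, using maximal simplices of $X$ lying above a $Y$-edge. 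Exhaustiveness (item 7) follows from the classification of simplex types combined with the absence of triangles and squares in $Y$ (Lemma \ref{lem:no square}), which prevents any further link-inclusions or join configurations.

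The main obstacle is item (8): no element of $\mathfrak{S}$ is orthogonal to any of its $A_\Gamma$-translates. The decisive observation is that $Y$ is bipartite with respect to tree-type versus dihedral-type vertices, by Lemma \ref{lem:iota}: no two tree-type vertices, and no two dihedral-type vertices, are $Y$-adjacent. Since each $H \in \mathcal{H}$ produces $Y$-vertices of a single type (tree if $H = \langle a \rangle$, dihedral if $H = \langle z_{ab} \rangle$), two cosets of the same $N(H)$ can never be $Y$-adjacent. Combined with the explicit orthogonality witnesses from the preceding paragraph — each of which requires a $Y$-adjacency between the relevant cosets — this precludes line-line and tree-tree self-translate orthogonality. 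The case of $S$ is immediate from the standard HHS axioms. Once this final check is in place, the theorem follows by assembling the preceding identifications of $\mathfrak{S}_\infty$, the $\mathcal{C}$-spaces, and the relations.
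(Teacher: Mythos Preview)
Your proof is essentially correct and follows the same approach as the paper: apply the CHHS machinery from Theorem~\ref{thm:main criterion}, identify $\mathfrak S_\infty$ via Corollary~\ref{cor:unbounded link}, read off the $\mathcal C$--spaces from Lemmas~\ref{lem:maximal_Deligne}, \ref{lem:quasiline}, \ref{lem:quasitree}, and extract the relations from the join structure of $X$ combined with bipartiteness of $Y$.

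There is one genuine gap in your treatment of item~(8). The statement concerns \emph{every} element of $\mathfrak S$, not only $\mathfrak S_\infty$; but your argument only handles $S$ and the line/tree classes, via the orthogonality descriptions you derived for $\mathfrak S_\infty$. The index set $\mathfrak S$ also contains the classes of simplices with bounded augmented link (types (1), (2), (4), (5), (6), (8) in the proof of Lemma~\ref{lem:index_set}, plus the leaf cases excluded from $\mathfrak S^{tree}$), and you give no reason why, say, a single-root-vertex class $[gN(H)]$ cannot be orthogonal to a translate $[kgN(H)]$. The paper's argument avoids this case analysis entirely: if $[\Delta]\orth[h\Delta]$ for \emph{any} non-maximal $\Delta$, pick any vertex $x\in\link_X(\Delta)$; orthogonality forces $x$ and $hx$ to be adjacent in $X$, and then either $x$ is a root (so $x,hx$ are $Y$--adjacent vertices in the same $A_\Gamma$--orbit, contradicting bipartiteness) or $x\in\Lambda_{gN(H)}$ (in which case replace $x$ by the root $gN(H)$ and repeat). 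This single uniform argument covers all of $\mathfrak S$ at once; you should adopt it in place of your case-by-case treatment restricted to the unbounded classes.
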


\begin{proof}
The HHG structure can be obtained from the description of the HHS structure for a combinatorial HHS \cite[Theorem 1.18, Remark 1.19]{CHHS}. The index set of the HHS structure is in bijection with the set of links of simplices, and the hyperbolic spaces in the HHS structure are the augmented links. The identification of the unbounded augmented links is done in Corollary \ref{cor:unbounded link}, with the (equivariant quasi-isometry types of the) augmented links being described in Lemma \ref{lem:maximal_Deligne} (see also Proposition \ref{prop:irred_parabolic}), \ref{lem:quasiline}, and Lemma \ref{lem:quasitree}.

Orthogonality and nesting of a combinatorial HHS are described in \cite[Definition 1.11]{CHHS}, and they correspond to links that form joins and links that are nested into each other. The link of the empty simplex will be the $\nest$--maximal element, and a straightforward case-by-case analysis with the other two types of links yields the relations are in our case (in view of how adjacency in the commutation graph is defined).

Finally, let us prove the last item; for this we argue with the HHS structure of the combinatorial HHS $(X,W)$ that we used throughout the section. Consider by contradiction non-maximal simplices $\Delta,h\Delta$ such that their links form a join. Consider any $x\in\link(\Delta)$; in particular, $x$ and $hx$ are adjacent in $X$. There are two cases. If $x=gN(H)$ for some $H\in\mathcal H$ and $g\in A_{\Gamma}$, then $x$ and $hx$ are also adjacent vertices of the commutation graph $Y$. However, $Y$ is bipartite (with respect to dihedral and tree type vertices, see Lemma \ref{lem:iota}) and no adjacent vertices are in the same orbit. On the other hand, if $x\in \Lambda_{gN(H)}$, then we can repeat the same argument with $gN(H)$.
\end{proof}

\begin{rem}
 The last item of Theorem \ref{thm:artin_HHG} implies that the HHG structure is $A_\Gamma$-\emph{colourable} in the sense of \cite[Definition 2.8]{DMS}.
\end{rem}

\begin{rem}
In the dihedral case, it does not suffice to simply observe that $A_\Gamma$ is virtually free-times-$\integers$ to show that it is a hierarchically hyperbolic group.  Indeed, while the property of being a hierarchically hyperbolic \emph{space} is 
preserved by quasi-isometries, the extra equivariance properties needed to be a hierarchically hyperbolic \emph{group} fail badly to be preserved even by commensurability.  Indeed, while 
$\integers^2$ is a hierarchically hyperbolic group, there are virtually $\integers^2$ groups, like the $(3,3,3)$ triangle group, that are not~\cite[Corollary 4.5]{PetytSpriano}. 
\end{rem}

 We now turn to the consequence of hierarchical hyperbolicity stated in  Corollary~\ref{corintro}.  

\begin{proof}[Proof of Corollary~\ref{corintro}]
Assertion \ref{item:asdim} follows directly from \cite[Thm. A]{HHS_III}.

All of the assertions follow from Theorem~\ref{thmintro:main} and general results about hierarchically hyperbolic groups, as mentioned in the statement, except  
(uniform exponential growth) and~\eqref{item:stable} (on stable subgroups).

We now handle assertion~\eqref{item:unif_exp} (uniform exponential growth).  In the case where $A_\Gamma$ is not dihedral, we have that $A_\Gamma$ is torsion-free by \cite{CharneyDavis} and acylindrically hyperbolic by \cite{MP}, so by 
Corollary~1.3 of~\cite{ANS}, $A_\Gamma$ has uniform exponential growth.  The remaining case is where $A_\Gamma$ is dihedral, i.e. $\Gamma$ is a single edge (and we are assuming that the label is not 2).  In this case, 
Lemma~\ref{lem:dihedral_split} implies that $A_\Gamma$ fits into an exact sequence $1\to\integers\to A_\Gamma\to F\to 1$, where $F$ is virtually a finite-rank non-abelian free group.  Hence 
$A_\Gamma$ has uniform exponential growth by~\cite[Proposition 2.3]{dlH}.  This proves assertion~\eqref{item:unif_exp}.

Next, we deal with assertion~\eqref{item:stable} (stable subgroups). Here, we use two additional facts about the hierarchically hyperbolic structure $\mathfrak S$. The first one is that the hyperbolic space associated to the $\nest$-maximal element is $A_\Gamma$--equivariantly quasi-isometric to the coned-off Deligne complex $\widehat D$, as stated in Theorem \ref{thm:artin_HHG}. Also, we will use the fact that for each non-$\nest$-maximal $V\in\mathfrak S$ with unbounded $\mathcal C V$ there exists $U\in\mathfrak S$ with $U\orth V$ and unbounded $\mathcal C U$. This readily follows from Theorem \ref{thm:artin_HHG}.

According to~\cite[Theorem B]{ABD}, there is an HHS structure on $A_\Gamma$ such that a subgroup $H\leq A_\Gamma$ is stable if and only if $H$--orbit maps to the $\nest$-maximal hyperbolic space are quasi-isometric embeddings. We observe below (Lemma \ref{lem:ABD}) that under our assumptions the $\nest$-maximal hyperbolic space constructed in \cite{ABD} is $A_\Gamma$-equivariantly quasi-isometric to $\widehat D$, which proves the assertion.
\end{proof}

In \cite{ABD}, Theorem B follows combining \cite[Corollary 6.2]{ABD}, which characterises stability for any HHS satisfying an additional condition (having \emph{unbounded products}), and \cite[Theorem 3.7]{ABD}, which says that says that one can change an HHG structure into one satisfying the additional condition.

What we used in the proof of Corollary \ref{corintro} is that this change does not affect the equivariant quasi-isometry type of the $\nest$-maximal hyperbolic space, as we now justify.

\begin{lemma}\label{lem:ABD}
 Let $(G,\mathfrak S)$ be an HHG with the property that for each non-$\nest$-maximal $V\in\mathfrak S$ with unbounded $\mathcal C V$ there exists $U\in\mathfrak S$ with $U\orth V$ and unbounded 
$\mathcal C U$. Then $G$ admits an HHG structure $(G,\mathfrak T)$ with unbounded products where the $\nest$-maximal hyperbolic space is $G$-equivariantly quasi-isometric to that of $(G,\mathfrak 
S)$.
\end{lemma}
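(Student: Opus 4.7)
The plan is to invoke \cite[Theorem 3.7]{ABD}, which asserts that any HHG admits an HHG structure with bounded domain dichotomy and unbounded products, and to verify that the construction in its proof leaves the $\nest$-maximal hyperbolic space intact up to $G$-equivariant quasi-isometry. So the content of the lemma is really just an inspection of the proof of that theorem under our hypothesis.

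First I would recall the ABD construction. Given $(G,\mathfrak S)$, the HHG structure $\mathfrak T$ is built by (a) discarding or collapsing domains $V$ with $\mathcal C V$ bounded (imposing bounded domain dichotomy) and (b) for each non-$\nest$-maximal $V \in \mathfrak S$ with $\mathcal C V$ unbounded but with $F_V$ bounded, adjoining an auxiliary orthogonal index whose hyperbolic space is a quasiline so as to make $F_V$ unbounded in the new structure. Crucially, both modifications are carried out below the $\nest$-maximal element: neither the maximal index $S$, nor the hyperbolic space $\mathcal C S$, nor the projection $\pi_S$, nor the $G$-action on $\mathcal C S$, are altered by the procedure.

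Next I would apply our hypothesis. By assumption, for every non-$\nest$-maximal $V \in \mathfrak S$ with $\mathcal C V$ unbounded there exists $U \orth V$ with $\mathcal C U$ unbounded. Hence $F_V$ is already unbounded for every such $V$, so step (b) of the ABD construction is vacuous on unbounded domains, and only step (a) contributes a genuine modification. In particular the modifications occur strictly below $S$ and cannot affect the $\nest$-maximal hyperbolic space.

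The only step requiring care, and the place where an obstacle could conceivably appear, is checking that $\mathcal C S$ is preserved not merely as a quasi-isometry type but $G$-equivariantly. This amounts to verifying that in the ABD construction the $G$-action and the projection to $\mathcal C S$ in $\mathfrak T$ are literally the same as those in $\mathfrak S$, which follows from the fact that the construction is built equivariantly on the same group $G$ and modifies only the link/orthogonal data beneath $S$. Once this is observed, identifying $\mathcal C S$ in both structures yields the desired $G$-equivariant quasi-isometry and completes the proof.
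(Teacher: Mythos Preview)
Your description of the ABD construction does not match the one used in \cite[Theorem 3.7]{ABD}, and this leads to a genuine gap. In that proof the new $\nest$-maximal hyperbolic space is \emph{not} the original $\mathcal C S$; it is the factored space $\mathcal T_S=\hat G_{\mathfrak S^M}$, obtained from $G$ by coning off the product regions associated to the set $\mathfrak S^M$ of domains $U$ admitting $U\nest V$, $V\orth W$ with $\mathrm{diam}\,\mathcal C V,\mathrm{diam}\,\mathcal C W>M$. So your assertion that ``neither the maximal index $S$, nor the hyperbolic space $\mathcal C S$ \ldots are altered by the procedure'' is incorrect, and the ``adjoining an auxiliary orthogonal quasiline'' description is not what ABD actually does.

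What is needed is an argument that $\hat G_{\mathfrak S^M}$ is $G$-equivariantly quasi-isometric to $\mathcal C S$, and this is exactly where the hypothesis enters. Under the hypothesis, every non-$\nest$-maximal $U$ with unbounded $\mathcal C U$ lies in $\mathfrak S^M$ (take $V=U$), so $\mathfrak S-\mathfrak S^M$ consists of $S$ together with domains whose hyperbolic spaces are uniformly bounded. Now $\hat G_{\mathfrak S^M}$ is an HHS with index set $\mathfrak S-\mathfrak S^M$ by \cite[Proposition 2.4]{HHS_III}, and applying the distance formula with threshold exceeding $M$ kills every term except the $S$-term, so the (equivariant) projection $\pi_S:\hat G_{\mathfrak S^M}\to\mathcal C S$ is a quasi-isometry. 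This distance-formula step is the actual content of the lemma and is missing from your argument.
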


\begin{proof}
 We claim that the HHG structure $(G,\mathfrak T)$ constructed in the second paragraph of the proof of \cite[Theorem 3.7]{ABD} has $\nest$-maximal hyperbolic space $\mathcal T_S$ with the required property; we now recall the construction. First, for a constant $M$ let $\mathfrak S^M$ be the set of all $U\in\mathfrak S$ for which there exist $U\nest V$ and $V\orth W$ with $\diam\ \mathcal CV>M$ and $\diam\ \mathcal CW>M$. We can choose $M$ such that $\diam\ \mathcal CV>M$ is equivalent to $\diam\ \mathcal CV=\infty$ (because the $G$-action on $\mathfrak S$ is cofinite by definition of HHG). Then $\mathcal T_S$ is defined as the factored space $\hat G_{\mathfrak S^M}$. As a set, this is $G$, however it is endowed with a different metric. Roughly, we start with $G$ and cone-off certain product regions corresponding to the elements of $\mathfrak S^M$, but we will not need the exact definition here.
 
 By \cite[Proposition 2.4]{HHS_III}, $\hat G_{\mathfrak S^M}$ is an HHS with index set $\mathfrak S-\mathfrak S^M$, and with the same associated hyperbolic spaces and projections as in $(G,\mathfrak S)$. By the distance formula for HHS \cite[Theorem 4.5]{HHS_II}, for any suitably large constant $L$ there are constants $K,C$ so that for all $x,y\in \hat G_{\mathfrak S^M}$ we have.
 
 $$d_{\hat G_{\mathfrak S^M}}(x,y)\approx_{K,C} \sum_{Y\in \mathfrak S-\mathfrak S^M} \left[\dist_{\mathcal C Y}(\pi_Y(x),\pi_Y(y))\right]_L,$$
 
 where $\approx_{K,C}$ denotes quantities that differ by multiplicative constant at most $K$ and additive constant at most $C$, and $[A]_L$ denotes $A$ if $A\geq L$ and $0$ otherwise.
 
 Now it is time to use our hypothesis. In our case $\mathfrak S^M$ contains all $U\in\mathfrak S$ with $\diam \mathcal C U>M$ except the $\nest$-maximal element $S$. If we take the threshold $L$ to be larger than $M$, the distance formula states that $\pi_S$ is a quasi-isometry from $\mathcal T_S=\hat G_{\mathfrak S^M}$ to $\mathcal C S$. Since $\pi_S$ is $G$-equivariant by definition of HHG, we are done. 
\end{proof}

\bibliographystyle{alpha}
\bibliography{artin}
\end{document}